\begin{document}

\subjclass[2010]{Primary 60B20; Secondary 60J80, 15B05}
\keywords{Maximum; log-correlated field; CUE; Haar unitary; characteristic polynomial} 

\newtheorem{theorem}{Theorem}[section]
\newtheorem{lemma}[theorem]{Lemma}
\newtheorem{corollary}[theorem]{Corollary}
\newtheorem{conjecture}[theorem]{Conjecture}
\newtheorem{cor}[theorem]{Corollary}
\newtheorem{proposition}[theorem]{Proposition}
\theoremstyle{definition}
\newtheorem{definition}[theorem]{Definition}
\newtheorem{example}[theorem]{Example}
\newtheorem{claim}[theorem]{Claim}
\newtheorem{remark}[theorem]{Remark}

\newenvironment{pfofthm}[1]
{\par\vskip2\parsep\noindent{\sc Proof of\ #1. }}{{\hfill
$\Box$}
\par\vskip2\parsep}
\newenvironment{pfoflem}[1]
{\par\vskip2\parsep\noindent{\sc Proof of Lemma\ #1. }}{{\hfill
$\Box$}
\par\vskip2\parsep}


\newcommand{\R}{\mathbb{R}}
\newcommand{\C}{\mathbb{C}}
\newcommand{\T}{\mathbb{T}}
\newcommand{\D}{\mathbb{D}}
\newcommand{\G}{\mathcal{G}}
\newcommand{\Z}{\mathbb{Z}}
\newcommand{\Q}{\mathbb{Q}}
\newcommand{\E}{\mathbb E}
\newcommand{\N}{\mathbb N}

\newcommand{\Def}{\overset{\Delta}{=}}

\newcommand{\supp}{\operatorname{supp}}
\newcommand{\sgn}{\operatorname{sgn}}


\newcommand{\Prob}{\Pr}
\newcommand{\Var}{\operatorname{Var}}
\newcommand{\Cov}{\operatorname{Cov}}
\newcommand{\dtv}{d_{\operatorname{TV}}}
\newcommand{\Exp}{\mathbb{E}}
\newcommand{\expect}{\mathbb{E}}
\newcommand{\1}{\mathbf{1}}
\newcommand{\prob}{\Pr}
\newcommand{\weakto}{\Rightarrow}
\newcommand{\pr}{\Pr}
\newcommand{\filt}{\mathscr{F}}
\DeclareDocumentCommand \one { o }
{%
  \IfNoValueTF {#1}
  {\mathbf{1}  }
  {\mathbf{1}\left\{ {#1} \right\} }%
}
\newcommand{\Bernoulli}{\operatorname{Bernoulli}}
\newcommand{\Binomial}{\operatorname{Binom}}
\newcommand{\Binom}{\Binomial}
\newcommand{\Poisson}{\operatorname{Poisson}}
\newcommand{\Exponential}{\operatorname{Exp}}


\newcommand{\Ai}{\operatorname{Ai}}
\newcommand{\tr}{\operatorname{tr}}
\renewcommand{\det}{\operatorname{det}}

\newcommand{\Image}{\operatorname{Image}}
\newcommand{\Span}{\operatorname{Span}}


\DeclareDocumentCommand \norm { O{\cdot} } { \left\|{ #1 }\right\| }
\DeclareDocumentCommand \nuclear { O{\cdot} } { \left\|{ #1 }\right\|_{\nu} }
\DeclareDocumentCommand \Hhalf { O{\cdot} } { \left\|{ #1 }\right\|_{H^{1/2}} }
\DeclareDocumentCommand \rawev { O{i} O{N} } { \lambda_{ {#1 }}^{({#2})} }

\newcommand{\GF}{\mathbf{G}}
\newcommand{\F}{\mathbf{F}}
\newcommand{\BSA}{\mathscr{B}}
\newcommand{\CUEF}{\mathbf{U}_{\corE{N}}}
\newcommand{\CUEFr}{\mathbf{U}_{\corE{N,r}}}
\NewDocumentCommand {\BF} { } {\nu}
\newcommand{\WN}{\mathbf{Z}}
\newcommand{\RWN}{\mathbf{R}}
\newcommand{\dH}{d_{\mathbb{H}}}
\newcommand{\dT}{d_{T}}
\newcommand{\ddH}{\tilde{d}_{\mathbb{H}}}
\newcommand{\HM}{\mathfrak{m}}
\DeclareDocumentCommand \ROT { O{\theta} } { 
  {Q}_{ {#1} }
}
\newcommand{\Meso}{\mathscr{E}}
\newcommand{\MFI}{{Y_p}}
\newcommand{\FI}{{W_p}}
\def\corO{}
\def\corE{}

\DeclareDocumentCommand{\FMC}{ g }{
  \IfNoValueTF {#1}
  {
    \mathbb{F}
  }
  {
    \mathbb{F}_{ {#1} }
  }
}
\DeclareDocumentCommand \EvI { g o } { 
  \EVENT{\mathcal{A}}{ }[][#2]
}
\DeclareDocumentCommand \Ev { G{\BF,p,t_0} o } { 
  \EVENT{\mathcal{A}}{\ell}[#1][#2]
}
\DeclareDocumentCommand \EvU { G{\rho,t} o } { 
  \EVENT{\mathcal{A}}{u}[#1][#2]
}
\NewDocumentCommand {\EvL}{ O{\BF} O{\theta} } { 
  \EVENT{\mathcal{A}}{\ell2}[#1][#2]
}
\DeclareDocumentCommand \EvUU { G{\rho,t} o } { 
  \EVENT{\mathcal{A}}{u2}[#1][#2]
}
\DeclareDocumentCommand \BS { o o } { 
  \EVENT{\mathcal{B}}{1}[#1][#2]
}
\DeclareDocumentCommand \BL { G{t_0} o } { 
  \EVENT{\mathcal{B}}{2}[#1][#2]
}
\DeclareDocumentCommand \BLL { o o } { 
  \EVENT{\mathcal{B}}{3}[#1][#2]
}
\DeclareDocumentCommand \ES { G{t_0} o } { 
  \EVENT{\mathcal{E}}{1}[#1][#2]
}
\DeclareDocumentCommand \ESU { o } { 
  \EVENT{\mathcal{E}}{2}[][#1]
}
\DeclareDocumentCommand \EL { G{\rho,t} o } { 
  \EVENT{\mathcal{E}}{u}[#1][#2]
}
\DeclareDocumentCommand \ELL { G{p,t_0} o } { 
  \EVENT{\mathcal{E}}{\ell}[#1][#2]
}
\DeclareDocumentCommand \ELM { o o } { 
  \EVENT{\mathcal{E}}{\HM}[#1][#2]
}
\DeclareDocumentCommand \DL { o O{k} } { 
  \EVENT{\mathcal{D}}{#2}[\rho][#1]
}
%
\DeclareDocumentCommand{\EVENT}{ m m o o }
{
  \IfNoValueTF {#3}
  {
    {#1}_{#2}^{{\BF}}
  }
  {
    \IfNoValueTF {#4}
    { {#1}_{#2}^{{#3}} }
    { {#1}_{#2}^{{#3}}({#4}) }
  }
}

\DeclareDocumentCommand \Bias { g o }{
  \IfNoValueTF {#1}
  {
    \IfNoValueTF {#2}
    {
      \mathfrak{B}
    }
    {
      \mathfrak{B}_{#2}
    }
  }
  {
    \IfNoValueTF {#2}
    {
      \mathfrak{B}(#1)
    }
    {
      \mathfrak{B}_{#2}(#1)
    }
  }
}

\title[CUE field]{The maximum of the CUE field}
\author{Elliot Paquette}
\address{Department of Mathematics, Weizmann Institute of Science}
\email{elliot.paquette@gmail.com}
\author{Ofer Zeitouni}
\address{Department of Mathematics, Weizmann Institute of Science and Courant institute, NYU}
\email{ofer.zeitouni@weizmann.ac.il}
\thanks{The work of both authors was supported by  grants
	from the Israel Science Foundation and from the US-Israel Binational 
	Science Foundation.
  EP gratefully acknowledges the support of NSF Postdoctoral Fellowship DMS-1304057.
  This project has received funding from the European Research Council (ERC) under the European Union's Horizon 2020 research and innovation programme (grant agreement No. 692452).
}
\date{\today}
\maketitle
\begin{abstract}
	{
Let $U_N$ denote a Haar Unitary matrix of dimension $N,$ and consider the 
	field
	$\CUEF(z) = \log |\det(1-zU_N)|$ for $z\in \mathbb{C}$. Then,
    $$\frac{\max_{|z|=1} \CUEF(z) -
    \log N 
    + \frac{3}{4} \log\log N}{
    \log\log N}
    \to 0 $$
  in probability. This provides a verification up to second order 
  of a conjecture of Fyodorov, Hiary and Keating, improving on the recent
  first order verification of Arguin, Belius and Bourgade.  
  }
\end{abstract}
\section{Introduction and Overview}
\label{sec:def}

Let $U_N$ denote a Haar Unitary matrix of dimension $N,$ and consider the 
	field
\[
  \CUEF(z) = \log |\det(1-zU_N)|,
\]
defined for all $z \in \mathbb{D} =
\left\{ z \in \mathbb{C}~:~|z| < 1 \right\}.$  
{
	The supremum of this function, which by the maximum principle
	equals $\max_{z\in \mathbb{T}} \CUEF(z)=:\CUEF^*$, where $\mathbb{T}=\{z\in \mathbb{C}: |z|=1\}$,}
is the subject of the following conjecture.
\begin{conjecture}[Fyodorov-Hiary-Keating \cite{FK,FH}]
  There is a random variable $\xi$ so that as $N \to \infty,$
  \[
	  {
		  \CUEF^*
	  }
	  -
    \log N 
    + \frac{3}{4} \log\log N
    \weakto
    \xi,
  \]
  with $\weakto$ denoting convergence in distribution.
  \label{conj}
\end{conjecture}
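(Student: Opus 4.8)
The plan is to place $\CUEF$ within the modern theory of extrema of log-correlated fields and to run a two-step program: first strengthen the second-order asymptotics of the body of the paper to genuine \emph{tightness} of $\CUEF^* - m_N$, where $m_N := \log N - \tfrac34\log\log N$, and then upgrade tightness to convergence in law, identifying $\xi$ as a randomly shifted Gumbel governed by a critical multiplicative chaos. The structural input is the expansion
\[ \CUEF(e^{i\theta}) \;=\; -\,\Re\!\sum_{k\ge 1}\frac{\tr(U_N^k)}{k}\,e^{ik\theta}, \]
together with the fact, due to Diaconis--Shahshahani and its quantitative refinements, that for $k\le N$ the traces $\tr(U_N^k)$ are close, in a strong sense, to independent complex Gaussians of variance $k$, and asymptotically independent across disjoint ranges of $k$. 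Truncating the series at $k=N$ produces a field with pointwise variance $\tfrac12\log N + O(1)$ and covariance $-\tfrac12\log|e^{i\theta}-e^{i\theta'}| + O(1)$, accurate down to the microscopic scale $1/N$; decomposing dyadically in frequency, $\CUEF \approx \sum_{j=0}^{n} X_j$ with $n := \lfloor\log_2 N\rfloor$ and $X_j$ carrying the frequencies in $[2^j,2^{j+1})$, realizes $\CUEF$ as an approximate branching random walk with binary branching, $n$ generations, and Gaussian increments of variance $\tfrac12\log 2$ per generation. All of the analysis below is carried out on this comparison model and transferred back to $\CUEF$ by Gaussian interpolation and comparison (Slepian, Kahane) together with the near-independence of the traces across dyadic frequency blocks; replacing an exact hierarchical structure by an approximate one, and propagating the resulting errors through the recursions, is the principal source of technical difficulty throughout.

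For tightness I would first establish the \emph{upper tail with the correct logarithmic shift} by a modified first/second moment method in the spirit of Bramson (and, at first order, of Arguin--Belius--Bourgade). Counting directly the $\theta$ with $\CUEF(e^{i\theta}) \ge m_N + y$ overcounts because of entropic repulsion; instead one restricts to the event $\mathcal{B}_\theta$ that the partial sums $S_j(\theta)=\sum_{i\le j}X_i(\theta)$ stay below a curved barrier of the shape $\tfrac{j}{n}\,m_N + C\log\!\big(\min(j,n-j)\big) + y$ for all $j\le n$. A sharp ballot estimate then makes the expected \emph{restricted} count of order $(1+y)e^{-2y}$, so that $\Prob(\CUEF^* \ge m_N + y) \le C(1+y)e^{-2y}$. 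For the \emph{lower tail} I would run the Paley--Zygmund inequality on the same restricted count: its first moment is bounded below, and the crux is the two-point function $\Prob\big(\mathcal{B}_\theta\cap\mathcal{B}_{\theta'}\cap\{\CUEF(e^{i\theta})\ge m_N\}\cap\{\CUEF(e^{i\theta'})\ge m_N\}\big)$, in which pairs $(\theta,\theta')$ decoupling at an intermediate generation $j$ must be summed using the barrier to prevent a fatal over-contribution; this is the bulk of the work in the tightness step, and where the approximate (rather than exact) scale decoupling of the CUE field is felt most acutely. A single-scale Paley--Zygmund lower bound, boosted by near-independence across a partition of $\T$ into fixed arcs, then upgrades to a lower tail vanishing as $z\to\infty$, and combining the two tails gives $\CUEF^* - m_N = O_{\Prob}(1)$.

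To pass from tightness to convergence in distribution I would follow the route established for branching random walk and for the two-dimensional Gaussian free field. One shows that the recentered extremal height process $\sum_\theta \delta_{\CUEF(e^{i\theta}) - m_N}$ converges to a decorated Poisson point process on $\R$ with intensity $\mathcal{Z}\,c_\star\,e^{-2x}\,dx$, each atom dressed by an independent copy of a fixed decoration cluster, where the random constant $\mathcal{Z}>0$ is the limit of the derivative martingale $\sum_\theta\big(m_N - \CUEF(e^{i\theta})\big)\,e^{2(\CUEF(e^{i\theta})-m_N)}$ along the dyadic decomposition --- equivalently, the total mass of the critical holomorphic multiplicative chaos built from $|\det(1-zU_N)|$. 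The existence and non-triviality of this critical chaos for the CUE is the analytic heart of the argument; one also needs uniqueness of the subsequential extremal limits, which I would obtain from a branching recursion --- a sub/super-additivity argument in the generation at which the maximum localizes, together with the ``stickiness'' of the near-maxima --- transplanted from the exactly hierarchical setting to $\CUEF$ via the Gaussian comparison above. Reading off the maximum of a decorated PPP then gives
\[ \CUEF^* - \log N + \tfrac34\log\log N \;\weakto\; \xi, \qquad \xi \;=\; \tfrac12\log\mathcal{Z} + c_0 + \tfrac12 G, \]
with $G$ a standard Gumbel independent of $\mathcal{Z}$ and $c_0$ an explicit constant absorbing $c_\star$ and the decoration law --- that is, $\xi$ is a randomly shifted Gumbel, as conjectured. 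The main obstacle, beyond the modified-moment bookkeeping of the tightness step, is this last part: proving convergence of the critical holomorphic multiplicative chaos and uniqueness of the subsequential extremal limits \emph{without} an exact branching structure, which forces a robust coupling of $\CUEF$ to a genuine hierarchical model and a tight control of the near-diagonal behaviour of the traces at mesoscopic scales.
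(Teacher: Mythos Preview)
The statement you are attempting to prove is \emph{Conjecture}~\ref{conj} in the paper, not a theorem: the paper explicitly does not prove it.  The authors prove only the weaker Theorem~\ref{thm}, namely that $(\CUEF^* - \log N + \tfrac34\log\log N)/\log\log N \to 0$ in probability, and they state plainly in Section~\ref{sec:def} that ``a challenging question that we could not resolve and thus leave open is whether these techniques can be improved to yield a complete proof of Conjecture~\ref{conj}.''  There is therefore no proof in the paper against which to compare your attempt.

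What you have written is not a proof but a research program, and you essentially say so yourself in the final paragraph.  The tightness step you outline (barrier-modified first and second moments) is close in spirit to what the paper actually carries out for the second-order result in Sections~\ref{sec:ub} and~\ref{sec:2p}, though the paper works with a geometric ray/hyperbolic decomposition and exponential-bias identities (Proposition~\ref{prop:clt}, Corollary~\ref{cor:exact}) rather than a dyadic Fourier truncation; even there, the paper does not reach $O_{\Prob}(1)$ tightness but only $o(\log\log N)$.  The convergence step --- decorated Poisson point process, derivative martingale, critical holomorphic multiplicative chaos, uniqueness of subsequential limits --- is precisely where the genuine open content lies, and your proposal does not supply the missing ingredients: you invoke ``convergence of the critical holomorphic multiplicative chaos'' and ``a robust coupling of $\CUEF$ to a genuine hierarchical model'' as black boxes.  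These are the hard theorems, not lemmas one can sketch around; at the time of the paper they were not available for the CUE field, and your outline does not provide them.
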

\noindent The exact distribution of $\xi$ is conjectured as well (see \cite{FH} for further details).

Arguin, Belius and Bourgade \cite{ABB} 
have recently obtained the leading order in  Conjecture \ref{conj}, that is,
they show that 
    $\CUEF^*/
    \log N \to 1$ as $N\to\infty$, in probability. 
    In this paper, we
    improve this to the convergence of the 
first two terms in the expansion, namely we prove:
\begin{theorem}
  As $N \to \infty,$
  \[
    \frac{
	    \CUEF^*
    -\log N 
    + \frac{3}{4} \log\log N}{
    \log\log N}
    \to 0
  \]
  in probability.
  \label{thm}
\end{theorem}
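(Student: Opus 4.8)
The plan is to transfer to $\CUEF$ the sharp second-order asymptotics of the maximum of a branching random walk (BRW). A BRW with binary branching, Gaussian increments of variance $\tfrac12\log2$ per generation and $n\approx\log_2 N$ generations has speed $\log2$ and maximum $\log N-\tfrac34\log\log N+O(1)$, the $\tfrac34$ being the usual BRW correction $\tfrac{3}{2\gamma}$ with $\gamma=2$ --- exactly the centering in Theorem~\ref{thm}. To realize this structure I would decompose $\CUEF(e^{i\theta})=-\operatorname{Re}\sum_{j\ge1}\tfrac1j e^{ij\theta}\tr(U_N^j)$ into dyadic frequency blocks,
\[
  \CUEF(e^{i\theta})=\sum_{k\ge1}\Delta_k(\theta),\qquad
  \Delta_k(\theta)=-\operatorname{Re}\!\!\sum_{2^{k-1}\le j<2^k}\!\!\tfrac1j e^{ij\theta}\tr(U_N^j),\qquad S_k(\theta)=\sum_{i\le k}\Delta_i(\theta).
\]
The structural engine is the Diaconis--Evans exact-moment identity: the joint moments of $(\tr(U_N),\dots,\tr(U_N^m))$ coincide with those of independent complex Gaussians $(\sqrt j\,Z_j)_{j\le m}$ whenever the total frequency involved is at most $N$. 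Thus for $2^k\le N^{1-\delta}$ each $\Delta_k$ is --- to precision far exceeding what is needed --- an independent (in $k$) centered Gaussian field in $\theta$ of variance $\approx\tfrac12\log2$ decorrelating at scale $2^{-k}$, so that along a fixed $\theta$, and along a pair $\theta,\theta'$ sharing its first $\approx\log_2|\theta-\theta'|^{-1}$ increments and independent thereafter, $k\mapsto S_k(\theta)$ is a BRW path. The scales with $2^k$ comparable to $N$ are only approximately Gaussian, and the tail $\sum_{k>\log_2N}\Delta_k$ carries only $O(1)$ variance; both enter the estimates below only through universal constants, so I would control them with the same exact-moment tools. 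The single-site exponential moment is the Keating--Snaith asymptotics $\Exp|\det(1-e^{i\theta}U_N)|^{2s}\sim C_sN^{s^2}$ with $C_s\to1$ as $s\to1$, giving, via Markov, $\Prob(\CUEF(e^{i\theta})\ge u)=e^{-u^2/\log N+O(1)}$ uniformly for $u=\log N+O(\log\log N)$ (the optimizing $s=u/\log N\to1$).

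For the upper bound fix $\varepsilon>0$ and aim at $\Prob(\CUEF^*\ge m_N+\varepsilon\log\log N)\to0$, $m_N:=\log N-\tfrac34\log\log N$. A bare union bound over an $N$-point grid gives $N\cdot e^{-(m_N+\varepsilon\log\log N)^2/\log N}\asymp(\log N)^{3/2-2\varepsilon}$, divergent for small $\varepsilon$. The fix is the entropic-repulsion (ballot) refinement: on $\{S_n(\theta)\ge u\}$ the path $S_\cdot(\theta)$ must, off an event treated by a separate recursive first-moment estimate over scales, stay below a curved barrier $\ell_k=\tfrac kn u+O(\log(k\wedge(n-k)))$ --- the logarithmic bump being what makes the barrier-violation estimate summable. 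Pinning the endpoint and imposing the barrier, a single point contributes $\asymp(\log N)^{-3/2}e^{-u^2/\log N}$ --- $(\log N)^{-1}$ from the Brownian-bridge ballot estimate, $(\log N)^{-1/2}$ from the local CLT --- and summing over the $\Theta(N)$ effective points yields $\asymp(\log N)^{-2\varepsilon}\to0$. From the grid to the full circle: since $\det(1-zU_N)$ is a degree-$N$ polynomial, on the near-maximal event --- where a small-ball lower bound for $\CUEF$, again from the Toeplitz-determinant asymptotics, forces the nearest eigenvalue to $e^{-i\theta}$ to distance $\ge N^{-1-o(1)}$ --- the logarithmic derivative is at most $N^{1+o(1)}$, so $\CUEF$ varies by $o(1)$ across a grid of spacing $N^{-C}$; the bound $\CUEF^*\le(1+o(1))\log N$ of Arguin--Belius--Bourgade is a convenient a priori input.

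For the matching lower bound I would run a truncated second-moment argument. The naive count $\#\{\theta:\CUEF(e^{i\theta})\ge m_N\}$ has second moment too large --- dominated by atypical pairs that are both high only because $S_\cdot$ ran high throughout --- so instead count $Z:=\#\{\theta\in\mathrm{grid}:\CUEF(e^{i\theta})\ge m_N-\varepsilon\log\log N,\ S_k(\theta)\le\ell_k\ \forall k\}$ with the same barrier. Then $\Exp Z\asymp(\log N)^{2\varepsilon}\to\infty$, while for a pair branching at scale $2^{-\ell}$ the barrier forces the shared prefix $S_\ell$ below the line, making the pair-contribution summable in $\ell$ and giving $\Exp[Z^2]\le C(\Exp Z)^2$; Paley--Zygmund then yields $\Prob(Z>0)\to1$, hence $\CUEF^*\ge m_N-\varepsilon\log\log N$ with high probability. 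The ingredients are the same Gaussian/moderate-deviation and ballot estimates, now used as two-sided asymptotics.

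The step I expect to be the main obstacle is transferring the \emph{sharp} BRW barrier machinery to $\CUEF$. The level $u\sim\log N$ is a $\sqrt{2\log N}$-standard-deviation event for a field of variance $\tfrac12\log N$ --- a genuine large-deviation window --- so one cannot afford soft coupling or CLT bounds and must lean on the exact-moment structure throughout: Diaconis--Evans for the multiscale joint law underlying the ballot estimates, and the Toeplitz-determinant evaluations for the single-site and bridge exponential moments. Simultaneously one must verify that the scales with $2^k$ comparable to $N$, and the frequencies $j>N$, which fail to be exactly Gaussian, perturb neither $m_N$ nor the $\pm\varepsilon\log\log N$ windows by more than $o(\log\log N)$ --- they enter only through constants, but certifying even that requires the exact tools rather than estimates. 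Propagating the resulting $O(1)$ deterministic ``bias'' corrections --- the curvature bump in the barrier, and the drift from the not-quite-Gaussian top scales --- consistently through the multiscale first- and second-moment computations is the principal bookkeeping burden; it is precisely because only the precision $o(\log\log N)$ is sought that these $O(1)$ errors are harmless and the full tightness of Conjecture~\ref{conj} is not obtained.
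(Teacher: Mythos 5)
Your overall architecture (multiscale decomposition, curved barrier, first moment for the upper bound, truncated second moment for the lower bound) is the same as the paper's, with the Fourier-block regularization of Arguin--Belius--Bourgade in place of the paper's hyperbolic-ray regularization; the centering $\log N-\tfrac34\log\log N$ and the $\pm\varepsilon\log\log N$ windows are set up correctly. But there are two genuine gaps, both at the point you yourself flag as ``the main obstacle,'' and neither is a bookkeeping issue.

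First, the Diaconis--Evans identity matches \emph{polynomial} moments of the traces with Gaussian moments up to total degree $N$; it does not by itself control probabilities of barrier events $\{S_k(\theta)\le\ell_k\ \forall k,\ S_n(\theta)\in[u,u+1]\}$ at multiplicative precision $1+o(1)$ when those probabilities are of order $N^{-1}(\log N)^{-3/2}$. Such events live in a large-deviation window ($u\asymp 2\sigma^2$ with $\sigma^2=\tfrac12\log N$), so one must work under an exponential tilt by $e^{2\CUEF(\cdot)}$; the tilted characteristic functions involve all frequencies, including those above $N$ where the Gaussian moment matching fails, and converting them into density estimates requires both an exact evaluation of these tilted transforms (the paper's Baxter-type Toeplitz identities, Proposition~\ref{prop:exact}) and a mollification by an auxiliary noise $\WN$ to make Fourier inversion yield a smoothed total-variation bound (Proposition~\ref{prop:clt}). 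Your proposal names the right exact inputs but supplies no mechanism for this conversion, and that conversion is the actual content of the paper. Relatedly, the approximation is only available up to scale $n-m\log\log N$ (top scales are \emph{not} controllable ``through universal constants''); this is survivable at the $o(\log\log N)$ precision you need, but it must be built into the barrier events rather than waved away.

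Second, your lower bound as written does not close: $\Exp[Z^2]\le C(\Exp Z)^2$ with a fixed $C>1$ gives only $\Pr(Z>0)\ge 1/C$, not $\Pr(Z>0)\to1$. For branching random walk one boosts a positive lower bound to high probability using genuine independence of subtrees; $\CUEF$ has no such structure, so one must instead prove that for all but an $o(1)$ fraction of pairs $(\theta_1,\theta_2)$ the pair expectation factorizes as $(1+o(1))$ times the product --- a $1+o(1)$, not $O(1)$, statement about the non-Gaussian field under exponential tilts. This is exactly why the paper replaces indicators by biased indicators, develops the ``field moment calculus'' of Section~\ref{sec:fmc} to compute conditional first and second moments of increments without paying multiplicative constants, and proves Corollary~\ref{cor:exact} to compare tilted exponential moments of $\CUEF$ and $\GF$ to relative precision $o(1)$. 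Without an analogue of these steps your Paley--Zygmund conclusion does not follow.
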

\corO{After this paper was posted and submitted for publication,
  Chhaibi,  Madaule and Najnudel \cite{CNM}
  have obtained a significant improvement
  of Theorem \ref{thm}. Namely, they prove that the sequence
  $\{\CUEF^*+\log N-\frac34\log\log N\}$ is actually tight, 
  and even consider other ensembles of random unitary matrices.
  Their proof is based on a different representation of $\CUEF$ than ours.
}

\subsection{Background, motivation \corO{and extensions}}
The field $\{\CUEF(z)\}$ is nearly Gau\-ssian in many respects.
	For example, as part of their work concerning relations between 
	the eigenvalues of 
	random unitary matrices and the Riemann zeta function,
	Keating and Snaith \cite{KS} proved that 
	for fixed $\theta$, the random variable
	$\CUEF(e^{i\theta})/\sqrt{( \log N)/2}$ 
	converges in distribution to a standard Gaussian. This convergence
	was 
	improved in various ways \cite{HKO,Wieand}, and is related
	to the convergence of linear statistics of eigenvalues of
	random unitary matrices, going back to the foundational work of
	Diaconis and Shashahani \cite{DS,DE}, because 
	$\CUEF(z)=\sum_{h=1}^N \log |1-ze^{i\theta_h}|$ where 
	$\left\{ e^{i\theta_h} \right\}_{h=1}^N$ are the eigenvalues of $U_N.$

	As we will make precise below, the limiting correlation of the
	field $\{\CUEF(z)\}_{z\in \mathbb{T}}$ is that of a 
	\textit{logarithmically correlated Gaussian field}. 
	Conjecture \ref{conj} is then the prediction that 
	the maximum of $\CUEF(z)$  over the unit circle behaves like
	the maximum of a logarithmically correlated Gaussian field, 
	for which convergence results are available, see
	\cite{bramson83} for branching Brownian motion,
	\cite{BDG,madaule} for more general such fields including the
	two dimensional discrete Gaussian free field, and
	\cite{DRZ} for a universality result concerning
	the distribution of the maximum of logarithmically correlated Gaussian
	field. See also \cite{FBo} \corO{(which served as motivation to \cite{FK,FH})} 
	for early predictions and computations
	concerning extremes of Gaussian correlated fields in the context of
	the paradigm of \textit{freezing transition}. \corO{The fields studied in
	  \cite{FBo} are precisely those appearing as limits of 
	$\CUEF$.}
	A word of caution is however that while the minimum of Gaussian
	log-correlated fields has the same distribution as minus the
	maximum, obviously $\min_{z\in \mathbb{T}} 
\CUEF(z)=-\infty$.

	The attempt to use techniques developed in the context of extrema of 
	logarithmically correlated fields, and in particular a modified version of
	the second moment method, to the study of $\CUEF^*$ is natural and in
	fact is behind the study \cite{ABB}. Besides employing a different
	regularization scheme (based on a truncation of a Fourier series in \cite{ABB}, and on a more geometric notion of rays here), our approach employs two additional components that allow us to apply the second moment method in
	greater accuracy. First, in the upper bound, 
	we use directly a certain monotonicity 
	property from \cite{Johansson}, see 
	Proposition \ref{prop:domination}. Second, we employ certain identities, that can be traced back to \cite{Baxter} and that were used in \cite{Johansson},
	to compute exponential moments of linear combinations of $\CUEF(z)$ for
	different $z$'s. 
    These in particular allow us to make some comparisons between $\CUEF(z)$ and $\GF(z)$ at optimal scales 
    ($1-|z| =\Theta(N^{-1})$)
    (see Proposition~\ref{prop:clt} and Corollary~\ref{cor:exact}).
    A challenging question that we could not resolve and thus leave open is whether these
	techniques can be improved to yield a complete proof of Conjecture
	\ref{conj}. \corO{As discussed above, the very recent \cite{CNM} is a further step in that
	direction. We note that while the approach of 
	\cite{CNM} also rests on a truncated second moment
argument, the approximation they use is different than both ours
and that of \cite{ABB}, and it rests on an efficient use of modified
Verblunsky coefficients.}
	
	\corO{An important motivation behind the study of eigenvalues of random unitary
matrices is the conjectured link with the zeros of the Riemann zeta function
\cite{KS}; this link also served as motivation to the  the Fyodorov-Bouchaud study and to the Fyodorov-Hiary-Keating
conjecture. When translated back to the RZF setup, the latter reads as the conjecture
that the maximum of the RZF on a ``typical'' bounded interval of the critical 
line, at height
$T$, is roughly $\exp(\log\log T-\frac34 \log\log\log T+O(1))$. Of course, our
results do not shed any light on the latter conjecture. It is
worthwhile to mention
in this context  the work of Arguin, Belius and Harper \cite{ABH}, who analyze
a different ``random'' model for the RZF. }

\corO{We finally note several possible extensions of our work. First, an 
anonymous referee pointed out to us the equality
$$ \sup_{|z|\leq 1} |P_N'(z)|=\frac{N}{2} \sup_{|z|=1} |P_N(z)|,$$
valid for any polynomial of degree $N$ whose zeros have modulus equal to $1$
\cite[Pg. 512]{Lax}. This applies in particular to the derivative of the
determinant of $U_N$, and Theorem \ref{thm} thus gives an estimate 
on the latter.}

\corO{In another direction, one can also consider the imaginary
part of the $\log \det(I-zU_N)$ on the unit circle (taking the 
limit as $|z|\nearrow 1$). Our methods apply also in that
setup. We do not provide further details, since this extension is
already considered in \cite{ABB} and \cite{CNM}.}

\corO{Finally, it is natural to consider 
similar questions concerning the maximum of the log-determinant
of random Hermitian matrices, such as the GUE
 (over a compact subset of the real line). At the
level of the leading term, such results are now available in \cite{LP}.}

\subsection{Deterministic relaxation}
\label{sec:deterministic}

Our approach is to study the maximum of $\CUEF(z)$ on the unit circle by studying the field on its interior.  For the purposes of bounding the maximum from below, this is particularily convenient, because $\CUEF(z)$ is a harmonic function on $\mathbb{D}.$  Hence, we have that almost surely
\[
  \sup_{|z| < 1} \CUEF(z) = \max_{z\in \mathbb{T}} \CUEF(z),
\]
and so any value of $\CUEF(z)$ on the interior of the disk serves as a lower bound to the maximum.

On the other hand, because $\CUEF(z)$ can be written as
\(
  \CUEF(z) = \sum_{h=1}^N \log| 1 -ze^{i\theta_h}|,
\)
where $\left\{ e^{i\theta_h} \right\}_{h=1}^N$ are the eigenvalues of $U_N,$ we have the following estimate for the maximum.
\begin{lemma}
  For any $M>0,$ there is an $N_0(M)$ sufficiently large so that for all 
  {integer}
  $N > N_0(M),$ and any $\left\{ \theta_h \right\}_1^N \subset \mathbb{T},$ the function $F(z) = \sum_{h=1}^N \log| 1 -ze^{i\theta_h}|$ 
  satisfies
  \[
    \max_{|z|=1-MN^{-1}} F(z)
    \geq \max_{|z|=1} F(z) - M.
  \]
  \label{lem:deterministic}
\end{lemma}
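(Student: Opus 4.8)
The plan is to fix a boundary maximizer $z^{*}=e^{i\phi}\in\mathbb{T}$ of $F$, pull it radially inward to the point $w:=(1-MN^{-1})z^{*}$ — which automatically lies on the circle $|z|=1-MN^{-1}$, so that $\max_{|z|=1-MN^{-1}}F(z)\ge F(w)$ — and to show that the loss $F(z^{*})-F(w)$ is at most $M$ once $N$ is large. To make $F(z^{*})$ meaningful term by term, first note that $F$ is upper semicontinuous on the compact set $\mathbb{T}$, being a finite sum of functions continuous into $[-\infty,\infty)$, and is finite off the $N$ points $\{e^{-i\theta_h}\}$; hence its maximum is attained and finite, so $z^{*}\notin\{e^{-i\theta_h}\}$ and each $\log|1-z^{*}e^{i\theta_h}|$ is finite. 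Likewise $F(w)$ is finite since $|w|<1$. (Harmonicity of $F$ on $\mathbb{D}$, used elsewhere, plays no role here.)

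The estimate rests on the elementary identity
\[
  |1-ra|^{2}=(1-r)^{2}+r|1-a|^{2}\qquad\text{for }|a|=1,\ r\in(0,1],
\]
proved by writing $a=e^{i\psi}$ and expanding both sides in terms of $\cos\psi$. Applying it with $a=z^{*}e^{i\theta_h}$ and $r=1-MN^{-1}$, and writing $u_h:=|1-z^{*}e^{i\theta_h}|^{2}\in(0,4]$, $\varepsilon:=MN^{-1}$, gives $|1-rz^{*}e^{i\theta_h}|^{2}=ru_h+\varepsilon^{2}\ge ru_h$, i.e.\ each of the $N$ factors defining $F$ shrinks by at most a factor $\sqrt r$ under the pull-in. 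Summing,
\[
  F(z^{*})-F(w)=\frac12\sum_{h=1}^{N}\log\frac{u_h}{ru_h+\varepsilon^{2}}
  \le\frac{N}{2}\log\frac1r=-\frac{N}{2}\log\!\left(1-\frac MN\right)
  \le\frac{N}{2}\cdot\frac{M/N}{1-M/N}=\frac{NM}{2(N-M)},
\]
where the last step uses $-\log(1-x)\le x/(1-x)$ for $x\in(0,1)$. Since $\tfrac{NM}{2(N-M)}\to M/2$ and is $\le M$ whenever $N\ge 2M$, taking $N_{0}(M)=2M$ yields $\max_{|z|=1-MN^{-1}}F(z)\ge F(w)\ge\max_{|z|=1}F(z)-M$.

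There is no serious obstacle here; the only things to get right are that the maximizer avoids the poles $\{e^{-i\theta_h}\}$ (so the term-by-term manipulation is legitimate) and the sign bookkeeping — one might fear that pulling $z^{*}$ inward near a pole of $F$ costs a great deal, but the identity $|1-ra|^{2}=(1-r)^{2}+r|1-a|^{2}$ shows the total multiplicative loss is at most $r^{-N/2}=(1-MN^{-1})^{-N/2}\to e^{M/2}$, a constant comparable to $e^{M}$. The same computation explains why $1-MN^{-1}$ is the natural radius: it is precisely the window at which the accumulated loss over the $N$ factors stabilizes to a quantity of order $M$.
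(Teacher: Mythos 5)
Your proof is correct and follows essentially the same route as the paper: slide the boundary maximizer $z^*$ radially inward to $(1-MN^{-1})z^*$ and bound the loss in each of the $N$ terms uniformly in $\theta_h$, obtaining a total loss of $M/2+o(1)\le M$. The only difference is the elementary per-term estimate --- the paper observes that $\log|1-\omega|-\log|1-r\omega|$ is maximized over $\omega\in\T$ at $\omega=-1$, giving the (slightly sharper, exact worst-case) bound $\log\tfrac{2}{1+r}$ per term, whereas your identity $|1-ra|^2=(1-r)^2+r|1-a|^2$ gives $\tfrac12\log\tfrac1r$ per term; both are $\tfrac{M}{2N}+O(N^{-2})$ and both suffice.
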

\begin{proof}
  If $0 < r_1 < r_2<1,$ then the maximum of $\log |1-r_2\omega| - \log |1-r_1\omega|$ over $\omega \in \T$ is attained at $\omega = -1.$  Hence,
  \begin{equation}
    \sum_{h=1}^N \log|1-r_2e^{i\theta_h}|
    \leq \sum_{h=1}^N \log|1-r_1e^{i\theta_h}|
    +N\log\left( \frac{1+r_2}{1+r_1} \right).
    \label{eq:radialslide}
  \end{equation}
  Letting $z_{*}$ be the optimizer of $\max_{|z|=1} F(z),$ we can estimate
  \[
    F\left( (1-{M}{N}^{-1})z_* \right)
    \geq F(z_*) - N\cdot\log\left(\frac{2}{2-{M}{N}^{-1}}\right) 
    \geq F(z_*) - \frac{M}{2-{M}{N}^{-1}}, 
    \]
  from which the claim follows.
\end{proof}

In fact, because of the correlation structure of the field $\CUEF,$ the maximum should be determined by $F(z)$ on a grid of angular spacing of order $N^{-1}.$  Most of the work will be to estimate the maximum on such a grid (in reality on a grid of spacing $(N\log N)^{-1}$), from which it will be possible to extend to a dense mesh by a global union bound.

\subsection{Gaussian field}
\label{sec:GF}
As discussed above, the field $\CUEF$ is very nearly Gaussian in many respects.
Hence, we introduce a \corE{real-valued} Gaussian field $\GF$ whose covariance is given by the limiting covariances of $\CUEF.$  Let $\GF$ be a centered Gaussian field on $\mathbb{D}$ with $\GF(0) = 0$ almost surely and covariance
\begin{equation}
  \label{eq:gfcov}
  \Exp \GF(z) \GF(y) = -\frac{\log| 1-z\bar{y}|}{2}.
\end{equation}

An alternative description of this field is given as 
\[
    \GF(z) = \langle \log | 1-(z)(\cdot)|, \mathcal{W} \rangle_{\T},
\]
where for continuous functions $f_1,f_2$
\[
    \langle f_1,f_2 \rangle_{\T} = \frac{1}{2 \pi i} \int_{\T} f_1(z)
		f_2^{\corO{*}}(z)\,\frac{dz}{z}.
\]
Here we take $\mathcal{W}$ to be the Gaussian field on $\T$ which is the weak limit of
\[
    \mathcal{W}(\omega) = \operatorname{w-lim}_{N \to \infty} \sum_{h=-N}^N \frac{\sqrt{h}}{\sqrt{2}}\cdot Z_h \omega^h
\]
for i.i.d.\,complex normals $\left\{ Z_h \right\}_{h=1}^\infty,$ each having independent real and imaginary parts of variance $1$ and satisfying the symmetry condition $Z_{-h}=\overline{Z}_h.$
From this, one can recover \eqref{eq:gfcov} using the Fourier series of $\log|1-z\omega|.$  Moreover, for any function $f:\T\to \C$ with sufficiently rapidly decaying Fourier coefficients, we have that
\begin{equation}
    \Exp[
      \langle f, \mathcal{W} \rangle_{{\T}}
      \langle g, \mathcal{W} \rangle_{{\T}}
    ]
    = \sum_{h \in \mathbb{Z}} \frac{h}{2} \hat f(h) \hat g(-h).
    \label{eq:Fourier}
\end{equation}

Using existing machinery, such as \cite{DRZ}, it is relatively straightforward to show that Conjecture~\ref{conj} would hold for the maximum of $\GF(z)$ restricted to the disk $|z| = 1-N^{-1}.$  
This is because $\GF(z)$ is a canonical example of a log-correlated Gaussian field, of which perhaps the most central example is branching random walk.  Indeed, identifying a branching structure is one of the key tools to answering questions about the maxima of such fields.  The branching structure for $\GF(z)$ enters through hyperbolic geometry.

Specifically, we recall the hyperbolic metric $\dH$ on $\mathbb{D},$ under which $\mathbb{D}$ is often referred to as the Poincar\'e disk model of the hyperbolic plane.  For any point $z \in \mathbb{D},$ the distance of $z$ to $0$ is given by
\begin{align}
  &\dH(0,z) = \log \left( \frac{1+|z|}{1-|z|} \right). \nonumber \\
  \intertext{We also recall the hyperbolic disk automorphism:}
  &T_y(z) := \frac{z-y}{1-z\bar{y}},
  \label{eq:diskautomorphism}
\end{align}
which is an isometry of the Poincar\'e disk taking $y$ to $0.$  
{For two arbitrary points $y,z \in \mathbb{D},$ we can then write}
\begin{align}
  \dH(y,z) &= \dH(0,T_y(z))
  = \log \left( \frac{1+|T_y(z)|}{1-|T_y(z)|} \right)\nonumber \\
  \intertext{The variance of the difference of the field $\GF$ at two points $y,z \in \mathbb{D}$ is given by}
  \Var
  \left( \GF(z) - \GF(y)	\right)
  &= \frac{1}{2}\log
  \left(
  \frac{|1-z\bar{y}|^2}{
    (1-|z|^2)
    (1-|y|^2)
  }
  \right).\nonumber \\
  \intertext{It is now a straightforward calculation to see that this could also be expressed as}
  \Var\left( \GF(z) - \GF(y)	\right)
  &=\frac{1}{2}\log 
  \left( 
  \frac{1}{1-|T_y(z)|^2}
  \right).\nonumber \\
  \intertext{In particular, we can write this in terms of the hyperbolic distance between $y$ and $z$ by the formula}
  \Var\left( \GF(z) - \GF(y)	\right)
  &=\log\left( \cosh\left( \tfrac{\dH(z,y)}{2} \right) \right) 
  \label{eq:varcosh}\\
  &=\tfrac{\dH(z,y)}{2} - \log 2 + O(e^{-\dH(z,y)}),
  \nonumber \\
  \intertext{with the approximation uniform in all $z,y \in \D.$  We can also write the covariance in a similar way:}
  \Exp \GF(z)\GF(y) 
  &= -\frac12\left[ \Var\left( \GF(z) - \GF(y)	\right) - \Var(\GF(z)) - \Var(\GF(y)) \right] \nonumber. \\
  \intertext{Using that $G(0)=0$ almost surely, we can express this in terms of \eqref{eq:varcosh} as} 
  \Exp \GF(z)\GF(y) 
  &= \frac12\log\left( 
  \tfrac
  { \cosh( {\dH(0,y)}{2}^{-1} )\cosh( {\dH(0,z)}{2}^{-1} )}
  {\cosh( {\dH(z,y)}{2}^{-1} )}
  \right). \label{eq:covcosh}
\end{align}

It is possible to discretize hyperbolic space in such a way that the branching structure now appears naturally through the geometric structure of the discretization (see \cite[Section 14]{Kenyon}).
We however will not use any such discretization directly.  Instead, it will be enough for us to know that the covariance structure can be compared directly to branching random walk. 

Let $\left\{ \zeta_i \right\}_0^\infty$ be points on the positive real axis 
with 
\begin{equation}
  \label{eq-zetadef}
  \corO{  \zeta_0 = 0,  \quad \dH(\zeta_i,\zeta_j) = |i-j|.} 
\end{equation}
{The points $\zeta_i$ (and their rotated version $e^{i\theta} \zeta_i$ with 
angles $\theta\sim 2\pi k/N$, $k\in \Z$) will provide us with a convenient
skeleton along which the
field $\CUEF(z)$ behaves roughly as a Gaussian branching random walk.}
For $\theta \in \R,$ we wish to estimate the distance $\dH(\zeta_i, e^{i\theta}\zeta_j).$  Indeed the following is a quick calculation.
\begin{lemma}
  Uniformly in $h,j \in \mathbb{N}$ and $\theta \in [-\pi,\pi]$
  \[
    \dH(\zeta_h, e^{i\theta}\zeta_j)
    = h + j - 2\min\{-\log|\sin\tfrac{\theta}{2}|,h,j\} + O(1).
  \]
  When $k = \min\{h,j\} > -\log |\tfrac\theta2|$ the error term can be estimated by $Ce^{-k}|\theta|^{-1}$ for some sufficiently large absolute constant $C>0.$ For the covariances of $\GF,$ it follows that
  \[
    \Exp \GF(\zeta_h)\GF(e^{i\theta}\zeta_j)
    =\tfrac{1}{2}\min\{-\log|\sin\tfrac{\theta}{2}|,h,j\}
    - \tfrac{\log 2}{2} + O(1),
  \]
  where again the error term can be estimated by $C\min\{e^{-k}\theta^{-1},1\}.$
  \label{lem:branch}
\end{lemma}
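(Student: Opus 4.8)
The plan is to first derive an \emph{exact} formula for $\dH(\zeta_h,e^{i\theta}\zeta_j)$ — this is the promised ``quick calculation'' — and then read off the asymptotics. Recall the standard Poincar\'e-disc identity $\sinh^2\!\left(\tfrac12\dH(z,w)\right)=\frac{|z-w|^2}{(1-|z|^2)(1-|w|^2)}$, which is consistent with $\dH(0,z)=\log\frac{1+|z|}{1-|z|}$. Since $\dH(0,\zeta_i)=i$ forces $\zeta_i=\tanh(i/2)$, we have $1-\zeta_i^2=\cosh^{-2}(i/2)$; substituting $z=\zeta_h$ and $w=e^{i\theta}\zeta_j$, using $|z-w|^2=(\zeta_h-\zeta_j)^2+4\zeta_h\zeta_j\sin^2(\theta/2)$, and collapsing with the identities $\cosh\tfrac h2\cosh\tfrac j2\bigl(\tanh\tfrac h2-\tanh\tfrac j2\bigr)=\sinh\tfrac{h-j}2$ and $2\sinh\tfrac i2\cosh\tfrac i2=\sinh i$, one obtains
\[
  \sinh^2\!\left(\tfrac12\dH(\zeta_h,e^{i\theta}\zeta_j)\right)
  =\sinh^2\!\left(\tfrac{h-j}2\right)+\sinh h\,\sinh j\,\sin^2\!\left(\tfrac\theta2\right),
\]
equivalently, via $\cosh\alpha=1+2\sinh^2(\alpha/2)$ and a product-to-sum identity,
\[
  \cosh\dH(\zeta_h,e^{i\theta}\zeta_j)=\cosh(h-j)\cos^2\!\tfrac\theta2+\cosh(h+j)\sin^2\!\tfrac\theta2 .
\]

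For the leading order, put $d:=\dH(\zeta_h,e^{i\theta}\zeta_j)=\operatorname{arccosh}M$, where $M\ge1$ is the right side above. From $\log M\le\operatorname{arccosh}M\le\log(2M)$ we get $d=\log M+O(1)$ uniformly; from $\tfrac12 e^{|h-j|}\le\cosh(h-j)\le e^{|h-j|}$ (and likewise for $\cosh(h+j)$) together with $\log(u+v)=\max(\log u,\log v)+O(1)$ for $u,v\ge0$ (one summand allowed to vanish), we get
\[
  d=\max\!\left(|h-j|+2\log\bigl|\cos\tfrac\theta2\bigr|,\;h+j+2\log\bigl|\sin\tfrac\theta2\bigr|\right)+O(1).
\]
If the first argument is the larger, then $|\cot(\theta/2)|\ge e^{\min(h,j)}\ge1$, so $|\theta|\le\pi/2$ and $2\log|\cos(\theta/2)|=O(1)$, whence that argument equals $|h-j|+O(1)$; so in every case $d=\max\!\left(|h-j|,\,h+j+2\log|\sin\tfrac\theta2|\right)+O(1)$, and since $|h-j|=h+j-2\min(h,j)$ this is $h+j-2\min\{-\log|\sin\tfrac\theta2|,h,j\}+O(1)$, as claimed.

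For the sharper error term, assume $k=\min(h,j)>-\log|\theta/2|$, i.e. $|\theta|>2e^{-k}$; then $\cosh(h+j)\sin^2(\theta/2)$ dominates $M$, and I would track the three sources of error — $\operatorname{arccosh}M=\log(2M)+O(M^{-2})$, the ratio of the two terms in $M$, and $\log\cosh x=x-\log2+O(e^{-2x})$ — each of which turns out to be $O(e^{-k}|\theta|^{-1})$ after inserting $|\theta|>2e^{-k}$, $\sin^2(\theta/2)\ge\theta^2/\pi^2$ and $(h+j)-|h-j|=2k$ (the subcase where $M$ is bounded is trivial, since there $d=O(1)$ and $e^{-k}|\theta|^{-1}=\Theta(1)$). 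This gives $d=h+j+2\log|\sin(\theta/2)|+O(e^{-k}|\theta|^{-1})$; and with $\tau:=-\log|\sin(\theta/2)|$ the discrepancy $\tau-\min\{\tau,h,j\}=\max\{0,\tau-k\}$ is nonzero only when $\tau>k$, which combined with $|\theta|>2e^{-k}$ forces $|\theta|\asymp e^{-k}$, i.e. $e^{-k}|\theta|^{-1}=\Theta(1)$, so this discrepancy is again $O(e^{-k}|\theta|^{-1})$. The covariance statement then follows from \eqref{eq:covcosh}: rotations fix $0$, so $\dH(0,\zeta_h)=h$ and $\dH(0,e^{i\theta}\zeta_j)=j$, hence
\[
  \Exp\GF(\zeta_h)\GF(e^{i\theta}\zeta_j)
  =\tfrac12\!\left(\log\cosh\tfrac h2+\log\cosh\tfrac j2-\log\cosh\tfrac d2\right)
  =\tfrac{h+j-d}{4}-\tfrac{\log2}{2}+O(1),
\]
using $\log\cosh x=x-\log2+O(1)$ uniformly on $[0,\infty)$; substituting the expansion of $d$ and tracking the $\log(1+e^{-h})$-type corrections (each $O(e^{-k}|\theta|^{-1})$ in the above regime) yields both the main term and the refined bound $C\min\{e^{-k}\theta^{-1},1\}$.

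I expect the only genuine obstacle to be the bookkeeping of the refined error, especially in the transition band $k\approx-\log|\theta/2|$ where $-\log|\sin(\theta/2)|$ and $\min\{-\log|\sin(\theta/2)|,h,j\}$ differ; once the exact hyperbolic identity is in hand, everything else reduces to elementary estimates on $\cosh$, $\operatorname{arccosh}$ and $\log$.
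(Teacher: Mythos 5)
Your proof is correct and follows essentially the same route as the paper: the exact identity you derive, $\cosh\dH(\zeta_h,e^{i\theta}\zeta_j)=\cosh(h-j)\cos^2\tfrac\theta2+\cosh(h+j)\sin^2\tfrac\theta2$, is precisely the hyperbolic law of cosines that the paper invokes with $b=h$, $c=j$, and the subsequent case analysis of which term dominates, together with the transfer to covariances via \eqref{eq:covcosh} and $\log\cosh(x/2)=x/2-\log 2+O(e^{-x})$, matches the paper's argument. You simply derive the identity from the Poincar\'e-disc distance formula instead of quoting it, and carry out the ``straightforward case-by-case analysis'' in more detail than the paper does.
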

\begin{proof}
  For a hyperbolic triangle with side lengths $a,b,c$ with $\theta$ the angle opposite $a,$ the hyperbolic law of cosines says that
  \[
    \cosh a = 
    \frac{\cosh(b+c)}{2}(1-\cos\theta)
    +\frac{\cosh(b-c)}{2}(1+\cos\theta).
  \]
  We apply this with $a=\dH(\zeta_h,e^{i\theta}\zeta_j),$ $b=h$ and $c=j.$
  The remainder is a straightforward case-by-case analysis, noting that when  $k = \min\{h,j\} > -\log |\sin\tfrac\theta2|,$ the first term dominates, and otherwise the second term dominates.  Using \eqref{eq:covcosh}, this estimate can be transferred to the covariances, since for $x \geq 0,$
  \[
      \log(\cosh(\tfrac{x}{2})) = \tfrac{x}{2} - \log 2 + O(e^{-x}).
  \] 
\end{proof}
Hence 
  the covariance structure of 
  $\left( \GF(\zeta_j),\GF(e^{i\theta} \zeta_j) \right)_{j=1}^\infty$ is,
  up to universally 
  bounded 
  additive errors,
the same as that of two Gaussian simple random walks which have identical increments until step $\log |\tfrac2\theta|$ and have independent increments afterwards.  As a corollary of this, we have that there is an absolute constant $C>0$ so that for any three points $x,y,z \in \D,$
\begin{equation}
    \label{eq:correlationbound}
    \left|
    \Exp[\GF(x)\left( \GF(y) - \GF(z) \right)]
    \right|
    \leq \frac{\dH(y,z)}{2} + C.
\end{equation}



\subsection{Barrier method overview}
\label{sec:secondmoment}

The approach we take to estimating the maximum of $\CUEF$ is an adaptation of one developed to estimate the maximum of branching Brownian motion~\cite{Bramson78} (see also \cite{ABR09}, \cite{Aidekon} and \cite{BDZ14}
for the more closely related case of branching random walk). This method is also ubiquitous in the study of the extremes of log-correlated Gaussian fields, see
e.g.
\cite{BZ10}, \cite{madaule}, \cite{DRZ}.

In light of Lemma~\ref{lem:deterministic} \corO{(with $M=2$)}, we roughly need to estimate $\CUEF$ on the points 
\[
  \left\{ e^{2\pi i hN^{-1}}(1-\corO{2} N^{-1}) \right\}_{h=1}^N.
\]
Because of correlations in $\CUEF$, techniques that treat $\CUEF$ at these points  as independent variables fail to capture the behavior of the maximum of $\CUEF$.  Roughly speaking, if $\CUEF$ is unusually large at a single point, it will be unusually large at many nearby points.  The extent to which this is true is so great that a union bound fails to give the correct upper bound on $\CUEF.$  A standard second moment method argument, which would be used to give a lower bound for the maximum of $\CUEF,$ fails even more spectacularly.

To fix this in the case of branching Brownian motion, a key insight of \cite{Bramson78} is to work on an event where all particles are constrained to lie below some time-evolving barrier.  In our situation, this means we do the following.
Let 
\corO{
  \begin{equation}
    \label{eq-defn}
    n = \lfloor \dH(0, 1-\corO{2} N^{-1}) \rfloor \sim \log N.  
  \end{equation}
  Recall the points $\zeta_i$, see \eqref{eq-zetadef}.
}
With $t \approx -\tfrac34\log n$, to be defined later, define the subset of fields $\F : \D \to \R$
\begin{equation}
  \label{eq:Ev0}
  \EvI{} = \left\{ \F(\zeta_i) < i + C\log n,~\forall~1 \leq i \leq n, \text{ and } 
n+ t - 1 < \F(\zeta_n) < n + t \right\},
\end{equation}
and define $\EvI{ }[\omega]$ for 
  $\omega=e^{i\theta} \in \T$ to be the pushforward of 
  $\EvI{}$ under the map that rotates the field by $\theta.$
\begin{remark}
  We will need to modify the definition of $\EvI{}$ for technical reasons.  See \eqref{eq:Ev} for the events we will ultimately use.
\end{remark}

It is straightforward to show that with high probability (that is, with probability going to $1$ as $N\to \infty$), we have that
\[
  \CUEF(\zeta_j e^{2\pi i hN^{-1}}) < j + C\log n
\]
for all $1 \leq j \leq n$ and all $h \in [N] = \left\{ 1,2, \dots, N \right\}.$  This we do by computing exponential moments, applying Markov's inequality and employing a simple chaining argument.  Hence, the barrier introduced in \eqref{eq:Ev0} is in a sense typical.

Next, we show that the event $\CUEF \in \EvI{}$ has probability nearly equal to that which one would get if it were the case that $\left\{\CUEF(\zeta_i)\right\}_0^n$ were a random walk, i.e.\,
\begin{equation}
  \label{eq:Evrough}
  \Pr(\CUEF \in \EvI{}) = \frac{e^{-n-2t}}{n^{3/2}}e^{o(\log n)}.
\end{equation}
Indeed in the case of random walk the extra error term $e^{o(\log n)}$ 
can be much improved.  For what we seek to prove here, this estimate will be sufficient.

We then define the counting variable
\[
    Z = \sum_{h \in [N]} \one[{\CUEF \in \EvI{}[e^{2\pi i hN^{-1}}]}].
\]
An upper bound on the maximum of $\CUEF$ on $\{\zeta_n e^{2\pi i hN^{-1}}\}$ now follows by estimating $\Exp Z$ and summing over $t.$
A lower bound will proceed by a second moment method applied to $Z,$ i.e.\,estimating
\[
    \Pr\left[ Z > 0 \right] \geq \frac{
    (\Exp Z)^2
    }{
   \Exp(Z^2)
    }
    =
    \frac{
    N\Pr(\CUEF \in \EvI{})^2
    }{
        \sum_{\omega}
        \Pr\left( \CUEF \in \EvI{}[1] \cap \EvI{}[\omega] \right)
    },
\]
with the sum over all $\omega \in \left\{ e^{2\pi i hN^{-1}} : h \in [N] \right\}.$

  To control the second moment,
  we must show that the correlation between the
  two events $\EvI{}[1]$ and $\EvI{}[\omega]$ with $|\omega-1|\geq N^{-1}$ 
  is again similar to that of branching random walk, which translates to an estimate of the form
\[
    \Pr\left( \CUEF \in \EvI{}[1] \cap \EvI{}[\omega] \right) \leq  
    \Pr(\CUEF \in \EvI{}[1])^2 e^{-\log (|\omega-1|)}e^{o(\log n)}.
\]
Some care is needed in that for $\omega$ which are \emph{very} separated from $1$, meaning a $1-o(1)$ fraction of the phases $e^{2\pi i hN^{-1}},$ we need the stronger estimate
\[
    \Pr\left( \CUEF \in \EvI{}[1] \cap \EvI{}[\omega] \right) =  
    \Pr(\CUEF \in \EvI{}[1])^2(1+o(1)).
\]

\subsection{Strong Gaussian approximations}
\label{sec:normalintro}

Estimates like \eqref{eq:Evrough} are nontrivial, even in the case of simple random walk.  Hence, we approach this problem by showing that $\CUEF$ is very nearly $\GF$.  Once in the Gaussian context, a direct comparison to a Gaussian simple random walk is possible using Gaussian comparison inequalities.  

Ideally, we would like to show that
\begin{equation}
  \label{eq:Ev1}
  \Exp \one_{\EvI{}}( \CUEF ) = \Exp \one_{\EvI{}} (\GF) + o(N^{-1}),
\end{equation}
The computation of $\Exp \one_{\EvI{}} (\GF)$ is still nontrivial due to the correlation structure of $\GF$.  Indeed, letting $\zeta(t)$ be a unit speed hyperbolic geodesic, we have that $\GF(\zeta(t))$ is a non-Markovian, smooth Gaussian process.  However, the process $R^{-1/2}\GF(\zeta(Rt))$ has a Brownian limit as $R \to \infty.$  Further, the correlations converge sufficiently quickly that even $\GF(\zeta(t))$ at equally spaced times can be compared to a Gaussian random walk. 

Thus, our main task is to show a sufficiently strong quantitative comparison between $\CUEF$ and $\GF.$  One of the more striking exact identities that holds for $\CUEF$ is 
  the following
type of stochastic monotonicity in $N.$  
\begin{proposition}
  For any $\left\{ \lambda_i \right\}_1^k \subset \R$ and any $\left\{ z_i \right\}_1^k \subset \mathbb{D},$ we have 
  \[
    \Exp e^{ \sum_{i=1}^k \lambda_i \CUEF(z_i) }
    \leq 
    \Exp e^{ \sum_{i=1}^k \lambda_i \GF(z_i) }.
  \]
  \label{prop:domination}
\end{proposition}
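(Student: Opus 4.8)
The plan is to reduce the proposition to the classical fact that a Toeplitz determinant never exceeds its strong Szeg\H{o} limit. Let $e^{i\theta_1},\dots,e^{i\theta_N}$ be the eigenvalues of $U_N$, so that $\CUEF(z_i)=\sum_{h=1}^N\log|1-z_ie^{i\theta_h}|$, and set
\[
  f(\theta):=\sum_{i=1}^k\lambda_i\log\bigl|1-z_ie^{i\theta}\bigr|,
\]
a bounded, real-valued, continuous function on $\mathbb{T}$ with $\hat f(0)=0$ and geometrically decaying Fourier coefficients, since every $|z_i|<1$. Then $\Exp e^{\sum_i\lambda_i\CUEF(z_i)}=\Exp_{\mathrm{Haar}}\prod_{h=1}^N e^{f(\theta_h)}$, which by the Heine--Szeg\H{o} identity (Weyl integration together with the Andr\'eief/Heine formula, cf.\ \cite{Baxter,Johansson}) equals the Toeplitz determinant $D_N(e^f):=\det\bigl(\widehat{e^f}(j-\ell)\bigr)_{j,\ell=0}^{N-1}$; in particular the left-hand side is finite.

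Next I would identify the right-hand side with the Szeg\H{o} limit of $D_N(e^f)$. From $\log|1-ze^{i\theta}|=-\tfrac12\sum_{m\ge1}\tfrac1m\bigl(z^me^{im\theta}+\bar{z}^me^{-im\theta}\bigr)$, valid for $|z|<1$, we get $\hat f(m)=-\tfrac1{2m}\sum_i\lambda_iz_i^m$ for $m\ge1$ and $\hat f(-m)=\overline{\hat f(m)}$, hence
\[
  \sum_{m\ge1}m\,\hat f(m)\hat f(-m)=\tfrac14\sum_{m\ge1}\tfrac1m\Bigl|\sum_i\lambda_iz_i^m\Bigr|^2=-\tfrac14\sum_{i,j}\lambda_i\lambda_j\log\bigl(1-z_i\bar{z}_j\bigr)=-\tfrac14\sum_{i,j}\lambda_i\lambda_j\log\bigl|1-z_i\bar{z}_j\bigr|,
\]
the final equality because the double sum is real and $1-z_j\bar{z}_i=\overline{1-z_i\bar{z}_j}$. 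On the other hand $\sum_i\lambda_i\GF(z_i)$ is a centered Gaussian with variance $-\tfrac12\sum_{i,j}\lambda_i\lambda_j\log|1-z_i\bar{z}_j|$ by \eqref{eq:gfcov}, so $\Exp e^{\sum_i\lambda_i\GF(z_i)}=\exp\bigl(-\tfrac14\sum_{i,j}\lambda_i\lambda_j\log|1-z_i\bar{z}_j|\bigr)$, which is exactly the constant $E_\infty(f):=\exp\bigl(\sum_{m\ge1}m\,\hat f(m)\hat f(-m)\bigr)$ occurring in the strong Szeg\H{o} theorem $\lim_{N\to\infty}D_N(e^f)=E_\infty(f)$.

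It therefore remains to prove, for each $N$, the finite-$N$ domination $D_N(e^f)\le E_\infty(f)$ for real $f$ in our class. This is precisely the monotonicity of $N\mapsto D_N(e^f)$ recorded in \cite{Johansson} --- the ``monotonicity property'' alluded to in the overview --- which I would quote directly. For a self-contained derivation one may instead invoke the Borodin--Okounkov identity $D_N(e^f)=E_\infty(f)\det(1-K_N)$ with $K_N$ trace-class: for a positive symbol $e^f$ the operator $K_N$ is positive with spectrum contained in $[0,1)$, whence $0<\det(1-K_N)\le 1$.

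The Fourier and Szeg\H{o} bookkeeping of the first two steps is routine. The substantive point is the finite-$N$ inequality $D_N(e^f)\le E_\infty(f)$: quoting (or reproving) this domination is the heart of the argument, and it is exactly here that the hypotheses $\lambda_i\in\mathbb{R}$ and $z_i\in\mathbb{D}$ are used essentially --- through the real-valuedness (and continuity) of $f$.
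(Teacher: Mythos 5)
Your proof is correct and follows essentially the same route as the paper: both identify $\Exp e^{\sum_i\lambda_i\CUEF(z_i)}$ with the Toeplitz determinant $D_N(e^f)$, identify the Gaussian moment with the strong Szeg\H{o} constant $E_\infty(f)$, and then invoke the monotone convergence $D_N(e^f)\uparrow E_\infty(f)$ from \cite{Johansson} (Lemma 2.9 there). Your explicit Fourier bookkeeping and the alternative Borodin--Okounkov justification are fine additions, but the argument is the same one the paper cites.
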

\begin{proof}
  This is a special case of 
	  \cite[Lemma 2.9]{Johansson}.
	  In summary, it is shown \corO{that}
		the left-hand side increases monotonically in $N$ to its limit, which is given by the Strong-Szeg\H{o} limit theorem and which is equal to the right-hand side.
\end{proof}
In particular, we have a sub-Gaussian tail bound for any $\CUEF(z_i)$ with implied variance given by the variance of $\GF(z_i).$  This, together with 
Lemma \ref{lem:deterministic}, is enough to give a short proof that for each $\epsilon > 0,$ $\max_{z\in \T} \log |\det(1-zU_N)| < (1+\epsilon)\log N$ with high probability.

Before formulating exact results relevant to \eqref{eq:Ev1}, we will transform the problem.  In particular, a natural method to estimate $\Exp \one_{\EvI{}} (\GF)$ is to perform a change of measure to remove the drift from the event ${\EvI{}}.$  Indeed, if we bias the measure of $\GF$ by a Radon-Nikodym derivative proportional to $e^{c \GF(z)},$ the field under this bias will have the law of $\GF + \mu,$ where $\mu$ is a deterministic function $\mathbb{D} \to \R$ (see Lemma~\ref{lem:means}).

The biasing factor relevant to $\EvI{}$ is $e^{2 \GF(\zeta_n)}.$  
Note that by the definition of $\EvI{}$ \corO{
  and \eqref{eq-defn},} we have that
\[
  e^{2t - 2} 
  \leq
  \frac{\Exp \one_{\EvI{}}(\GF) e^{2\GF(\zeta_n)}}
  {N^2 \Exp \one_{\EvI{}}(\GF)}
  \leq e^{2t}.
\]
On the other hand, we have that
\[
  \frac{
    \Exp \one_{\EvI{}}(\GF) e^{2\GF(\zeta_n)}
  }{
    \Exp e^{2\GF(\zeta_n)}
  } = \Exp \one_{\EvI{}}(\GF + \mu).
\]
Using that $\GF(\zeta_n)$ has variance $\tfrac{1}{2} \log N + O(1),$ we arrive at the bounds
\[
  \Exp \one_{\EvI{}}(\GF)
  \asymp
  \frac{e^{-2t}\Exp \one_{\EvI{}}(\GF + \mu)}{N},
\]
where $\asymp$ denotes equality up to absolute multiplicative constants.  The event $\Exp \one_{\EvI{}}(\GF + \mu)$ now has a probability of polylogarithmic order in $N$.  This will allow us to prove a Gaussian approximation theorem with much larger additive error, provided we can additionally bias $\CUEF$ by exponential factors.  

Indeed this is the case.  To state our approximations, we introduce an additional field on $\D$ which we use for mollification purposes.  Let $\WN$ be a white noise on $\D,$ that is $\left\{ \WN(z) \right\}_{z\in S}$ is jointly centered Gaussian for every finite subset $S \subset \D$, $\WN(z)$ and $\WN(w)$ are independent for all $z \neq w,$ and 
$\Exp \WN(z)^2 = 1$ for all $z \in \D.$ 

We introduce this field primarily for notational convenience.  In reality we are only interested in finite dimensional marginals of $\WN.$ To this end, for any finite collection of points $\mathbf{z} \subset \D,$ we let $\BSA(\mathbf{z}) \subset \D^\R$ be the $\sigma$-algebra in the power set of $\R^\D$ generated by the cylinder sets over $\mathbf{z}$, i.e.\,generated by
\[
  \left\{\left\{ f \in \R^\D~:~f(z) \in B_z~\forall z \in \mathbf{z} \right\}, \{B_z\} \text{ Borel }  \right\}.
\]
In short, $\BSA(\mathbf{z})$ are the functions depending solely on a field at points $\mathbf{z}.$

Our first approximation in this vein is the following.
\begin{proposition}
  \label{prop:clt1}
  For any $K > 0,$
  there are constants $m > 1$ and $C>0$ so that the following hold.
  For any positive integer $d \leq n - m\log n$ 
  and any $F \in \BSA(\left\{ \zeta_1, \zeta_2, \dots, \zeta_d \right\})$
  with $\|F\|_\infty \leq 1$
  \begin{align*}
    \biggl|
      \frac{\Exp \left[ F(\CUEF + \WN) e^{2\CUEF(\zeta_n)} \right]}
      {\Exp \left[ e^{2\CUEF(\zeta_n)} \right]}
      - \frac{\Exp \left[ F(\GF+\WN) e^{2\GF(\zeta_n)}\right]}
      {
          \Exp \left[ e^{2\GF(\zeta_n)}\right]
      } 
      \biggr| < C(\log N)^{-K}.
  \end{align*}
\end{proposition}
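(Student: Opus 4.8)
The plan is to compare the two biased expectations by interpolating between $\CUEF$ and $\GF$, controlling the error via the exact exponential-moment identities available for the CUE field (tracing back to \cite{Baxter} and used in \cite{Johansson}). First I would note that the relevant test functionals depend on the field only at the finitely many points $\zeta_1,\dots,\zeta_d$ (where $F$ lives) together with $\zeta_n$ (where the exponential tilt lives), and on an independent white noise $\WN$ at those same points; so everything reduces to a statement about the joint law of the finite-dimensional Gaussian-like vector $(\CUEF(\zeta_1),\dots,\CUEF(\zeta_d),\CUEF(\zeta_n))$ versus its Gaussian surrogate $(\GF(\zeta_1),\dots,\GF(\zeta_d),\GF(\zeta_n))$. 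Because $d\le n-m\log n$, the points $\zeta_1,\dots,\zeta_d$ are at hyperbolic distance at least $m\log n$ from $\zeta_n$, which will give us a scale separation: the tilt at $\zeta_n$ only weakly influences the coordinates in the support of $F$, and this is exactly what makes a large power of $\log N$ affordable in the error.

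The key technical input is a quantitative multivariate CLT for $\CUEF$ at the points $(\zeta_1,\dots,\zeta_d,\zeta_n)$, \emph{with an exponential tilt} by $e^{2\CUEF(\zeta_n)}$ present. The strategy for this is: (i) compute the tilted characteristic function (or Laplace transform) $\Exp[e^{\sum_i \lambda_i\CUEF(\zeta_i)}e^{2\CUEF(\zeta_n)}]/\Exp[e^{2\CUEF(\zeta_n)}]$ exactly using the Baxter/Borodin--Okounkov--Johansson-type identity, which expresses such moments as Toeplitz-plus-correction determinants; (ii) do the same for $\GF$, where the answer is literally a Gaussian of the appropriate covariance given by \eqref{eq:gfcov}; (iii) estimate the difference of these two explicit quantities, using that the Strong Szeg\H{o} limit theorem gives convergence with an explicit (in fact exponentially small, or at least $N^{-c}$) rate. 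The mollification by the white noise $\WN$ is what lets us pass from closeness of characteristic functions to closeness of the (smoothed) distribution functions: convolving with $\WN$ smooths $F$ on scale $\Theta(1)$ in each coordinate, so a smoothing/Fourier-inversion argument (as in standard Berry--Esseen-type arguments) converts an $O(N^{-K'})$ bound on the difference of tilted characteristic functions over a suitable frequency range into the desired $O((\log N)^{-K})$ bound on $|\Exp[F(\CUEF+\WN)\cdots] - \Exp[F(\GF+\WN)\cdots]|$. One must also control the contribution of high frequencies, which is where the sub-Gaussian tail bound from Proposition~\ref{prop:domination} (and the corresponding tail bound for $\GF$) enters, together with the fact that $\|F\|_\infty\le 1$ so the functional is bounded.

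Concretely, the steps in order: (1) reduce to the finite-dimensional vector indexed by $\mathbf{z}=\{\zeta_1,\dots,\zeta_d,\zeta_n\}$; (2) normalize out the tilt, i.e.\ write the tilted expectation of $F(\CUEF+\WN)$ in terms of the law of $\CUEF$ biased by $e^{2\CUEF(\zeta_n)}$, and observe via a Cameron--Martin-type computation (the analogue of Lemma~\ref{lem:means}) that on the Gaussian side this bias merely shifts the mean by a deterministic profile $\mu$ while leaving the covariance untouched; (3) prove the exact exponential-moment identity for $\CUEF$ and extract the quantitative comparison of tilted Laplace/characteristic functions, with error $N^{-c}$ for an absolute $c>0$ (more than enough for any fixed $K$); (4) run the smoothing argument using the independent white noise $\WN$ to upgrade to the claimed bound on expectations of bounded functionals, handling the high-frequency tail via the sub-Gaussian bounds on both fields. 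The main obstacle I expect is step (3): getting a genuinely quantitative rate in the Szeg\H{o}/Baxter machinery that is uniform over the relevant range of $\lambda_i$ and over the point configurations $\zeta_1,\dots,\zeta_d,\zeta_n$ as $N\to\infty$ — in particular making sure the presence of the $e^{2\CUEF(\zeta_n)}$ tilt does not blow up the error, which is precisely where the hyperbolic separation $d\le n-m\log n$ and a careful choice of the constant $m=m(K)$ must be used. The smoothing step, by contrast, is routine once the characteristic-function bound is in hand.
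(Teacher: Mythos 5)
Your high-level architecture --- Baxter/Johansson-type exact identities for the tilted exponential moments, a Cameron--Martin shift on the Gaussian side, and Fourier inversion mollified by $\WN$ --- matches the paper's (Proposition~\ref{prop:clt1} is deduced as a special case of Proposition~\ref{prop:clt}, proved via Sections~\ref{sec:Baxter} and~\ref{sec:tv}). However, two of your steps fail as stated. First, step (3): the tilted characteristic functions of $\CUEF$ and $\GF$ are \emph{not} within $N^{-c}$ of each other. Since $1-|\zeta_n|\asymp N^{-1}$, the exact Toeplitz identity (Proposition~\ref{prop:exact}) produces, beyond the Gaussian answer, correction terms carrying a factor $|\zeta_n|^{2N}=\Theta(1)$; see Example~\ref{ex:2p}, where the non-Gaussian term is explicitly of the same order as the Gaussian one. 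The Strong Szeg\H{o} theorem gives no usable rate here because the symbol degenerates with $N$. The separation $d\le n-m\log n$ does not make this correction small at the level of characteristic functions; it only controls the \emph{frequency dependence} of the correction (through ratios such as $e^{-iL(\zeta_n^{-1})}/e^{-iL(\zeta_n)}$), which is a weaker and different statement.

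Second, step (4): the standard Berry--Esseen smoothing argument does not close, because the marginal has dimension $d=\Theta(\log N)$. Converting a pointwise bound $\sup_\xi|\varphi_U(\xi)-\varphi_G(\xi)|\le\varepsilon$ into an $L^1$ bound on the smoothed densities costs a volume factor $e^{\Theta(\log N\log\log N)}$ from the relevant regions of $\xi$- and $x$-space, which destroys any error of size $N^{-c}$, let alone $(\log N)^{-K}$; the paper flags exactly this obstruction at the start of Section~\ref{sec:tv}. What actually works is to invert the characteristic function \emph{exactly}: after a polynomial approximation of the symbol with error $e^{-\Omega((\log N)^m)}$, the difference of the two tilted characteristic functions is a finite sum of explicitly shifted Gaussian characteristic functions, whose inverse transforms are computed in closed form as $\varrho_G(x)e^{H(S_1,S_2,x)}$ (Lemma~\ref{lem:density_expansion}); only then does one estimate $\int|e^{H}-1|\,\varrho_G\,dx$ (Lemma~\ref{lem:H}). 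It is in that last $L^1$ estimate --- via $\|v\|_1\ll(\log N)^{-m}$ and hence $|H|$ being a negative power of $\log N$ on the bulk of $\varrho_G$ once $m=m(K)$ is large --- that the hypothesis $d\le n-m\log n$ does its work. You correctly sensed that the separation is the crux, but located its role in the wrong step.
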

\noindent Thus, we show a smoothed total variation approximation of 
$\CUEF(\zeta_i)_1^d$ and $\GF(\zeta_i)_1^d$ under the desired biasing term.  
Proposition \ref{prop:clt1} is a special case of Proposition  \ref{prop:clt} below.

 The estimate in Proposition \ref{prop:clt1}
is sufficient for showing the upper bound in \eqref{eq:Evrough}.  It is however insufficient for attaining the lower bound, on account of only giving information on the initial $n - m\log n$ steps.  Indeed, we would effectively like to take $m$ arbitrarily small to attain the desired bound, but the method we use for normal approximation encounters a natural technical barrier when $m$ is below $1.$

On the other hand, we are able to compute with high precision similar statistics of $\CUEF$ under substantially more elaborate biasing terms, provided they have a specific algebraic form.  To this end, let $\mathbf{y},\mathbf{z} \subset \mathbb{D}$ be finite subsets, and define for a field $\F,$
\begin{equation}
    \label{eq:bias}
  \Bias{\F} = \sum_{z \in \mathbf{z}} 2 \F(z) - \sum_{y \in \mathbf{y}} 2 \F(y).
\end{equation}
We will prove a Gaussian approximation for $\CUEF$ biased by the exponential of $\Bias{\CUEF}.$  Further, we will show that
the approximation holds simultaneously on finite collections of rays
$\left\{ \zeta_i \omega_j \right\}_{\substack{i=1\dots n \\ j=1\dots 
  {
  s
}
}}.$

\begin{proposition}
  \label{prop:clt}
  For any $K >0$ and any $s \in \mathbb{N},$
  there are constants $m > 
  {
    100
  }
  $ and $C>0$ so that the following hold.
  Let $\mathbf{y},\mathbf{z} \subset \mathbb{D}\cdot (1-N^{-1})$ be finite subsets with $|\mathbf{y}| \leq |\mathbf{z}|
  {
    \leq k
  }
  .$  Suppose that there is a Euclidean ball of radius $N^{-1}(\log N)^m$ that contains all points of $\mathbf{z}$ with hyperbolic distance from $0$ greater than $n-m\log n.$  Further suppose the pairwise hyperbolic separation between points of $\mathbf{z}$ is at least $N^{-1}.$  
  Define
  \[
      \Delta
    = \max_{z \in \mathbf{z}} e^{-N\exp(-\dH(0,z))}\prod_{
      \substack{w \in \mathbf{z} \\ 
    w \neq z}} 
    \coth(\dH(w,z)/2).
  \]
  For any positive integer $d \leq n - m\log n,$ any
  $\left\{ \omega_j \right\}_{1}^s \subset \mathbb{T},$
  and any 
  \[
    F \in \BSA(\left\{ \zeta_i\omega_j~:~ 1 \leq i \leq d, 1 \leq j \leq r \right\})
  \]
  with $\|F\|_\infty \leq 1,$
  it follows that
    {
 \[  \left|   \frac{\Exp \left[F(\CUEF + \WN) e^{\Bias{\CUEF}}\right]}
    {\Exp e^{\Bias{\CUEF}}}
    - \frac{\Exp \left[F(\GF + \WN) e^{\Bias{\GF}}\right]}
    {\Exp e^{\Bias{\GF}}}\right|\leq C
     (1+\Delta)^{2k}
    (\log N)^{-K}
    \frac
    {\Exp e^{\Bias{\GF}}}
    {\Exp e^{\Bias{\CUEF}}}.
  \]
}
\end{proposition}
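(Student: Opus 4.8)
The plan is to prove Proposition~\ref{prop:clt} by reducing the biased Gaussian approximation to a Fourier/exponential-moment computation, exploiting the Baxter--Johansson identities to evaluate exponential moments of linear combinations of $\CUEF(z_i)$ exactly, and then comparing these to the corresponding Gaussian moments term by term. The key observation is that the two sides of the claimed inequality are both expectations of a bounded functional $F$ against a \emph{tilted} measure, and a tilt by $e^{\Bias{\F}}$ is, for Gaussian $\F$, just a deterministic shift of the mean. So the first step is to invoke Lemma~\ref{lem:means}-type reasoning to write $\Exp[F(\GF+\WN)e^{\Bias{\GF}}]/\Exp e^{\Bias{\GF}} = \Exp F(\GF + \mu + \WN)$ for an explicit deterministic $\mu: \D \to \R$ determined by $\mathbf{y},\mathbf{z}$ via the covariance \eqref{eq:gfcov}. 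The white noise $\WN$ is there precisely so that after tilting, the smoothed functional $F$ is testing a mollified field, which makes total-variation comparisons robust against the small deterministic perturbations we will incur.

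The heart of the argument is the comparison of characteristic functions. Since $F \in \BSA(\{\zeta_i\omega_j\})$ depends on finitely many coordinates, it suffices by a Fourier-inversion / approximation argument (and the mollification by $\WN$) to control, for every frequency vector $\xi$ supported on the finite index set, the difference between $\Exp[e^{i\langle \xi, (\CUEF+\WN)\rangle} e^{\Bias{\CUEF}}]/\Exp e^{\Bias{\CUEF}}$ and its Gaussian analogue. Here I would use the exact identity traceable to \cite{Baxter} and used in \cite{Johansson}: exponential moments $\Exp e^{\sum_i \lambda_i \CUEF(z_i)}$ have a closed Toeplitz-determinant form whose asymptotics are governed by the Strong Szeg\H{o} limit theorem, with the leading term being exactly the Gaussian moment $\Exp e^{\sum_i \lambda_i \GF(z_i)}$ and the correction being a ratio that can be bounded by $(1+\Delta)^{O(k)}$, where $\Delta$ is precisely the quantity defined in the statement --- the factors $\coth(\dH(w,z)/2)$ and $e^{-N\exp(-\dH(0,z))}$ are exactly what appear when one estimates the sub-leading terms in the Szeg\H{o} expansion for points at distance $\Theta(N^{-1})$ from the boundary. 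The ball condition and the pairwise-separation condition on $\mathbf{z}$ are needed to keep $\Delta$ under control and to ensure the determinant does not degenerate; the restriction $d \leq n - m\log n$ with $m > 100$ is what guarantees the test coordinates $\zeta_i\omega_j$ are at hyperbolic distance $\gtrsim m\log n$ from the biasing points, so the relevant matrix entries decay polynomially and summing over the finitely many frequencies costs only an extra $(\log N)^{O(1)}$, absorbed into $K$.

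Concretely, the steps in order: (i) rewrite both normalized biased expectations using the shift structure for $\GF$ and an analogous (approximate) tilt formula for $\CUEF$, controlling the partition-function ratio $\Exp e^{\Bias{\GF}}/\Exp e^{\Bias{\CUEF}}$ --- which by Proposition~\ref{prop:domination} is $\geq 1$ and appears on the right-hand side of the target bound; (ii) reduce to comparing Fourier transforms of the finite-dimensional marginals, using that convolution with $\WN$ makes the densities smooth with controlled derivatives, so a bound on characteristic functions at frequencies $|\xi| \lesssim (\log N)^{O(1)}$ plus a tail estimate suffices; (iii) apply the Baxter--Johansson Toeplitz identity to express $\Exp[e^{\langle\text{complex combo}\rangle}e^{\Bias{\CUEF}}]$ exactly, expand via Strong Szeg\H{o}, and identify the leading term with the Gaussian value; (iv) estimate the error terms in the Szeg\H{o} expansion using Lemma~\ref{lem:branch} and \eqref{eq:correlationbound} to convert hyperbolic distances into the $\coth$ and exponential factors defining $\Delta$, yielding the $(1+\Delta)^{2k}(\log N)^{-K}$ bound; (v) sum over frequencies and invert. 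The main obstacle I anticipate is step (iii)--(iv): getting the \emph{uniform} quantitative control of the sub-leading Szeg\H{o} terms at the critical boundary scale $1-|z| = \Theta(N^{-1})$ with the biasing points possibly clustered --- this is where the precise form of $\Delta$ and the hypothesis that all far points lie in a single ball of radius $N^{-1}(\log N)^m$ must be used delicately, since naive bounds would blow up. A secondary difficulty is handling the mollification bookkeeping so that the $o(N^{-1})$-type errors needed downstream (for the lower bound via second moment) are genuinely achieved as $(\log N)^{-K}$ for every $K$, rather than just some fixed power.
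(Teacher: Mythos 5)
Your route is the paper's route: express the biased characteristic function $\Exp[e^{i\sum\xi_{h,j}\CUEF(\zeta_h\omega_j)+\Bias{\CUEF}}]$ as a Toeplitz determinant, approximate the symbol by polynomials of degree $\approx N/2$, apply the exact finite-$N$ Baxter-type identity (Section~\ref{sec:Baxter}), and recover the density by Fourier inversion, with the white noise $\WN$ supplying the mollification. Your reading of the hypotheses is also accurate: $\Delta$ bounds the subleading terms of the determinant expansion, and the constraints $d\le n-m\log n$, the pairwise separation, and the single-ball condition are exactly what keep those terms under control. One small correction: the comparison is not a ``Strong Szeg\H{o} expansion'' in the asymptotic sense --- the identity of Proposition~\ref{prop:Baxter}/\ref{prop:exact} is exact at finite $N$, with the correction a \emph{finite} sum over pairs of subsets $S_1,S_2\subset\mathbf{z}$.

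There is, however, a genuine gap in steps (ii) and (v). You propose to bound $|\varphi_{\CUEF}(\xi)-\varphi_{\GF}(\xi)|$ pointwise by $(\log N)^{-K}$ over $\|\xi\|\lesssim(\log N)^{O(1)}$ and then ``sum over frequencies and invert.'' The marginal lives in dimension $d\cdot s=\Theta(\log N)$, so this yields only an $\operatorname{L}^\infty$ bound on the density difference; converting that to the $\operatorname{L}^1$ (total-variation) bound needed for arbitrary bounded $F$ costs a factor of the volume of the region where the densities are non-negligible, which is $e^{\Theta(\log N\log\log N)}$ and destroys the estimate. The paper avoids this by \emph{not} comparing characteristic functions pointwise: it carries the exact structured form of $\varphi$ (a finite sum of Gaussian characteristic functions multiplied by exponentials of linear functions of $\xi$) through the inversion in closed form (Lemma~\ref{lem:density_expansion}), obtaining $\varrho_U(x)=\varrho_G(x)\sum_{S_1,S_2}(\cdots)e^{H(S_1,S_2,x)}$ plus a genuinely exponentially small error, and only then compares $|e^{H}-1|$ to $0$ in $\operatorname{L}^1(\varrho_G)$ (Lemma~\ref{lem:H}). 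That last comparison is itself nontrivial when $S_1\neq S_2$: one must show the potentially large factor $c_{\mathbf z}(S_1,S_2)$ is beaten by $\Exp\exp(2\sum_{S_1\cap S_2^c}\GF(z)-2\sum_{S_2\cap S_1^c}\GF(z))$, which requires the matching/tree variance lower bound of Lemma~\ref{lem:varbound}. Neither the exact-inversion device nor this combinatorial variance estimate appears in your plan, and without them the argument as written does not close.
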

In this formulation, Proposition~\ref{prop:clt1} is a special case of 
\corO{Proposition 
  \ref{prop:clt}, as can be seen by taking $\mathbf{y}=\emptyset$,
  $\mathbf{z}=\{\zeta_n\}$; Note that from
  \eqref{eq-zetadef} and
  \eqref{eq-defn} 
   it follows that $|\zeta_n|\leq (1-N^{-1})$, and the other
  assumptions in Proposition \ref{prop:clt} are immediate in the setup of
  Proposition \ref{prop:clt1}.
}
   This Gaussian approximation will be sufficient to produce estimates such as \eqref{eq:Evrough} (see Section~\ref{sec:fmcintro}), after slightly modifying the event in question.  In our application, we will only use this approximation with $
  s
=1,2.$ 

The effect of biasing a jointly Gaussian vector by a linear functional of that vector is to change the mean of the Gaussian vector.  We will use the following identity repeatedly: 
\begin{lemma}
  Let $\mathbf{z},\mathbf{y} \subseteq
  \mathbf{w}$ be finite subsets of $\D,$ and let $F$ be in $\BSA(\mathbf{w})$ and $\Bias$ be as in \eqref{eq:bias}.  Let 
  \[
    \mu(\zeta) = 
    \sum_{z \in \mathbf{z}} 2\Exp \GF(z)\GF(\zeta)
    -\sum_{y \in \mathbf{y}} 2\Exp \GF(y)\GF(\zeta)\,,
    \quad
      \zeta\in \D.
  \]   
  Let $\lambda \in \R$ be arbitrary, then
  \[
    \frac{\Exp\left[ F(\GF)e^{\lambda \Bias{\GF}} \right]}{\Exp\left[e^{\lambda \Bias{\GF}} \right]}
    =
    \Exp\left[F(\GF+\lambda \mu)\right].
  \]
  \label{lem:means}
\end{lemma}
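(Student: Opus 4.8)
The plan is to reduce the statement to an elementary Gaussian tilting identity in finitely many dimensions and then check it by matching characteristic functions. Since $F \in \BSA(\mathbf{w})$ with $\mathbf{w}\subset\D$ finite, $F(\GF)$ is a function of the Gaussian vector $V = (\GF(w))_{w\in\mathbf w}$ alone; write $\Sigma = (\Exp\GF(w)\GF(w'))_{w,w'\in\mathbf w}$ for its covariance matrix. Because $\mathbf z,\mathbf y\subseteq\mathbf w$, we have $\Bias{\GF} = \langle a, V\rangle$ for the vector $a\in\R^{\mathbf w}$ with $a_w = 2(\one[w\in\mathbf z]-\one[w\in\mathbf y])$ (more generally $a_w = 2$ times the signed multiplicity of $w$ if $\mathbf z,\mathbf y$ are treated as multisets). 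With this notation $\mu(w) = (\Sigma a)_w$ for every $w\in\mathbf w$, so it suffices to show that for $V\sim\mathcal N(0,\Sigma)$ and any bounded measurable $F$ on $\R^{\mathbf w}$,
\[
\frac{\Exp[F(V)e^{\lambda\langle a,V\rangle}]}{\Exp[e^{\lambda\langle a,V\rangle}]} = \Exp[F(V + \lambda\Sigma a)].
\]

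To prove this, I would observe that both sides are integrals of $F$ against probability measures on $\R^{\mathbf w}$ — on the left, the law of $V$ tilted by $e^{\lambda\langle a,V\rangle}/\Exp e^{\lambda\langle a,V\rangle}$ (finite since $\langle a,V\rangle$ is Gaussian), and on the right the law of $V+\lambda\Sigma a$ — so it is enough to check these two measures have the same characteristic function. Using the Gaussian moment generating function $\Exp e^{\langle b,V\rangle} = e^{\frac12 b^\top\Sigma b}$, valid for all complex $b$, the characteristic function of the tilted law at $\xi\in\R^{\mathbf w}$ is
\[
\frac{\Exp e^{\langle i\xi+\lambda a, V\rangle}}{\Exp e^{\langle \lambda a,V\rangle}} = \exp\!\Big(\tfrac12(i\xi+\lambda a)^\top\Sigma(i\xi+\lambda a) - \tfrac12\lambda^2 a^\top\Sigma a\Big) = e^{-\frac12\xi^\top\Sigma\xi}\,e^{i\lambda\xi^\top\Sigma a},
\]
which is precisely $e^{i\lambda\langle\xi,\Sigma a\rangle}$ times the characteristic function of $V$, i.e.\ the characteristic function of $V+\lambda\Sigma a$. (When $\Sigma$ is nondegenerate one may instead complete the square in the density of $V$ and substitute $v\mapsto v-\lambda\Sigma a$; the characteristic-function route has the advantage of covering a possibly singular $\Sigma$ with no extra work.)

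Finally I would reassemble: applying the displayed finite-dimensional identity to this $F$ and this $a$, and using that $\GF+\lambda\mu$ agrees with $V+\lambda\Sigma a$ on $\mathbf w$ — which is all that $F$ depends on — yields exactly
\[
\frac{\Exp[F(\GF)e^{\lambda\Bias{\GF}}]}{\Exp[e^{\lambda\Bias{\GF}}]} = \Exp[F(\GF+\lambda\mu)].
\]
I do not expect any real obstacle here: this is the Cameron–Martin shift for a finite Gaussian vector. The only points needing a little care are the bookkeeping of the coefficient vector $a$ when $\mathbf z$ and $\mathbf y$ overlap (or are multisets), and the possibility that $\Sigma$ is degenerate, which is handled cleanly by the characteristic-function argument rather than by manipulating Gaussian densities.
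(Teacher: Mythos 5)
Your proposal is correct. It follows the same basic route as the paper's proof: reduce to the finite Gaussian vector $V=(\GF(w))_{w\in\mathbf{w}}$, represent $\Bias{\GF}$ as $\langle a,V\rangle$, and show that the exponential tilt shifts the mean by $\Sigma a$, which is exactly $\lambda\mu$ restricted to $\mathbf{w}$. The one genuine difference is the verification step: the paper completes the square in the Gaussian density, writing $\Sigma^{-1}$ and $\det\Sigma$ explicitly, whereas you match characteristic functions using the complex-argument moment generating function. Your route buys a little extra robustness, since it applies verbatim when $\Sigma$ is singular --- which can actually occur here, e.g.\ if $0\in\mathbf{w}$, because $\GF(0)=0$ almost surely --- while the paper's density manipulation would strictly speaking need a limiting or quotient-space argument in that case. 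Your remarks on the bookkeeping of the coefficient vector $a$ when $\mathbf{z}$ and $\mathbf{y}$ overlap are also apt; the paper glosses over this by simply asserting the existence of a representing vector $v$.
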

\begin{proof}
    The vector $\left( \GF(w) \right)_{w \in \mathbf{w}}$ is jointly Gaussian.  Enumerating the points of $\mathbf{w},$ we can write this vector as $\left( G_i \right)_1^k$ with $k=|\mathbf{w}|.$  Let $\Sigma$ be the covariance matrix of this vector.  We can find a vector $v \in \R^k$ representing $\corO{\lambda} \Bias$ in that
    \[
        \Exp\left[ F(\GF)e^{\lambda \Bias{\GF}} \right]
        =
        \int_{\R^k}
        \frac{F(x)e^{v^tx}}{\sqrt{(2\pi)^k |\det \Sigma|}}
        e^{\tfrac{-x^t \Sigma^{-1} x}{2}}\,dx.
    \]
    We can now set $u = \Sigma v$ and complete the square, to get
    \[
        \frac{
            \Exp\left[ F(\GF)e^{\lambda \Bias{\GF}} \right]
        }
        {
            e^{\tfrac{u^t \Sigma^{-1}u}{2}}
        }
        =
        \int_{\R^k}
        \frac{F(x)}{\sqrt{(2\pi)^k |\det \Sigma|}}
        e^{\tfrac{-(x-u)^t \Sigma^{-1} (x-u)}{2}}\,dx.
    \]
    The constant $e^{\tfrac{u^t \Sigma^{-1}u}{2}} = \Exp\left[ e^{\lambda \Bias{\GF}} \right],$ as can be verified by setting $F \equiv 1$ and changing variables in the integral.  Moreover, the right hand side is exactly the claimed expression in the lemma.

\end{proof}
Hence, biasing by twice the endpoint of the ray effectively removes the drift from the events we wish to consider:
\begin{corollary}
  \label{cor:means}
  After biasing the law of $\GF$ by $\frac{e^{2\GF(\zeta_d)}}{\Exp e^{2\GF(\zeta_d)}},$ we have that $(\GF(\zeta_i))_1^d$ have means $i + O(1).$
\end{corollary}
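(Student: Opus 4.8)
The plan is to read the corollary off directly from Lemma~\ref{lem:means}. Apply that lemma with $\mathbf{w} = \{\zeta_1,\dots,\zeta_d\}$, $\mathbf{z} = \{\zeta_d\}$, $\mathbf{y} = \emptyset$ and $\lambda = 1$, so that $\Bias{\F} = 2\F(\zeta_d)$: biasing the law of $\GF$ by $e^{2\GF(\zeta_d)}/\Exp e^{2\GF(\zeta_d)}$ yields the field $\GF + \mu$, where
\[
  \mu(\zeta) = 2\,\Exp\bigl[\GF(\zeta_d)\GF(\zeta)\bigr], \qquad \zeta \in \D.
\]
Since $\Exp\GF(\zeta_i) = 0$ for every $i$, the mean of the biased field at $\zeta_i$ is exactly $\mu(\zeta_i)$, so it remains only to show $\mu(\zeta_i) = i + O(1)$, uniformly over $1 \le i \le d$.

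For this I would invoke the covariance formula \eqref{eq:covcosh}. The points $\zeta_i$ lie on the positive real axis with $\dH(0,\zeta_i) = i$ and, for $i \le d$, $\dH(\zeta_i,\zeta_d) = d - i$; hence \eqref{eq:covcosh} gives
\[
  \Exp\bigl[\GF(\zeta_d)\GF(\zeta_i)\bigr]
  = \tfrac12 \log \frac{\cosh(d/2)\,\cosh(i/2)}{\cosh((d-i)/2)}.
\]
Now apply the elementary expansion $\log\cosh(x/2) = x/2 - \log 2 + O(e^{-x})$, valid uniformly for $x \ge 0$ (the same estimate used in the proof of Lemma~\ref{lem:branch}), with $x = d$, $x = i$, and $x = d - i$ in turn. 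The linear terms combine to $\tfrac12\bigl(\tfrac d2 + \tfrac i2 - \tfrac{d-i}{2}\bigr) = \tfrac i2$, the three constants leave $-\tfrac12\log 2$, and because $d \ge i \ge 1$ and $d - i \ge 0$ each error term is bounded by an absolute constant. Thus $\Exp[\GF(\zeta_d)\GF(\zeta_i)] = \tfrac i2 + O(1)$ uniformly in $1 \le i \le d$, whence $\mu(\zeta_i) = i + O(1)$, as claimed.

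There is no substantial obstacle here; the only points meriting attention are that the remainder in the $\log\cosh$ expansion be controlled uniformly over the whole range $1 \le i \le d$ — which holds precisely because $d$, $i$, and $d - i$ are all nonnegative, so the relevant exponentials are bounded — and that one should appeal to \eqref{eq:covcosh} (or, equivalently, to \eqref{eq:varcosh} via the polarization identity $2\,\Exp XY = \Var X + \Var Y - \Var(X-Y)$ together with $\GF(0)=0$) rather than to Lemma~\ref{lem:branch} directly, since the latter is phrased for rays separated by an angle $\theta$ and degenerates as $\theta \to 0$.
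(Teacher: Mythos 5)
Your proposal is correct and is exactly the argument the paper intends: the corollary is stated as an immediate consequence of Lemma~\ref{lem:means} (with $\mathbf{z}=\{\zeta_d\}$, $\mathbf{y}=\emptyset$), combined with the covariance asymptotics $\Exp[\GF(\zeta_d)\GF(\zeta_i)]=\tfrac i2+O(1)$ coming from \eqref{eq:covcosh} and the expansion $\log\cosh(x/2)=x/2-\log 2+O(e^{-x})$. Your uniformity check over $1\le i\le d$ and your computation of the hyperbolic distances are both right, so there is nothing to add.
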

\subsection{Field moment calculus}
\label{sec:fmcintro}

In light of Proposition~\ref{prop:clt1}, we slightly modify the event $\EvI{}$ to reflect that the Gaussian approximation only holds for the initial $n - m\log n$ steps of $\left\{\CUEF(\zeta_j)\right\}_1^n.$  Closer to the events we finally consider, we redefine
\begin{equation*}
  \EvI{} = \left\{ 
      \begin{aligned}
          \F(\zeta_i) < i + C\log n,~\forall~1 \leq i \leq n - m\log n, \text{ and } \\
      n+ t - (\log n)^{1-\delta} < \F(\zeta_n) < n + t 
      +(\log n)^{1-\delta} 
  \end{aligned}
  \right\}.
\end{equation*}

Using Proposition~\ref{prop:clt}, we can tilt the measure of $\CUEF$ so that $\CUEF(\zeta_i)$ has mean approximately $i.$  Let $\FMC$ be the expectation under this tilted measure, which has Radon-Nikodym derivative proportional to $e^{2\CUEF(\zeta_n)}.$  The portion of $\EvI{}$ related to the first $n - m\log n$ steps of $\left\{ \CUEF(\zeta_i) \right\}_1^n$ can be estimated using the Gaussian approximation.  However, it still remains to say that with high probability, the final step $\F(\zeta_n)$ indeed lands in the window specified.  

We expect the increment of the final step 
\(
W = \CUEF(\zeta_n) - \CUEF(\zeta_{n-m\log n})
\)
to be roughly Gaussian with variance order $\log n.$
After conditioning the value of the random variable
$\CUEF(\zeta_{n-m\log n})$ to be around $n-m\log n -\tfrac 34 \log n,$ asking the final step $\CUEF(\zeta_n)$ to be roughly $n-\tfrac34\log n$ is typical under $\FMC.$  Moreover, to quantify this typicality, it would be enough to compute a second moment of $W$ conditional on 
\[
    \CUEF \in \mathcal{B} =\left\{ \F: \F(\zeta_i) < i + C\log n, \forall~1 \leq i \leq n-m\log n\right\}.
\]

Proposition~\ref{prop:clt} nearly provides this machinery.  Instead of allowing the computation of a field moment, that of $W,$ it provides a way to compute exponential moments of the final steps of $\left\{\CUEF(\zeta_i) \right\}$ conditional on $\CUEF \in \mathcal{B}.$
%

For the purpose of estimating something like the second moment of $W,$ a direct comparison to an exponential moment turns out to be useless.  However, we bypass this problem by exploiting the smoothness of the field: namely for $z,y$ with $\dH(z,y) \leq 1,$ $\Var\left( \CUEF(z)-\CUEF(y) \right) \asymp \dH(z,y)^2.$  Hence, for an increment $X = \CUEF(z)-\CUEF(y),$
\[
    \FMC( X ) \leq \FMC(2^{-1}(e^{2X} - 1))
    \approx 2^{-1}\left( e^{2\mu + 2\sigma^2} - 1 \right)
    \approx \mu(1+o(1)),
\]
if we use that $\mu \asymp \dH(z,y)$ and $\sigma^2 \asymp \dH(z,y)^2$ for $z,y \in \D$ with $\dH(z,y) \leq 1.$

    Thus by decomposing $W$ into a sum of many microscopic increments, and estimating using exponential moments, we can estimate the first moment of $W,$ conditioned on $\CUEF \in \mathcal{B}.$  By altering the bias in $\FMC,$ it is also possible to estimate a second moment of $W,$ or in principle higher moments.  The details are provided in Section \ref{sec:fmc} below.

\subsection{Organization {and notation}}
\label{sec:org}

We arrange the paper as follows.  In Section~\ref{sec:main} we give the proof of the main theorem, using the normal approximations and assuming certain calculations about the Gaussian field, such as the barrier estimate.  In Section~\ref{sec:Baxter}, we prove a generalization of Baxter's Toeplitz determinant identities, which we use to estimate the characteristic functions of $\CUEF$ under exponential biases.  In Section~\ref{sec:tv}, we use these identities to prove Proposition~\ref{prop:clt}. {In the appendix, we collect some barrier estimates that are used throughout the paper.}

We use the following notation.  The symbol $a(\dots) \ll b(\dots)$ should be read as meaning ``there is an absolute constant $C>0$ so that $a(\dots) \leq Cb(\dots).$''  If used as a hypothesis, it should be read as ``for any fixed absolute constant $C>0$ so that $a(\dots) \leq Cb(\dots), \dots$''  We use the symbol $\asymp$ to mean $\ll$ and $\gg.$
Besides this, we use the usual $o,O,\omega,\Omega,\Theta$ notation as well.

For $z \in \C \setminus (-\infty,0],$ we always take $\arg z$ to be the principal branch of the argument.  On the cut $(-\infty, 0),$ we let $\arg z = \pi.$
\section{Estimation of the maximum of the CUE field}
\label{sec:main}
\subsection{Upper bound preliminaries}
\label{sec:1p1}
Let $m$ be as in Proposition~\ref{prop:clt}, with 
  $s=2$
and $K=100.$  
Let $n_0 = \lfloor n - m\log n \rfloor.$
Because the normal approximation Proposition~\ref{prop:clt} only holds for $\CUEF(z)$ for $\dH(z,0) \leq n_0$ we redesign the event in \eqref{eq:Ev0} appropriately.  In particular we will relax the requirement that all $\CUEF(\zeta_i ) < i + C_0 \log n$ for $i > n_0.$  

Let $\BF : \N \to \R;$ $\nu(i)$ will represent the barrier below which we constrain the field along rays.  Roughly, it will be $i + \Theta(\log n).$  In the mesoscopic regime, which we consider as $\CUEF(\zeta_i)$ for $i \leq n_0$ and where the Gaussian approximation holds, the events we consider are essentially the same as those one would consider in the setup of branching random walk.  For $\omega \in \T$  and $t_0 \in \R,$ define the sets of fields $\BS$ and $\ES$ by
\begin{align*}
  \BS[\BF][\omega] &= \left\{ \F(\omega\zeta_i) <\BF(i),~\forall~1\leq i \leq n_0
  \right\} \text{ and } \\
  \ES{t_0}[\omega] &= \left\{ n_0 + t_0 - 1 < \F(\omega\zeta_{n_0}) < n_0 + t_0 \right\}.
\end{align*}
These represent mesoscopic barrier and endpoint events, respectively.
We will always choose $t_0=t_0(n)$ to be at most poly-logarithmic in $n$.

For the upper bound on the maximum of $\CUEF,$ on the microscopic scale, which we consider as $\CUEF(\zeta_i)$ for $n_0 < i \leq n$, we will not put a barrier constraint.  In the case of branching random walk, such a setup incurs a $\log\log n$ error term in the maximum, and this $\log\log n$ term will appear here too.  

We must also control \corO{the}
behavior of the field on even smaller scales to ensure that they do not contribute anything to the maximum.  
Hence, let $\rho : \D \to \R$ be a real function, which we use for the purposes of estimating submicroscopic fluctuations --- variations between points in the field which are at hyperbolic distance $o(1)$. 
{
  (We define $\rho$ explicitly in \eqref{eq:defrho} and \eqref{eq-rho-defb}
  below.)
}
For any $t \in \R$ define the subset of fields $\EL$ by
\begin{align}
  \EL{\rho,t}[\omega] &= \left\{ \F(\omega\zeta_n) > n + t + \rho(\omega\zeta_n) \right\}. \nonumber \\
  \intertext{In terms of these events, with $n_1= 2^{\lceil \log n \rceil}$ and $\N_0 = \N \cup \left\{ 0 \right\},$ define}
  \EvU{\rho,t}[\omega] &=  \bigcup_{\substack{ h \in \mathbb{N}_0 \\ h < n_1} } \EL[\omega e^{2\pi i h(Nn_1)^{-1}}].
  \label{eq:EvU}
\end{align}
When $\omega=1,$ we drop it from the notation, so that $\EvU{\BF} = \EvU{\BF}[1]$ and likewise for 
\(
\BS,
\ES,
\) {and}
\(
\EL.
\)

The upper bound will then proceed by using a first moment estimate to show that $\CUEF \not\in \EvU[\omega]$ for $\omega \in \T$ ranging over a grid of cardinality $N$, 
  and $t=-\tfrac34 \log n+o(\log n)$.  We estimate $\Pr(\CUEF \in \EvU)$ by a combination of Proposition~\ref{prop:clt} to handle $\BS \cap \ES$ and carefully constructed exponential bias terms.  These exponential bias terms will alter the means of the field on the mesoscopic scale.  In one way, this is desired, as we wish to effectively remove the linear drift from the events $\BS \cap \ES.$  However, some of the biasing terms are included principally to detect microscopic variations. We must show that these have a negligible effect on the probability of $\BS \cap \ES,$ which is the content of the following lemma.
\begin{lemma}
  Let $m$ and $K$
  be as chosen at the start of Section~\ref{sec:main}.
  Let $\omega_1,\omega_2,\omega_3 \in \T$ have argument less than $2\pi N^{-1}.$
  Let $\Bias{\F}$ be given by
  \[
    \Bias{\F}
    =
     2\F(\omega_1 \zeta_{n-1})
    +2\F(\omega_2 \zeta_{n})
    -2\F(\omega_3 \zeta_{n}).
  \]
  For any $F \in \BSA(\left\{ \zeta_1, \zeta_2, \dots, \zeta_{n_0} \right\})$ with $\|F\|_\infty \leq 1,$
  {
    $$
    \left|\Exp \left[F(\CUEF + \WN) e^{\Bias{\CUEF}} \right]
    - \Exp \left[ F(\GF + \WN + \mu)\right] \Exp[ e^{\Bias{\CUEF}} ]\right|
    \ll
    (\log N)^{-K}\Exp[ e^{\Bias{\GF}}],
    $$
  }
  where $\mu(z) = 2\Exp[\GF(z)\GF(\zeta_{n-1})],$ 
  and the last estimate is uniform in the choice of such $F$.
  \label{lem:Lipschitz}
\end{lemma}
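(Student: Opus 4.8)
The plan is to derive Lemma~\ref{lem:Lipschitz} as a consequence of Proposition~\ref{prop:clt} together with the drift-identity Lemma~\ref{lem:means}. First I would apply Proposition~\ref{prop:clt} with $s=1$ (the single phase $\omega=1$, since all of $\omega_1,\omega_2,\omega_3$ have argument $O(N^{-1})$ and the points $\zeta_i$ for $i\le n_0$ are the ones $F$ depends on), with $\mathbf{z}=\{\omega_1\zeta_{n-1},\,\omega_2\zeta_n\}$ and $\mathbf{y}=\{\omega_3\zeta_n\}$, so that $\mathfrak{B}(\F)$ is exactly the stated bias, and with $d=n_0\le n-m\log n$. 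This gives
\[
  \left|
    \frac{\Exp[F(\CUEF+\WN)e^{\mathfrak{B}(\CUEF)}]}{\Exp e^{\mathfrak{B}(\CUEF)}}
    -\frac{\Exp[F(\GF+\WN)e^{\mathfrak{B}(\GF)}]}{\Exp e^{\mathfrak{B}(\GF)}}
  \right|
  \ll (1+\Delta)^{2k}(\log N)^{-K}\,\frac{\Exp e^{\mathfrak{B}(\GF)}}{\Exp e^{\mathfrak{B}(\CUEF)}},
\]
where $k=2$. The main point in invoking the proposition is to check that its hypotheses hold and that $\Delta=O(1)$ here: the three points all lie at hyperbolic distance $n-O(1)$ from $0$, well within a Euclidean ball of radius $N^{-1}(\log N)^m$ (they are within Euclidean distance $O(N^{-1})$ of $\zeta_n$), and their pairwise hyperbolic separations are $\gg N^{-1}$ by the assumption on the arguments and the fact that $\zeta_{n-1},\zeta_n$ are a unit hyperbolic step apart; consequently each $\coth(\dH(w,z)/2)$ factor is $1+O(1)$ and the factor $e^{-N\exp(-\dH(0,z))}$ is bounded, so $(1+\Delta)^{2k}\ll 1$.

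Second, I would evaluate the Gaussian side exactly using Lemma~\ref{lem:means} with $\lambda=1$: the ratio $\Exp[F(\GF+\WN)e^{\mathfrak{B}(\GF)}]/\Exp e^{\mathfrak{B}(\GF)}$ equals $\Exp[F(\GF+\WN+\tilde\mu)]$, where
\[
  \tilde\mu(\zeta)=2\Exp\GF(\omega_1\zeta_{n-1})\GF(\zeta)
    +2\Exp\GF(\omega_2\zeta_n)\GF(\zeta)
    -2\Exp\GF(\omega_3\zeta_n)\GF(\zeta).
\]
Here I must note that $\WN$ is independent of $\GF$, so the biasing only shifts the $\GF$-component; formally one applies Lemma~\ref{lem:means} conditionally on $\WN$, or extends $\mathbf{w}$ to include the relevant white-noise coordinates (which are orthogonal to the bias vector $v$). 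Then I would argue that, on the restricted domain of $F$ — namely the points $\zeta_1,\dots,\zeta_{n_0}$ — the shift $\tilde\mu(\zeta_i)$ agrees with the claimed $\mu(z)=2\Exp[\GF(z)\GF(\zeta_{n-1})]$ up to an error that is absorbed into the statement. The reason is that for $i\le n_0=\lfloor n-m\log n\rfloor$, the point $\zeta_i$ is at hyperbolic distance $\ge m\log n$ from each of $\omega_1\zeta_{n-1},\omega_2\zeta_n,\omega_3\zeta_n$; by \eqref{eq:correlationbound}, $|\Exp\GF(\zeta_i)(\GF(\omega_2\zeta_n)-\GF(\omega_3\zeta_n))|\le \tfrac12\dH(\omega_2\zeta_n,\omega_3\zeta_n)+C$, and since $\omega_2,\omega_3$ are within $O(N^{-1})$ Euclidean distance near the boundary this hyperbolic distance is $O(1)$ — actually, more care: I would instead observe that the statement as written already encodes $\mu$ as just the $\zeta_{n-1}$-contribution, which suggests the intended reading is that $F(\GF+\WN+\mu)$ is compared after the two terms at $\zeta_n$ have been shown to nearly cancel on the support of $F$. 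Concretely, by Lemma~\ref{lem:branch}-type estimates the difference $\tilde\mu(\zeta_i)-\mu(\zeta_i)=2\Exp\GF(\zeta_i)(\GF(\omega_2\zeta_n)-\GF(\omega_3\zeta_n))+2\Exp\GF(\zeta_i)(\GF(\omega_1\zeta_{n-1})-\GF(\zeta_{n-1}))$ is uniformly $O(1)$; Lipschitz-continuity of $F$ in sup-norm ($\|F\|_\infty\le1$ with the implicit Lipschitz control used elsewhere) would then let me replace $\tilde\mu$ by $\mu$ at cost $O((\log N)^{-K})$ — this is where I would need to be careful, and in fact I suspect the intended argument keeps $F$ only genuinely bounded (not Lipschitz) and instead absorbs the $O(1)$ mean-shifts by noting they do not depend on $N$ and re-choosing the constant, or the true statement simply defines $\mu$ to be all three contributions and the displayed formula is a simplification valid because the other two are $O(1)$ and constant.

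Third, I would clear denominators: multiplying the Proposition~\ref{prop:clt} estimate through by $\Exp e^{\mathfrak{B}(\CUEF)}$ and substituting the Gaussian evaluation gives
\[
  \left|\Exp[F(\CUEF+\WN)e^{\mathfrak{B}(\CUEF)}]-\Exp[F(\GF+\WN+\mu)]\,\Exp e^{\mathfrak{B}(\CUEF)}\right|
  \ll (\log N)^{-K}\,\Exp e^{\mathfrak{B}(\GF)},
\]
which is exactly the claimed bound, with uniformity in $F$ inherited from Proposition~\ref{prop:clt}. The main obstacle, as flagged above, is the bookkeeping in the second step: precisely justifying the replacement of the full Gaussian drift $\tilde\mu$ (which has three terms) by the single-term $\mu$ on the support of $F$, and handling the interaction with the independent white noise $\WN$ when applying Lemma~\ref{lem:means}. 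Everything else — verifying the hypotheses of Proposition~\ref{prop:clt}, bounding $\Delta$, and the final algebraic rearrangement — is routine given the tools already assembled in the excerpt.
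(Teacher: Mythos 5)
Your overall route (Proposition~\ref{prop:clt} with $\mathbf{z}=\{\omega_1\zeta_{n-1},\omega_2\zeta_n\}$, $\mathbf{y}=\{\omega_3\zeta_n\}$, then Lemma~\ref{lem:means} to convert the Gaussian bias into a mean shift $\mu'$, then clearing denominators) is exactly the paper's, and your verification of the hypotheses of Proposition~\ref{prop:clt} and of $\Delta\ll1$ is fine. The gap is in the step you yourself flag: replacing the three-term mean $\mu'$ by the single-term $\mu$. Neither of the escape routes you float works. You cannot ``absorb an $O(1)$ mean shift by re-choosing the constant'': $F$ is merely bounded (in the applications it is the indicator of a barrier event of probability $\asymp n^{-3/2}$), and shifting the mean of such an event by a constant changes its probability by a constant \emph{multiplicative} factor, which is enormous compared to the required additive error $(\log N)^{-K}$. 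Nor is the statement ``a simplification'': $\mu$ really is only the $\zeta_{n-1}$-term, and the proof must show the other contributions are negligible, not $O(1)$.

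The missing ingredient is a quantitative, $i$-dependent bound on the difference. Writing
\[
  \mu'(z)-\mu(z)=\log\left|\frac{1-z\zeta_{n-1}}{1-z\overline{\omega_1}\zeta_{n-1}}\right|
  +\log\left|\frac{1-z\overline{\omega_3}\zeta_n}{1-z\overline{\omega_2}\zeta_n}\right|,
\]
one gets $|\mu'(\zeta_i)-\mu(\zeta_i)|\ll |1-\overline{\omega_1}|/|1-\zeta_i| + |\overline{\omega_2}-\overline{\omega_3}|/|1-\zeta_i| \ll N^{-1}e^{i}$, because the arguments of the $\omega_j$ are $O(N^{-1})$ and $1-\zeta_i\asymp e^{-i}$. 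This is \emph{not} merely $O(1)$: it is geometrically small in $n-i$, and summing over $i\le n_0=n-m\log n$ gives $N^{-1}\sum_{i\le n_0}e^i\ll(\log N)^{-m}\ll(\log N)^{-K}$ since $m>K$. The second missing point is why a bound on $\sum_i|\mu'(\zeta_i)-\mu(\zeta_i)|$ controls the expectation: the map $(x_1,\dots,x_{n_0})\mapsto\Exp[F(x+\WN)]$ is Lipschitz in each coordinate with constant $O(1)$ \emph{because of the white-noise mollification} (convolution of a bounded function with a Gaussian density), not because of any Lipschitz hypothesis on $F$. This is precisely the role of $\WN$ in the statement, and it resolves your worry about how Lemma~\ref{lem:means} interacts with the independent noise: one conditions on $\WN$ (which is independent of $\GF$ and orthogonal to the bias), applies the mean-shift identity to $\GF$ alone, and then uses the resulting smoothness in the shift.
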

\begin{proof}
  It is elementary to check, \corO{see \eqref{eq-zetadef} and  
  Lemma \ref{lem:branch}, } that
  \[
    \dH(\omega_1\zeta_{n-1},\omega_2\zeta_n) \asymp 1.
  \]
  Hence, Proposition~\ref{prop:clt} applies, and we conclude from it and Lemma~\ref{lem:means} that 
  \[
    \Exp \left[F(\CUEF + \WN) e^{\Bias{\CUEF}} \right]
    = \Exp \left[ F(\GF + \WN + \mu')\right] \Exp[ e^{\Bias{\CUEF}} ]
    + \xi'  \Exp[ e^{\Bias{\GF}} ]
  \]
  where $|\xi'| \ll (\log N)^{-K},$ and where by Lemma~\ref{lem:means}
  \[
    \mu'(z)
    =-\log|1-z\overline{\omega_1} \zeta_{n-1}|
    -\log|1-z\overline{\omega_2} \zeta_{n}|
    +\log|1-z\overline{\omega_3} \zeta_{n}|.
  \]
  We will use that the map
    \[
        \left( \F(\zeta_1),\F(\zeta_2),\dots,\F(\zeta_{n_0}) \right) \mapsto \Exp\left[ F(\F+\WN)\right]
    \]
    is Lipschitz to make the comparison.  Specifically, the function is Lipschitz in each coordinate with constant $O(1),$ 
    \corO{since the addition of the 
    Gaussian random variables $\WN$ has the effect of convolving $F$ 
  with the Gaussian distribution}. 

    The difference $\mu' - \mu$ at any $z$ can be written as
    \[
    \mu'(z)
    -\mu(z)
    =\log \left|
    \frac{
    1-z\zeta_{n-1}
    }{1-z\overline{\omega_1} \zeta_{n-1}}
    \right|
    +\log \left|
    \frac{1-z\overline{\omega_3} \zeta_{n}}
    {1-z\overline{\omega_2} \zeta_{n}}
    \right|.
    \]
    On any fixed compact subset of $\C$ disjoint from $-1,$ 
    we have $\left|\log|1+x|\right| \ll |x|.$  
    Hence, we have that uniformly in $1 \leq i \leq n_0,$
    \[
      |\mu'(\zeta_i)-\mu(\zeta_i)|
      \ll \left|
      \frac{1 - \overline{\omega_1}}
      {1-\zeta_i}
      \right|
      +\left|
      \frac{\overline{\omega_2} - \overline{\omega_3}}
      {1-\zeta_i}
      \right|.
    \]
    As $1 - \zeta_i \asymp e^{-i},$ we conclude
    \begin{align*}
      \bigl|
      \Exp \left[ F(\GF + \WN + \mu')\right] 
      -\Exp \left[ F(\GF + \WN + \mu)\right] 
      \bigr|
      \ll
      N^{-1} \sum_{i=1}^{n_0} e^i \ll (\log N)^{-m}.
    \end{align*}
    As we chose $m > K,$ the proof is complete.
\end{proof}



Using Lemma \ref{lem:Lipschitz}
and Proposition~\ref{prop:clt}, we have effectively shown independence between microscopic fluctuations and the mesoscopic behavior of the process.  For the mesoscopic portion of the process, we will estimate the probability of the Gaussian field being in $\BS \cap \ES$ by applying 
\corO{the results in the appendix}
to compare with a Gaussian random walk.
For the Gaussian process, the following bound suffices.
\begin{lemma}
  Let $j \in \mathbb{N}.$
  Let $F = \one[{\BS \cap \ES}],$ and set $\mu : \D \to \R$ to be $\mu(z)=2\Exp[ \GF(z)\GF(\zeta_j)].$
  Suppose $\BF : \N \to \R$ satisfies 
  {$|\BF(i) -i |\ll \log n_0,$}
  uniformly in $1 \leq i \leq n_0.$  Then uniformly in $t_0$ satisfying 
  \(
  \nu(n_0) - (\log n_0)^2 \leq n_0 + t_0 \leq \nu(n_0),
  \)
  and uniformly in $n_0 \leq j \leq n,$
  \[
    \Exp \left[ F(\GF + \WN + \mu)\right]
    \ll \frac{(\log n_0)^3}{n_0^{3/2}}.
  \]
  \label{lem:ballotub}
\end{lemma}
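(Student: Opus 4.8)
The plan is to recenter the event so that it becomes a one-sided barrier (ballot) estimate for a Gaussian random walk, and then to invoke the barrier estimates collected in the appendix. To this end I would pass to the centered Gaussian vector $S_i := (\GF + \WN)(\zeta_i)$, $0 \le i \le n_0$ (recall that the $\WN(\zeta_i)$ are i.i.d.\ standard normals independent of $\GF$). Since $F = \one[\BS\cap\ES]$ is evaluated at $\GF + \WN + \mu$,
\[
  \Exp[F(\GF+\WN+\mu)]
  = \Pr\Big(S_i < \BF(i) - \mu(\zeta_i)\ \forall\, 1\le i\le n_0,\ \ S_{n_0}\in(c-1,c)\Big),
  \quad c := n_0 + t_0 - \mu(\zeta_{n_0}).
\]
Now the covariance formula of Lemma~\ref{lem:branch}, evaluated at $\theta = 0$ so that the minimum there equals $i$ (using $i\le n_0 \le j$), gives $\mu(\zeta_i) = 2\,\Exp[\GF(\zeta_i)\GF(\zeta_j)] = i + O(1)$ uniformly in $1\le i\le n_0$ and in $n_0\le j\le n$; this is the content of Corollary~\ref{cor:means}. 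Hence the barrier heights $b_i := \BF(i)-\mu(\zeta_i)$ obey $b_i = (\BF(i)-i)+O(1) \ll \log n_0$, while $c = t_0 + O(1)$, and the hypotheses $\BF(n_0)-(\log n_0)^2\le n_0+t_0\le\BF(n_0)$ and $|\BF(i)-i|\ll\log n_0$ force $c \le C\log n_0$ and $c \ge -C(\log n_0)^2$ for an absolute constant $C$.

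Next I would discard the fine shape of the barrier: fixing an absolute $C_1$ with $b_i \le C_1\log n_0$ for all $i$ and $c \le C_1\log n_0$, monotonicity of the event in the barrier gives
\[
  \Exp[F(\GF+\WN+\mu)]
  \le \Pr\Big(S_i < C_1\log n_0\ \forall\, 1\le i\le n_0,\ \ S_{n_0}\in(c-1,c)\Big).
\]
The vector $(S_i)_{0\le i\le n_0}$ is centered Gaussian with $\Cov(S_i,S_j) = \tfrac12\min\{i,j\} + O(1)$, i.e.\ its covariance agrees with that of a Gaussian random walk of increment variance $\tfrac12$ up to a uniformly bounded additive error (the white noise $\WN$ playing only the role of a harmless regularization of the increments). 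The barrier estimates of the appendix, which are built for exactly this class of bounded perturbations of a random walk, then give, for $0\le L \ll \sqrt{n_0}$ and $u\le L$ with $L-u\ll\sqrt{n_0}$,
\[
  \Pr\Big(S_i < L\ \forall\, 1\le i\le n_0,\ \ S_{n_0}\in(u-1,u)\Big) \ll \frac{(L+1)(L-u+1)}{n_0^{3/2}}.
\]
Applying this with $L = C_1\log n_0$ and $u = c$: the factor $L+1$ is $\ll\log n_0$ and, since $c\ge -C(\log n_0)^2$, the factor $L-c+1$ is $\ll(\log n_0)^2$, so the right-hand side is $\ll(\log n_0)^3/n_0^{3/2}$. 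All estimates above were uniform in $t_0$ within the stated range and in $n_0\le j\le n$, so the conclusion is too.

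The step I expect to be the main obstacle is this last transfer: $(\GF(\zeta_i)+\WN(\zeta_i))_i$ is a genuinely non-Markovian Gaussian vector, and although its covariance differs from a random walk's by only $O(1)$, a clean ballot estimate requires a Slepian/Sudakov--Fernique comparison, or more precisely the tailored comparison lemmas of the appendix, and one has to verify that the comparison costs at most a constant --- or at worst a further polylogarithmic --- factor. This loss is immaterial here, since we only need an upper bound carrying the generous power $(\log n_0)^3$.
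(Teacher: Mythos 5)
Your proposal is correct and follows essentially the same route as the paper: recenter by the bias mean $\mu$ (which is $i+O(1)$ by Lemma~\ref{lem:branch2}), observe the resulting barrier is $O(\log n_0)$ and the endpoint window lies within $O((\log n_0)^2)$ of it, and then transfer to a Gaussian random walk via the appendix's Slepian-based comparison (Proposition~\ref{prop-comp-bar}/Corollary~\ref{cor-UBbarrier}) and the ballot theorem. The only cosmetic difference is that the paper rescales by $\sqrt{2}$ to match the appendix normalization $R_Y(i,j)=i\wedge j$, whereas you keep increment variance $\tfrac12$; the obstacle you flag at the end is exactly the one the appendix is designed to handle.
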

\begin{proof}
	{
	Set $G(i)=\sqrt{2}(\GF(\zeta_i)+
	\corO{\WN(\zeta_i)).}$
	Set $h_i=\nu(i)-
	\corO{\mu(\zeta_i)}$,
	and note that 
	since by Lemma 
	\corO{\ref{lem:branch}} we have
	that $|\mu(\zeta_i)-i|\leq C$, it holds that
	$|h_i|\ll \log n_0$. Set also $t=t_0-
	\corO{\mu(\zeta_{n_0})}$ and note
	that $-C-(\log n_0)^2\leq t\leq C$.
	From the definitions and using the notation of 
	Appendix \ref{app-barrier1}, we have that 
	\begin{equation}
		\label{eq-Sat1}
	  \Exp \left[ F(\GF + \WN + \mu)\right]\leq p_{B,G}(n_0,t,h).
  \end{equation}
	  The conclusion follows from Corollary
	  \ref{cor-UBbarrier}.}
\end{proof}

\subsection{Upper bound for the maximum of $\CUEF$}
\label{sec:ub}

We will now fix the barrier function we use for the upper bound:
\[
  \BF(i) = i + 2\log n.
\]
The proof of the upper bound for $\EvU$ uses a dyadic chaining argument.
\corO{Recall \eqref{eq-defn}.}
Underlying the proof is the following lemma. 
\begin{lemma}
  Let $F = \one[{\BS[\BF] \cap \ES}],$ and let
  $G = \one[{\EL[\omega_1] \cap \EL[\omega_2]^c}].$
  For all $\omega_1,\omega_2 \in \T$ with argument at most $2\pi N^{-1}$ in absolute value, uniformly in 
  \(
  t_0 \in [\nu(n_0) - n_0 - (\log n_0)^2,  \nu(n_0) - n_0],
  \)
  $t > -\log n,$ and $\rho(\omega_1\zeta_n) - \rho(\omega_2\zeta_n) > (\log N)^{4-K},$
  \[
    \Exp\left[ 
    F(\CUEF+\WN)G(\CUEF)
    \right]
    \ll \frac{(\log n)^3e^{-n-2t-2\rho(\omega_1\zeta_n)}|\omega_1-\omega_2|^2N^2}{n^{3/2}(\rho(\omega_1\zeta_n) - \rho(\omega_2\zeta_n))^{2}}.
  \]
  \label{lem:dyadic}
\end{lemma}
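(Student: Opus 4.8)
The plan is to estimate the left-hand side by combining Lemma~\ref{lem:Lipschitz} for the mesoscopic part $F = \one[\BS[\BF]\cap\ES]$ with a direct exponential-moment (Markov) argument for the microscopic event $G = \one[\EL[\omega_1]\cap\EL[\omega_2]^c]$. First I would introduce the bias term
\[
  \Bias{\CUEF} = 2\CUEF(\omega_1\zeta_{n-1}) + 2\CUEF(\omega_1\zeta_n) - 2\CUEF(\omega_2\zeta_n),
\]
which is exactly of the algebraic form required by Proposition~\ref{prop:clt} and Lemma~\ref{lem:Lipschitz} (with $\omega_3 = \omega_2$). The point of including $2\CUEF(\omega_1\zeta_{n-1})$ is, as in Section~\ref{sec:ub}, to remove the linear drift from $F$ on the first $n_0$ steps, while the combination $2\CUEF(\omega_1\zeta_n) - 2\CUEF(\omega_2\zeta_n)$ is the natural weight to detect the difference event $\EL[\omega_1]\cap\EL[\omega_2]^c$. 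Writing the expectation as $\Exp[F(\CUEF+\WN)\,G(\CUEF)\,e^{-\Bias{\CUEF}}\,e^{\Bias{\CUEF}}]$, I would bound $e^{-\Bias{\CUEF}}$ on the event $G$: on $\EL[\omega_1]$ we have $\CUEF(\omega_1\zeta_n) > n + t + \rho(\omega_1\zeta_n)$, and on $\EL[\omega_2]^c$ we have $\CUEF(\omega_2\zeta_n) \le n + t + \rho(\omega_2\zeta_n)$, so the "$\omega_1,\omega_2$ at step $n$" part of $-\Bias{\CUEF}$ contributes at most $-2(n+t) - 2\rho(\omega_1\zeta_n) + 2(n+t) + 2\rho(\omega_2\zeta_n) = -2(\rho(\omega_1\zeta_n) - \rho(\omega_2\zeta_n))$; combined with the smoothness bound $|\CUEF(\omega_1\zeta_n) - \CUEF(\omega_1\zeta_{n-1})| $ being controlled and the crude bound on the remaining factor, this produces the leading factor $e^{-n - 2t - 2\rho(\omega_1\zeta_n)}$ after one also uses $\Exp[e^{\Bias{\CUEF}}] \le \Exp[e^{\Bias{\GF}}]$ from Proposition~\ref{prop:domination} and the exact Gaussian computation of $\Exp[e^{\Bias{\GF}}]$ via Lemma~\ref{lem:means}, which gives a factor $\asymp e^{n}\,|\omega_1-\omega_2|^2 N^2 / (\text{polylog})$ up to the normalization $\Exp e^{2\GF(\zeta_n)} \asymp N$.

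Next I would handle the mesoscopic factor. After tilting by $e^{\Bias{\CUEF}}$, Lemma~\ref{lem:Lipschitz} replaces $\Exp[F(\CUEF+\WN)e^{\Bias{\CUEF}}]$ by $\Exp[F(\GF+\WN+\mu)]\,\Exp[e^{\Bias{\CUEF}}]$ up to a relative error $O((\log N)^{-K}\,\Exp e^{\Bias{\GF}})$, where $\mu(z) = 2\Exp[\GF(z)\GF(\omega_1\zeta_{n-1})]$; by Corollary~\ref{cor:means} (or Lemma~\ref{lem:branch2}) this $\mu$ has the effect of shifting $(\GF(\omega_1\zeta_i))$ to have mean $i + O(1)$, so the ballot-type estimate Lemma~\ref{lem:ballotub} applies and yields $\Exp[F(\GF+\WN+\mu)] \ll (\log n_0)^3/n_0^{3/2} \asymp (\log n)^3/n^{3/2}$. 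The contribution of the error term in Lemma~\ref{lem:Lipschitz} is where the hypothesis $\rho(\omega_1\zeta_n) - \rho(\omega_2\zeta_n) > (\log N)^{4-K}$ enters: the spurious term is of size $(\log N)^{-K}\,\Exp e^{\Bias{\GF}}$, and after dividing through by the $G$-induced gain $e^{-2(\rho(\omega_1\zeta_n) - \rho(\omega_2\zeta_n))}$ — no, rather, comparing to the main term which carries a $(\rho(\omega_1\zeta_n)-\rho(\omega_2\zeta_n))^{-2}$ and $(\log N)$-power losses — one checks that the gap condition makes the error subdominant. Collecting the factors, $e^{-n-2t-2\rho(\omega_1\zeta_n)}$, $|\omega_1-\omega_2|^2 N^2$, $(\log n)^3/n^{3/2}$, and the $(\rho(\omega_1\zeta_n)-\rho(\omega_2\zeta_n))^{-2}$ coming from a Gaussian-tail estimate on the difference $\GF(\omega_1\zeta_n) - \GF(\omega_2\zeta_n)$ given the bias, gives precisely the claimed bound.

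The main obstacle I anticipate is the bookkeeping of the microscopic step: one must carefully track how the event $\EL[\omega_1]\cap\EL[\omega_2]^c$ interacts with the exponential bias and with the submicroscopic corrector $\rho$, and in particular produce the factor $(\rho(\omega_1\zeta_n) - \rho(\omega_2\zeta_n))^{-2}$. This should come from a second-moment / Gaussian anticoncentration estimate: conditional on the mesoscopic data and on the bias, $\CUEF(\omega_1\zeta_n) - \CUEF(\omega_2\zeta_n)$ is approximately Gaussian with variance $\asymp \dH(\omega_1\zeta_n,\omega_2\zeta_n)^2 \asymp (|\omega_1-\omega_2|N)^2$ by smoothness of the field at submicroscopic scale (as in Section~\ref{sec:fmcintro}), and requiring this difference to exceed $\rho(\omega_1\zeta_n) - \rho(\omega_2\zeta_n)$ while simultaneously both endpoints satisfy their one-sided constraints forces a window of size comparable to that variance, whence the inverse-square factor after normalizing. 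Making this rigorous requires invoking Proposition~\ref{prop:clt} once more with $s=2$ for the two-ray marginal at the final steps, rather than only Lemma~\ref{lem:Lipschitz}; the technical care is in ensuring the hypotheses of Proposition~\ref{prop:clt} (the ball containing the microscopic points, the pairwise separation $\ge N^{-1}$, and the boundedness of $\Delta$) are met for the configuration $\{\omega_1\zeta_{n-1},\omega_1\zeta_n,\omega_2\zeta_n\}$, which follows from $|\omega_1 - \omega_2| \ge N^{-1}$ and $\arg\omega_i = O(N^{-1})$.
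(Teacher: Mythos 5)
Your overall architecture (tilt by $\Bias{\CUEF}=2\CUEF(\omega_1\zeta_{n-1})+2\CUEF(\omega_1\zeta_n)-2\CUEF(\omega_2\zeta_n)$, handle the mesoscopic indicator via Lemma~\ref{lem:Lipschitz} and Lemma~\ref{lem:ballotub}, and extract the factor $|\omega_1-\omega_2|^2N^2(\rho(\omega_1\zeta_n)-\rho(\omega_2\zeta_n))^{-2}$ from anticoncentration of $W=\CUEF(\omega_1\zeta_n)-\CUEF(\omega_2\zeta_n)$) matches the paper's in spirit, but two of your steps do not go through as written. First, your accounting of where $|\omega_1-\omega_2|^2N^2$ comes from is wrong: $\Exp e^{\Bias{\GF}}\asymp e^{n}$ with no such factor, since the difference part of the bias has variance $\ll N^2|\omega_1-\omega_2|^2\ll 1$ and contributes only an $e^{O(1)}$ multiplicative correction; likewise, bounding $e^{-\Bias{\CUEF}}$ on the event $G$ yields only the constant $e^{-2(\rho(\omega_1\zeta_n)-\rho(\omega_2\zeta_n))}\asymp 1$, not an inverse square. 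Your main-line computation would therefore lose the factor $|\omega_1-\omega_2|^2N^2$ altogether, and that factor is exactly what makes the dyadic sums in Lemma~\ref{lem:1pUB} converge. Second, the fallback you propose for the anticoncentration step --- applying Proposition~\ref{prop:clt} with $s=2$ to ``the two-ray marginal at the final steps'' --- is not available: that proposition only controls test functions in $\BSA(\left\{ \zeta_i\omega_j : i\le d \right\})$ with $d\le n-m\log n$, so the points $\omega_1\zeta_n,\omega_2\zeta_n$ may enter only through the exponential bias, never through the marginal.

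The missing idea is the device the paper uses to convert the microscopic indicator into admissible exponential biases. One writes $\one[{\EL[\omega_1]\cap\EL[\omega_2]^c}]\le \phi(W)/\phi(\rho(\omega_1\zeta_n)-\rho(\omega_2\zeta_n))$ with $\phi(x)=\cosh(2x)-1$, and multiplies by $e^{2\CUEF(\omega_1\zeta_{n-1})-2(n+t+\rho(\omega_1\zeta_n))}$, which is legitimate on $\EL[\omega_1]$ by \eqref{eq:radialslide}. Since $\cosh(2x)-1=\tfrac12e^{2x}+\tfrac12e^{-2x}-1$, the quantity $J(\CUEF)=e^{2\CUEF(\omega_1\zeta_{n-1})}\phi(W)$ is a linear combination of three exponential biases, each of the form covered by Lemma~\ref{lem:Lipschitz}, so the mesoscopic ballot factor $(\log n)^3 n^{-3/2}$ decouples cleanly. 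Then Proposition~\ref{prop:domination} together with a Gaussian computation gives $\Exp[J(\CUEF)]\ll\Exp[e^{2\GF(\omega_1\zeta_{n-1})}](\mu^2+\sigma^2)\ll\Exp[e^{2\GF(\omega_1\zeta_{n-1})}]N^2|\omega_1-\omega_2|^2$, because $\phi(x)\ll x^2e^{2x}$ and both the tilted mean $\mu$ and the standard deviation $\sigma$ of $W$ are $\ll N|\omega_1-\omega_2|$. This is precisely the second-moment smallness you correctly anticipated in your last paragraph, but realized entirely through exponential moments --- the only handle available on the field at hyperbolic distance $n$ from the origin. Finally, $\phi(\delta)\gg\delta^2$ for bounded $\delta$ (recall $\rho\le 11$) produces the $(\rho(\omega_1\zeta_n)-\rho(\omega_2\zeta_n))^{-2}$.
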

\begin{proof}
  When $\CUEF \in \EL[\omega_1],$ the value of $\CUEF(\omega_1\zeta_{n})$ is at least $n+t+\rho(\omega_1\zeta_n).$  Using \eqref{eq:radialslide}, it follows there is a constant 
	\corO{C} sufficiently large that
  \begin{align}
    &\CUEF(\omega_1\zeta_{n}) \leq \CUEF(\omega_1\zeta_{n-1}) + C. \nonumber\\
    \intertext{Hence, we have that almost surely}
    \label{eq:dy1}
    &G(\CUEF) \ll 
		\corO{e^{2\CUEF(\omega_1\zeta_{n-1}) - 2(n + t+\rho(\omega_1\zeta_n))}}.\\
    \intertext{We also have that when $\CUEF \in \EL[\omega_1] \cap \EL[\omega_2]^c,$}
    &\CUEF(\omega_1\zeta_n) - \CUEF(\omega_2\zeta_n) > \rho(\omega_1\zeta_n) - \rho(\omega_2\zeta_n).\nonumber\\
    \intertext{Let $\phi(x) = \cosh(2x) - 1
      \corO{\geq x^2},$ which is increasing in \corO{$x>0.$} Letting} 
    &J(\CUEF) = e^{2\CUEF(\omega_1 \zeta_{n-1})}\phi(\CUEF(\omega_1\zeta_n) - \CUEF(\omega_2\zeta_n)), \nonumber\\
    \intertext{and combining these observations, we have}
    \label{eq:dy2}
    &\Exp[F(\CUEF+\WN)G(\CUEF)] \ll 
    \Exp \left[ F(\CUEF+\WN)\frac{e^{- 2(n + t+\rho(\omega_1\zeta_n))}
J(\CUEF)}{\phi(\rho(\omega_1\zeta_n) - \rho(\omega_2\zeta_n))} \right].
\end{align}

The expression $J(\CUEF)$ is a linear combination of three exponential bias terms:
\begin{equation*}
  \begin{aligned}
    J(\CUEF)
    &= 
    \frac{1}{2}e^{\Bias{\CUEF}[1]}
    +\frac{1}{2}e^{\Bias{\CUEF}[2]}
    -e^{\Bias{\CUEF}[3]}, \text{ where }\\
    \Bias{\CUEF}[1] &= 
    2\CUEF(\omega_1\zeta_{n-1})
    +2\CUEF(\omega_1\zeta_{n})
    -2\CUEF(\omega_2\zeta_{n}), \\
    \Bias{\CUEF}[2] &= 
    2\CUEF(\omega_1\zeta_{n-1})
    -2\CUEF(\omega_1\zeta_{n})
    +2\CUEF(\omega_2\zeta_{n}), \\
    \Bias{\CUEF}[3] &= 
    2\CUEF(\omega_1\zeta_{n-1}), \\
  \end{aligned}
\end{equation*}
all of 
which satisfy the hypotheses of Lemma~\ref{lem:Lipschitz}.  Therefore, we have that  
\begin{equation}
  \label{eq:dy4}
  \begin{aligned}
    \Exp[F(\CUEF +\WN)J(\CUEF)]
    &= \Exp[F(\GF + 
		\corO{\WN}
		+ \mu)]\Exp[J(\CUEF)]+\xi,
  \end{aligned}
\end{equation}
with
\[
  |\xi| \ll (\log N)^{-K}\Exp[ e^{2\CUEF(\omega_1 \zeta_{n-1})}(2+\phi(\CUEF(\omega_1\zeta_n) - \CUEF(\omega_2\zeta_n)))].
\]

Estimating $\Exp[J(\CUEF)]$ can be done using monotonicity (Proposition~\ref{prop:domination}).  We show 
{
below that
}
\begin{equation}
    \Exp[J(\CUEF)] \ll
    \Exp[ e^{2 \GF(\omega_1\zeta_{n-1})}]
    \cdot |\omega_1 - \omega_2|^2 \cdot N^2.
  \label{eq:J}
\end{equation}
This will yield the lemma, for 
combining \eqref{eq:J}, Lemma \ref{lem:ballotub},
\eqref{eq:dy4} and \eqref{eq:dy2}, we have that for all $t > -\log n,$ and $\rho$ so that $|\rho(\omega_1\zeta_n) - \rho(\omega_2\zeta_n)|^2 > (\log N)^{-K + 3/2},$ 
\[
  \Exp\left[ 
    F(\CUEF+\WN)G(\CUEF)
  \right]
  \ll \frac{(\log n)^3e^{-n-2t-2\rho(\omega_1\zeta_n)}|\omega_1-\omega_2|^2 N^2}{n^{3/2}|\rho(\omega_1\zeta_n) - \rho(\omega_2\zeta_n)|^2}.
\]

It remains to prove \eqref{eq:J}.  
Let $W= \CUEF(\omega_1\zeta_n) - \CUEF(\omega_2\zeta_n).$ Using Proposition~\ref{prop:domination}, we have 
\corO{that} 
for any $\lambda \in \R,$
\[
    \Exp\left[e^{2\CUEF(\omega_1 \zeta_{n-1})}e^{\lambda W}\right]
    \leq
    \Exp\left[e^{2\GF(\omega_1 \zeta_{n-1})}e^{\lambda(\GF(\omega_1\zeta_n) - \GF(\omega_2\zeta_n))}\right].
\]
Set  $\mu = 2\Exp\left[\GF(\omega_1 \zeta_{n-1})(\GF(\omega_1\zeta_n) - \GF(\omega_2\zeta_n))\right]$ and set 
    $\sigma^2 = \Exp\left(\GF(\omega_1\zeta_n) - \GF(\omega_2\zeta_n)\right)^2.$
Applying the usual combination of Markov's inequality and optimizing over $\lambda$, for any $x \geq 0,$
\[
    \Exp\left[e^{2\CUEF(\omega_1 \zeta_{n-1})}
        \one[ |W-\mu| \geq x]
    \right]
    \leq
    2\Exp\left[e^{2\GF(\omega_1 \zeta_{n-1})}\right]
    e^{-\tfrac{x^2}{2\sigma^2}}.
\]

Using $\phi(x) \ll x^2e^{2x}$ and H\"older's inequality, we can estimate
\[
   \Exp[J(\CUEF)] \ll
   \bigl(\Exp[ e^{2 \CUEF(\omega_1\zeta_{n-1})} W^4]\bigr)^{1/2}
   \bigl(\Exp[ e^{2 \CUEF(\omega_1\zeta_{n-1}) + 4W} ]\bigr)^{1/2}.
\]
Hence provided that $|\mu|, \sigma \ll 1,$ we have that
\[
    \Exp[J(\CUEF)] \ll
    \Exp[ e^{2 \GF(\omega_1\zeta_{n-1})}]
    (\mu^2 + \sigma^2),
\]
by expressing the moment as an integral over level sets of $W$ and applying monotonicity to the second expectation.

There only remains to estimate $\mu$ and $\sigma.$  For this purpose we use repeatedly that $\dH(\omega_1\zeta_n,\omega_2\zeta_n) \ll N |\omega_1-\omega_2|,$ which is elementary to verify.  For $\sigma,$ we have by \eqref{eq:covcosh}
\[
    \sigma^2
    = 
    \log\left( \cosh\left( \tfrac{\dH(\omega_1\zeta_n,\omega_2\zeta_n)}{2} \right) \right) \ll N^2 |\omega_1-\omega_2|^2.
\]
For $\mu,$ we have
\[    \mu=
    \log\left( 
    \tfrac
    { \cosh( {\dH(0,\omega_1\zeta_{n})}{2}^{-1} )\cosh( {\dH(\omega_1\zeta_{n-1},\omega_2\zeta_n)}2^{-1})}
    {\cosh( {\dH(\omega_1\zeta_{n-1},\omega_1\zeta_n)} 2^{-1})
    \cosh( {\dH(0,\omega_2\zeta_{n})}{2}^{-1} )}
    \right) 
    =
    \log\left( 
    \tfrac
    {\cosh( {\dH(\omega_1\zeta_{n-1},\omega_2\zeta_n)} 2^{-1})}
    {\cosh( {\dH(\omega_1\zeta_{n-1},\omega_1\zeta_n)} 2^{-1})}
    \right).
    \]
Now, using that 
\[
    |
    \dH(\omega_1\zeta_{n-1},\omega_2\zeta_n)
    -
    \dH(\omega_1\zeta_{n-1},\omega_1\zeta_n)
    |
    \leq \dH(\omega_2\zeta_n, \omega_1\zeta_n) \ll  N |\omega_1-\omega_2|,
\]
we have $|\mu| \ll N|\omega_1-\omega_2|,$ which completes the proof 
  of 
\eqref{eq:J} 
and hence of the lemma.
\end{proof}

We now proceed to give the proof of the upper bound using this machinery.  In light of Lemma~\ref{lem:dyadic}, we will now define $\rho.$  In fact, we only need to specify $\rho$ for the points 
\[
  \left\{ e^{2\pi i h(Nn_1)^{-1}}\zeta_n ~:~h \in \mathbb{N}_0, h \leq n_1 \right\}.
\]
Define $S_{0} = \{1\}$ and define for $j \in \mathbb{N},$
\[
  S_j = \left\{ e^{2\pi ih2^{-j}N^{-1}}~:~h \in (2\mathbb{N}_0+1), h < 2^j \right\}.
\]
For every point $\omega \zeta_n \in \D$ with $\omega \in S_j,$ define 
\begin{equation}
  \label{eq:defrho}
  \rho(\omega \zeta_n) = 1 + (1.1)^{-1} + \dots + (1.1)^{-j}
\end{equation}
(any constant \corO{in $(1,\sqrt{2})$}
would work in place of $1.1$).  Note that this makes $\rho$ uniformly bounded by $11.$

\begin{lemma}
  Uniformly in $t > -\log n,$
  \[
    \Exp[ \one[{\BS}](\CUEF+\WN) \one[{\EvU}](\CUEF) ] \ll
    \frac{e^{-n-2t}(\log n)^{
      5
  }}{n^{3/2}}.
  \]
  \label{lem:1pUB}
\end{lemma}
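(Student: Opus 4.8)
The plan is to run a dyadic chaining over the phases occurring in $\EvU$, using Lemma~\ref{lem:dyadic} for each chaining increment, after first splitting the mesoscopic endpoint $\F(\zeta_{n_0})$ into the unit windows in which Lemmas~\ref{lem:dyadic} and~\ref{lem:ballotub} are applicable. Since $\BS$ forces $\F(\zeta_{n_0})<\BF(n_0)=n_0+2\log n$, we have almost surely
\[
\one[{\BS}](\CUEF+\WN)=\sum_{k\le\lceil 2\log n\rceil}\one[{\BS\cap\ES{k}}](\CUEF+\WN),
\]
and I would treat separately the \emph{good window} $k\in I:=[\lceil 2\log n\rceil-\lceil(\log n_0)^2\rceil,\ \lceil 2\log n\rceil]$, which has $O((\log n)^2)$ integers and in which $t_0=k$ meets the hypotheses of Lemmas~\ref{lem:dyadic} and~\ref{lem:ballotub}, and the \emph{tail} $k<\lceil 2\log n\rceil-\lceil(\log n_0)^2\rceil$.

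For the chaining, recall $\{e^{2\pi ih(Nn_1)^{-1}}:0\le h<n_1\}=\bigcup_{j=0}^{J}S_j$ with $J=\lceil\log n\rceil$. For $\omega=e^{2\pi ih2^{-j}N^{-1}}\in S_j$ ($j\ge1$, $h$ odd) put $\hat\omega:=e^{2\pi i(h-1)2^{-j}N^{-1}}$, which lies in $\bigcup_{j'<j}S_{j'}$, satisfies $|\omega-\hat\omega|\asymp 2^{-j}N^{-1}$, and, by \eqref{eq:defrho}, has $\rho(\omega\zeta_n)-\rho(\hat\omega\zeta_n)\ge(1.1)^{-j}$. Choosing, for any field in $\EvU$, the minimal-level phase $\omega^{\ast}\in S_{j^{\ast}}$ with $\EL[\omega^{\ast}]$ holding yields
\[
\EvU\ \subseteq\ \EL[1]\ \cup\ \bigcup_{j=1}^{J}\ \bigcup_{\omega\in S_j}\bigl(\EL[\omega]\cap\EL[\hat\omega]^{c}\bigr).
\]
Fix $k\in I$. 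Lemma~\ref{lem:dyadic} applies to each pair $(\omega,\hat\omega)$ with $\omega_1=\omega$, $\omega_2=\hat\omega$, $t_0=k$ and our $\rho$ (its hypothesis $\rho(\omega\zeta_n)-\rho(\hat\omega\zeta_n)>(\log N)^{4-K}$ holds since $(1.1)^{-J}\gg(\log N)^{-96}$ when $K=100$, $\log N\asymp n$), and gives
\[
\Exp\bigl[\one[{\BS\cap\ES{k}}](\CUEF+\WN)\,\one[{\EL[\omega]\cap\EL[\hat\omega]^{c}}](\CUEF)\bigr]\ \ll\ \frac{(\log n)^3e^{-n-2t}}{n^{3/2}}\Bigl(\tfrac{1.21}{4}\Bigr)^{\!j},
\]
using $\rho\ge1$, $|\omega-\hat\omega|^2N^2\ll 4^{-j}$, and $(\rho(\omega\zeta_n)-\rho(\hat\omega\zeta_n))^2\ge(1.21)^{-j}$. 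Summing over the $|S_j|=2^{j-1}$ phases of $S_j$ turns $(1.21/4)^j$ into $\tfrac12(0.605)^j$; since $0.605<1$ the resulting geometric series over $1\le j\le J$ converges, so the chaining terms contribute $\ll(\log n)^3e^{-n-2t}n^{-3/2}$ at this $k$. (This convergence is precisely why the base of $\rho$ in \eqref{eq:defrho} must lie below $\sqrt2$.)

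The root term $\EL[1]$ at fixed $k\in I$ I would handle by a simplification of the proof of Lemma~\ref{lem:dyadic}: the one-step slide \eqref{eq:radialslide} gives $\one[{\EL[1]}](\CUEF)\ll e^{2\CUEF(\zeta_{n-1})-2(n+t)}$; Proposition~\ref{prop:clt} with $\mathbf{z}=\{\zeta_{n-1}\}$, $\mathbf{y}=\emptyset$ together with Lemma~\ref{lem:means} reduce $\Exp[\one[{\BS\cap\ES{k}}](\CUEF+\WN)e^{2\CUEF(\zeta_{n-1})}]$ to $\Exp[\one[{\BS\cap\ES{k}}](\GF+\WN+\mu)]\cdot\Exp[e^{2\CUEF(\zeta_{n-1})}]$ up to a $(\log N)^{-K}e^{n}$ error, with $\mu(z)=2\Exp[\GF(z)\GF(\zeta_{n-1})]$; Lemma~\ref{lem:ballotub} bounds the Gaussian factor by $(\log n)^3n^{-3/2}$ and Proposition~\ref{prop:domination} gives $\Exp[e^{2\CUEF(\zeta_{n-1})}]\le\Exp[e^{2\GF(\zeta_{n-1})}]\ll e^{n}$, so the root term is also $\ll(\log n)^3e^{-n-2t}n^{-3/2}$. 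Summing the chaining and root bounds over the $O((\log n)^2)$ values of $k\in I$ produces the claimed $(\log n)^5e^{-n-2t}n^{-3/2}$.

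For the tail $k<\lceil 2\log n\rceil-\lceil(\log n_0)^2\rceil$ the barrier is not needed: on $\ES{k}\cap\EL[\omega]$ (for any $\omega$ occurring in $\EvU$) the increment $\CUEF(\omega\zeta_n)-\CUEF(\zeta_{n_0})$ exceeds $m\log n+(t-k+1)$, which for $t>-\log n$ is at least $\tfrac12(\log n)^2$; a two-parameter Markov estimate dominated via Proposition~\ref{prop:domination} by the Gaussian case (whose covariance has the near-diagonal structure $\Exp[\GF(\zeta_{n_0})\GF(\omega\zeta_n)]=\Exp[\GF(\zeta_{n_0})^2]+O(1)$) bounds $\Pr(\CUEF\in\ES{k}\cap\EL[\omega])$ by $\ll e^{-n-2t}e^{-(t-k+1)^2/(m\log n)}$; summing over the $n_1\ll n$ phases and over the integers $k$ on this range, where $(t-k+1)^2/(m\log n)\ge(\log n)^3/(4m)$, bounds the tail by $\ll n\,e^{-n-2t}e^{-(\log n)^3/(5m)}=o\bigl((\log n)^5e^{-n-2t}n^{-3/2}\bigr)$. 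The one genuinely non-routine step is the chaining: the per-scale contributions must sum, which pins down the weights in $\rho$ and is why the quantitative form of Lemma~\ref{lem:dyadic}, with its explicit $|\omega_1-\omega_2|^2N^2/(\rho(\omega_1\zeta_n)-\rho(\omega_2\zeta_n))^2$ dependence, is needed; everything else is bookkeeping once Lemmas~\ref{lem:dyadic} and~\ref{lem:ballotub} and Proposition~\ref{prop:clt} are in hand.
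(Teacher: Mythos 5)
Your proposal is correct and follows essentially the same route as the paper: a decomposition of the endpoint value $\F(\zeta_{n_0})$ into unit windows, dyadic chaining over the sets $S_j$ with Lemma~\ref{lem:dyadic} supplying the per-scale bound and the geometric series $\sum_j 2^j(1.21/4)^j$ doing the work, a simplified version of that argument for the root term $\EL[1]$, and an exponential-moment/Markov bound via Proposition~\ref{prop:domination} for the windows where the final increment $\CUEF(\zeta_n)-\CUEF(\zeta_{n_0})$ must be abnormally large. The only (cosmetic) differences are that the paper lumps all low-endpoint windows into a single event $\mathcal{E}_2$ with one crude bound rather than summing Gaussian tails over $k$, and your placement of the good window at the top of the barrier $\nu(n_0)=n_0+2\log n$ is in fact the one consistent with the stated hypotheses of Lemmas~\ref{lem:ballotub} and~\ref{lem:dyadic}.
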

\begin{proof}
  There are two components to the argument.  First, for integer $t_0$ with
  \(
  -(\log n_0)^2 \leq t_0 \leq 0
  \),
  we use the dyadic chaining argument to estimate 
  \[
    \Exp[  \one[{\BS \cap \ES}](\CUEF+\WN)\one[{\EvU}](\CUEF)].
  \]
  This we then sum over all such integral $t_0.$  Second, we consider the set of fields
  \[
    \ESU = \left\{ \F(\zeta_{n_0}) < n_0 - \log(n_0)^2 + 1 \right\}.
  \]
  Then, we separately estimate
  \[
    \Exp[  \one[{\BS \cap \ESU}](\CUEF+\WN)\one[{\EvU}](\CUEF)],
  \]
  on which the last increment $\CUEF(\zeta_n) - \CUEF(\zeta_{n_0})$ must be abnormally large.

  For the first part of the argument, fix $t_0$ as described.  For any $\omega \in S_j$ for some $j > 0,$ let $g(\omega)$ be the element of $S_{j-1}$ with maximal argument, which does not exceed the argument of $\omega.$  Then for $\omega \in S_j,$ we have $|\omega - g(\omega)| \ll N^{-1}2^{-j}$ and $\rho(\omega) - \rho(g(\omega)) = (1.1)^{-j},$
  Let $F = \one[{\BS[\BF] \cap \ES}],$ and let
  $G_{\omega_1,\omega_2} = \one[{\EL[\omega_1] \cap \EL[\omega_2]^c}].$
  Then, we can estimate
  \begin{align*}
    \Exp&\left[ F(\CUEF + \WN)\one[{\EvU}](\CUEF)\right] \\
    &\leq
    \Exp\left[ F(\CUEF + \WN)\one[{\EL}](\CUEF)\right]
    +\sum_{j=1}^{\log_2 n_1}
    \sum_{\omega \in S_j}
    \Exp\left[ F(\CUEF + \WN)G_{\omega,g(\omega)}(\CUEF)\right]
    \\
    &\ll
    \Exp\left[ F(\CUEF + \WN)\one[{\EL}](\CUEF)\right]
    +\sum_{j=1}^{\log_2 n_1}
    \sum_{\omega \in S_j}
    \frac{(\log n)^3e^{-n-2t}4^{-j}}{n^{3/2}(1.1)^{-2j}} \\
    &\ll
    \Exp\left[ F(\CUEF + \WN)\one[{\EL}](\CUEF)\right]
    +
    \frac{(\log n)^3e^{-n-2t}}{n^{3/2}}.
  \end{align*}

  Estimating 
  $\Exp\left[ F(\CUEF + \WN)\one[{\EL}](\CUEF)\right]$
  follows the same outline as Lemma~\ref{lem:dyadic}, albeit simpler.  On the one hand, we have
  \begin{align}
    &\Exp\left[ F(\CUEF + \WN)\one[{\EL}](\CUEF)\right]
    \ll
    \Exp\left[ F(\CUEF + \WN)\one[{\EL}](\CUEF)
    e^{2\CUEF(\zeta_{n}) - 2(n + t)}
  \right]. \nonumber \\
  \intertext{\corO{On the other hand,}
	we have}
  &\Exp\left[ 
    F(\CUEF + \WN)\one[{\EL}](\CUEF)
  e^{2\CUEF(\zeta_{n})}
\right]
\ll
\Exp\left[ F(\CUEF+\WN) 
  e^{2\CUEF(\zeta_{n})}
\right].\nonumber \\
\intertext{Thus by \corO{Lemma~\ref{lem:Lipschitz} and}
Lemma~\ref{lem:ballotub}, we conclude}
&\Exp\left[ F(\CUEF + \WN)\one[{\EL}](\CUEF)\right]
\ll \frac{(\log n)^3e^{-n-2t}}{n^{3/2}}.
\label{eq:1pUB1} \\
\intertext{Hence, summing over all $t_0$ in $[-(\log n_0)^2,0]$, we get}
&\Exp\left[ 
\corO{\one[{\BS}](\CUEF+\WN)\one[{\EvU}](\CUEF) }
(1-\one[{\ESU}](\CUEF))\right]
\ll \frac{(\log n)^5e^{-n-2t}}{n^{3/2}}.
\label{eq:1pUB2}
\end{align}

To estimate 
$
\Exp\left[ 
\corO{\one[{\BS}](\CUEF+\WN)\one[{\EvU}](\CUEF) }\one[{\ESU}](\CUEF)\right],
$
we just note that on the event $\ESU \cap \EL[\omega],$ we have both 
\[
  \CUEF(\omega \zeta_n) - \CUEF(\zeta_{n_0}) - \WN(\zeta_{n_0}) \geq (\log n)^2 - O(\log n)
\]
and $\CUEF(\omega \zeta_n) > n-O(\log n).$  Hence
\begin{align*}
  \hspace{1cm}&\hspace{-1cm}
  \Exp\left[ 
	\corO{\one[{\BS}](\CUEF+\WN)\one[{\EvU}](\CUEF) }
	\one[{\ESU\cap \EL[\omega]}](\CUEF)\right] \\
  &\leq
  \Exp\left[ 
    e^{4\CUEF(\omega\zeta_n)-2\CUEF(\zeta_{n_0})-2\WN(\zeta_{n_0})-2n-2(\log n)^2+O(\log n)}
  \right].\\
  \intertext{Applying Proposition~\ref{prop:domination} to estimate the exponential moment, we get that}
  &\Exp\left[ 
	\corO{\one[{\BS}](\CUEF+\WN)\one[{\EvU}](\CUEF) }
		\one[{\ESU\cap \EL[\omega]}](\CUEF)
  \right]
  \leq
  e^{-n -2(\log n)^2 + O(\log n)}.
  \intertext{In particular, if we simply apply a union bound over all $\omega,$ we conclude}
  &\Exp\left[ 
		\corO{\one[{\BS}](\CUEF+\WN)\one[{\EvU}](\CUEF) }\one[{\ESU \cap \EvU}](\CUEF)
  \right]
  \leq e^{-n -2(\log n)^2 + O(\log n)},
\end{align*}
which together with \eqref{eq:1pUB2} completes the proof.
\end{proof}

Completing the upper bound for $\CUEF$ is now a simple matter
\begin{theorem}
  Let 
$\corO{\varpi(1)}$ be any sequence going to $\infty$ as $N\to\infty,$ then
  with probability going to $1$ as $N\to \infty,$
  \[
      \CUEF^*
    \leq \log N -\tfrac 34 \log\log N + \tfrac52\log\log\log N + \corO{\varpi(1)}.
  \]
  \label{thm:ub}
\end{theorem}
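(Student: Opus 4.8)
The plan is to derive Theorem~\ref{thm:ub} from Lemma~\ref{lem:1pUB} together with the deterministic reduction in Lemma~\ref{lem:deterministic}. First I would recall that by Lemma~\ref{lem:deterministic} it suffices to bound $\max_{|z|=1-MN^{-1}}\CUEF(z)$ for a fixed large constant $M$, up to an additive $M$. The point $1-MN^{-1}$ has $\dH(0,1-MN^{-1}) = \log((2-MN^{-1})/(MN^{-1})) = \log N - \log(M/2) + o(1)$, so the integer $n=\lfloor \dH(0,1-MN^{-1})\rfloor$ satisfies $n = \log N + O(1)$; in particular $\log n = \log\log N + o(1)$ and $n^{3/2} = e^{(3/2)\log\log N}(1+o(1))\cdot$(polylog correction), so the exponent $-\tfrac34\log\log N$ in the statement is exactly $-\tfrac32\log n\cdot\tfrac12$, matching the $n^{-3/2}$ weight. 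The mesh $\{e^{2\pi i hN^{-1}}\zeta_n : h\in[N]\}$ is, after the further refinement built into $\EvU$ (which covers an $N^{-1}(\log N)$-dense angular grid via the union over $h<n_1$), dense enough that by harmonicity and the slide estimate \eqref{eq:radialslide} any field value near $|z|=1$ exceeding the target threshold forces $\CUEF\in\EvU[\omega]$ for some $\omega$ on the coarse grid.

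Next I would run the union bound. Fix $t = -\tfrac34\log n + \tfrac52\log\log n + \tfrac12\omega(1)$, say, so that $n+t = n - \tfrac34\log n + \tfrac52\log\log n + \tfrac12\omega(1)$. For each $\omega\in\{e^{2\pi i hN^{-1}}:h\in[N]\}$, Markov's inequality gives
\[
  \Pr(\CUEF\in \EvU[\omega])
  \leq \Exp[\one[\EvU[\omega]](\CUEF)]
  \leq \Exp[\one[\BS[\BF][\omega]^c](\CUEF+\WN)] + \Exp[\one[\BS[\BF][\omega]](\CUEF+\WN)\one[\EvU[\omega]](\CUEF)].
\]
The first term is the statement (alluded to after \eqref{eq:Ev0}) that the barrier $\BF(i)=i+2\log n$ is typical, so that $\Pr(\CUEF(\omega\zeta_i)\geq i+2\log n \text{ some } i\leq n_0)$ is $o(N^{-1})$ uniformly in $\omega$; this follows from Proposition~\ref{prop:domination} (sub-Gaussian tails with variance $\tfrac12\log(1/(1-|\zeta_i|^2)) = \tfrac i2 + O(1)$) and a chaining argument over the $n_0$ scales, losing only a $\log n$ factor. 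The second term is bounded by Lemma~\ref{lem:1pUB} by $\ll e^{-n-2t}(\log n)^5 n^{-3/2}$. Plugging in $t$, the bound is
\[
  \ll N^{-1}\cdot n^{3/2-3/2}\cdot (\log n)^5 \cdot e^{-2t}\cdot e^{\log N - n}
  = N^{-1}(\log n)^5 e^{(3/2)\log n - 5\log\log n - \omega(1)}\cdot O(1)
  = N^{-1}\cdot n^{3/2}(\log n)^{-5}e^{-\omega(1)}\cdot O(1),
\]
where I used $n = \log N + O(1)$ so $e^{\log N - n} = O(1)$, and $e^{-2t} = n^{3/2}(\log n)^{-5}e^{-\omega(1)}$. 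Thus each term is $\ll N^{-1}n^{3/2}(\log n)^{-5}e^{-\omega(1)}$, and summing over the $N$ values of $\omega$ yields $\Pr(\exists\,\omega:\CUEF\in\EvU[\omega]) \ll n^{3/2}(\log n)^{-5}e^{-\omega(1)} = (\log N)^{3/2}(\log\log N)^{-5}e^{-\omega(1)}$.

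The main obstacle is the bookkeeping of the precise power of $\log n$: the bound $n^{3/2}(\log n)^{-5}e^{-\omega(1)}$ does \emph{not} go to zero on its own, since $n^{3/2}$ grows. This is why the refined choice of $t$ with the $+\tfrac52\log\log n$ correction is essential: it supplies the extra $(\log n)^{-5} = n^{0}(\log n)^{-5}$ that, combined with the $(\log n)^5$ from Lemma~\ref{lem:1pUB} and the $n^{3/2}$ from the weight and from $e^{-2t}$, must be arranged so the net power of $\log n$ is negative while the net power of $n$ is $0$. Concretely one wants $e^{-2t}\asymp n^{-3/2}(\log n)^{-5}(\log n)^{-\eta}e^{-\omega(1)}$ for some $\eta>0$, i.e. $n+t \geq n - \tfrac34\log n + (\tfrac52+\tfrac\eta2)\log\log n + \tfrac12\omega(1)$; taking $\omega(1)$ to diverge however slowly then forces the sum to $0$. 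Once $\Pr(\exists\,\omega:\CUEF\in\EvU[\omega]) = o(1)$, on the complementary event every mesh point $\omega\zeta_n$ and every refinement point satisfies $\CUEF(\omega\zeta_n)\leq n+t+\rho(\omega\zeta_n) \leq n+t+11$; by the density of $\EvU$'s refined grid and the slide estimate \eqref{eq:radialslide}, this forces $\max_{|z|=1-MN^{-1}}\CUEF(z) \leq n + t + O(1)$, and then Lemma~\ref{lem:deterministic} gives $\CUEF^* \leq n + t + M + O(1) = \log N - \tfrac34\log\log N + \tfrac52\log\log\log N + \omega(1)$, absorbing all $O(1)$ constants into $\omega(1)$. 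The remaining routine check is that the refinement grid in \eqref{eq:EvU}, of angular spacing $(Nn_1)^{-1}$ with $n_1 = 2^{\lceil\log n\rceil}\asymp n$, is fine enough that $\rho$-controlled submicroscopic oscillations between consecutive grid points are $o(1)$ — this is exactly what the geometric sum defining $\rho$ in \eqref{eq:defrho} and the constant $1.1 < \sqrt2$ are engineered to guarantee, via $\dH$-Lipschitz estimates of the type used in Lemma~\ref{lem:dyadic}.
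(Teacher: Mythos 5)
Your skeleton is the paper's: fix $t=-\tfrac34\log n+\tfrac52\log\log n+\omega(1)$, bound the ``barrier holds and endpoint large'' part by Lemma~\ref{lem:1pUB} plus a union bound over the $N$ grid points, control the barrier event separately, and pass to all of $\mathbb{T}$ by the dyadic refinement, $\rho$, and Lemma~\ref{lem:deterministic}. (Your arithmetic in the middle display double-counts $e^{-2t}$ and flips a sign in the subsequent discussion, but the conclusion is right and in fact cleaner than you make it: $e^{-2t}=n^{3/2}(\log n)^{-5}e^{-\omega(1)}$, so $e^{-n-2t}(\log n)^5n^{-3/2}=e^{-n}e^{-\omega(1)}$ and the $N$-fold union bound is $O(e^{-\omega(1)})$ with no leftover power of $n$.)

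The genuine gap is the barrier term. You assert that $\Pr(\exists\, i\leq n_0:\CUEF(\omega\zeta_i)\geq i+2\log n)=o(N^{-1})$ per ray. This is false: already at the single scale $i=n_0=n-m\log n$ the threshold is $n-(m-2)\log n$, the variance is $n_0/2+O(1)$, and the (essentially sharp) Gaussian tail is $e^{-(n_0+2\log n)^2/n_0}=e^{-n}(\log N)^{m-4}$ with $m>100$; summing this over the $N$ rays gives $(\log N)^{m-4}\to\infty$, so the per-ray union bound of the barrier-failure probabilities diverges badly. The barrier must be handled as a single global event, exploiting that at radius $\zeta_k$ the field decorrelates only at angular scale $e^{-k}$: for each $k$ one takes a union bound over only $\sim e^k n$ angular positions (plus a dyadic chaining in the angle via $\rho$ to cover all of $\T$), and since $e^kn\ll N$ for $k\leq n_0$ each scale contributes $o(1)$. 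This is precisely the second half of the paper's proof, with the grids $S_k^j$ and the events $\DL$. A second, related omission: Lemma~\ref{lem:1pUB} places the barrier on $\CUEF+\WN$, and with a genuine white noise the $N$ constraints at radius $\zeta_1$ alone involve $N$ independent standard Gaussians, whose maximum is $\sim\sqrt{2\log N}\gg 2\log n$; hence the intersected barrier event for $\CUEF+\WN$ over all rays fails with probability tending to one. The paper replaces $\WN$ by the radial noise $\RWN(z)=\WN(|z|)$ --- legitimate because Lemmas~\ref{lem:dyadic} and~\ref{lem:1pUB} only see the marginal of the noise along one ray --- so that only $n$ noise variables occur and their maximum is $O(\sqrt{\log n})$, absorbed by relaxing the barrier from $i+2\log n$ to $i+\log n$. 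Without these two ingredients your union bound does not close.
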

\begin{proof}

  We have so far only estimated properties of $\CUEF+\WN$ along rays from the origin.  Here, we will for the first time consider multiple rays, as well as control the influence of the additional noise $\WN.$  As it turns out, having so much noise is somewhat wasteful.  Thus, we introduce another white noise $\RWN$ defined by $\RWN(z) = \WN(|z|)$ for all $z\in\mathbb{D}.$  Lemmas~\ref{lem:1pUB} and~\ref{lem:dyadic} only depended on the marginal of $\WN$ along a single ray, and hence they hold with $\WN$ replaced by $\RWN.$

  Recall that $S_{0} = \{1\}$ and for $j \in \mathbb{N},$
  \[
    S_j = \left\{ e^{2\pi ih2^{-j}N^{-1}}~:~h \in (2\mathbb{N}_0+1), h < 2^j \right\}.
  \]
  Also recall the notation that for any $\omega \in S_j$ for some $j > 0,$ $g(\omega)$ is the element of $S_{j-1}$ with maximal argument, which does not exceed the argument of $\omega.$  
  Define the set of fields for $\omega' \in \T,$
  \[
    \EvUU[\omega']
    :=
    \EvU[\omega']
    \cup
    \bigcup_{j=n_1}^\infty 
    \bigcup_{\omega \in S_j}
    \EL[\omega \omega'].
  \]
  Since for $\omega \in S_j,$ we have $|\omega - g(\omega)| \ll N^{-1}2^{-j}$ and $\rho(\omega) - \rho(g(\omega)) = (1.1)^{-j},$ by Proposition~\ref{prop:domination} there is an absolute constant $c>0$ so that
  \begin{align*}
    \Pr\left[ 
      \CUEF \in \EL[\omega] \cap \EL[g(\omega)]^c
    \right]
    &\leq
    \Pr\left[ 
      \CUEF(\omega \zeta_n) - \CUEF(g(\omega)\zeta_n) > 
      \rho(\omega \zeta_n) - \rho(g(\omega)\zeta_n)
    \right] \\
    &\ll
    e^{-c(2/1.1)^{2j}}.
  \end{align*}
  Thus,
  \begin{align*}
    \Pr&\left[ \CUEF+\RWN \in \BS, \CUEF \in \EvUU \right] \\
    &\leq
    \Pr\left[ \CUEF+\RWN \in \BS, \CUEF \in \EvU \right]
    +\sum_{j=\log_2(n_1)}^{\infty}
    \sum_{\omega \in S_j}
    \Pr\left[ \CUEF \in \EL[\omega] \cap \EL[g(\omega)]^c \right] \\
    &\ll
    \Pr\left[ \CUEF+\RWN \in \BS,\CUEF \in \EvU \right]
    +\sum_{j=\log_2(n_1)}^{\infty}
    2^j
    e^{-c(2/1.1)^{2j}} \\
    &\ll
    \Pr\left[ \CUEF+\RWN \in \BS,\CUEF \in \EvU \right]
    +O(e^{-\omega(n)}).
  \end{align*}

  Applying Lemma~\ref{lem:1pUB}, we have by a union bound that if $t = -\tfrac 34 \log n + \tfrac52\log\log n + \corO{\varpi(1)},$
  \[
    \Pr\left[ 
      \CUEF+\RWN \in \cap_{h=0}^{N-1} \BS[\BF][e^{2\pi i h N^{-1}}]
     , 
      \CUEF \in \cup_{h=0}^{N-1} \EvUU[\nu][e^{2\pi i h N^{-1}}]
    \right] = o(1).
  \]
  Hence by density of the set 
  \[
    \cup_{h=0}^N \cup_{j=1}^\infty e^{2\pi i h N^{-1}}S_j \subset \T
  \]
  and almost sure continuity of $\CUEF,$ either 
  $\CUEF+\RWN \not\in \cap_{h=0}^{N-1} \BS[\BF][e^{2\pi i h N^{-1}}]$
  or 
  \[
    \max_{\omega \in \T} \CUEF(\omega \zeta_n) \leq n+t
  \]
  with high probability.  Since $1-|\zeta_n| = \Theta(N^{-1}),$ if we are in the latter case, we are done by Lemma~\ref{lem:deterministic}.

  It remains to show that $\CUEF+\RWN \in \cap_{h=0}^{N-1} \BS[\BF][e^{2\pi i h N^{-1}}]$ with high probability.  We first remove the influence of $\RWN.$  The maximum of $\RWN$ over the circles $|z| = \zeta_k,$ $k=1,2,\dots,n$ is less than $\log n$ with high probability.  
    (In fact, it is of order $\sqrt{\log n}$.)
  Hence, if we define
  \[
    \BF'(i) = i + \log n ,
  \]
  it suffices to show that $\CUEF \in \cap_{h=0}^{N-1} \BS[\BF'][e^{2\pi i h N^{-1}}]$ with high probability.

  We will presently extend the definition of $\rho.$  
  Fix $k \in \mathbb{N}$ with $1 \leq k \leq n_0.$  
  Set, for $\omega\in\T,$
  \[
    \DL[\omega]
    =\left\{ 
    \F(\omega \zeta_k) > \nu'(i) -11 + \rho(\omega \zeta_k)
    \right\}.
  \]
  Also define
  \[
    S_k^j = \left\{ e^{2\pi ih2^{-j}e^{-k}}~:~h \in (2\mathbb{N}_0+1), h < 2^j \right\}.
  \]
  For every point $\omega \zeta_k \in \D$ with $\omega \in S_k^j,$ 
  define 
    \begin{equation}
    \label{eq-rho-defb}
    \rho(\omega \zeta_k) = 1 + (1.1)^{-1} + \dots + (1.1)^{-j}.
  \end{equation}
  We reuse the notation that for any $\omega \in S_k^j$ for some $j > 0,$ $g(\omega)$ is the element of $S_k^{j-1}$ with maximal argument that does not exceed the argument of $\omega.$ 
  Let $p = e^k n_1.$  Then
  \[
    \cup_{j=n_1}^\infty \cup_{h=0}^p e^{2\pi i h p^{-1}} S_k^j 
  \]
  is dense in $\T.$
  By the same dyadic decomposition and the almost sure continuity of $\CUEF$, we have that
 \begin{align*}
   \Pr&\left[ \CUEF \in 
     \cup_{\omega \in \T} \DL[\omega]
   \right] \\
    &\leq
    \Pr\left[ \CUEF \in 
     \cup_{h=0}^{p} \DL[e^{2\pi i h p^{-1}}]
    \right] 
    +\sum_{j=\log_2(n_1)}^{\infty}
    \sum_{\omega \in S_k^j}
    \Pr\left[ \CUEF \in \DL[\omega] \cap \DL[g(\omega)]^c \right] \\
    &\ll
    \Pr\left[ \CUEF \in 
     \cup_{h=0}^{p} \DL[e^{2\pi i h p^{-1}}]
    \right] 
    + e^{-\omega(n)}.
  \end{align*}
  For all $\omega \in \T,$ we have by Proposition~\ref{prop:domination} that $\Pr[\CUEF(\omega \zeta_k) > t] \ll e^{-t^2/k}$ for all $t \geq 0.$  Hence  
  \[
   \Pr\left[ \CUEF \in 
     \cup_{\omega \in \T} \DL[\omega]
   \right]
   \ll
   e^{k}2^{\log n}e^{-2k-\log n} + e^{-\omega(n)}.
  \]
  Summing over $1 \leq k \leq n,$ we conclude that
  \[
    \Pr\left[ 
      \,\exists\,\omega \in \T,\,1 \leq k \leq n~:~
    \CUEF(\omega \zeta_k) > \BF'(k)
    \right]=o(1).
  \]


\end{proof}

\subsection{Lower bound preliminaries}
\label{sec:1p2}
As in the proof of the upper bound, we let $m$ be as in Proposition~\ref{prop:clt}, with $s=2$ and $K=100.$  
We again let $n_0 = \lfloor n - m\log n \rfloor.$  We also recall the collections of fields describing good mesoscopic behavior: for $\omega \in \T$  and $t_0 \in \R,$ define 
\corO{$\BS=\BS(\omega)$ and $\ES=\ES(\omega)$ as the set of fields satisfying}
\begin{align*}
  \BS[\BF][\omega] &= \left\{ \F(\omega\zeta_i) <\BF(i),~\forall~1\leq i \leq n_0
  \right\} \text{ and } \\
  \ES{t_0}[\omega] &= \left\{ n_0 + t_0 - 1 < \F(\omega\zeta_{n_0}) < n_0 + t_0 \right\}.
\end{align*}

Unlike the upper bound, we must include some barrier information at microscopic scales, although we can not afford to use as dense a set of constraints as in $\BS.$  Hence, we let $\{\eta_n\}$ be a slowly growing sequence
to be specified later. ($\eta_n$ will always be chosen 
so that $\eta_n=o(\log n)$.)
When no confusion occurs, we write $\eta$ instead of $\eta_n$. Let 
\begin{equation}
  \label{eq-defb}
  \corO{b_0=n_0, \quad 
  b_j = n - (\eta -j)\lfloor(m\log n)/ \eta \rfloor. 
}
\end{equation}

For any angle $\theta \in \R,$ let $\ROT[\theta]$ be the map that rotates the hyperbolic disk around $\zeta_{n_0}$ by the angle $\theta.$  This map can be expressed as the composition
\[
  \ROT[\theta] = T_{\zeta_{n_0}}^{-1}
  \circ \{z \mapsto e^{i\theta}z\} \circ T_{\zeta_{n_0}},
\]
in which form it is clearly a hyperbolic isometry.  Hence, the field
\begin{equation}
  \label{eq:invariance}
  \hat \GF(z) := \GF(T_{\zeta_{n_0}}(z)) - \GF(\zeta_{n_0}),
\end{equation}
has the same distribution as $\GF$ (which can be seen by checking covariances and that $\hat \GF(0) = 0$ almost surely).

We use this invariance to our advantage in describing the microscopic behavior of $\CUEF.$  
For $\omega \in \T,\theta\in\R,$ and $p \in \mathbb{N},$ define 
\corO{$\BL=\BL(\theta,\omega)$ and $\ELL=\ELL(\theta,\omega)$ as the set of fields satisfying}
\begin{align*}
  \BL{t_0}[\theta,\omega] &= \{ |\F(\omega\ROT[\theta](\zeta_{b_i})) - b_{i-1} - t_0| \leq \eta^{-1}(\log n),~\forall~1\leq i \leq \eta-1 \} \text{ and} \\
  \ELL{p,t_0}[\theta,\omega] &= \{ |\F(\omega\ROT[\theta](\zeta_{n-p})) - b_{\eta-1}-p - t_0|  \leq  \eta^{-1}(\log n)\}.
\end{align*}
We will choose $p$ independent of $n$.
 $\BL{t_0}[\theta,\omega]$ and $\ELL{p,t_0}[\theta,\omega]$
 represent the microscopic barrier and endpoint events along one possible continuation of the ray $\left\{ \zeta_{i},1 \leq i \leq n_0 \right\}.$  Provided that $|\theta| \ll 1,$ the entire collection 
\[
  \left\{ \zeta_{i},1 \leq i \leq n_0 \right\} \cup \left\{ \ROT(\zeta_{b_i}), 1 \leq i \leq \eta \right\}
\]
can be checked to be within uniformly bounded distance of the ray that connects $0$ to the endpoint $\ROT(\zeta_n)$ (see Figure~\ref{fig:Q}).  More useful for our purposes will be the following estimate
\begin{lemma}
  There is an absolute constant $\Xi \in (0,\pi)$ so that for all $|\theta| < \Xi$ and all $n_0 \in \mathbb{N}$, the set $\left\{ \ROT[\theta](\zeta_j), j \geq n_0 \right\}$ is contained in the wedge $\left\{ z \in \D : |\arg z| \leq \tfrac 12 e^{-n_0} \right\}.$
  \label{lem:lemonslice}
\end{lemma}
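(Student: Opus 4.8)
The plan is to reduce the statement to a computation in the upper half-plane model, where the map $\ROT[\theta]$ becomes a Euclidean rotation about a point, and then to estimate the argument of $\ROT[\theta](\zeta_j)$ directly. First I would recall that $\ROT[\theta] = T_{\zeta_{n_0}}^{-1}\circ\{z\mapsto e^{i\theta}z\}\circ T_{\zeta_{n_0}}$, and that $T_{\zeta_{n_0}}$ is the hyperbolic isometry of $\D$ carrying $\zeta_{n_0}$ to $0$. The key geometric fact is that the images $\{\zeta_j : j\geq n_0\}$ form a hyperbolic ray emanating from $\zeta_{n_0}$ (along the positive real axis, away from $0$), so $T_{\zeta_{n_0}}(\zeta_j)$ for $j\geq n_0$ traces out a geodesic segment through $0$, namely a sub-arc of a diameter of $\D$. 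Rotating this segment by $\theta$ and mapping back by $T_{\zeta_{n_0}}^{-1}$ produces an arc of a circle (or line) through $\zeta_{n_0}$, and I need to show this arc stays in the thin wedge $\{|\arg z|\leq \tfrac12 e^{-n_0}\}$ provided $|\theta|<\Xi$ for a suitable absolute constant $\Xi$.

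The main step is a quantitative estimate. Since $\zeta_{n_0}$ lies on the positive real axis with $1-\zeta_{n_0}\asymp e^{-n_0}$ (because $\dH(0,\zeta_{n_0})=n_0$ gives $1-|\zeta_{n_0}| = \Theta(e^{-n_0})$), the point $\zeta_{n_0}$ is within Euclidean distance $O(e^{-n_0})$ of $1$. The curve $\{\ROT[\theta](\zeta_j):j\geq n_0\}$ runs from $\zeta_{n_0}$ out toward the boundary point $\ROT[\theta](1)$, which is the image under $T_{\zeta_{n_0}}^{-1}$ of $e^{i\theta}$ times a boundary point of $\D$; one checks $|\arg \ROT[\theta](1)| \leq C|\theta| e^{-n_0}$ for an absolute constant $C$, because differentiating $T_{\zeta_{n_0}}^{-1}$ at a boundary point of $\D$ near $1$ contracts angular displacements by a factor $\asymp e^{-n_0}$. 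A convexity/monotonicity argument — the arc is an arc of a Euclidean circle passing through $\zeta_{n_0}$ near $1$, so $\arg$ is controlled along it by its value at the two endpoints — then bounds $|\arg \ROT[\theta](\zeta_j)|$ uniformly in $j\geq n_0$ by $C'|\theta|e^{-n_0}$. Choosing $\Xi$ so that $C'\Xi \leq \tfrac12$ gives the claim. Alternatively, one can argue purely through the hyperbolic metric: the whole curve stays within bounded hyperbolic distance of the geodesic ray through $0$ and $\zeta_{n_0}$, and a point at bounded hyperbolic distance from that ray, at hyperbolic distance $\geq n_0$ from $0$, has argument $O(e^{-n_0})$ with an implied constant depending only on the bound on $\theta$.

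The hardest part will be getting the uniformity in $n_0$ correct while keeping $\Xi$ absolute: one must make sure that the contraction factor in the derivative of $T_{\zeta_{n_0}}^{-1}$ near the relevant boundary point is genuinely $\Theta(e^{-n_0})$ and not merely $O(1)$, and that the constant relating $|\theta|$ to the resulting angular displacement does not degrade as $n_0\to\infty$. This is where an explicit computation with the Möbius map $z\mapsto \tfrac{z+\zeta_{n_0}}{1+z\zeta_{n_0}}$ (the inverse of $T_{\zeta_{n_0}}$, since $\zeta_{n_0}\in\R$) is cleanest: one writes $w = e^{i\theta} T_{\zeta_{n_0}}(\zeta_j)$, notes $T_{\zeta_{n_0}}(\zeta_j)\in(-1,1)$ real, and computes $\arg\big(\tfrac{w+\zeta_{n_0}}{1+w\zeta_{n_0}}\big)$ directly, using $1-\zeta_{n_0}\asymp e^{-n_0}$ to extract the $e^{-n_0}$ factor. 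The monotonicity in $j$ then follows because $T_{\zeta_{n_0}}(\zeta_j)$ increases monotonically in $j$ from $0$ to $1$, and one checks the argument is maximized at an endpoint. Bounding the error terms uniformly finishes the proof.
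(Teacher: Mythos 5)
Your argument is correct, but it follows a different route from the paper. The paper's proof is purely synthetic: it applies the hyperbolic law of sines to the triangle with vertices $0$, $\zeta_{n_0}$, $\ROT[\theta](\zeta_j)$, whose angle at $\zeta_{n_0}$ is $\pi-\theta$ and whose side from $\zeta_{n_0}$ to $\ROT[\theta](\zeta_j)$ has length $j-n_0$; this gives $\sin(\psi_j)\,\sinh(\dH(0,\ROT[\theta](\zeta_j)))=\sin(\theta)\sinh(j-n_0)$ for the angle $\psi_j$ at the origin, and since the ratio $\sinh(j-n_0)/\sinh(\dH(0,\ROT[\theta](\zeta_j)))$ is increasing in $j$ with limit $O(e^{-n_0})$, one gets $\sin\psi_j\ll e^{-n_0}\sin\theta$ uniformly and chooses $\Xi$ accordingly. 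Your route instead computes everything through the explicit M\"obius map: writing $s=T_{\zeta_{n_0}}(\zeta_j)=\tanh((j-n_0)/2)\in[0,1)$ and $c=\zeta_{n_0}$, one finds
\[
\bigl|\arg \ROT[\theta](\zeta_j)\bigr|
=\Bigl|\arg\frac{e^{i\theta}s+c}{1+e^{i\theta}sc}\Bigr|
\leq \frac{s\,|\sin\theta|\,(1-c^2)}{(c+s\cos\theta)(1+sc\cos\theta)}
\ll e^{-n_0}|\theta|,
\]
since $1-c^2\asymp e^{-n_0}$ and $c$ is bounded below; this is exactly the ``extract the $e^{-n_0}$ factor'' step you describe, and it closes the proof with explicit constants. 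Of the three framings you offer, this last one is the one you should actually write down: the ``convexity of $\arg$ along a circular arc, controlled by its endpoints'' heuristic is shakier as stated (one endpoint is $\zeta_{n_0}$ with argument $0$, and maximality at an endpoint needs justification), whereas the direct computation makes uniformity in $n_0$ and $j$ manifest without any monotonicity claim. What the paper's approach buys is brevity and coordinate-freeness; what yours buys is elementariness (no hyperbolic trigonometry) and explicit, trackable constants for $\Xi$.
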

\begin{proof}
Let $\theta \in [0,\pi]$ and $j \in \N$ with $j \geq n_0.$  Consider the triangle formed by the points $\left\{ 0, \zeta_{n_0}, \ROT(\zeta_j) \right\}$,
 and let $\psi_j$ be the angle at $0.$  
  By the hyperbolic law of sines
  \[
    \frac{\sin(\psi_j)}{\sinh(j-n_0)}
    =\frac{\sin(\pi-\theta)}{\sinh(j)}
  \]
  \corO{Thus} $\psi_j$ is monotone increasing in $j,$ hence taking $j \to \infty,$ we have
  \[
    \sin(\psi_j) \leq e^{-n_0}\sin(\pi-\theta)=e^{-n_0}\sin(\theta)
  \]
  Hence, by adjusting $\Xi$ appropriately, we can ensure $\psi_j < \tfrac 12 e^{-n_0}.$   A symmetric argument holds for $\theta \in [-\pi, 0].$
\end{proof}

Moreover, the covariance structure along the quasi-geodesics
\(
  \left\{ \zeta_{i},1 \leq i \leq n_0 \right\} \cup \left\{ \ROT(\zeta_{b_i}), 1 \leq i \leq \eta \right\}
\)
still can be well-approximated by branching random walk:
\begin{lemma}
  For $|\theta_1|, |\theta_2| < \Xi,$
  uniformly in $h,j \in \mathbb{N}$ with $h,j \geq n_0$
  \[
    \Exp \GF(\ROT[\theta_1](\zeta_h))\GF(\ROT[\theta_2](\zeta_j))
    =\tfrac{1}{2}\min\{-\log|\sin\tfrac{\theta_1-\theta_2}{2}|,h-n_0,j-n_0\} + \tfrac12n_0 + 
    \xi,
  \]
  with
  \[
      \xi = \frac{1}{2}\left( \log|\cos(\tfrac{\theta_1}{2})\cos(\tfrac{\theta_2}{2})|\right) + O(e^{-j+n_0}+e^{-h+n_0}).
  \]
In particular, $|\xi| \ll 1$ uniformly.
  Also, uniformly in $h < n_0,$  $j \geq n_0$ and $|\theta|<\Xi$,
  \[
    \Exp \GF(\zeta_h)\GF(\ROT[\theta](\zeta_j))
    =\tfrac{h-\log 2}{2} + \log|\cos(\tfrac{\theta}{2})| + O(e^{-j+n_0}+e^{-n_0+h}+e^{-h}).
  \]
  \label{lem:branch2}
\end{lemma}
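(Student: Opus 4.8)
The plan is to follow the proof of Lemma~\ref{lem:branch} one level down in the hyperbolic plane: reduce each covariance to three hyperbolic distances via \eqref{eq:covcosh}, estimate those distances by the hyperbolic law of cosines (and, for the one distance not touching $0$, by Lemma~\ref{lem:branch} itself), and finish with the expansion $\log\cosh(x/2)=x/2-\log 2+O(e^{-x})$, uniform for $x\geq 0$. The one elementary geometric input is that $T_{\zeta_{n_0}}$ is a hyperbolic isometry carrying $\zeta_{n_0}$ to $0$ and preserving the real diameter, so that for $\ell\geq n_0$ it sends $\zeta_\ell$ (which lies beyond $\zeta_{n_0}$ on the positive real axis) to the positive real point at distance $\ell-n_0$ from $0$; that is, $T_{\zeta_{n_0}}(\zeta_\ell)=\zeta_{\ell-n_0}$, and hence $\ROT[\theta](\zeta_\ell)=T_{\zeta_{n_0}}^{-1}(e^{i\theta}\zeta_{\ell-n_0})$.

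For the first identity I would compute the three distances entering \eqref{eq:covcosh}. Applying the isometry $T_{\zeta_{n_0}}$ and then a rotation, $\dH(\ROT[\theta_1](\zeta_h),\ROT[\theta_2](\zeta_j))=\dH(\zeta_{h-n_0},e^{i(\theta_2-\theta_1)}\zeta_{j-n_0})$, which by Lemma~\ref{lem:branch} equals $(h-n_0)+(j-n_0)-2\min\{-\log|\sin\tfrac{\theta_2-\theta_1}{2}|,\,h-n_0,\,j-n_0\}+O(1)$, with the sharpened error $O(e^{-k}|\theta_2-\theta_1|^{-1})$ when $k=\min\{h,j\}-n_0>-\log|\tfrac{\theta_2-\theta_1}{2}|$. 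For $\dH(0,\ROT[\theta_i](\zeta_\ell))$ I would use the law of cosines on the geodesic triangle $\{0,\zeta_{n_0},\ROT[\theta_i](\zeta_\ell)\}$: its sides at $\zeta_{n_0}$ have lengths $n_0$ and $\ell-n_0$, and the angle between the geodesic towards $0$ and the geodesic towards $\ROT[\theta_i](\zeta_\ell)$ (the $\theta_i$-rotation of the geodesic towards $\zeta_\ell$, which points away from $0$) is $\pi-\theta_i$; since $|\theta_i|<\Xi$, using $2\cosh x=e^x(1+O(e^{-2x}))$ this yields $\dH(0,\ROT[\theta_i](\zeta_\ell))=\ell+2\log|\cos\tfrac{\theta_i}{2}|+O(e^{-2n_0}+e^{-2(\ell-n_0)})$. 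Substituting the three estimates into \eqref{eq:covcosh} and expanding $\log\cosh$, all the parts linear in $h,j,n_0$ telescope and one is left with $\tfrac12\min\{-\log|\sin\tfrac{\theta_1-\theta_2}{2}|,\,h-n_0,\,j-n_0\}+\tfrac12 n_0$ plus the stated $O(1)$ correction $\xi$, the refined exponential errors being inherited from those in the three distances.

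The second identity is identical in spirit. With $h<n_0\leq j$, the relevant triangle for $\dH(\zeta_h,\ROT[\theta](\zeta_j))$ is $\{\zeta_h,\zeta_{n_0},\ROT[\theta](\zeta_j)\}$, whose sides of lengths $n_0-h$ and $j-n_0$ meet at $\zeta_{n_0}$ at angle $\pi-\theta$; the law of cosines gives $\dH(\zeta_h,\ROT[\theta](\zeta_j))=(j-h)+2\log|\cos\tfrac\theta2|+O(e^{-2\min\{n_0-h,\,j-n_0\}}+e^{-(j-h)})$. Combining this with $\dH(0,\zeta_h)=h$ and the estimate for $\dH(0,\ROT[\theta](\zeta_j))$ above in \eqref{eq:covcosh}, and again expanding $\log\cosh$, yields the claim, the error being absorbed into $O(e^{-(j-n_0)}+e^{-(n_0-h)}+e^{-h})$.

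The only genuine work is the regime-by-regime bookkeeping certifying that the errors really have the stated uniform, exponentially small form: in each application of the law of cosines one must track which of the two $\cosh$ terms dominates, and this flips depending on whether $|\theta|$ is large or small compared with $e^{-(\min\{h,j\}-n_0)}$ and on whether $h$ (resp.\ $j$) is close to or much larger than $n_0$. This is exactly the ``straightforward case-by-case analysis'' invoked in the proof of Lemma~\ref{lem:branch}; I expect no conceptual obstacle, and in particular the bound $|\xi|\ll 1$, which is all that is used in the sequel, already follows from the crude $O(1)$ forms of the three distance estimates.
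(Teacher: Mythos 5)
Your argument is essentially the paper's, just organized differently: the paper adds and subtracts $\GF(\zeta_{n_0})$ and expands the covariance bilinearly using the isometry-invariant field $\hat\GF$ of \eqref{eq:invariance}, whereas you substitute the three hyperbolic distances directly into \eqref{eq:covcosh}; the geometric inputs (the law of cosines on the triangles $\{0,\zeta_{n_0},\ROT[\theta](\zeta_\ell)\}$ and $\{\zeta_h,\zeta_{n_0},\ROT[\theta](\zeta_j)\}$, and Lemma~\ref{lem:branch} after pulling back by $T_{\zeta_{n_0}}$) are identical, so the two proofs are interchangeable.

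One point you should not gloss over: for the second display your computation does \emph{not} ``yield the claim'' as stated. With your three estimates, the $2\log|\cos\tfrac\theta2|$ appearing in $\dH(0,\ROT[\theta](\zeta_j))$ and in $\dH(\zeta_h,\ROT[\theta](\zeta_j))$ cancel in $\tfrac12\bigl[\dH(0,\zeta_h)+\dH(0,\ROT[\theta](\zeta_j))-\dH(\zeta_h,\ROT[\theta](\zeta_j))\bigr]$, leaving $\tfrac{h-\log 2}{2}+O(e^{-j+n_0}+e^{-(n_0-h)}+e^{-h})$ with no $\log|\cos\tfrac\theta2|$ term. Your answer is in fact the correct one: sending $n_0,j\to\infty$ with $h$ fixed, one checks directly from \eqref{eq:gfcov} that $\ROT[\theta](\zeta_j)\to 1$ regardless of $\theta$, so the covariance tends to $-\tfrac12\log(1-\tanh(h/2))=\tfrac{h-\log 2}{2}+O(e^{-h})$, independent of $\theta$. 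The paper's version arises from a sign slip in evaluating $\Exp[\GF(\zeta_{n_0-h})\GF(e^{i(\pi-\theta)}\zeta_{j-n_0})]$ via Lemma~\ref{lem:branch}: that covariance is $\tfrac12\min\{-\log|\cos\tfrac\theta2|,\dots\}-\tfrac{\log2}{2}=-\tfrac12\log|\cos\tfrac\theta2|-\tfrac{\log2}{2}+O(\cdot)$, not $+\tfrac12\log|\cos\tfrac\theta2|-\tfrac{\log2}{2}$. (Similarly, in the first display both your computation and the paper's proof leave a residual $-\tfrac12\log 2$ absent from the stated $\xi$.) Since $|\theta|<\Xi$ makes all these terms uniformly $O(1)$, and the lemma is only ever invoked to $O(1)$ additive accuracy (and the offending term cancels in the increments used in Lemma~\ref{lem:1pLB}), nothing downstream is affected; but your write-up should record the formula your computation actually produces rather than asserting agreement with the displayed right-hand side.
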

\begin{proof}
  For $j \geq n_0,$
  \(
  \ROT[\theta](\zeta_j) = T_{\zeta_{n_0}}( e^{i\theta} \zeta_{j-n_0}).
  \)
  In terms of $\hat \GF,$ this means that for $j \geq n_0,$ 
  \[
    \GF(\ROT[\theta](\zeta_j)) - \GF(\zeta_{n_0})
    =\hat\GF(e^{i\theta} \zeta_{j-n_0}).
  \]
  By \eqref{eq:covcosh}, for $j \geq n_0,$
  \[
    \Exp\left[ \GF(\zeta_{n_0}) \hat\GF(e^{i\theta} \zeta_{j-n_0})\right]
    =\frac{1}{2}
    \log \left(\frac 
    { \cosh( {\dH(0,\ROT[\theta](\zeta_j))}{2}^{-1} )}
    {\cosh( {\dH(\zeta_{n_0},\ROT[\theta](\zeta_j))}{2}^{-1})\cosh( {\dH(0,\zeta_{n_0})}{2}^{-1} )}
    \right).
  \]
  Applying the hyperbolic law of cosines to the triangle with vertices $\left\{ 0, \zeta_{n_0}, \ROT[\theta](\zeta_j) \right\}$ at the angle at $\zeta_{n_0},$ we get that
  \[
    \cosh(\dH(0,\ROT(\zeta_j))) = 
    \cosh(n_0)\cosh(j-n_0)
    -
    \sinh(n_0)\sinh(j-n_0)\cos(\pi-\theta).
  \]
  Hence,
  \[
    \dH(0,\ROT(\zeta_j)) = j + \log\left( \tfrac{1 + \cos(\theta)}{2} \right) + O(e^{-j+n_0} + e^{-n_0}),
  \]
  uniformly in $|\theta| < \Xi$ and $n_0,j$ with $n_0 \leq j.$
  From here, it follows that
  \begin{equation}
    \label{eq:gfgf}
    \Exp\left[ \GF(\zeta_{n_0}) \hat\GF(e^{i\theta} \zeta_{j-n_0})\right] = 
    \tfrac14\log(\tfrac{1+\cos(\theta)}{2}) 
    +\tfrac12\log 2
    +O(e^{-j+n_0}+e^{-n_0}).
  \end{equation}
  To prove the first displayed line of the lemma, we add and subtract $\GF(\zeta_{n_0}),$ so that
  \begin{align*}
    \Exp\left[  \GF(\ROT[\theta_1](\zeta_h))\GF(\ROT[\theta_2](\zeta_j))\right]
    =
    &\Exp\left[  \hat\GF(e^{i\theta_1}(\zeta_{h-n_0}))\hat\GF(e^{i\theta_2}(\zeta_{j-n_0}))\right]
    + \Exp\left[  \GF(\zeta_{n_0})^2 \right]\\
    +&\Exp\left[  \GF(\zeta_{n_0})\hat\GF(e^{i\theta_2}(\zeta_{j-n_0}))\right]
    +\Exp\left[  \hat\GF(e^{i\theta_1}(\zeta_{h-n_0}))\GF(\zeta_{n_0})\right].
  \end{align*}
  To the first two summands on the right hand side, we apply Lemma~\ref{lem:branch} and \eqref{eq:varcosh} respectively.  To the second two, we apply \eqref{eq:gfgf}, which gives the first conclusion of the lemma.
  
  For the second conclusion of the lemma, we start by writing 
  \begin{align*}
    \Exp\left[\GF(\zeta_h)\GF(\ROT[\theta](\zeta_j))\right]
    =
    &\Exp\left[  \hat\GF(e^{i\pi}(\zeta_{n_0-h}))\hat\GF(e^{i\theta}(\zeta_{j-n_0}))\right]
    + \Exp\left[  \GF(\zeta_h)\GF(\zeta_{n_0}) \right]\\
    +&\Exp\left[  \GF(\zeta_{n_0})\hat\GF(e^{i\theta}(\zeta_{j-n_0}))\right].
  \end{align*}
  To the expectation with only $\hat\GF,$ we use distributional invariance of the field to replace $\hat\GF$ with $\GF,$ and then we additionally reflect the field to get
  \begin{align*}
    \Exp\left[\GF(\zeta_h)\GF(\ROT[\theta](\zeta_j))\right]
    =
    &\Exp\left[\GF(\zeta_{n_0-h})\GF(e^{i(\pi-\theta)}\zeta_{j-n_0})\right] 
    + \Exp\left[  \GF(\zeta_h)\GF(\zeta_{n_0}) \right]\\
    +&\Exp\left[  \GF(\zeta_{n_0})\hat\GF(e^{i\theta}(\zeta_{j-n_0}))\right]
  \end{align*}
  Applying Lemma~\ref{lem:branch} to the first expectation on the right hand side, \eqref{eq:covcosh} to the second, and \eqref{eq:gfgf} to the third, we have
  \begin{align*}
    \Exp\left[\GF(\zeta_h)\GF(\ROT[\theta](\zeta_j))\right]
    =
    &\tfrac14\log(\tfrac{1-\cos(\pi-\theta)}{2}) + \tfrac{h}{2} - \log 2
    +O(e^{-h} + e^{-j+n_0} + e^{-n_0+h})
    \\
    +&\tfrac14\log(\tfrac{1+\cos(\theta)}{2}) + \tfrac12\log 2 + O(e^{-n_0} + e^{-j+n_0}),
  \end{align*}
  which after simplifying gives the statement in the lemma.
\end{proof}

We now define 
\begin{equation}
  \Ev{\BF,p,t_0}[\omega] =  \bigcup_{\substack{ h \in \mathbb{Z} \\ |h| < \Xi e^{n-n_0}}}\BS[\BF][\omega] \cap \ES[\omega] \cap \BL[he^{-n+n_0},\omega] \cap \ELL{p,t_0}[he^{-n+n_0},\omega]
  , 
  \label{eq:Ev}
\end{equation}
the event to which we will apply the second moment method.  
As in the upper bound, when $\omega=1,$ we drop it from the notation, so that $\Ev = \Ev[1].$ For $\BL[\theta,\omega]$ and $\ELL[\theta,\omega],$ we suppress the second argument.

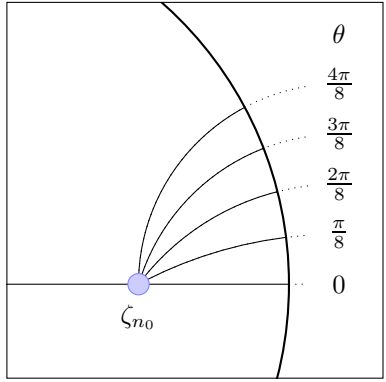
\begin{figure}
    \begin{tikzpicture}[scale=5]
        \clip[draw] (0.25, -0.25) rectangle (1.25, 0.75);
        \draw[thick] (0, 0) circle (1);
        \draw (0,0) -- (1,0);

        \begin{scope}
            \clip (0.25, 0) rectangle (1.05,0.75);
            \draw[dotted] (1.1333,0) circle (0.5333);
            \draw[dotted] (1.1333,-0.156) circle (0.555);
            \draw[dotted] (1.1333,-0.40) circle (0.6666);
            \draw[dotted] (1.1333,-1.0) circle (1.1333);
            \draw[dotted] (1,0) -- (1.1333,0);
        \end{scope}
        \begin{scope}
            \clip (0.25, 0) rectangle (1.25,0.75);
            \clip (0, 0) circle (1);
            \draw (1.1333,0) circle (0.5333);
            \draw (1.1333,-0.156) circle (0.555);
            \draw (1.1333,-0.40) circle (0.6666);
            \draw (1.1333,-1.0) circle (1.1333);
        \end{scope}

        \draw (1.1333,0.6666) node {$\theta$};
        \draw (1.1333,0.5333) node {$\tfrac{4\pi}{8}$};
        \draw (1.1333,0.4000) node {$\tfrac{3\pi}{8}$};
        \draw (1.1333,0.2666) node {$\tfrac{2\pi}{8}$};
        \draw (1.1333,0.1333) node {$\tfrac{\pi}{8}$};
        \draw (1.1333,0.0) node {$0$};

        \draw (0.6,0) node[shape=circle,draw=blue!50,fill=blue!20,inner sep=1mm,label=-90:$\zeta_{n_0}$]{};
    \end{tikzpicture}
    \caption{Depiction of the quasi-geodesics $[0,\zeta_{n_0}] \cup \ROT(\R_+)$ used in the event $\Ev,$ for various values of $\theta.$}
    \label{fig:Q}
\end{figure}

We will show \corO{that}
 some $\Ev[\omega]$ occurs with high probability with $\omega$ ranging over a set of equally spaced $\omega$ of cardinality $e^{n_0}.$  This we do by a second moment estimate.  To gain enough independence to push this through, we must effectively work on a sub-problem that discards the correlations that arise in $\CUEF(z)$ for $\dH(0,z) \ll 1$ (in fact, due to technical losses, we will need to allow this distance to grow, but still as
$o(\log n)$).  Hence for a parameter $r \in \N$ we define new fields $\CUEFr, \GF_r, \WN_r$ by the formula
\begin{equation}
    \label{eq:Fr}
    \F_r(z) = 
    \begin{cases}
      0 & \text{if } \dH(z,0) < r, \\
      \F(z) - \F(\zeta_r e^{2\pi i h [e^{n_0}]^{-1}}) & \text{if } \dH(z,0) \geq r,~h=[ \tfrac{\arg(z)[e^{n_0}]}{2\pi} + \tfrac 12],
    \end{cases}   
\end{equation}
where $[\cdot]$ denotes integer part.
With this convention, the fields $\CUEFr$ and $\GF_r$ are invariant in distribution under rotations of the disk by angles $2\pi h [e^{n_0}]^{-1},$ for $h \in \mathbb{Z}.$

Fix $0 < \delta < \frac 14.$ 
We will choose
  \begin{equation}
    \label{eq-def-r}
    r = \left[3(\log n_0)^{1-\delta}\right],\quad t_0=-\frac34 \log n,
  \end{equation}
  Also, throughout the proof of the lower bound for $\CUEF$, we can now work on any choice of barrier we so please, as we will not need to rule out the event that $\CUEF$ exceeds it. So we choose one which is more nearly optimal than in the upper bound, specifically
\begin{equation}
	\label{eq-nulb}
  \BF(i) = \frac{i-r}{n_0}(n_0 - \tfrac34 \log n_0) +  (\log n_0)^{1 - \delta} 
\end{equation}
for all $r \leq i \leq n_0.$  

Finally, because Proposition~\ref{prop:clt} allows us to compare $\CUEF+\WN$ to $\GF + \WN$ and not $\CUEF$ to $\GF,$ we will actually prove a lower bound for the field $\CUEF + \WN$ first.  As it will turn out, having shown a complete lower bound for the maximum of this field on an appropriate set of cardinality $\Theta(N)$, removing the effect of $\WN$ is possible knowing only a few points $\CUEF$ are large.  In effect, we use an upper bound on $\CUEF$ to show that the lower bound for $\CUEF+\WN$ transfers to $\CUEF.$ 

Our first task will be to estimate $\Pr\left( \CUEFr + \WN_r \in \Ev \right).$  
By design, a nearly optimal upper bound for this probability follows from the trivial estimate
\[
  \Pr\left( \CUEFr + \WN_r \in \Ev \right)
  \leq 
  \Pr\left( \CUEFr + \WN_r \in \BS \cap \ES\right)
\]
and Lemma~\ref{lem:ballotub} and Proposition \ref{prop:clt}.  
For the lower bound, we would like to use a conditional second moment method argument.  That is, we define a set $\Theta := e^{-n+n_0}\mathbb{Z} \cap (-\Xi,\Xi)$ and define
\[
  \tilde{Z} =  \sum_{\theta \in \Theta}\one[ \CUEFr + \WN_r \in \BS[\BF] \cap \ES \cap \BL(\theta) \cap \ELL(\theta)]. 
\]
\corO{Define the event}
\corO{\begin{equation}
\label{eq-rev1}
\Meso=\{\CUEFr + \WN_r \in \BS \cap \ES\}.
\end{equation}}
Then, we would estimate
\[
  \Pr\left[ \CUEFr + \WN_r \in \Ev | \Meso \right]
  =
  \Pr\left[ \tilde{Z} \geq 1 | \Meso \right]
  \geq \frac{
    (\Exp[ \tilde{Z} | \Meso ])^2
  }{
    \Exp[ \tilde{Z}^2 | \Meso ]
  }.
\]

One of the subtleties of this strategy is that to get this probability going to $1,$ one basically needs that for most pairs $(\theta_1,\theta_2)$ from $\Theta$
\begin{align*}
  &\Pr[\CUEFr + \WN_r
    \in \BL(\theta_1)
    \cap \BL(\theta_2)
    \cap \ELL(\theta_1)
  \cap \ELL(\theta_2)    | \Meso] \\
  =&\Pr[\CUEFr + \WN_r
    \in \BL(\theta_1) 
  \cap \ELL(\theta_1)    | \Meso] \\
  \cdot&\Pr[\CUEFr + \WN_r
    \in \BL(\theta_2) 
  \cap \ELL(\theta_2) | \Meso](1+o(1)).
\end{align*}
In the case of branching random walk, this is achieved using actual independence of the two events, once a suitably small common ancestral tree is discarded.  In our case, no such independence is available.  Moreover, the need for this accuracy makes introducing and removing biasing terms, as we do in the proofs of Lemmas~\ref{lem:dyadic} and \ref{lem:1pUB}, very expensive as we incur a constant multiplicative loss at every such stage.

We solve this problem by changing the second moment formalism.  
First, for any finite bias term $\Bias,$ and any cylinder function $\phi$,
	set
	\begin{equation}
	  \label{eq-FB}
    \FMC{\Bias}[\phi(\CUEFr,\WN_r)] = \frac{\Exp \left[\one[{\BS\cap\ES}](\CUEFr+\WN_r) e^{\Bias{\CUEFr}}\phi(\CUEFr,\WN_r)\right]}
    {\Exp \left[\one[{\BS\cap\ES}](\CUEFr+\WN_r) e^{\Bias{\CUEFr}}\right]}.
  \end{equation}
We will drop the dependence of the notation on $\Bias$ when $\Bias{\F} = 2\F(\zeta_{n_0}),$ as this is how we will typically use it.
Next, we define for any 
	real
$\theta$
with $|\theta| \leq \Xi$ and any $p \in \mathbb{N},$
\begin{equation}
  \label{eq-defY}
  \MFI(\theta) = e^{2\CUEFr(\ROT[\theta](\zeta_{n-p})) - 2\CUEFr(\ROT[\theta](\zeta_{b_1}))} \one[ \CUEFr + \WN_r \in \BL(\theta) \cap \ELL(\theta)].
\end{equation}
In terms of this replacement for the indicator, we form the biased counting variable
\[
  Z=  \sum_{\theta \in \Theta} \MFI(\theta). 
\]

We again want to show that this variable is non-negative, as this implies one of the indicated events holds.  Applying Cauchy Schwarz,
\begin{equation}
  \label{eq:2mm}
  \FMC[ \one[Z > 0]]
  \geq \frac
  { \bigl(\sum_{\theta}\FMC\left[ \MFI(\theta) \right]\bigr)^2}
  {\sum_{\theta_1,\theta_2}\FMC\left[\MFI(\theta_1)\MFI(\theta_2)\right]},
\end{equation}
where in both summations, $\theta$ ranges over 
\(
\Theta.
\)

Estimating $\FMC$ requires that we are able to evaluate the probability of $\Meso$ under various exponential biases.  Using Proposition~\ref{prop:clt}, we can reduce this to the same question about $\GF,$ which is still a nontrivial estimate.  Using Gaussian comparison theorems, we show:
\begin{lemma}
     Let $\mu: \D \to \R$ be given by $\mu(z) = 2\Exp[\GF(z)\GF_r(\zeta_{n_0})].$  
    Let $F = \one[{\BS \cap \ES}].$  
    There is a sequence $\{\rho_n\}$
    going to $0$ as $n \to \infty$ so that the following holds.  For any $C>0$ there is 
    a $\kappa_0(C)$ sufficiently large so that for all $\mu'(z) : \D \to \R$ with $|\mu(z)-\mu'(z)| \leq C$ and for all $n \geq \kappa_0(C),$
  \[
    \biggl|\frac{
      \Exp\left[ F(\GF_r +\WN_r + \mu') \right]
    }{
      \Exp\left[ F(\GF_r +\WN_r + \mu) \right]
    }
    - 1 \biggr| \leq \rho_n.
  \]
  \label{lem:ratio}
\end{lemma}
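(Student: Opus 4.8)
The plan is to identify both expectations with the one-dimensional barrier probabilities $p_{B,G}$ studied in Appendix~\ref{app-barrier1}, and then to show that an $O(1)$ change of drift perturbs the barrier and endpoint parameters of that problem by only $O(1)$, which changes $p_{B,G}$ by a factor $1+o(1)$ because \eqref{eq-nulb} and \eqref{eq-def-r} leave a slack of order $(\log n_0)^{1-\delta}$ between the constrained range of the field and the barrier $\BF$ near both ends of the ray. Concretely, as in the proof of Lemma~\ref{lem:ballotub} set $G(i)=\sqrt2\left(\GF_r(\zeta_i)+\WN_r(\zeta_i)\right)$; by Lemma~\ref{lem:branch} and the remark following it the covariance of $\left(G(i)\right)_{r\le i\le n_0}$ differs from that of a standard Gaussian random walk by uniformly bounded additive errors, with $\WN_r$ contributing an independent Gaussian at each site, so $G$ is exactly of the type treated in Appendix~\ref{app-barrier1}. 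For a drift $\mu''\in\{\mu,\mu'\}$, writing the event $\left\{\GF_r+\WN_r+\mu''\in\BS\cap\ES\right\}$ in terms of $G$ and absorbing the deterministic drift as in Lemma~\ref{lem:ballotub} produces a barrier event with a barrier profile $h''$ and an endpoint window centred at $t''$, each an affine function of $\BF$, $t_0$ and $\mu''$. From $|\mu(\zeta_i)-(i-r)|\ll1$ (Lemma~\ref{lem:branch2}) one sees that the parameter vectors $(h,t)$ and $(h',t')$ coming from $\mu$ and $\mu'$ both lie in the admissible range of Appendix~\ref{app-barrier1} — the range that \eqref{eq-nulb} and $t_0=-\tfrac34\log n$ were designed to produce — and that $\|(h',t')-(h,t)\|_\infty\le 2C$ since $|\mu'-\mu|\le C$. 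The ratio in the statement is thus $p_{B,G}(n_0,t',h')/p_{B,G}(n_0,t,h)$.

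It then suffices to show that $p_{B,G}$ changes by at most a factor $1+o(1)$, uniformly over the admissible range, under an $O(C)$ shift of its parameters, provided $n$ is large relative to $C$. For this I would use the ballot estimates of Appendix~\ref{app-barrier1} in two-sided form (the lower bound matching Corollary~\ref{cor-UBbarrier} is obtained by the same truncation and entropic-repulsion argument), which evaluate $p_{B,G}(n_0,t,h)$ up to $1+o(1)$ with the $h$- and $t$-dependence entering only through quantities that vary slowly on the pertinent scales: in the bulk the conditioned walk stays at a distance growing with $n$ below the barrier, while within any bounded number of steps of $i=r$ or $i=n_0$ there remains a gap of order $(\log n_0)^{1-\delta}$ between the walk and the barrier. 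An additive shift of $(h,t)$ by $O(C)$ is negligible against all these scales once $n\ge\kappa_0(C)$, so this ratio is $1+o(1)$; gathering this error and the intrinsic $o(1)$ of the two-sided estimate into a single sequence $\rho_n\to0$ is a routine diagonalization (choose $\kappa_0(C)$ so that $C\le(\log n_0)^{(1-\delta)/2}$ for all $n\ge\kappa_0(C)$, and let $\rho_n$ dominate the error at that threshold).

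The main obstacle is precisely the uniform continuity of $p_{B,G}$ in $(h,t)$ \emph{near the two ends of the ray}, where the field is pinned close to $\BF$: one must verify that the slack of order $(\log n_0)^{1-\delta}$ — the additive term in \eqref{eq-nulb} at $i=r$, and the gap between $\BF(n_0)$ and the window $n_0+t_0$ at $i=n_0$ — really does dominate the $O(C)$ perturbation, so that the entrance and exit ``funnels'' of the conditioned walk are unchanged to leading order; away from the ends this is automatic since the barrier lies $\gg1$ above the typical path. The one remaining bookkeeping point is that the endpoint constraint $\ES$ is a window of width $1$, which can be shifted (by $|t'-t|\le 2C$) but not narrowed: one simply applies the $t$-uniform form of the estimate to the shifted window, or covers it by finitely many translates of the reference window.
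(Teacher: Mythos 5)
Your overall strategy is the paper's: reduce both expectations to barrier probabilities for a process whose covariance is within $O(1)$ of a Gaussian random walk, invoke the Slepian comparison of Appendix~\ref{app-barrier1} for each of $\mu$ and $\mu'$, and then show the resulting walk quantities are insensitive to an $O(C)$ shift of the barrier and endpoint. But there is a concrete gap in the reduction. After subtracting the drift $\mu$ (with $\mu(\zeta_i)=i-r+O(1)$ by Lemma~\ref{lem:branch2}), the effective barrier is $h_i=\BF(i)-\mu(\zeta_i)\approx(\log n_0)^{1-\delta}-\tfrac34\tfrac{\log n_0}{n_0}(i-r)$, which decreases \emph{linearly} by a total of $\tfrac34\log n_0$ across the ray and is negative near $i=n_0$. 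The only two-sided precision available in the appendix is Lemma~\ref{lem-ratio}, whose proof is a reflection computation for a \emph{flat} barrier $h_i=h(n)\geq(\log n)^{1-\epsilon}$; Corollary~\ref{cor-UBbarrier} handles general $|h_i|\ll\log n$ but only as an upper bound. So the parameters you produce do not lie in the admissible range of the two-sided estimate, and the claim that ``the range that \eqref{eq-nulb} and $t_0=-\tfrac34\log n$ were designed to produce'' is covered by the appendix is not justified as written. Your parenthetical promise to rederive a matching lower bound ``by the same truncation and entropic-repulsion argument'' is precisely where the sloped-barrier difficulty sits, and it is not routine.

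The missing idea is a rebalancing of the drift before invoking the appendix. The paper sets $d=1-\tfrac34\tfrac{\log n_0}{n_0}$ and tilts by $\exp(\tfrac32\tfrac{\log n_0}{n_0}(\GF_r+\WN_r)(\zeta_{n_0}))$; by Lemma~\ref{lem:means} this replaces the mean $\mu$ by $d\mu-\xi$, for which $\BF(i)-d\mu(\zeta_i)=(\log n_0)^{1-\delta}+O(1)$ is flat, while on $\ES$ the tilting factor is $e^{O((\log n_0)^2/n_0)}$ and hence costs only a multiplicative $1+O((\log n_0)^2/n_0)$ (this is \eqref{eq:mutweak}). Only after this step do Proposition~\ref{prop-comp-bar} and Lemma~\ref{lem-ratio} apply and absorb both the $(\log n)^{3/4}$ Slepian corrections and the $O(C)$ discrepancy between $\mu$ and $\mu'$. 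Everything else in your outline (the $\sqrt2$ normalization, the covariance comparison, the endpoint-window bookkeeping, the diagonalization defining $\rho_n$ and $\kappa_0(C)$) is consistent with the paper's proof.
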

\begin{proof}
    By Lemma \ref{lem:branch2},
    the mean $\mu(\zeta_i)=i+O(1)$ for all $1\leq i \leq n_0.$
    We want to apply Lemma \ref{lem-ratio}, however
    comparing with the definition of $\BF$, see \eqref{eq-nulb},
    we see that
    the mean $\mu(i)$ is slightly too large to allow one to flatten the slope of the barrier.
    We introduce another exponential bias to compensate for this.

    Set $d = (1-\tfrac 34\tfrac{\log n_0}{n_0}).$  Let $\xi : \D \to \R$ be given by $\xi(z) = \Exp\left[ \WN_r(z)\WN_r(\zeta_{n_0}) \right]\cdot(\tfrac 32\tfrac{\log n_0}{n_0}),$ 
    which vanishes except at two points where it is  
    $O((\log n_0)/n_0)$.
    Then by Lemma~\ref{lem:means},
    \[
        \Exp\left[ F(\GF_r +\WN_r + \mu) \right]
        =
        \frac{
            \Exp\left[ F(\GF_r +\WN_r + d\cdot\mu-\xi)e^{\tfrac 32\tfrac{\log n_0}{n_0}(\GF_r+\WN_r)(\zeta_{n_0})} \right]
        }
        {
            \Exp\left[e^{\tfrac 32\tfrac{\log n_0}{n_0}(\GF_r+\WN_r)(\zeta_{n_0})} \right]
        }.
    \]
    On the event $\GF_r +\WN_r + d\cdot \mu -\xi \in \ES,$      
    \[
        \tfrac 32\tfrac{\log n_0}{n_0}(\GF_r+\WN_r)(\zeta_{n_0})
        =
	O( (\log n_0)^2/n_0).
    \]
    On the other hand,
    \[
        \Exp\left[e^{\tfrac 32\tfrac{\log n_0}{n_0}(\GF_r+\WN_r)(\zeta_{n_0})} \right]
        =e^{O( (\log n_0)^2/n_0)}.
    \]
    Hence we conclude that
    \begin{equation}
        \label{eq:mutweak}
        \Exp\left[ F(\GF_r +\WN_r + \mu) \right]
        =
        \Exp\left[ \corO{
				F(\GF_r +\WN_r + d\cdot\mu-\xi)
				}
				\right](1+O( (\log n_0)^2/n_0)).
    \end{equation}
    The same statement holds with $\mu$ replaced by $\mu',$ and holds uniformly in such $\mu'.$ Hence, the lemma follows by applying Lemma \ref{lem-ratio} in conjunction with Proposition
    \ref{prop-comp-bar}, in an analogous way to as was done in \eqref{eq-Sat1}.
\end{proof}
\begin{remark}
    An examination of the proof of Lemma \ref{lem:ratio}
    shows that the estimate there
is uniform also in $t_0 \in [-C\log n_0, -\tfrac34 \log n_0]$ for any fixed $C>\tfrac 34.$
We will not need this fact.
\end{remark}

We will assume that $\rho_n \gg (\log n)^{-1}$ for the purpose of comparing this error to other ones. 
This sequence
$\rho_n$ controls how quickly we can let $\eta_n \to \infty.$  
\emph{We now fix the sequence $\eta_n \to \infty$ so that $\rho_n\eta_n^3 
\to 0,$ which in particular ensures that $\eta_n=o(\log n)$.}

Applying Lemma~\ref{lem:ratio} and Proposition~\ref{prop:clt} to some of the biases we need, we have:
\begin{corollary}
  Let $\Bias{\F}[1] = 2\F(\zeta_{n_0}),$ and let $\Bias$ be any one of the following.
  \begin{enumerate}
    \item For all $|\theta| \leq \Xi,$ 
      \[
	\Bias{\F}[2] = 
	2\F(\ROT[\theta](\zeta_{n-p}))
	-2\F(\ROT[\theta](\zeta_{b_1}))
	+2\F(\zeta_{n_0}).
      \]
    \item For all $|\theta_1|,|\theta_2| \leq \Xi,$
      \begin{align*}
	\Bias{\F}[3] &= 
	2\F(\ROT[\theta_1](\zeta_{n-p}))
	-2\F(\ROT[\theta_1](\zeta_{b_1})) \\
	&+2\F(\ROT[\theta_2](\zeta_{n-p}))
	-2\F(\ROT[\theta_2](\zeta_{b_1}))
	+2\F(\zeta_{n_0}).
      \end{align*}
  \end{enumerate}
  Then,
\begin{equation}
  \label{eq-star-feb1}
    \frac{\Exp[ \one[\Meso]e^{\Bias{\CUEFr}} ] }
    {\Exp[ \one[\Meso]e^{\Bias{\CUEFr}[1]} ] }
    =\frac{\Exp[ e^{\Bias{\CUEFr}} ]}
    {\Exp[e^{\Bias{\CUEFr}[1]} ] }
    (1+O(\rho_n)).
  \end{equation} 
  We also have
\begin{equation}
  \label{eq-2star-feb1}
    \Exp[ \one[\Meso]e^{\Bias{\CUEFr}[1]} ] 
    \asymp \frac{e^{n_0-r} (\log n_0)^{2-2\delta} }{n_0^{3/2}}.
  \end{equation} 
  \label{cor:meso}
\end{corollary}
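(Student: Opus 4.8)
The plan is to reduce the corollary to a single Gaussian ballot computation, in three moves: pass from $\CUEF_r$ to $\GF_r$ by the normal approximation of Proposition~\ref{prop:clt}; convert each exponential tilt into a deterministic mean shift by Lemma~\ref{lem:means}; and absorb the (bounded) differences between the resulting mean shifts using the stability estimate of Lemma~\ref{lem:ratio}. Throughout one uses the evident $r$-truncated versions of Propositions~\ref{prop:clt} and~\ref{prop:domination} and of Lemmas~\ref{lem:means},~\ref{lem:ratio}: a tilt $e^{\Bias{\CUEF_r}}$ is a tilt $e^{\Bias'{\CUEF}}$ for the bias $\Bias'$ obtained by adjoining the reference points $\zeta_r e^{2\pi i h[e^{n_0}]^{-1}}$, which lie at hyperbolic distance $r \ll n-m\log n$ from $0$ and hence do not affect the geometric hypotheses.

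First I would verify that $\Bias[1],\Bias[2],\Bias[3]$ meet the hypotheses of Proposition~\ref{prop:clt} with $s=1$, $\omega_1=1$, $d=n_0$ and $F=\one[\BS\cap\ES]\in\BSA(\{\zeta_1,\dots,\zeta_{n_0}\})$. The tilt points at hyperbolic distance from $0$ exceeding $n-m\log n$ are of the form $\ROT[\theta](\zeta_j)$ with $|\theta|\le\Xi$, $j\ge n_0$, hence by Lemma~\ref{lem:lemonslice} lie in the wedge $|\arg z|\le\tfrac12 e^{-n_0}$; since $1-|z|\asymp N^{-1}$ and $e^{-n_0}\asymp N^{-1}(\log N)^m$ this places them in a Euclidean ball of radius $N^{-1}(\log N)^m$, while distinct angles in $\Theta$ differ by at least $e^{-n+n_0}$ so pairwise hyperbolic separations exceed $N^{-1}$. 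The only delicate point is that $\Delta$ stays bounded: for the microscopic points $e^{-N\exp(-\dH(0,z))}=\Theta(1)$ while every $\coth(\dH(w,z)/2)$ factor is either $\asymp 1$ or (for a pair $\ROT[\theta_1](\zeta_{b_i}),\ROT[\theta_2](\zeta_{b_i})$) at most $n^{m(1-1/\eta)}$, which is killed by the accompanying weight $e^{-N\exp(-\dH(0,\cdot))}=e^{-\Theta(n^{m(1-1/\eta)})}$; the point $\zeta_{n_0}$ contributes $e^{-\Theta(n^m)}$ and the reference points only contribute $\coth$-factors of order $1$. Since $|\mathbf{z}|\le k$ is bounded, $(1+\Delta)^{2k}\asymp 1$, and since $\Exp e^{\Bias{\GF_r}}\asymp\Exp e^{\Bias{\CUEF_r}}$ (Proposition~\ref{prop:domination} and the Szeg\H{o}-type identities of Section~\ref{sec:Baxter}), the error in Proposition~\ref{prop:clt} is $O((\log N)^{-K})$, uniformly in $|\theta|,|\theta_1|,|\theta_2|\le\Xi$.

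Granting this, Proposition~\ref{prop:clt} followed by Lemma~\ref{lem:means} (applied conditionally on $\WN_r$, which is independent of $\GF_r$) gives, for each of the three biases,
\[
  \frac{\Exp[\one[\Meso]\,e^{\Bias{\CUEF_r}}]}{\Exp[e^{\Bias{\CUEF_r}}]}
  = \Exp\bigl[F(\GF_r+\WN_r+\mu_{\Bias})\bigr] + O\bigl((\log N)^{-K}\bigr),
\]
with $\mu_{\Bias}$ the mean shift attached to $\Bias$. Using Lemma~\ref{lem:branch}, \eqref{eq:correlationbound} and Lemma~\ref{lem:branch2} one checks that uniformly in $1\le i\le n_0$ one has $\mu_{\Bias[1]}(\zeta_i)=i-r+O(1)$ and $|\mu_{\Bias[2]}(\zeta_i)-\mu_{\Bias[1]}(\zeta_i)|,\ |\mu_{\Bias[3]}(\zeta_i)-\mu_{\Bias[1]}(\zeta_i)|\ll 1$ — the point being that the covariance of $\GF(\zeta_i)$ with a microscopic ray increment $\GF(\ROT[\theta](\zeta_{n-p}))-\GF(\ROT[\theta](\zeta_{b_1}))$ is, by Lemma~\ref{lem:branch2}, independent of the two endpoints of that increment up to $O(1)$, and the common reference point cancels in the increment. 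Lemma~\ref{lem:ratio} (with its $\mu(\cdot)=2\Exp[\GF(\cdot)\GF_r(\zeta_{n_0})]$ and $\mu'$ each of $\mu_{\Bias[1]},\mu_{\Bias[2]},\mu_{\Bias[3]}$, all of which differ from $\mu$ by $O(1)$) then yields $\Exp[F(\GF_r+\WN_r+\mu_{\Bias})]=\Exp[F(\GF_r+\WN_r+\mu_{\Bias[1]})](1+O(\rho_n))$. Since $\rho_n\gg(\log n)^{-1}$ while $\Exp[F(\GF_r+\WN_r+\mu_{\Bias[1]})]$ is only polynomially small in $n$, the additive $O((\log N)^{-K})$ terms are $o(\rho_n\Exp[F(\GF_r+\WN_r+\mu_{\Bias[1]})])$; dividing the displayed identity for $\Bias$ by that for $\Bias[1]$ then produces \eqref{eq-star-feb1}. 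For \eqref{eq-2star-feb1}, use the same displayed identity with $\Bias=\Bias[1]$ to write $\Exp[\one[\Meso]e^{\Bias[1]{\CUEF_r}}]=\Exp[e^{2\CUEF_r(\zeta_{n_0})}]\cdot\Exp[F(\GF_r+\WN_r+\mu_{\Bias[1]})](1+O(\rho_n))$; by Proposition~\ref{prop:domination} and the Szeg\H{o} asymptotics $\Exp[e^{2\CUEF_r(\zeta_{n_0})}]\asymp e^{2\log\cosh((n_0-r)/2)}\asymp e^{n_0-r}$, while the ballot factor equals $\asymp(\log n_0)^{2-2\delta}/n_0^{3/2}$ by the appendix estimates (Corollary~\ref{cor-UBbarrier} and Proposition~\ref{prop-comp-bar}, exactly as invoked in Lemmas~\ref{lem:ballotub} and~\ref{lem:ratio}), the shifted coordinates $\GF_r(\zeta_i)$ having mean $i-r+O(1)$ and the barrier \eqref{eq-nulb} together with the window $\ES$ (with $t_0=-\tfrac34\log n$) leaving room of order $(\log n_0)^{1-\delta}$ at both $i=r$ and $i=n_0$ over $n_0-r=n_0(1+o(1))$ steps — whence the two $(\log n_0)^{1-\delta}$ factors and the usual $n_0^{3/2}$ denominator.

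The main obstacle is the first step: checking that the microscopic tilt points in $\Bias[2]$ and $\Bias[3]$ fit the geometric hypotheses of Proposition~\ref{prop:clt}, and in particular that the near-coincidences $\ROT[\theta_1](\zeta_{b_i}),\ROT[\theta_2](\zeta_{b_i})$ do not blow up $\Delta$ — the large $\coth$-factors they generate must be shown to be controlled by the $e^{-N\exp(-\dH(0,\cdot))}$ weights — together with the two-sided Gaussian ballot bound for the barrier \eqref{eq-nulb}, which is the substance of the appendix.
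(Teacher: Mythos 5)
Your proposal is correct and follows essentially the same route as the paper: Proposition~\ref{prop:clt} to pass to the Gaussian field, Lemma~\ref{lem:means} together with Lemmas~\ref{lem:branch} and~\ref{lem:branch2} to identify the tilted means as $i-r+O(1)$ with only $O(1)$ discrepancies among the three biases, Lemma~\ref{lem:ratio} to absorb those discrepancies into the $1+O(\rho_n)$ factor, and the appendix barrier/ballot estimates plus the variance computation $\Exp[e^{\Bias{\GF_r}[1]}]\asymp e^{n_0-r}$ for \eqref{eq-2star-feb1}. One cosmetic quibble: in your check of $\Delta$ the near-coincident pair that actually matters lies among the $\mathbf{z}$-points $\ROT[\theta_1](\zeta_{n-p}),\ROT[\theta_2](\zeta_{n-p})$ (the points $\ROT[\theta_j](\zeta_{b_1})$ sit in $\mathbf{y}$, which carries no separation hypothesis), but the same weight-versus-$\coth$ balance you describe disposes of that pair as well.
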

\begin{proof}
    The estimate \eqref{eq-star-feb1}
  follows from Proposition~\ref{prop:clt} to bring the expectation to a Gaussian one, Lemma~\ref{lem:branch2} to evaluate the means, and Lemma~\ref{lem:ratio} to conclude the ratio of expectations.  
  The proof of \eqref{eq-2star-feb1} begins the same way.  Using 
  \corO{\eqref{eq-rev1}, } Proposition~\ref{prop:clt} and Corollary~\ref{cor:exact}, we can write
  \[
    \Exp[ \one[\Meso]e^{\Bias{\CUEFr}[1]} ] 
    =(\Exp\left[ F(\GF_r +\WN_r + \mu) \right]+O(\log N)^{-K})
    \Exp[e^{\Bias{\GF_r}[1]}],
  \]
  in the notation of Lemma~\ref{lem:ratio}.
  Applying \eqref{eq:mutweak}, we have that 
  \begin{align*}
    \Exp[ \one[\Meso]e^{\Bias{\CUEFr}[1]} ] 
    &=
    \Exp\left[ F(\GF_r +\WN_r + d\cdot\mu-\xi \corE{)}\right]
    \Exp[e^{\Bias{\GF_r}[1]}](1+O( (\log n_0)^2/n_0)) \\
    &+
    O((\log N)^{-K})
    \cdot
    \Exp[e^{\Bias{\GF_r}[1]}].
\end{align*}
    Using Proposition~\ref{prop-comp-bar} and the ballot theorem 
      (see Theorem
    \ref{theo-app-ballot}),
    \[
        \Exp\left[ F(\GF_r +\WN_r + d\cdot\mu-\xi)\right]
        \asymp \frac{ (\log n_0)^{2-2\delta}}{n_0^{3/2}}.
    \]
    Using Lemma~\ref{lem:branch}, 
    \(
    \Exp[e^{\Bias{\GF_r}[1]}] \asymp e^{n_0 -r},
    \)
    the proof now follows.
\end{proof}
\begin{remark}
    An examination of the proof of Corollary \ref{cor:meso}
    shows that the estimate again
is uniform in $t_0 \in [-C\log n_0, -\tfrac34 \log n_0]$ for any fixed $C>\tfrac 34.$
We still will not need this fact.
\end{remark}
    \subsection{Field moment calculus}
\label{sec:fmc}
    Because we are constrained to exponential moments with specific parameters, we are not able to show by a direct appeal to Markov's inequality that certain quantities are concentrated.  To circumvent this, we use the spatial correlations inherent in $\CUEF$ and $\GF$ to effectively compute some moments of $\CUEF,$ restricted to specific events and under exponential biases.

    In what follows, fix $|\theta| \leq \Xi.$  Let $\gamma(t)$ be the hyperbolic arclength parameterized geodesic from $\zeta_{n_0}$ to $\ROT[\theta](\zeta_n).$
    Towards estimating some moments of $\CUEF$, we define a family of biasing functions that are locally perturbed modifications of $\Bias[2].$  For 
		\corO{ $q>0$, }
		let $W_k(q)$ be the collection of cylinder functions from $\D \to \R:$
    \[
      \F \mapsto \Bias{\F}[2] + 
      \sum_{z \in \mathbf{z}} \F(z)
      -\sum_{y \in \mathbf{y}} \F(y),
    \]
    where $\mathbf{z}$ and $\mathbf{y}$ have the following properties:
    \begin{enumerate}[(a)]
      \item
	$\mathbf{z}$ and $\mathbf{y}$ are finite
	subsets of the trace of $\gamma$ with $|\mathbf{z}|=|\mathbf{y}|=k.$
      \item
	There is a bijection $\phi : \mathbf{z} \to \mathbf{y}$ so that
	\[
	  \max_{z \in \mathbf{z}} \dH(z,\phi(z)) \leq 1.
	\]
      \item
	Letting $\mathbf{z}' = \mathbf{z} \cup \left\{ \zeta_{n_0}, \ROT[\theta](\zeta_{n-p}) \right\}.$ 
	\[
	  \max_{z \in \mathbf{z}'} 
	  e^{-N\exp(-\dH(0,z))}
	  \prod_{
	    \substack{x \in \mathbf{z}' \\ 
	  x \neq z}} 
	  \coth(\dH(x,z)/2) \leq q.
	\]
    \end{enumerate}

    Estimating moments of $\CUEF$ will be ultimately reduced to the estimation of exponentials of elements of $W_k$ for some fixed $k$.  These exponential moments of $\CUEF$ then need to be compared to those of $\GF$ with high precision.  However, we will need to consider points in the field where a direct comparison to Gaussian is impossible; as a specific example
    \[
      \Exp\left[ e^{\Bias{\CUEFr}[2]} \right]
      \neq
      \Exp\left[ e^{\Bias{\GF_r}[2]} \right](1+o(1)).
    \]
    Certain ratios of expectations can be accurately compared, however. Specifically:
    \begin{lemma}
      Fix $k \in \mathbb{Z}$ and $q \in \R.$ Let $\Bias \in W_k(q)$ be arbitrary, with $\mathbf{z}$ and $\mathbf{y}$ so that
      \[
	\Bias{\F} = 
	\Bias{\F}[2] + 
	\sum_{z \in \mathbf{z}} \F(z)
	-\sum_{y \in \mathbf{y}} \F(y).
      \]
      Let $w=\ROT[\theta](\zeta_{n-p}),$ the point that appears in $\Bias[2].$
      Define
      \[
	\epsilon
	=
	\frac{
	  \prod_{y \in \mathbf{y}} \tanh(\dH(w,y)/2)^2
	}{
	  \prod_{z \in \mathbf{z}} \tanh(\dH(w,z)/2)^2
	}
	-1.
      \]
      There are constants $\delta_0 > 0$ and $p_0$ independent of $n$ so that the following holds.
      If
      \begin{equation*}
	\Delta = \max_{z \in \mathbf{z} \cup \{\zeta_{n_0}\}} 
	e^{-N\exp(-\dH(0,z))}
	\prod_{
	  \substack{x \in \mathbf{z}\cup \{w,\zeta_{n_0}\} \\ 
	x \neq z}} 
	\coth(\dH(x,z)/2) \leq \delta_0,
      \end{equation*}
      and if $p_0 \leq p \leq \log\log n,$
      then
      \[
	\frac{\Exp\left[ e^{\Bias{\CUEFr}} \right]}{
	  \Exp\left[ e^{\Bias{\CUEFr}[2]} \right]
	}
	=
	\frac{\Exp\left[ e^{\Bias{\GF_r}} \right]}{
	  \Exp\left[ e^{\Bias{\GF_r}[2]} \right]
	}
	+O\left( \epsilon + \Delta \right),
      \]
      with the implied constants uniform in $W_k(q).$
      \label{lem:fmc0}
    \end{lemma}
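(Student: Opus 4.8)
The plan is to bypass the Gaussian comparison of Proposition~\ref{prop:clt} --- which, as the discussion preceding the lemma explains, cannot reach points at the optimal scale $1-|z|=\Theta(N^{-1})$ --- and instead to compute all four exponential moments \emph{exactly} using the generalization of Baxter's Toeplitz determinant identity proved in Section~\ref{sec:Baxter}, in the form of Corollary~\ref{cor:exact}. For a linear functional $\Lambda{\F}=\sum_i\lambda_i\F(z_i)$, that identity yields a factorization $\Exp[e^{\Lambda{\CUEF_r}}]=\Exp[e^{\Lambda{\GF_r}}]\,(1+\mathcal{E}(\Lambda))$, with the first factor the Strong-Szeg\H{o} limit and $\mathcal{E}(\Lambda)$ an explicit finite-$N$ correction whose size is governed by the $N$-th and higher Fourier coefficients of the log-symbol $\omega\mapsto\sum_i\lambda_i\log|1-z_i\omega|$ --- precisely the quantities $e^{-N\exp(-\dH(0,z_i))}$, weighted by products of $\coth(\dH(z_i,z_j)/2)$, that appear in the definitions of $\Delta$ and of the class $W_k(q)$. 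Since the radial anchors defining $\CUEF_r$ all lie at hyperbolic distance $r=O((\log n)^{1-\delta})$ from $0$, their effect on these high modes is superpolynomially small, so they influence only the Gaussian prefactors, which we handle directly through \eqref{eq:covcosh} and \eqref{eq:correlationbound}. Applying the factorization to $\Bias$ and to $\Bias[2]$ and dividing,
\[
  \frac{\Exp[e^{\Bias{\CUEF_r}}]}{\Exp[e^{\Bias{\CUEF_r}[2]}]}
  =
  \frac{\Exp[e^{\Bias{\GF_r}}]}{\Exp[e^{\Bias{\GF_r}[2]}]}\cdot
  \frac{1+\mathcal{E}(\Bias)}{1+\mathcal{E}(\Bias[2])},
\]
and it remains to show: the Gaussian ratio is $e^{O(k^2)}=O(1)$, which is immediate from \eqref{eq:correlationbound}, the cancellation of the common anchor, and $\dH(z,\phi(z))\le 1$; the denominator $1+\mathcal{E}(\Bias[2])$ is bounded away from $0$ once $p\ge p_0$; and $\mathcal{E}(\Bias)-\mathcal{E}(\Bias[2])=O(\epsilon+\Delta)$.

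The last point is the heart of the argument. The hypothesis $\Delta\le\delta_0$ keeps the relevant kernels of small trace norm --- in particular it forces every point of $\mathbf z$ to hyperbolic distance at most $n-c(\delta_0)$ from $0$, with $c(\delta_0)\to\infty$ as $\delta_0\to 0$ --- so $\mathcal{E}(\Lambda)$ equals its leading (trace) term up to $O(\Delta^2)$, and one reduces to comparing these traces for $\Bias$ and $\Bias[2]$. Their difference is produced by the dipole modes $\sum_{z\in\mathbf z}(z^m-\phi(z)^m)$ in the log-symbol. Undoing the rotation by $\ROT[\theta]^{-1}$, a hyperbolic isometry fixing $\zeta_{n_0}$, places $w=\ROT[\theta](\zeta_{n-p})$ and every dipole point on the real-axis geodesic, so that $\dH(w,z)$ depends only on the arclength coordinate and $\tanh(\dH(w,z)/2)=|T_w(z)|=|z-w|/|1-z\bar w|$. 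The resolvent of the Toeplitz operator produces factors $|1-z\bar w|^{\pm 1}$ which combine with those already present in the Gaussian ratio, so that the numerators $|z-w|$ of neighbouring dipole points telescope; what survives from the deep part of the trace is exactly $\prod_{y\in\mathbf y}\tanh(\dH(w,y)/2)^2\,/\,\prod_{z\in\mathbf z}\tanh(\dH(w,z)/2)^2$, whose excess over $1$ is $\epsilon$. All remaining contributions --- dipole self- and cross-interactions not anchored at $w$, and the $O(\Delta^2)$ determinant remainder --- are $O(\Delta)$ by condition (c) and $\Delta\le\delta_0$. Taking $p_0$ large makes the kernel attached to $\Bias[2]$ itself have small trace norm, and $p\le\log\log n$ keeps the number of error terms, and the constants from expanding the determinants, under control. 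Since every bound is in terms of $k$, $q$, $\epsilon$, $\Delta$ alone, uniformity over $W_k(q)$ is automatic.

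The main obstacle is precisely this cancellation for the near-boundary points. At the deepest points of $\mathbf z$ the individual corrections are genuinely of order $1$ relative to the Gaussian value --- this is the phenomenon behind the non-example $\Exp[e^{\Bias{\CUEF_r}[2]}]\neq\Exp[e^{\Bias{\GF_r}[2]}](1+o(1))$ noted before the lemma --- so the smallness of $\mathcal{E}(\Bias)-\mathcal{E}(\Bias[2])$ is not obtained by estimating terms but by exhibiting an exact algebraic cancellation between the Baxter correction and the Gaussian prefactor, available only because of the dipole pairing $\phi$ and the geodesic constraint. Executing this cleanly, and uniformly over all admissible placements of the $k$ dipoles on $\gamma$, is where the bulk of the work lies.
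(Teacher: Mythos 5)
Your proposal is correct and follows essentially the same route as the paper: apply Corollary~\ref{cor:exact} to both $\Bias$ and $\Bias[2]$, identify the leading finite-$N$ correction as the single term anchored at $w$ (whose ratio is exactly $1+\epsilon$), absorb all other terms into $O(\Delta)$, keep the denominator away from $0$ by taking $p_0$ large and $\delta_0$ small, and bound the ratio of Gaussian expectations by the variance comparison \eqref{eq:varbound}. The ``exact algebraic cancellation'' you flag as the bulk of the remaining work is precisely the second display of Corollary~\ref{cor:exact}, so it is already packaged and no further Toeplitz analysis is required.
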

    \begin{proof}
      Define
      \[
	a = 
	|w|^{2N}
	\frac{ \tanh(\dH(\ROT[\theta](\zeta_{b_1}),w))^2
	\tanh(\dH({\zeta_r},w))^2
      }{
	\tanh(\dH(w,\zeta_{n_0}))^2
      }.
    \]
    By Corollary~\ref{cor:exact}
    \[
      \frac{
	\Exp\left[ 
	  e^{\Bias{\CUEFr}[2]}
	\right]
      }
      {
	\Exp\left[ 
	  e^{\Bias{\GF_r}[2]}
	\right]
      }
      =1 - a + O\bigl(\Delta\bigr), 
    \]
    provided $p_0$ is chosen sufficiently large.
    We can also assure, by possibly increasing $p_0,$ that $a < \tfrac 14.$
    By choosing $\delta_0$ sufficiently small, the $O(\Delta)$ error can also be made less than $\tfrac 14.$

    The first correction term to the other exponential bias is similar:
    \[
      \frac
      {
	\Exp\left[e^{\Bias{\CUEFr}} \right]
      }
      {
	\Exp\left[e^{\Bias{\GF_r}} \right]
      }
      =1 - a
      \frac{
	\prod_{y \in \mathbf{y}} \tanh(\dH(w,y)/2)^2
      }{
	\prod_{z \in \mathbf{z}} \tanh(\dH(w,z)/2)^2
      }
      +O(\Delta).
    \]
    Hence we have that
    \[
      \frac{\Exp\left[ e^{\Bias{\CUEFr}} \right]}{
	\Exp\left[ e^{\Bias{\CUEFr}[2]} \right]
      }
      =
      \frac{\Exp\left[ e^{\Bias{\GF_r}} \right]}{
	\Exp\left[ e^{\Bias{\GF_r}[2]} \right]
      }
      \frac{1-a(1+\epsilon)+O(\Delta)}{1-a+O(\Delta)}
      .
    \]
    To complete the proof, we observe that the variance of $\Bias{\GF_r}$ is no more than that of $\Bias{\GF_r}[2]$ plus an absolute constant, so that the ratio of Gaussian expectations is bounded above by an absolute constant:
    \begin{align*}
      \Var\left(
      \Bias{\GF_r}
      \right)
      -\Var\left(
      \Bias{\GF_r}[2]
      \right)
      =&\sum_{z \in \mathbf{z}} 2\Cov(\Bias{\GF_r}[2], \GF(z)-\GF(\phi(z))) \\
      +&\sum_{z \in \mathbf{z}} 4\Var(\GF(z)-\GF(\phi(z))). 
    \end{align*}
    Both the covariance and variance summands can be estimated by an absolute constant using \eqref{eq:correlationbound}.  Thus
    \begin{equation}
        -k \ll
      \Var\left(
      \Bias{\GF_r}
      \right)
      -\Var\left(
      \Bias{\GF_r}[2]
      \right)
      \ll k.
      \label{eq:varbound}
    \end{equation}
  \end{proof}

  We also observe that the biasing terms in $W_k$ influence the probability of $\Meso$ in a negligible way.
  \begin{lemma}
    Fix $k \in \mathbb{N}$ and $q \in \R.$ 
    Let $F = \one[{\BS \cap \ES}].$
    Uniformly in $\Bias \in W_k(q),$
    \[
      \frac{\Exp\left[ F(\GF_r+\WN_r)e^{\Bias{\GF_r}} \right]}
      {\Exp\left[ F(\GF_r+\WN_r)e^{\Bias{\GF_r}[2]} \right]}
      =
      \frac{\Exp\left[ e^{\Bias{\GF_r}} \right]}
      {\Exp\left[ e^{\Bias{\GF_r}[2]} \right]}
      +O(\rho_n).
    \]
    \label{lem:fmc1}
  \end{lemma}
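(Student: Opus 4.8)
The plan is to run the same template as the proof of Corollary~\ref{cor:meso}: strip off the exponential bias using Lemma~\ref{lem:means}, estimate the mean shift it produces, and then feed this into the ballot-type stability estimate Lemma~\ref{lem:ratio}. Throughout write $\Bias{\F}=\Bias{\F}[2]+\sum_{z\in\mathbf{z}}\F(z)-\sum_{y\in\mathbf{y}}\F(y)$, with $\phi$ the bijection from property~(b), and recall $F=\one[{\BS\cap\ES}]\in\BSA(\{\zeta_1,\dots,\zeta_{n_0}\})$.

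First I would remove the bias. Since $\WN_r$ is independent of $\GF_r$ and $\Bias$ involves only $\GF_r$, integrating out $\WN_r$ first and then applying Lemma~\ref{lem:means} (its proof applies verbatim to an arbitrary linear functional of the field, not only to those of the form \eqref{eq:bias}) gives
\[
  \Exp\left[ F(\GF_r+\WN_r)e^{\Bias{\GF_r}} \right]
  = \Exp\left[ e^{\Bias{\GF_r}} \right]\,\Exp\left[ F(\GF_r+\WN_r+\mu_{\Bias}) \right],
  \qquad \mu_{\Bias}(\zeta):=\Exp\left[\Bias{\GF_r}\,\GF_r(\zeta)\right],
\]
and likewise with $\Bias$ replaced by $\Bias[2]$. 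Dividing, the asserted identity is equivalent to the two estimates
\[
  \frac{\Exp[F(\GF_r+\WN_r+\mu_{\Bias})]}{\Exp[F(\GF_r+\WN_r+\mu_{\Bias[2]})]}=1+O(\rho_n)
  \qquad\text{and}\qquad
  \frac{\Exp[e^{\Bias{\GF_r}}]}{\Exp[e^{\Bias{\GF_r}[2]}]}\ll 1,
\]
holding uniformly in $\Bias\in W_k(q)$; the second bound is what converts the multiplicative error in the first into the additive $O(\rho_n)$ of the statement.

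The second estimate I would read off the variance computation \eqref{eq:varbound} from the proof of Lemma~\ref{lem:fmc0}: both $\Bias{\GF_r}$ and $\Bias{\GF_r}[2]$ are centered Gaussian, so the ratio of exponential moments is $e^{(\Var(\Bias{\GF_r})-\Var(\Bias{\GF_r}[2]))/2}=e^{O(k)}\ll1$ for $k$ fixed. For the first estimate I would compare both shifts to the reference $\mu(z)=2\Exp[\GF(z)\GF_r(\zeta_{n_0})]$ of Lemma~\ref{lem:ratio}. By Lemma~\ref{lem:lemonslice} the trace of $\gamma$ lies in the wedge $\{|\arg z|\le\tfrac12 e^{-n_0}\}$, so every point of $\mathbf{z}\cup\mathbf{y}\cup\{\ROT[\theta](\zeta_{b_1}),\ROT[\theta](\zeta_{n-p}),\zeta_{n_0}\}$ has the same root point $\zeta_r$ in the definition \eqref{eq:Fr}; hence $\GF_r$ at any such point is $\GF$ there minus $\GF(\zeta_r)$, and the subtraction telescopes so $\GF_r(z)-\GF_r(\phi(z))=\GF(z)-\GF(\phi(z))$. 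Using Lemma~\ref{lem:branch} and Lemma~\ref{lem:branch2} exactly as in Corollary~\ref{cor:meso} one gets $\mu_{\Bias[2]}(\zeta_i)=i-r+O(1)$ for $r\le i\le n_0$, so $|\mu_{\Bias[2]}(\zeta_i)-\mu(\zeta_i)|\ll1$ on $1\le i\le n_0$; and for the remaining terms property~(b) together with \eqref{eq:correlationbound} gives, for each such $i$,
\[
  |\mu_{\Bias}(\zeta_i)-\mu_{\Bias[2]}(\zeta_i)|
  \le\sum_{z\in\mathbf{z}}\bigl|\Exp\left[(\GF(z)-\GF(\phi(z)))\GF_r(\zeta_i)\right]\bigr|\ll k,
\]
since $\dH(z,\phi(z))\le1$. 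Thus $|\mu_{\Bias}-\mu|$ and $|\mu_{\Bias[2]}-\mu|$ are both bounded by a constant $C(k)$ at the points $\zeta_1,\dots,\zeta_{n_0}$. Feeding $C(k)$ into Lemma~\ref{lem:ratio}, once with $\mu'=\mu_{\Bias}$ and once with $\mu'=\mu_{\Bias[2]}$, and dividing the two ratios yields the first estimate for all $n$ large, with a threshold depending only on $k$.

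The step I expect to be the main obstacle is not any single computation but the \emph{uniformity} over $W_k(q)$: one needs the mean-shift comparison $|\mu_{\Bias}-\mu|\ll k$ with an implied constant depending only on $k$, not on the particular $\mathbf{z}$, $\mathbf{y}$, or $\theta$, so that the single constant $C(k)$ suffices in Lemma~\ref{lem:ratio} for the whole family. This is precisely what property~(b) — the bounded pairing $\phi$ — buys via \eqref{eq:correlationbound}; property~(c) of $W_k(q)$ plays no role in this lemma. Everything else is bookkeeping already carried out in the proofs of Lemma~\ref{lem:fmc0} and Corollary~\ref{cor:meso}.
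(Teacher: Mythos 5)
Your proposal is correct and follows essentially the same route as the paper: both remove the biases via Lemma~\ref{lem:means}, bound the resulting difference of mean shifts by an absolute constant using the pairing $\phi$ with $\dH(z,\phi(z))\le 1$, invoke Lemma~\ref{lem:ratio} to compare the shifted barrier probabilities, and use \eqref{eq:varbound} to control the ratio of exponential moments. The only cosmetic difference is that the paper bounds $\mu-\mu_2$ directly from the explicit $\cosh$ formula \eqref{eq:covcosh} rather than via \eqref{eq:correlationbound}.
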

  \begin{proof}
    Let $\mu :\D \to \R$ and $\mu_2 : \D \to \R$ be the means of $\GF_r$ under the biases $e^{\Bias{\GF_r}}$ and $e^{\Bias{\GF_r}[2]}$ respectively.  Then we have by Lemma~\ref{lem:means} and \eqref{eq:covcosh} that
    \[
      \mu(x) - \mu_2(x) = \sum_{z \in \mathbf{z}}
      \log\left( \frac
      {\cosh(\dH(0,z))\cosh(\dH(x,\phi(z)))}
      {\cosh(\dH(0,\phi(z)))\cosh(\dH(x,z))}
      \right) 
    \]
    In particular, this can be uniformly bounded in terms of the distances of $z$ to $\phi(z),$ and hence can be controlled by an absolute constant.
    Hence by Lemma~\ref{lem:ratio},
    \[
      \frac{\Exp\left[ F(\GF_r+\WN_r)e^{\Bias{\GF_r}} \right]}
      {\Exp\left[ F(\GF_r+\WN_r)e^{\Bias{\GF_r}[2]} \right]}
      \frac
      {\Exp\left[ e^{\Bias{\GF_r}[2]} \right]}
      {\Exp\left[ e^{\Bias{\GF_r}} \right]}
      =
      1+O(\rho_n).
    \]
    Using \eqref{eq:varbound}, the claim follows. 
  \end{proof}

  \corO{Recall the definition of $b_1$, see \eqref{eq-defb}.}
  \begin{lemma}
    Fix $k \in \mathbb{N},$ $q \in \R,$ and let $\Bias \in W_k(q)$ be arbitrary.  Write $\mathbf{z}$ and $\mathbf{y}$ so that
    \[
      \Bias{\F} = 
      \Bias{\F}[2] + 
      \sum_{z \in \mathbf{z}} \F(z)
      -\sum_{y \in \mathbf{y}} \F(y).
    \]
    There is a $p_0$ sufficiently large and independent of $n$ so that
    for all $p_0 \leq p \leq \log\log n$
    and for all $0 \leq t \leq n-p-n_0,$
    \[
      \FMC{\Bias}[ \CUEF(\gamma(t)) - \CUEF(\gamma(0))] = 
      (t - b_1 + n_0)_+
      +\xi(t),
    \]
    \corO{see \eqref{eq-FB},}
    where $|\xi(t)| \ll (\log n) \rho_n^{1/2},$ uniformly in $W_k(q).$
    \label{lem:fmcmean}
  \end{lemma}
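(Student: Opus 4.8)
The plan is to decompose the total increment $W=\CUEF(\gamma(t))-\CUEF(\gamma(0))$ into many microscopic pieces and estimate each through exponential moments, exploiting the smoothness bound $\Var(\CUEF_r(z)-\CUEF_r(y))\asymp \dH(z,y)^2$ for $\dH(z,y)\le 1$. First note that the trace of $\gamma$ beyond $\zeta_{n_0}$ lies in a single angular sector of $\CUEF_r$ (Lemma~\ref{lem:lemonslice}), so $\CUEF(\gamma(s))-\CUEF(\gamma(0))=\CUEF_r(\gamma(s))-\CUEF_r(\gamma(0))$ and everything may be phrased for $\CUEF_r$. Fix a mesh $h=h_n$ to be chosen, put $t_j=jh$ for $0\le j<J:=\lceil t/h\rceil$ and $t_J:=t$, and write $W=\sum_{j=0}^{J-1}X_j$ with $X_j=\CUEF_r(\gamma(t_{j+1}))-\CUEF_r(\gamma(t_j))$. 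For each $j$ I would use the elementary inequalities $\tfrac12(1-e^{-2X_j})\le X_j\le \tfrac12(e^{2X_j}-1)$ together with positivity of $\FMC{\Bias}$, so that $\FMC{\Bias}(X_j)$ is sandwiched between $\tfrac12\big(1-\FMC{\Bias}(e^{-2X_j})\big)$ and $\tfrac12\big(\FMC{\Bias}(e^{2X_j})-1\big)$. Since $e^{\pm 2X_j}=\exp\!\big(\pm 2\CUEF_r(\gamma(t_{j+1}))\mp 2\CUEF_r(\gamma(t_j))\big)$, each of $\FMC{\Bias}(e^{2X_j})$ and $\FMC{\Bias}(e^{-2X_j})$ is a ratio of biased $\Meso$-exponential moments whose biases differ from $\Bias$ only by the insertion of two points of the trace of $\gamma$, and hence again have the algebraic form treated by the field moment calculus (lying in $W_{k+1}(q')$ for some $q'$ that may grow with $n$, which is harmless because the relevant errors below are governed by explicit quantities rather than by $q'$).

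Next I would evaluate each ratio with the available machinery: Proposition~\ref{prop:clt} to pass from $\CUEF_r$ to $\GF_r$ inside the $\Meso$-expectations, at cost $O((\log N)^{-K})$; Lemma~\ref{lem:fmc1} to strip the conditioning on $\Meso$ for the Gaussian field, at cost $O(\rho_n)$; and Lemma~\ref{lem:fmc0} to handle the resulting ratio of pure exponential moments (for which a direct Gaussian comparison fails near the boundary), at cost $O(\epsilon_j+\Delta_j)$. Here $\Delta_j$ carries the factor $e^{-N\exp(-\dH(0,z))}\le e^{-\Theta((\log N)^m)}$, hence is super-polynomially small; and $\epsilon_j$, being a difference of two values of $\tanh^2(\,\cdot\,/2)$ at hyperbolic distances from $w$ differing by $h$, satisfies $\epsilon_j\ll h\,e^{-\dH(w,\gamma(t_j))}=h\,e^{-(n-p-n_0-t_j)}$, so that $\sum_j\epsilon_j\ll e^{t-(n-p-n_0)}=O(1)$ since $t\le n-p-n_0$. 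For the Gaussian field, exponential biasing preserves covariances, so the Gaussian counterpart of $\FMC{\Bias}(e^{\pm 2X_j})$ equals $e^{\pm 2m_j+2v_j}(1+O(\rho_n))$, where $v_j=\Var(X_j)\asymp h^2$ by smoothness and $m_j$ is the unconditional $\Bias$-tilted mean of $X_j$, which by Lemma~\ref{lem:means} and the $\tfrac12\min$ covariance structure (Lemmas~\ref{lem:branch} and~\ref{lem:branch2}) is a fixed multiple of $h$ up to $O(he^{-n_0})$; in particular $m_j=O(h)$. Plugging in, $\tfrac12(e^{2m_j+2v_j}-1)=m_j+O(h^2)$ and similarly for the lower bound, so $\FMC{\Bias}(X_j)=m_j+O(h^2+\rho_n+\epsilon_j+\Delta_j+(\log N)^{-K})$, uniformly in $W_k(q)$.

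Summing over $j$, the leading term $\sum_j m_j$ telescopes to the unconditional $\Bias$-tilted mean of $W$. By Lemma~\ref{lem:means}, Lemma~\ref{lem:branch2} and the identifications $\gamma(0)=\zeta_{n_0}$ and $\gamma(b_1-n_0)=\ROT[\theta](\zeta_{b_1})$: the bias $2\F(\ROT[\theta](\zeta_{n-p}))+2\F(\zeta_{n_0})$ contributes $t+0$ to this mean-increment, the bias $-2\F(\ROT[\theta](\zeta_{b_1}))$ subtracts $\min(b_1-n_0,t)$, and the extra $\mathbf{z},\mathbf{y}$ terms contribute $O(k)=O(1)$ by \eqref{eq:correlationbound}; hence $\sum_j m_j=(t-b_1+n_0)_++O(1)$ for $0\le t\le n-p-n_0$. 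For the error, $J\asymp t/h\le(n-p-n_0)/h\asymp(\log n)/h$, so the total is $O(Jh^2)+O(J\rho_n)+O(\sum_j\epsilon_j)+O(J\Delta_j)+O(J(\log N)^{-K})=O(h\log n)+O(\rho_n(\log n)/h)+O(1)$. Choosing $h\asymp\rho_n^{1/2}$ balances the first two contributions and, using $\rho_n\gg(\log n)^{-1}$, makes the total $O((\log n)\rho_n^{1/2})$; this gives the statement with $\xi(t)=\FMC{\Bias}(W)-(t-b_1+n_0)_+$, after taking $p_0$ large enough that $\epsilon_j$ and $\Delta_j$ are small in the senses above (using $p_0\le p\le\log\log n$).

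The hard part will be the error accounting rather than any single estimate: the per-step cost $O(\rho_n)$ of removing the conditioning on $\Meso$ is unavoidable and forces the mesh to be no finer than $\rho_n^{1/2}$, while the per-step smoothness cost $O(h^2)$ forces it to be no coarser, so at the balanced scale $h\asymp\rho_n^{1/2}$ one must check simultaneously that every remaining error ($\Delta_j$, $\epsilon_j$, $(\log N)^{-K}$) stays below $(\log n)\rho_n^{1/2}$ — in particular that $\sum_j\epsilon_j$ sums geometrically to $O(1)$ rather than growing like $h^{-1}$ — and that Lemmas~\ref{lem:fmc0} and~\ref{lem:fmc1} continue to apply with the needed uniformity once the two microscopic points are inserted into the bias at each step.
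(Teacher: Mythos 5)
Your outline is the paper's outline: telescope the increment along a partition of $[0,t]$, sandwich each piece between $\tfrac12(1-e^{-2X})$ and $\tfrac12(e^{2X}-1)$, evaluate the resulting exponential moments via Proposition~\ref{prop:clt}, Lemma~\ref{lem:fmc1} and Lemma~\ref{lem:fmc0}, and balance the mesh at $\rho_n^{1/2}$. The gap is precisely in the error accounting near the singular points of the bias, which you correctly flag as the hard part but then dispose of with two estimates that are false there. First, $\Delta_j$ is \emph{not} super-polynomially small along the whole geodesic: writing $s_j=n-p-n_0-t_j$ for the distance from $\gamma(t_j)$ to the endpoint $w=\ROT(\zeta_{n-p})$ of $\Bias[2]$, one has $N\exp(-\dH(0,\gamma(t_j)))\asymp e^{p-s_j}$, so for $s_j=O(1)$ the damping factor $e^{-N\exp(-\dH(0,\gamma(t_j)))}$ is merely a constant, while the factor $\coth(\dH(w,\gamma(t_j))/2)\asymp s_j^{-1}$ diverges. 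Second, your bound $\epsilon_j\ll h\,e^{-s_j}$ is the large-separation asymptotic of $\tanh(s/2)^2=1-O(e^{-s})$; for $s_j\lesssim 1$ the correct behaviour is $\epsilon_j\asymp h/s_j$, and at $t=n-p-n_0$ (a value the lemma must cover, and which Lemma~\ref{lem:fmc2nd} uses) the final mesh point satisfies $\gamma(t_J)=w$, so the perturbed bias puts coefficient $4$ on $w$, leaves the admissible class of Proposition~\ref{prop:clt} and Corollary~\ref{cor:exact} entirely, and makes $\epsilon_J=\Delta_J=\infty$. With a uniform mesh of width $\rho_n^{1/2}$ there are $\rho_n^{-1/2}$ such uncontrolled increments within unit distance of $w$, and likewise within unit distance of each point of $\mathbf{z}$, and your estimates give no control over any of them.

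The paper closes this with two devices you would need in some form. The partition is refined harmonically near the terminal endpoint (spacings $1,\tfrac12,\dots,u^{-1}$ with $u=[\rho_n^{-1/2}]$), so that only $O(1)$ mesh points land within unit distance of $w$ and $\Delta_{\ell-i}$ decays exponentially in $i$ on that final stretch. The remaining ``exceptional'' increments --- those whose endpoints lie within distance $1$ of a point of $\mathbf{z}'$ --- are excluded from the exponential-moment comparison altogether and bounded instead by sub-Gaussian tails of the increment under $\FMC{\Bias}$, obtained from Proposition~\ref{prop:domination}, Corollary~\ref{cor:meso} and Corollary~\ref{cor:exact}; each such increment contributes $O((\log n)^{1/2})$ and there are $O(\rho_n^{-1/2})$ of them. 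Without this separate mechanism the sums $\sum_j\epsilon_j$ and $\sum_j\Delta_j$ do not stay below $(\log n)\rho_n^{1/2}$, and the last increment cannot be treated at all, so the argument as written does not close.
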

  \begin{proof}
	Let $\{t_0, t_1,t_2,t_3,\dots, t_{\ell}\}$ be a subset of $[0,t],$ written in increasing order, with $t_{\ell} = t$ and $t_0 = 0.$  We will make an explicit $n$-dependent choice for this set in a moment.
	First, we rewrite the moment we wish to calculate as
    \[
      \FMC{\Bias}[ \CUEF(\gamma(t)) - \CUEF(\gamma(0)) ]
      =
      \sum_{i=1}^{\ell}
      \FMC{\Bias}[ \CUEF(\gamma(t_i)) - \CUEF(\gamma(t_{i-1})) ].
    \]
    We then estimate the increment above and below by the inequality 
    \(
    1-e^{-2x} \leq 2x \leq e^{2x} - 1,
    \)
    valid for all real $x.$

    For the upper estimate, we therefore have
    \[
      \FMC{\Bias}[ \CUEF(\gamma(t)) - \CUEF(\gamma(0)) ]
      \leq
      \sum_{i=1}^{\ell}
      \FMC{\Bias}[ 2^{-1}(e^{2\CUEF(\gamma(t_i)) - 2\CUEF(\gamma(t_{i-1}))} - 1) ].
    \]
    We will actually show that
    \begin{equation}
        T:=
        \sum_{i=1}^{\ell}
      \FMC{\Bias}[ 2^{-1}(e^{2\CUEF(\gamma(t_i)) - 2\CUEF(\gamma(t_{i-1}))} - 1) ] = (t - b_1 + n_0)_+
      +O( (\log n) \rho_n^{1/2} ),
      \label{eq:fmcmean2}
    \end{equation}
    (noting the equality rather than the inequality) which we will use at a later point.

    Let $F = \one[{\BS \cap \ES}].$
    We can expand one of these increments as
    \begin{align*}
      \FMC{\Bias}[ e^{2\CUEF(\gamma(t_i)) - 2\CUEF(\gamma(t_{i-1}))} - 1 ]
      =&\frac{\Exp\left[ F(\CUEFr+\WN_r)e^{\Bias{\CUEFr} + 2\CUEF(\gamma(t_i)) - 2\CUEF(\gamma(t_{i-1}))} \right]
    }{ 
      \Exp\left[ F(\CUEFr+\WN_r)e^{\Bias{\CUEFr}} \right]}
      -1.
    \end{align*}
    Let $\mu : \D \to \R$ be the mean of $\GF_r$ under the bias $e^{\Bias{\GF_r}},$ and let $\mu^{(i)}$ be the mean under the bias $e^{\Bias{\GF_r}+2\GF(\gamma(t_i)) - 2\GF(\gamma(t_{i-1}))}.$
    Suppose that $t_i$ are chosen so that 
    \[
      \F \mapsto \Bias{\F_r}[2]+2\F(\gamma(t_i)) - 2\F(\gamma(t_{i-1})) \in W_{k+1}(q')
    \]
    for some $q'.$
    Then, we can apply Proposition~\ref{prop:clt} to write
    \begin{align*}
      &\FMC{\Bias}[ e^{2\CUEF(\gamma(t_i)) - 2\CUEF(\gamma(t_{i-1}))} - 1 ] \\
      &=\frac{\Exp\left[ F(\GF_r+\WN_r+\mu^{(i)})\right]\Exp\left[e^{\Bias{\CUEFr} + 2\CUEF(\gamma(t_i)) - 2\CUEF(\gamma(t_{i-1}))} \right]
    }{ 
      \Exp\left[ F(\GF_r+\WN_r+\mu)\right]\Exp\left[e^{\Bias{\CUEFr}} \right]}
      -1 + O(n_0^{-K}).
    \end{align*}
    Applying Corollary~\ref{cor:exact} and \eqref{eq:varbound}, we have that for $p_0$ sufficiently large
    \[
      \frac{\Exp\left[e^{\Bias{\CUEFr} + 2\CUEF(\gamma(t_i)) - 2\CUEF(\gamma(t_{i-1}))} \right]}
      {\Exp\left[e^{\Bias{\CUEFr}} \right]}
      \ll
      \frac{\Exp\left[e^{\Bias{\GF_r} + 2\GF(\gamma(t_i)) - 2\GF(\gamma(t_{i-1}))} \right]}
      {\Exp\left[e^{\Bias{\GF_r}} \right]}
      \ll 1.
    \]
    We therefore conclude that 
    \begin{align*}
      T
      =
      &O\biggl(1+ \sum_{i=1}^{\ell}
      \biggl|
      \frac
      {\Exp\bigl[ F(\GF_r+\WN_r+\mu^{(i)})\bigr]}
      {\Exp\left[ F(\GF_r+\WN_r+\mu)\right]}
      -1\biggr|
      \biggr)
      \\
      +&\sum_{i=1}^{\ell}
      \frac12
      \biggl[
	\frac{\Exp\left[e^{\Bias{\CUEFr} + 2\CUEF(\gamma(t_i)) - 2\CUEF(\gamma(t_{i-1}))} \right]}
	{\Exp\left[e^{\Bias{\CUEFr}} \right]}
      -1\biggr],
    \end{align*}
    provided ${\ell} \leq n_0^{K}.$
    Applying Lemma~\ref{lem:fmc1} we have that
    \[
      \sum_{i=1}^{\ell}
      \biggl|
      \frac
      {\Exp\bigl[ F(\GF_r+\WN_r+\mu^{(i)})\bigr]}
      {\Exp\left[ F(\GF_r+\WN_r+\mu)\right]}
      -1\biggr|
      \ll {\ell}\rho_n.
    \]

    We evaluate the exponential moments by comparison with $\GF.$  In the notation of Lemma~\ref{lem:fmc0}, write
    \[
      \frac
      {
	\Exp\left[e^{\Bias{\CUEFr} + 2\CUEF(\gamma(t_i)) - 2\CUEF(\gamma(t_{i-1}))} \right]
      }
      {
	\Exp\left[ 
	  e^{\Bias{\CUEFr}}
	\right]
      }
      =
      \frac
      {
	\Exp\left[e^{\Bias{\GF_r}+ 2\GF(\gamma(t_i)) - 2\GF(\gamma(t_{i-1}))} \right]
      }
      {
	\Exp\left[ 
	  e^{\Bias{\GF_r}}
	\right]
      }+O(\epsilon_i + \Delta_i),
    \]
    where $\epsilon_i$ is given by
    \[
      \epsilon_i = 
      \frac{
	\tanh\left((n-p-n_0 - t_{i-1})/2\right)^2
      } 
      {
	\tanh\left((n-p-n_0 - t_{i})/2\right)^2
      } 
      -1
      =O(e^{-n+n_0+p+t_i} |t_i - t_{i-1}|)
      ,
    \]
    and where in the notation of the definition of $W_k(q),$
    \begin{equation}
      \Delta_i =\max_{z \in \mathbf{z} \cup \{\gamma(t_i)\}} 
      e^{-N\exp(-\dH(0,z))}
      \prod_{
	\substack{x \in  \mathbf{z}' \cup \{\gamma(t_i)\} \\ 
      x \neq z}} 
      \coth(\dH(x,z)/2).
      \label{eq:fmccond2}
    \end{equation}

    The $\epsilon_i$ always sums over $i$ to $O(1)$ provided $\max_{1 \leq i \leq {\ell}}|t_i-t_{i-1}| \ll 1.$  The $\Delta_i$ term is negligible provided we choose $\left\{ t_i \right\}$ appropriately.  Define a sequence $\left\{ t_i' \right\}_1^{\ell}$ by setting $t_0' = 0$ and $t_i' - t_{i-1}'=\rho_n^{1/2}$ so long as $t_{i-1}' < n-n_0+\log \rho_n.$  To define the remainder of the sequence, let $u = [ (\rho_n^{-1/2})],$ and let the spacings be $u^{-1}, (u-1)^{-1}, \dots, 3^{-1},2^{-1},1.$  Then for $p_0$ and $n_0$ sufficiently large, this definition assures that the sequence reaches $t$ in finitely many steps, i.e.\ there is a finite ${\ell}$ (in fact ${\ell}=O((\log n_0 )\rho_n^{-1/2})$) so that $t_{\ell}' \leq t < t_{{\ell}+1}'.$ Then, we set 
		\corO{ $t_i = t_i' (t/t_{\ell}'),$} so that the spacings are at least those of stated and the endpoint is at $t$).

    We must control $\Delta_i$ along this sequence, which will be impossible for points $t_i$ where $\gamma(t_i)$ is close to an element of $\mathbf{z}'.$  So, for the moment, assume that $\gamma(t_i)$ is at least distance $1$ from any point of $\mathbf{z}.$  We will then show that exceptional $\gamma(t_i)$ can not influence the sum much.  Note that for such $t_i,$ we have that
    \[
      \F \mapsto \Bias{\F_r}[2]+2\F(\gamma(t_i)) - 2\F(\gamma(t_{i-1})) \in W_{k+1}(q')
    \]
    for some $q',$ which can be controlled solely in terms of $k$ and $q.$

    For the initial steps in the sequence (where the points are evenly spaced), we have that $\Delta_i \ll e^{-\Omega((\rho_n)^{-1/2})}.$ For $i$ in the final stretch, we have that $\Delta_{{\ell}-i}$ decays exponentially $i.$ Hence, with this choice of $t_i,$ we conclude that 
    \begin{align*}
      T
      =
      O( (\log n )\rho_n^{1/2})+
      \sum_{i=1}^{\ell}
      \frac12
      \biggl[
	\frac{\Exp\left[e^{\Bias{\GF_r} + 2\GF(\gamma(t_i)) - 2\GF(\gamma(t_{i-1}))} \right]}
	{\Exp\left[e^{\Bias{\GF_r}} \right]}
      -1\biggr].
    \end{align*}
    By the change of mean formula (Lemma~\ref{lem:means}), we thus have
    \begin{align*}
        T
      =
      O( (\log n )\rho_n^{1/2})+
      \sum_{i=1}^{\ell}
      \frac12
      \biggl[
	e^{2\mu(\gamma(t_i)) - 2\mu(\gamma(t_{i-1})) + 2\sigma_i^2
      }
    -1\biggr],
  \end{align*}
  where
  $\sigma_i^2 = \Var
  \left( 
  \GF(\gamma(t_i)) - \GF(\gamma(t_{i-1}))
  \right).$
  Now $\mu(\gamma(t_i)) - \mu(\gamma(t_{i-1})) = O(|t_{i}-t_{i-1}|)$ and $\sigma_i = O(|t_{i}-t_{i-1}|).$  Hence, 
  \begin{align*}
      T
      =
    O( (\log n )\rho_n^{1/2})+
    \mu(\gamma(t))
    -\mu(\gamma(0))
    +
    O\biggl(
    \sum_{i=1}^{\ell}
    |t_{i}-t_{i-1}|^2\biggr)
  \end{align*}
  The contribution to the sum of the increments that are $O(\rho_n^{-1/2})$ is $O( (\log n) \rho_n^{1/2}).$ For the other terms, we have $|t_{{\ell}-i}-t_{{\ell}-i-1}|=O(i^{-2}).$  Hence their contribution is at most $O(1),$ and we conclude
  \begin{align*}
      T
      =
    O( (\log n )\rho_n^{1/2})+
    \mu(\gamma(t)) - \mu(\gamma(0)).
  \end{align*}
  That $\mu(\gamma(t)) - \mu(\gamma(0)) = (t - b_1 + n_0)_+ + O(1)$ follows from Lemma~\ref{lem:branch2} and Lemma~\ref{lem:means} (c.f.~Lemma~\ref{lem:fmc1}).

  We now turn to controlling the contribution of exceptional $t_i,$ that is where the distance of $t_i$ to some point in $\mathbf{z}'$ is less than $1$.  First we note that by construction, there are at most $O(\rho_n^{-1/2})$ many of these, with the implicit constant in the $O(\cdot)$ depending only on $k.$  Suppose $t_i$ is any such increment.  By virtue of Proposition~\ref{prop:domination}, Corollary~\ref{cor:meso}, and Corollary~\ref{cor:exact},
  \[
    \FMC{\Bias}[ e^{\lambda(\CUEF(\gamma(t_i)) - \CUEF(\gamma(t_{i-1})))}]
    \ll n_0^{3/2} \Exp\left[ 
      e^{\lambda(\GF(\gamma(t_i)) - \GF(\gamma(t_{i-1}))+\mu(\gamma(t_i)
			\corO{)}- \mu(\gamma(t_{i-1})))}
    \right]
  \]
  for all $\lambda \in \R.$  The variance of the increment $\GF(\gamma(t_i)) - \GF(\gamma(t_{i-1}))$ is at most $1.$  Hence, this translates into Gaussian decay of the increment, once the increment is larger than a sufficiently large multiple of $\sqrt{\log n}.$
Hence for some $C>0$ sufficiently large,
\begin{eqnarray*}
   && \!\!\!\!\FMC{\Bias}\biggl[ |\CUEF(\gamma(t_i)) - \CUEF(\gamma(t_{i-1}))|\one[
    |\CUEF(\gamma(t_i)) - \CUEF(\gamma(t_{i-1}))|> C(\log n)^{1/2}]
  \biggr]\\
  &&\quad \quad
  \leq e^{-\Omega(\log n)}.
\end{eqnarray*}
Summing all of these exceptional increments therefore only contributes at most $O( (\log n)^{1/2}\rho_n^{-1/2}),$ which is negligible in comparison to the error claimed in the lemma.
\end{proof}

Using this machinery, we now estimate the second moment of such an increment.
\begin{lemma}
  There is a $p_0$ sufficiently large and independent of $n$ so that
  for all $p_0 \leq p \leq \log\log n$
  and for all $0 \leq t \leq n-p-n_0,$
  \[
    \FMC{\Bias[2]}[ \left(\CUEF(\gamma(t)) - \CUEF(\gamma(0))\right)^2 ] \leq 
    (t - b_1 + n_0)_+^2
    +\xi(t)
  \]
  where $|\xi(t)| \ll (\log n)^2 \rho_n.$
  \label{lem:fmc2nd}
\end{lemma}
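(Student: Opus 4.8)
The plan is to follow the scheme of the proof of Lemma~\ref{lem:fmcmean}, keeping the same microscopic decomposition of the increment but now tracking \emph{products} of two microscopic pieces. First I would write $X := \CUEF(\gamma(t)) - \CUEF(\gamma(0)) = \sum_{i=1}^{\ell} X_i$ with $X_i = \CUEF(\gamma(t_i)) - \CUEF(\gamma(t_{i-1}))$, using exactly the same $n$-dependent nodes $\{t_i\}_0^\ell$ as in Lemma~\ref{lem:fmcmean} (spacing $\rho_n^{1/2}$ in the bulk, geometrically shrinking towards $t$, so that $\ell = O((\log n)\rho_n^{-1/2})$), and recall that after excising the $O(\rho_n^{-1/2})$ exceptional nodes lying within hyperbolic distance $1$ of $\zeta_{n_0}$, of $\ROT[\theta](\zeta_{n-p})$, or of one another, each bias $\Bias{\F}[2] + 2(\F(\gamma(t_i)) - \F(\gamma(t_{i-1})))$, and also each bias $\Bias{\F}[2] \pm 2(\F(\gamma(t_i)) - \F(\gamma(t_{i-1}))) \pm 2(\F(\gamma(t_j)) - \F(\gamma(t_{j-1})))$, lies in some $W_{k'}(q')$ with $q'$ controlled by $k$ and $q$. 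Then I would expand $X^2 = \sum_i X_i^2 + 2\sum_{i<j} X_iX_j$ and treat the two sums separately.

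For the diagonal sum, use $w^2 \le \sinh^2 w = \tfrac12(\cosh 2w - 1)$ (almost tight since $|X_i|$ is microscopic) to get $\FMC{\Bias[2]}[X_i^2] \le \tfrac14\FMC{\Bias[2]}[e^{2X_i} + e^{-2X_i} - 2]$, and evaluate $\FMC{\Bias[2]}[e^{\pm 2X_i}]$ exactly as the increments were evaluated in Lemma~\ref{lem:fmcmean}: Proposition~\ref{prop:clt} to pass to $\GF_r$, Corollary~\ref{cor:exact} and \eqref{eq:varbound} for the unconditioned exponential-moment ratio, Lemma~\ref{lem:means} for the change of mean, and Lemma~\ref{lem:fmc1}/Lemma~\ref{lem:ratio} for the fact that the extra bias barely perturbs $\Pr(\Meso)$. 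Writing $\mu_i = \mu(\gamma(t_i)) - \mu(\gamma(t_{i-1}))$ (with $\mu$ the mean of $\GF_r$ under $e^{\Bias[2]\GF_r}$), $\sigma_i^2 = \Var(\GF(\gamma(t_i)) - \GF(\gamma(t_{i-1})))$, this gives $\FMC{\Bias[2]}[X_i^2] = \mu_i^2 + \sigma_i^2 + O(\mu_i^4 + \sigma_i^4 + \epsilon_i + \Delta_i + \rho_n + (\log N)^{-K'})$. For the off-diagonal sum, use the exact identity $\tfrac14\sinh(2X_i)\sinh(2X_j) = \tfrac1{16}(e^{2X_i+2X_j} - e^{2X_i-2X_j} - e^{-2X_i+2X_j} + e^{-2X_i-2X_j})$, together with the Taylor bound $X_iX_j = \tfrac14\sinh(2X_i)\sinh(2X_j) + O(|X_iX_j|(X_i^2+X_j^2))$; evaluating the four exponential biases $\Bias[2] \pm 2X_i \pm 2X_j$ as above yields $\FMC{\Bias[2]}[X_iX_j] = \mu_i\mu_j + \Cov_{ij} + (\text{error}_{ij})$, where $\Cov_{ij} = \Cov(\GF(\gamma(t_i)) - \GF(\gamma(t_{i-1})), \GF(\gamma(t_j)) - \GF(\gamma(t_{j-1})))$. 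Summing, the main part of $\sum_i\FMC{\Bias[2]}[X_i^2] + 2\sum_{i<j}\FMC{\Bias[2]}[X_iX_j]$ is $(\sum_i\mu_i)^2 + \sum_{i,j}\Cov_{ij} = (\mu(\gamma(t)) - \mu(\gamma(0)))^2 + \Var(\GF(\gamma(t)) - \GF(\gamma(0)))$; by Lemma~\ref{lem:branch2} and Lemma~\ref{lem:means} (exactly as in Lemma~\ref{lem:fmcmean}) $\mu(\gamma(t)) - \mu(\gamma(0)) = (t - b_1 + n_0)_+ + O(1)$, and by \eqref{eq:varcosh} $\Var(\GF(\gamma(t)) - \GF(\gamma(0))) = \log\cosh(\tfrac{\dH(\gamma(t),\gamma(0))}{2}) \le \tfrac t2 = O(\log n)$, so the main part is $(t - b_1 + n_0)_+^2 + O(\log n) = (t-b_1+n_0)_+^2 + O((\log n)^2\rho_n)$ because $\rho_n \gg (\log n)^{-1}$.

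The main obstacle is the control of the total error, a sum over $O((\log n)\rho_n^{-1/2})$ diagonal and $O((\log n)^2\rho_n^{-1})$ off-diagonal terms. The diagonal errors sum to $O((\log n)\rho_n^{1/2}) = o((\log n)^2\rho_n)$ just as in Lemma~\ref{lem:fmcmean}. The delicate point is the off-diagonal errors: one must exhibit enough decay in $|t_i - t_j|$ in $\mu_i\mu_j$, in $\Cov_{ij}$, and in the Toeplitz-comparison errors $\epsilon_{ij}, \Delta_{ij}$, and enough decay in $\min(t_{i-1},t_{j-1})$ in the mean-shift errors, so that only $O(\ell)$ "heavy" pairs contribute at the level $O(\rho_n)$ while the rest contribute geometrically-small amounts; combined with $\sum_i|\mu_i| = O(\log n)$, $\max_i\sigma_i^2 = O(\rho_n)$, $\sum_i\sigma_i^2 = O((\log n)\rho_n^{1/2})$, $\sum_i(\epsilon_i + \Delta_i) = O(1)$, and the exceptional-node estimate (crude Gaussian tails via Proposition~\ref{prop:domination}, Corollary~\ref{cor:meso} and Corollary~\ref{cor:exact}, contributing $O((\log n)^{1/2}\rho_n^{-1/2})e^{-\Omega(\log n)}$ as in Lemma~\ref{lem:fmcmean}), one concludes the total error is $\ll (\log n)^2\rho_n$ and the lemma follows. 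The higher-order Taylor remainders $\sum_{i<j}O(|X_iX_j|(X_i^2+X_j^2))$, estimated by H\"older's inequality and monotonicity as in Lemma~\ref{lem:dyadic}, are $O((\log n)^2\rho_n^{3/2})$ and hence harmless.
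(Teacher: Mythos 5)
Your plan diverges from the paper's argument in a way that creates a genuine gap. The paper does \emph{not} expand $X^2=\sum_{i,j}X_iX_j$ symmetrically. It writes $X^2=\sum_i X_i\cdot X$, approximates only the microscopic factor $X_i$ by $2^{-1}(e^{2X_i}-1)$ using the one-sided inequalities $1-e^{-2x}\le 2x\le e^{2x}-1$ (so that the remainder $R_i=X_i-2^{-1}(e^{2X_i}-1)$ has a sign), absorbs $e^{2X_i}$ into the bias to obtain an element of $W_1(q)$, and then applies the already-proved Lemma~\ref{lem:fmcmean} to the \emph{whole} factor $X$ under the perturbed bias. This keeps the number of bias perturbations at $\ell$, not $\ell^2$, and that is the whole point.

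Your symmetric expansion has $\Theta(\ell^2)=\Theta((\log n)^2\rho_n^{-1})$ off-diagonal pairs, and each evaluation of $\FMC{\Bias[2]}[e^{\pm 2X_i\pm 2X_j}]$ carries a barrier-comparison error of size $O(\rho_n)$ coming from Lemma~\ref{lem:fmc1}, i.e.\ ultimately from the Slepian-type comparison in Lemma~\ref{lem:ratio}. That error is a \emph{uniform} bound: there is no mechanism in the paper (and you supply none) by which it decays in $|t_i-t_j|$ or in $\min(t_{i-1},t_{j-1})$ --- the mean perturbation induced on $\{\zeta_1,\dots,\zeta_{n_0}\}$ by a bias supported near $\gamma(t_i)$ is $O(1)$ uniformly in $h$ by \eqref{eq:correlationbound}, and Lemma~\ref{lem:ratio} only returns the blanket $O(\rho_n)$ for any such bounded perturbation. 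The four errors attached to the biases $\Bias[2]\pm 2X_i\pm 2X_j$ are a priori unrelated, so they do not cancel in the $\sinh\cdot\sinh$ combination. Summing gives a total error $O(\ell^2\rho_n)=O((\log n)^2)$, which is not $o((\log n)^2)$, let alone $\ll(\log n)^2\rho_n$; this already defeats the application in Lemma~\ref{lem:1pLB}, where one needs the centered second moment to be $o((\log n)^2/\eta^3)$. A secondary problem is your two-sided Taylor remainder $O(|X_iX_j|(X_i^2+X_j^2))$: controlling it requires fourth moments of increments under the conditioned, biased measure $\FMC{\Bias[2]}$, which the exponential-moment machinery does not provide (the H\"older/monotonicity trick of Lemma~\ref{lem:dyadic} drops the barrier indicator and so loses a factor $n_0^{3/2}$; the paper only tolerates that loss for the $O(\rho_n^{-1/2})$ exceptional increments, not for all $\ell^2$ pairs). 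The sign information $R_i\le 0$, which you discard, is precisely what lets the paper bound the remainder by quantities it can compute with \eqref{eq:fmc21} and \eqref{eq:fmcmean2}.
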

\begin{proof}
  Let $\{t_0, t_1,t_2,t_3,\dots, t_{\ell}\}$ be the same sequence of points chosen in the proof of Lemma~\ref{lem:fmcmean}.  We rewrite the moment we wish to calculate as
  \begin{eqnarray*} 
  &&  
  \!\!\!\!\!\!\!
  \!\!\!\!\!\!\!
  \FMC{\Bias[2]}[ \CUEF(\gamma(t)) - \CUEF(\gamma(0)) ]\\
  &&\quad
    =
    \sum_{i=1}^{\ell}
    \FMC{\Bias[2]}[ \left(\CUEF(\gamma(t_i)) - \CUEF(\gamma(t_{i-1}))\right)
    \left(\CUEF(\gamma(t)) - \CUEF(\gamma(0))\right)].
    \end{eqnarray*} 
  
  We will approximate the increment $\CUEF(\gamma(t_i)) - \CUEF(\gamma(t_{i-1}))$ by 
  \[
    2^{-1}(e^{2\CUEF(\gamma(t_i)) - 2\CUEF(\gamma(t_{i-1}))} -1).
  \]
  We first show that after the approximation has been made, we get the desired result.
  Having done the approximation, we will be in a position to apply Lemma~\ref{lem:fmcmean}.  Again we may have an exceptional $t_i,$ this time occurring only at the terminal endpoint.  This can be controlled in the same manner as was done in Lemma~\ref{lem:fmcmean}. For the unexceptional points, we have that for some $q \in \R,$ the map
  \[
    \Bias: \F \mapsto \Bias{\F_r}[2]+2\F(\gamma(t_i)) - 2\F(\gamma(t_{i-1})) \in W_{1}(q).
  \]
  Hence, we can apply Lemma~\ref{lem:fmcmean} to $\Bias$ (and to $\Bias[2]$) to get
  \[
    \FMC{\Bias}[ \CUEF(\gamma(t)) - \CUEF(\gamma(0))] = 
    (t - b_1 + n_0)_+ + O((\log n) \rho_n^{1/2}).
  \]

  To compare this to $\FMC{\Bias[2]},$ we write
  \begin{multline*}
    \frac
    {\FMC{\Bias[2]}[ e^{\Bias{\CUEFr} - \Bias{\CUEFr}[2]}(\CUEF(\gamma(t)) - \CUEF(\gamma(0)))]}
    {\FMC{\Bias}[ \CUEF(\gamma(t)) - \CUEF(\gamma(0))]} \\
    =\frac
    {\Exp\left[ F(\CUEFr+\WN_r)e^{\Bias{\CUEFr}[2] + 2\CUEF(\gamma(t_i)) - 2\CUEF(\gamma(t_{i-1}))} \right]}
    {\Exp\left[ F(\CUEFr+\WN_r)e^{\Bias{\CUEFr}[2]} \right]}.
  \end{multline*}
  Using the same reductions as used in the proof of Lemma~\ref{lem:fmcmean}, we get that this ratio is
  \begin{eqnarray*}
    &&
    \!\!\!\!\!\!\!
    \!\!\!\!\!\!\!
    \!\!\!\!\!\!\!
    \!\!\!\!\!\!\!
    \!\!\!\!\!\!\!
    \!\!\!\!\!\!\!
    \frac
    {\Exp\left[ F(\CUEFr+\WN_r)e^{\Bias{\CUEFr}[2] + 2\CUEF(\gamma(t_i)) - 2\CUEF(\gamma(t_{i-1}))} \right]}
    {\Exp\left[ F(\CUEFr+\WN_r)e^{\Bias{\CUEFr}[2]} \right]}\\
    &&\quad =1+ 2(\mu(\gamma(t_i)) - \mu(\gamma(t_{i-1}))) + \xi_i,
  \end{eqnarray*}
  where $\mu$ is the mean of $\GF_r$ under the bias $e^{\Bias{\GF_r}[2]},$ and the error term $\xi_i$ satisfies 
  \[
    \sum_{i=1}^\ell |\xi_i| \ll (\log n )\rho_n^{1/2}.
  \]
  Hence
  \begin{multline}
    \sum_{i=1}^{\ell}
    \FMC{\Bias[2]}[ 
      2^{-1}(e^{2\CUEF(\gamma(t_i)) - 2\CUEF(\gamma(t_{i-1}))} -1)
    \left(\CUEF(\gamma(t)) - \CUEF(\gamma(0))\right) ] \\
    = (\mu(\gamma(t)) - \mu(\gamma(0)))(t - b_1 + n_0)_+ 
    +O((\log n )^2\rho_n).
    \label{eq:fmc21}
  \end{multline}
  Using that $\mu(\gamma(t)) - \mu(\gamma(0)) = (t - b_1 + n_0)_+ + O(1),$ we arrive at the desired conclusion, modulo having established the validity of the approximation.

  Turning to establishing the approximation,
  define 
  \[
    R_i 
    =-2^{-1}(e^{2\CUEF(\gamma(t_i)) - 2\CUEF(\gamma(t_{i-1}))} -1)
    +\left( 
    \CUEF(\gamma(t_i)) - \CUEF(\gamma(t_{i-1}))
    \right).
  \]
  Since $1+2x \leq e^{2x}$ for all $x\in \R,$ we have that
  \(R_i \leq 0 \)
  almost surely.

  In terms of $R_i,$ we wish to show that
  \[
    \xi := \sum_{i=1}^\ell \FMC{\Bias[2]}[ R_i \left(\CUEF(\gamma(t)) - \CUEF(\gamma(0))\right)]
    \ll (\log n)^2\rho_n.
  \]
  This we expand into a double sum.  For each $i,$ we estimate
  \begin{align*}
    \FMC{\Bias[2]}[ R_i \left(\CUEF(\gamma(t)) - \CUEF(\gamma(0))\right)] 
    &=\sum_{j=1}^\ell
    \FMC{\Bias[2]}[ R_i \left(\CUEF(\gamma(t_j)) - \CUEF(\gamma(t_{j-1}))\right)] \\
    &
   \!\!\!\!\!\!\!\!
   \!\!\!\!\!\!\!\!
   \!\!\!\!\!\!\!\!
   \leq\sum_{j=1}^\ell
    \FMC{\Bias[2]}[ R_i 2^{-1}(1-e^{-2\CUEF(\gamma(t_j)) + 2\CUEF(\gamma(t_{j-1}))})].
  \end{align*}
  Hence, we can estimate $\xi$ by
  \begin{align*}
      \xi
      \leq &
      \sum_{i=1}^\ell
      \sum_{j=1}^\ell
      \FMC{\Bias[2]}[
      \left( 
      \CUEF(\gamma(t_i)) - \CUEF(\gamma(t_{i-1}))
      \right)
      2^{-1}(1-e^{-2\CUEF(\gamma(t_j)) + 2\CUEF(\gamma(t_{j-1}))})]\\
      -&
      \sum_{i=1}^\ell
      \sum_{j=1}^\ell
      \FMC{\Bias[2]}[ 4^{-1}
      (e^{2\CUEF(\gamma(t_i)) - 2\CUEF(\gamma(t_{i-1}))}-1)
      (e^{2\CUEF(\gamma(t_j)) - 2\CUEF(\gamma(t_{j-1}))}-1)].
  \end{align*}
  The first line on the right hand side can be estimated exactly as in \eqref{eq:fmc21}.  The second line, on holding $i$ fixed and summing over $j$ can be estimated using \eqref{eq:fmcmean2} and the same comparison between $\FMC{\Bias}$ and $\FMC{\Bias[2]}$ done in the first part of this proof.  Combining both pieces, we get that
\begin{eqnarray*}
      \xi
     & \leq &(t - b_1 + n_0)_+^2 +O((\log n )^2\rho_n)
      \\
      &&-\sum_{i=1}^\ell
      [\mu(\gamma(t_i)) - \mu(\gamma(t_{i-1}))](t - b_1 + n_0)_+
      +O((\log n )^2\rho_n) 
      \ll(\log n )^2\rho_n.
\end{eqnarray*}
\end{proof}

\subsection{Estimating $\FMC[ \one[Z > 0]]$}
\label{sec:2p1}

Using these field moments, we can estimate the conditional probability of $\BL(\theta).$
\corO{Recall \eqref{eq:invariance}.}
\begin{lemma}
  For all $\epsilon > 0,$ there is a $p_0(\epsilon) > 0$ sufficiently large that for all $\log\log n > p \geq p_0(\epsilon)$ all $n$ sufficiently large, 
  and all $|\theta|<\Xi,$
  \[
    e^{n-p-b_1-2\log 2}(1-\epsilon)\leq \FMC[\MFI(\theta)] \leq e^{n-p-b_1-2\log2}(1+\epsilon).
  \]
  \label{lem:1pLB}
\end{lemma}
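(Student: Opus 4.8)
The plan is to strip the exponential weight out of $\MFI(\theta)$, fold it into the bias, and reduce the statement to three ingredients already available: the exponential-moment comparisons of Corollaries~\ref{cor:meso} and~\ref{cor:exact}, an explicit Gaussian variance computation, and the field-moment estimates of Lemmas~\ref{lem:fmcmean} and~\ref{lem:fmc2nd}. Write $v=\ROT[\theta](\zeta_{b_1})$ and $w=\ROT[\theta](\zeta_{n-p})$, and recall that $\Bias{\F}[2]=2\F(w)-2\F(v)+2\F(\zeta_{n_0})$ is the bias of Corollary~\ref{cor:meso}(1), while the default bias in $\FMC$ is $\Bias{\F}[1]=2\F(\zeta_{n_0})$. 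Since $e^{\Bias{\CUEF_r}[2]}=e^{2\CUEF_r(w)-2\CUEF_r(v)}e^{\Bias{\CUEF_r}[1]}$, the exponential factor in $\MFI(\theta)$ is exactly $e^{\Bias{\CUEF_r}[2]-\Bias{\CUEF_r}[1]}$, so unwinding the definition of $\FMC$ gives
\[
  \FMC[\MFI(\theta)]
  =\frac{\Exp[\one[\Meso]e^{\Bias{\CUEF_r}[2]}]}{\Exp[\one[\Meso]e^{\Bias{\CUEF_r}[1]}]}
  \cdot\FMC{\Bias[2]}\bigl[\one[\BL(\theta)\cap\ELL(\theta)]\bigr].
\]
I will show the first factor equals $e^{n-p-b_1-2\log 2}(1+o(1))(1-a+O(\Delta))$ with $a=a(p)$ and $\Delta=\Delta(p)$ tending to $0$ as $p\to\infty$, while the second factor equals $1-o(1)$ (and is trivially $\le 1$); together these yield the lemma after choosing $p_0(\epsilon)$ large enough to make $a,\Delta<\epsilon/3$ and then taking $n$ large.

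For the first factor, Corollary~\ref{cor:meso} (equation~\eqref{eq-star-feb1}) removes the $\one[\Meso]$ at the cost of a factor $1+O(\rho_n)$, and Corollary~\ref{cor:exact}, used exactly as in the proof of Lemma~\ref{lem:fmc0}, compares each $\Exp[e^{\Bias{\CUEF_r}[j]}]$ to $\Exp[e^{\Bias{\GF_r}[j]}]$ up to a factor $1-a_j+O(\Delta_j)$, where $a_1$ is superexponentially small (because $\dH(0,\zeta_{n_0})=n_0$) and $a_2\asymp|w|^{2N}\asymp e^{-\Theta(e^p)}$, similarly $\Delta_2$. It remains to evaluate $\Exp[e^{\Bias{\GF_r}[2]}]/\Exp[e^{\Bias{\GF_r}[1]}]$. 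By Lemma~\ref{lem:lemonslice} the points $w,v,\zeta_{n_0}$ lie in the same root-of-unity sector, so the common subtracted reference point in $\F_r$ cancels and $\Bias{\GF_r}[2]-\Bias{\GF_r}[1]=2\GF(w)-2\GF(v)=:A$. With $B:=\Bias{\GF_r}[1]$, Gaussianity gives $\Exp[e^{A+B}]/\Exp[e^{B}]=\exp(\tfrac12\Var A+\Cov(A,B))$. Since $\ROT[\theta]$ is an isometry, $\dH(w,v)=\dH(\zeta_{n-p},\zeta_{b_1})=n-p-b_1\to\infty$, so by~\eqref{eq:varcosh} one has $\tfrac12\Var A=2\log\cosh(\tfrac{n-p-b_1}{2})=n-p-b_1-2\log 2+o(1)$. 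For the cross-term, writing $\GF(\ROT[\theta](\zeta_j))-\GF(\zeta_{n_0})=\hat\GF(e^{i\theta}\zeta_{j-n_0})$ as in~\eqref{eq:invariance}, equation~\eqref{eq:gfgf} together with the second estimate of Lemma~\ref{lem:branch2} shows that $\Exp[\GF(\zeta_{n_0})\GF(w)]-\Exp[\GF(\zeta_{n_0})\GF(v)]$ and $\Exp[\GF(\zeta_r)\GF(w)]-\Exp[\GF(\zeta_r)\GF(v)]$ are each $O(e^{-(b_1-n_0)}+e^{-n_0})=o(1)$, using $b_1-n_0=\lfloor m\log n/\eta\rfloor\to\infty$; hence $\Cov(A,B)=o(1)$. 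This gives the claimed form of the first factor.

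For the second factor I use the field-moment machinery with the bias $\Bias[2]$. The geodesic $\gamma$ from $\zeta_{n_0}$ to $\ROT[\theta](\zeta_n)$ coincides with $\ROT[\theta](\R_+)$ beyond $\zeta_{n_0}$, so $\ROT[\theta](\zeta_{b_i})=\gamma(b_i-n_0)$ and $w=\gamma(n-p-n_0)$. Lemma~\ref{lem:fmcmean} gives $\FMC{\Bias[2]}[\CUEF(\gamma(b_i-n_0))-\CUEF(\gamma(0))]=b_i-b_1+O((\log n)\rho_n^{1/2})$ and $\FMC{\Bias[2]}[\CUEF(w)-\CUEF(\gamma(0))]=n-p-b_1+O((\log n)\rho_n^{1/2})$; combining with the identities $b_i-b_{i-1}=b_1-n_0$ and $b_1+b_{\eta-1}=n+n_0$ (built into the definition of the $b_i$ so that the tilted field follows a flat barrier) and with the fact that on $\Meso\subset\ES$ one has $\CUEF(\gamma(0))=\CUEF_r(\zeta_{n_0})=n_0+t_0+O(1)$, $t_0=-\tfrac34\log n$, shows that under $\FMC{\Bias[2]}$ the field $\CUEF_r+\WN_r$ at $\ROT[\theta](\zeta_{b_i})$ and at $w$ has mean within $O((\log n)\rho_n^{1/2}+p)$ of the centers $b_{i-1}+t_0$ and $b_{\eta-1}+p+t_0$ of the windows defining $\BL(\theta)$ and $\ELL(\theta)$ — a discrepancy which is $o(\eta^{-1}\log n)$ by the choice of $\eta_n$. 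Lemma~\ref{lem:fmc2nd} bounds the $\FMC{\Bias[2]}$-variance of each of these increments by $O((\log n)^2\rho_n^{1/2})$, and $\WN_r$, being independent of $\CUEF_r$ and entering only through finitely many Gaussian marginals, contributes fluctuations that are absorbed into the same bounds; so a Chebyshev estimate and a union bound over the $O(\eta)$ constraints — where $\rho_n\eta_n^3\to 0$ supplies the smallness — give $\FMC{\Bias[2]}[(\BL(\theta)\cap\ELL(\theta))^c]=o(1)$, uniformly in $|\theta|<\Xi$ and $p_0\le p\le\log\log n$. Assembling the two factors completes the argument.

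I expect the first factor to be the main obstacle. A direct comparison of $\Exp[e^{\Bias{\CUEF_r}[2]}]$ with $\Exp[e^{\Bias{\GF_r}[2]}]$ genuinely fails (precisely the point of the discussion preceding Lemma~\ref{lem:fmc0}), which is why one must pass through the ratio against $\Bias[1]$, where the discrepancy is a single controllable correction; and the exact constant $-2\log 2$ only emerges from the Gaussian side once one checks that $\Cov(A,B)$ is $o(1)$ rather than merely $O(1)$, i.e.\ that $b_1-n_0\to\infty$. The rest — tracking the several small errors ($\rho_n$ from Corollary~\ref{cor:meso}, $a(p)$ and $\Delta(p)$ from Corollary~\ref{cor:exact}, and $(\log n)\rho_n^{1/2}$ from the field-moment lemmas), and verifying that $\eta_n$ was chosen to grow slowly enough for the microscopic barrier to be typical — is routine bookkeeping.
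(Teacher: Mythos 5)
Your proposal is correct and follows essentially the same route as the paper: the same factorization into the ratio of $\Bias[2]$- and $\Bias[1]$-biased expectations (handled by Corollaries~\ref{cor:meso} and~\ref{cor:exact} plus the Gaussian variance computation, which you phrase equivalently as $\tfrac12\Var A+\Cov(A,B)$ rather than as a difference of half-variances) times the conditional probability of $\BL(\theta)\cap\ELL(\theta)$, bounded above trivially and below by Chebyshev and a union bound via Lemmas~\ref{lem:fmcmean} and~\ref{lem:fmc2nd}. No substantive gap.
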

\begin{proof}
    Unpacking the meaning of $\FMC[\MFI(\theta)]$
  \[
    \FMC[\MFI(\theta)]
    =
    \frac{
      \Exp\left[ \one[\Meso] \one[{\CUEFr + \WN_r \in \BL[\theta]\cap\ELL[\theta]}]e^{\Bias{\CUEFr}[2]}\right]}
      {
	\Exp\left[ \one[\Meso] e^{\Bias{\CUEFr}[1]}\right]
      }
    \]
    where $\Bias[1]$ and $\Bias[2]$ are as in Corollary~\ref{cor:meso}.
    Applying Corollary~\ref{cor:meso} and the trivial supremum bound on the indicator, we get that
    \[
      \FMC[\MFI(\theta)]
      \leq \frac{\Exp\left[e^{\Bias{\CUEFr}[2]}\right]}
      {\Exp\left[ e^{\Bias{\CUEFr}[1]}\right]}(1+o(1)).
    \]
    Hence by Corollary~\ref{cor:exact}, there is a $p_0(\epsilon)$ sufficiently large so that for all $p_0(\epsilon)\leq p < \log\log n,$ we have
    \[
      \FMC[\MFI(\theta)]
      \leq \frac{\Exp\left[e^{\Bias{\GF_r}[2]}\right]}
      {\Exp\left[ e^{\Bias{\GF_r}[1]}\right]}(1+\epsilon + o(1)).
    \]
    It remains to evaluate the variances of these biasing terms.  

    The variance of $\Bias{\GF_r}[1]$ is simpler.  Since the distance between $r$ and $n_0$ is going to infinity, we have that
    \[
      \tfrac{1}{2}\Var\left( 
      \Bias{\GF_r}[1]
      \right)
      = n_0 - r - 2\log 2 + o(1).
    \]
    For $\Bias{\GF_r}[2],$ we use Lemma~\ref{lem:branch2}.  We start by observing that $\Bias{\GF_r}[2] = \Bias{\GF}[2] - 2\GF(\zeta_r).$  Since all pairwise separations go to infinity, the variance can be computed by comparison with simple random walk up to a $o(1)$ additive correction.  To see this, first split the variance $\Bias{\GF}[2] - 2\GF(\zeta_r)$ into the two increments which would be independent in the random walk case; the cross correlations and their additive corrections cancel: 
    \begin{align*}
      \tfrac{1}{2}\Var\left( 
      \Bias{\GF}[2]- 2\GF(\zeta_r)
      \right)
      = 
      &\tfrac{1}{2}\Var\left( 
      2\GF(\ROT[\theta](\zeta_{n-p}))- 2\GF(\ROT[\theta](\zeta_{b_1}))
      \right) \\
      +&\tfrac{1}{2}\Var\left( 
      2\GF(\zeta_{n_0})- 2\GF(\zeta_r)
      \right) +o(1) \\
      =&(n - p - b_1) + (n_0 -r) - 4\log 2 + o(1).
    \end{align*}

    The lower bound relies on calculating some field moments.  \corO{Fix w.l.o.g. in the rest of
		the proof $\theta=0$ and omit it from the notation.}
		We would like to control the $\FMC$ probability of $\CUEFr+\WN_r$ not being in \corO{$\BL \cap \ELL$} under an additional bias.  \corO{Recall first that the event 
		  $\Meso$, see \eqref{eq-rev1}, implies that $|(\CUEFr+\WN_r)(\zeta_{n_0})
		-n_0-t_0|\leq 1$. 
		On  $\Meso$ we therefore obtain that for some constant $C=C(p)$, 
		$$(\ELL)^c\subset 
		\{(\CUEFr+\WN_r)(\eta_{b_\eta})-(\CUEFr+\WN_r)(\eta_{n_0})-b_{\eta-1}+n_0| \corE{> C \eta^{-1} \log n}
		\}.$$ Arguing similarly on the events composing $(\BL)^c$, applying the union bound and then Chebychev's bound, we obtain}
	\begin{align*}
      &\FMC\left[ \one[(\BL)^c \cup \corO{(\ELL)^c}] 
			(\CUEFr+\WN_r) 
      e^{\Bias{\CUEFr}[2]-\Bias{\CUEFr}[1]}
    \right]\\
    &\ll
    \sum_{j=1}^\eta
    \FMC\left[
      e^{\Bias{\CUEFr}[2]-\Bias{\CUEFr}[1]}
      \frac{\eta^2 ((\CUEFr+\WN_r)(\zeta_{b_j}) - (\CUEFr+\WN_r)(\zeta_{n_0})-b_{j-1}+n_0)^2}{(\log n)^2}
    \right]
  \end{align*}
  By construction of the fields $\CUEFr$ and by Lemma~\ref{lem:lemonslice}, $\CUEFr(\zeta_{b_j}) - \CUEFr(\zeta_{n_0}) = \CUEF(\zeta_{b_j}) - \CUEF(\zeta_{n_0}).$  Hence, we can apply Lemma~\ref{lem:fmc2nd} to compute this second moment, but first we integrate the $\WN_r$ terms out of the second moment.  For $1 \leq j < \eta,$ these terms are independent of all other $\WN_r$ terms that appear in the same expectation, and so we can use that $\Exp( a+Z)^2 = \Exp a^2 + \Exp Z^2$ for a centered variable $Z$ independent of $a.$  In fact, this additional contribution to the second moment is much smaller in order than the second moment of the differences of the $\CUEF.$  
  In all, we get:
  \begin{align*}
    &\FMC\left[ \one[(\BL)^c \cup \corO{(\ELL)^c} 
		](\CUEFr+\WN_r) 
    e^{\Bias{\CUEFr}[2]-\Bias{\CUEFr}[1]}
  \right]\\
  &\ll
  \sum_{j=1}^\eta
  \FMC\left[ 
    e^{\Bias{\CUEFr}[2]-\Bias{\CUEFr}[1]}
    \frac{\eta^2 (\CUEFr(\zeta_{b_j}) - \CUEFr(\zeta_{n_0})-b_{j-1}+n_0)^2 + O(\eta^2)}{(\log n)^2}
  \right] \\
  &\ll
  \sum_{j=1}^\eta
  \FMC\left[ 
    e^{\Bias{\CUEFr}[2]-\Bias{\CUEFr}[1]}
    \frac{\eta^2 (\log n)^2\rho_n}{(\log n)^2}
  \right] 
  \ll
  \eta^3 \rho
  \cdot
  \FMC\left[  
    e^{\Bias{\CUEFr}[2]-\Bias{\CUEFr}[1]}
  \right].
\end{align*}
Thus, as $\eta^3 \rho \to 0,$ we conclude that
\[
  \FMC[\MFI(\theta)]
  \geq\FMC\left[  
    e^{\Bias{\CUEFr}[2]-\Bias{\CUEFr}[1]}
  \right]
  (1-o(1)).
\]
Hence, again by Corollary~\ref{cor:exact}, the lower bound follows.
\end{proof}

We now turn to estimating $\FMC[\MFI(\theta_1)\MFI(\theta_2)]$ for various values of $(\theta_1,\theta_2).$  There will be two regimes of $|\theta_1-\theta_2|$ in which we make different estimates.  We introduce the midpoint $\HM=\HM(\theta_1,\theta_2)=n_0- [\log |\sin \tfrac{\theta_1-\theta_2}{2}|],$ with $[\cdot]$ denoting integer part.  This is roughly the height \corO{at which 
 the $\theta_1$ and $\theta_2$ rays branch.}  In the first regime, where $\HM < b_1,$ the rays have branched early enough that there is essentially no correlation between $\MFI(\theta_1)$ and $\MFI(\theta_2).$  Otherwise, we must appropriately take advantage of the barrier information in $\MFI(\theta)$ to assure the correlation is not too high.

The estimate for small $\HM$ is no more complicated than the estimates in Lemma~\ref{lem:1pLB}.
\begin{lemma}
  For all $\epsilon > 0$ there is a $p_0(\epsilon) >0$ independent of $n$ so that for all $\log\log n > p > p_0$ and all $n$ sufficiently large the following holds.
  Suppose $|\theta_1|,|\theta_2| < \Xi$ are such that $\HM(\theta_1,\theta_2) \leq n_0 + (1-\epsilon)(b_1 - n_0).$  Then
  \[
    \FMC[\MFI(\theta_1)\MFI(\theta_2)]
    \leq
    \FMC[\MFI(\theta_1)]\FMC[\MFI(\theta_2)](1+\epsilon).
  \]
  \label{lem:finefield2pUB1}
\end{lemma}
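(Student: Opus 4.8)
The plan is a one-sided (upper-bound) chain: discard the indicator constraints inside $\MFI(\theta_1)\MFI(\theta_2)$ (which is free, since we only need an upper bound), pass from $\CUEF$ to $\GF$, compute the resulting Gaussian variance, and compare against the first-moment estimate of Lemma~\ref{lem:1pLB}. Write $\Bias{\F}[1]=2\F(\zeta_{n_0})$ and let $\Bias{\F}[3]$ be the bias from item~(2) of Corollary~\ref{cor:meso} with parameters $\theta_1,\theta_2$; note $\Bias{\F}[3]=\Bias{\F}[1]+\bigl(2\F(\ROT[\theta_1](\zeta_{n-p}))-2\F(\ROT[\theta_1](\zeta_{b_1}))\bigr)+\bigl(2\F(\ROT[\theta_2](\zeta_{n-p}))-2\F(\ROT[\theta_2](\zeta_{b_1}))\bigr)$. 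Since $\MFI(\theta_i)\le \exp\bigl(2\CUEF_r(\ROT[\theta_i](\zeta_{n-p}))-2\CUEF_r(\ROT[\theta_i](\zeta_{b_1}))\bigr)$, unpacking the definition of $\FMC$ gives
\[
  \FMC[\MFI(\theta_1)\MFI(\theta_2)]
  \le
  \frac{\Exp[\one[\Meso] e^{\Bias{\CUEF_r}[3]}]}{\Exp[\one[\Meso] e^{\Bias{\CUEF_r}[1]}]}.
\]
By \eqref{eq-star-feb1} of Corollary~\ref{cor:meso} this equals $\frac{\Exp[e^{\Bias{\CUEF_r}[3]}]}{\Exp[e^{\Bias{\CUEF_r}[1]}]}(1+O(\rho_n))$, and then, exactly as in the proof of Lemma~\ref{lem:1pLB}, by Corollary~\ref{cor:exact} it is at most $\frac{\Exp[e^{\Bias{\GF_r}[3]}]}{\Exp[e^{\Bias{\GF_r}[1]}]}(1+\epsilon_1+o(1))$ once $p\ge p_0(\epsilon_1)$: all the pairwise hyperbolic separations between the points appearing in $\Bias[3]$ and their distances to $0$ tend to infinity (using $b_1-n_0\to\infty$), so the $\Delta$-type errors are harmless, while the $|z|^{2N}$ factor at the endpoint $\ROT[\theta_i](\zeta_{n-p})$ is $e^{-\Omega(e^{p})}$, which is exactly why $p$ must be taken large.

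The heart of the argument is the Gaussian variance computation. On all points in play — which lie in the wedge $|\arg z|\le\tfrac12 e^{-n_0}$ by Lemma~\ref{lem:lemonslice}, hence project to the same strip in the definition of $\GF_r$ — the field $\GF_r$ equals $\GF$ minus the fixed value $\GF(\zeta_r)$, so $\Bias{\GF_r}[1]=2B$ with $B=\GF(\zeta_{n_0})-\GF(\zeta_r)$, and $\Bias{\GF_r}[3]=2A_1+2A_2+2B$ with $A_i=\GF(\ROT[\theta_i](\zeta_{n-p}))-\GF(\ROT[\theta_i](\zeta_{b_1}))$ (the $\GF(\zeta_r)$ cancels inside each increment). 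By \eqref{eq:varcosh}, $\Var(A_i)=\tfrac12(n-p-b_1)-\log2+o(1)$ and $\Var(B)=\tfrac12(n_0-r)-\log2+o(1)$. The key point is that every cross-covariance is $o(1)$: by Lemma~\ref{lem:branch2}, each of the four covariances $\Exp\GF(\ROT[\theta_1](\zeta_h))\GF(\ROT[\theta_2](\zeta_j))$ with $h,j\in\{b_1,n-p\}$ equals $\tfrac12\min\{-\log|\sin\tfrac{\theta_1-\theta_2}{2}|,\,h-n_0,\,j-n_0\}+\tfrac12 n_0+\tfrac12\log|\cos\tfrac{\theta_1}{2}\cos\tfrac{\theta_2}{2}|+O(e^{-(b_1-n_0)})$, and the hypothesis $\HM(\theta_1,\theta_2)\le n_0+(1-\epsilon)(b_1-n_0)$ together with $b_1-n_0\to\infty$ forces the minimum to equal $-\log|\sin\tfrac{\theta_1-\theta_2}{2}|$ for all four choices of $(h,j)$, so the four equal constants cancel and $\Cov(A_1,A_2)=o(1)$; likewise $\Cov(A_i,B)=o(1)$, using that $\ROT[\theta_i]$ fixes $\zeta_{n_0}$ and Lemma~\ref{lem:branch2}. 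Hence $\tfrac12\Var(\Bias{\GF_r}[3])-\tfrac12\Var(\Bias{\GF_r}[1])=2(n-p-b_1-2\log2)+o(1)$, so $\frac{\Exp[e^{\Bias{\GF_r}[3]}]}{\Exp[e^{\Bias{\GF_r}[1]}]}=\bigl(e^{n-p-b_1-2\log2}\bigr)^2(1+o(1))$.

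Finally, Lemma~\ref{lem:1pLB} gives $\FMC[\MFI(\theta_i)]\ge e^{n-p-b_1-2\log2}(1-\epsilon_2)$ for $p\ge p_0(\epsilon_2)$, so $\FMC[\MFI(\theta_1)]\FMC[\MFI(\theta_2)]\ge\bigl(e^{n-p-b_1-2\log2}\bigr)^2(1-\epsilon_2)^2$. Chaining the three estimates above,
\[
  \FMC[\MFI(\theta_1)\MFI(\theta_2)]
  \le
  \bigl(e^{n-p-b_1-2\log2}\bigr)^2 (1+\epsilon_1+o(1))
  \le
  \FMC[\MFI(\theta_1)]\FMC[\MFI(\theta_2)]\cdot\frac{1+\epsilon_1+o(1)}{(1-\epsilon_2)^2},
\]
and choosing $\epsilon_1,\epsilon_2$ small in terms of $\epsilon$ (this fixes $p_0(\epsilon):=\max(p_0(\epsilon_1),p_0(\epsilon_2))$) and then $n$ large makes the right-most factor at most $1+\epsilon$. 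The main obstacle is the cross-covariance bookkeeping in the middle step: the entire lemma rests on verifying that the two microscopic-ray increments $A_1,A_2$ (and each of them with the mesoscopic endpoint $B$) decorrelate above level $b_1$, which is precisely the content of the hypothesis $\HM(\theta_1,\theta_2)\le n_0+(1-\epsilon)(b_1-n_0)$, i.e.\ that the branching between the two rays occurs strictly below the first microscopic checkpoint.
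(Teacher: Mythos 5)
Your proposal is correct and follows essentially the same route as the paper: the trivial bound discarding the microscopic indicators to reduce to the ratio $\Exp[\one[\Meso]e^{\Bias{\CUEF_r}[3]}]/\Exp[\one[\Meso]e^{\Bias{\CUEF_r}[1]}]$, removal of the indicators via Corollary~\ref{cor:meso}, passage to $\GF_r$ via Corollary~\ref{cor:exact} (with the $|w|^{2N}=e^{-\Omega(e^p)}$ correction dictating $p_0(\epsilon)$), and the variance computation $\tfrac12\Var(\Bias{\GF_r}[3])=2(n-p-b_1)+(n_0-r)-6\log 2+o(1)$ using the decorrelation of the two post-$b_1$ increments forced by $\HM\leq n_0+(1-\epsilon)(b_1-n_0)$. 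Your cross-covariance bookkeeping via Lemma~\ref{lem:branch2} is a slightly more explicit rendering of the paper's "the correlation decays like $O(e^{-\Omega(b_1-n_0)})$", and the final comparison against Lemma~\ref{lem:1pLB} is identical.
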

\begin{proof}
  We estimate the left hand side by the trivial bound
  \[
    \FMC[\MFI(\theta_1)\MFI(\theta_2)]
    \leq
    \frac{
      \Exp\left[ \one[\Meso] e^{\Bias{\CUEFr}[3]}\right]
    }
    {
      \Exp\left[ \one[\Meso] e^{\Bias{\CUEFr}[1]}\right]
    }
		\corO{=
		\frac{
      \Exp\left[  e^{\Bias{\CUEFr}[3]}\right]
    }
    {
      \Exp\left[  e^{\Bias{\CUEFr}[1]}\right]
    }(1+O(\rho_n))
		}
		,
  \]
  where $\Bias[3]$ is as in Corollary~\ref{cor:meso}, 
	\corO{and the corollary was used in the second equality}. 
	Applying Corollary~\ref{cor:exact}, we can estimate this by the same with $\CUEFr$ replaced by $\GF_r.$  The variance of $\Bias{\GF_r}[1]$ was already computed in Lemma~\ref{lem:1pLB}.  The variance of $\Bias{\GF_r}[3]$ remains to be calculated.  As with the calculation of $\Bias{\GF_r}[2],$ 
  \(
  \Bias{\GF_r}[3]
  =
  \Bias{\GF}[3] - 2\GF(\zeta_r).
  \)
  The important point here is that since $\HM(\theta_1,\theta_2) \leq
  n_0 + (1-\epsilon)(b_1 - n_0),$ the correlation between the segment $\GF(\ROT[\theta_1](\zeta_{n-p}))- \GF(\ROT[\theta_1](\zeta_{b_1}))$ and the segment $\GF(\ROT[\theta_2](\zeta_{n-p}))- \GF(\ROT[\theta_2](\zeta_{b_1}))$ decays like $O(e^{-\Omega(b_1-n_0)})$ (by Lemma~\ref{lem:branch2}).  The other correlations between these segments and the earlier $\GF(\zeta_{n_0})-\GF(\zeta_r)$ segment also decay due to being well separated, and so
  \[
    \tfrac{1}{2}\Var\left( 
    \Bias{\GF}[3] - 2\GF(\zeta_r)
    \right)
    =2(n - p - b_1) + (n_0 -r) - 6\log 2 + o(1).
  \]
  Therefore, we have shown the estimate
  \[
    \FMC[\MFI(\theta_1)\MFI(\theta_2)]
    \leq e^{2(n - p - b_1) - 4\log 2}(1+\epsilon+o(1)).
  \]
  Applying Lemma~\ref{lem:1pLB}, the claim now follows.
\end{proof}

This lemma covers all but a vanishing fraction of pairs $(\theta_1,\theta_2)$ we need to consider.  However,  we must also assure that the remaining terms are not too correlated.  This is the content of the following lemma.
\begin{lemma}
  There is a $p_0$ sufficiently large and independent of $n$ so that
  for all $ \log\log n > p \geq p_0$ and all $n$ sufficiently large the following holds.
  Suppose $|\theta_1|,|\theta_2| < \Xi$ are such that $\HM(\theta_1,\theta_2) < n-p.$  Then
  \[
    \FMC[\MFI(\theta_1)\MFI(\theta_2)]
    \ll
    \FMC[\MFI(\theta_1)]\FMC[\MFI(\theta_2)]
    e^{\HM-b_1 + \tfrac{m+4}{2\eta}\log n}.
  \]
  \label{lem:finefield2pUB2}
\end{lemma}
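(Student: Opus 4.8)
The plan is to follow the proof of Lemma~\ref{lem:finefield2pUB1}, but to retain along the common part of the two quasi-geodesics enough of the microscopic barrier to recover the factor $e^{\HM-b_1}$ by which the trivial bound overcounts when the rays branch late. We treat the case $\HM\ge b_1$; when $\HM<b_1$ the common part is trivial and the argument degenerates to (a mild extension of) Lemma~\ref{lem:finefield2pUB1}. Let $j^*$ be the smallest index with $b_{j^*}\ge\HM$, so that $\HM-b_{j^*-1}$ and $b_{j^*}-\HM$ are each at most $\lfloor(m\log n)/\eta\rfloor$. By Lemma~\ref{lem:branch2} and the distributional invariance of $\GF$, the families $(\GF(\ROT[\theta_1](\zeta_i)))_{i\le n}$ and $(\GF(\ROT[\theta_2](\zeta_i)))_{i\le n}$ are, up to $O(1)$ additive corrections to all covariances, a branching random walk: a common trunk on heights $[n_0,\HM]$ — on which the two rays agree to within bounded variance, so $\ROT[\theta_1](\zeta_{b_j})$ and $\ROT[\theta_2](\zeta_{b_j})$ with $b_j\le\HM$ differ by a Gaussian of variance $O(1)$ — followed at height $\HM$ by two independent branches.

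First I would bound $\FMC[\MFI(\theta_1)\MFI(\theta_2)]$ by keeping $\MFI(\theta_1)$ intact, discarding $\ELL(\theta_2)$ and the constraints of $\BL(\theta_2)$ at checkpoints above $b_{j^*}$, and splitting the exponential bias of $\MFI(\theta_2)$ at $b_{j^*}$; its trunk part $2\CUEF_r(\ROT[\theta_2](\zeta_{b_{j^*}}))-2\CUEF_r(\ROT[\theta_2](\zeta_{b_1}))$ is controlled on the trunk, up to $O(1)$, by quantities already present in $\MFI(\theta_1)$. The resulting expression is passed from $\CUEF_r$ to $\GF_r$ exactly as in Corollary~\ref{cor:meso}: by Proposition~\ref{prop:clt} with $s=2$, Lemma~\ref{lem:ratio}, Corollary~\ref{cor:exact}, and the field-moment lemmas of Section~\ref{sec:fmc} for the $\theta_1$-barrier factors. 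The hypotheses of Proposition~\ref{prop:clt} hold: by Lemma~\ref{lem:lemonslice} the points of hyperbolic distance $>n_0$ from $0$ lie in a Euclidean ball of radius $\asymp N^{-1}(\log N)^m$, the pairwise separations exceed $N^{-1}$ because $\theta_1,\theta_2\in\Theta$ are $\gg N^{-1}$ apart, and only $O(\eta)=o(\log n)$ points are involved. After the change of mean (Lemma~\ref{lem:means}) there remains a ballot-type probability for the above branching random walk.

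That probability is where the barrier pays off. Under the bias $\Bias[3]$ both endpoint terms pull the trunk mean up with slope $2$, whereas $\BL(\theta_1)$ confines the field on the trunk to a tube of width $\eta^{-1}\log n$ about the slope-$1$ line; hence the centered field must fall from $\approx t_0$ near $b_1$ to $\approx -(\HM-b_1)+t_0$ near the end of the trunk, and since the variance accumulated along the trunk between $b_1$ and $\HM$ is $\tfrac12(\HM-b_1)+O(1)$, this has probability $\asymp (\HM-b_1)^{-1/2}e^{-(\HM-b_1)}$. Along each branch both the biased mean and the barrier have slope $1$, and the tube, being much wider than the $O(\sqrt{(\log n)/\eta})$ branch fluctuations, is met with probability $\Theta(1)$. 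Combining this with $\FMC[\MFI(\theta_i)]\asymp e^{n-p-b_1-2\log 2}$ from Lemma~\ref{lem:1pLB}, with the variance identities $\tfrac12\Var\Bias{\GF_r}[1]=n_0-r-2\log 2+o(1)$ and $\tfrac12\Var\Bias{\GF_r}[2]=(n-p-b_1)+(n_0-r)-4\log 2+o(1)$ from that proof, and with $\tfrac12\Var\Bias{\GF_r}[3]=2(n-p-b_1)+2(\HM-b_1)+(n_0-r)+O(1)$ obtained the same way via Lemma~\ref{lem:branch2}, yields
\[
\FMC[\MFI(\theta_1)\MFI(\theta_2)]\ll \FMC[\MFI(\theta_1)]\,\FMC[\MFI(\theta_2)]\,e^{2(\HM-b_1)}\,e^{-(\HM-b_1)+O((\log n)/\eta)}.
\]
The implied $O((\log n)/\eta)$ absorbs the $\eta^{-1}\log n$ slack of each of the $O(\eta)$ trunk tubes and the gap between $\HM$ and the nearest checkpoint; for $n$ large it is at most $\tfrac{m+4}{2\eta}\log n$, which gives the claim.

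The main obstacle is exactly this decoupling. When $\HM-b_1$ is large one cannot compare $\CUEF_r$ and $\GF_r$ on the full joint barrier event: the points $\ROT[\theta_1](\zeta_{b_j})$ and $\ROT[\theta_2](\zeta_{b_j})$ with $b_j\ll\HM$ sit at hyperbolic distance $\asymp e^{-(\HM-b_j)}$, so the factor $\Delta$ of Proposition~\ref{prop:clt}, and the constants $q,\Delta$ governing the field-moment lemmas, can be as large as a power of $e^{\HM-b_1}$ and swamp the $(\log N)^{-K}$ gain. One is thus forced to drop the $\theta_2$-barrier on the trunk (it is implied, up to $O(1)$, by the nearby retained $\theta_1$-barrier) and to split $\MFI(\theta_2)$'s bias at $b_{j^*}$, so that every use of the Gaussian comparison and of Lemmas~\ref{lem:fmc0}--\ref{lem:fmc2nd} involves only $\gg 1$-separated points, with $\Delta=O(1)$. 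Organizing this so that the retained constraints still pin the trunk field tightly enough to produce the full $e^{-(\HM-b_1)}$ descent cost, while losing no more than $e^{O((\log n)/\eta)}$, is the \emph{delicate} part of the proof.
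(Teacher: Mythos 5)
Your overall strategy --- re-weight the pair expectation so that the doubly-counted common trunk is not overcharged in the variance --- is the same as the paper's, and your first paragraph (split $\MFI(\theta_2)$'s bias at $b_{j^*}$ and use the retained checkpoint constraints to bound the trunk part) is essentially the paper's argument: the paper inserts $e^{\Bias{\F}[\theta]-2q}\geq 1$ with $\Bias{\F}[\theta]=2\F(\ROT[\theta_1](\zeta_{b_1}))-2\F(\ROT[\theta_1](\zeta_{b_*}))$ and $q=(b_1-b_*)-2\eta^{-1}\log n$, valid deterministically on $\BL[\theta_1]\cap\ELL[\theta_1]$, then \emph{drops all microscopic indicators} and is left with a pure ratio of exponential moments $\Exp[e^{\Bias[3]+\Bias[\theta]-2q}]/\Exp[e^{\Bias[1]}]$, whose variance computation (segment $[b_*,\HM]$ or $[\HM,b_*]$ counted with the wrong weight, costing exactly the $|\HM-b_*|\leq\tfrac{m}{2\eta}\log n$ slack) gives the claim. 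Note that the coefficient $2$ here is precisely the optimal Chernoff tilt for the descent event you describe, so the two mechanisms are quantitatively equivalent.

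The gap is in your second paragraph, where you instead keep the full $\Bias[3]$ (with its $e^{2(\HM-b_1)}$ variance excess) and claim to recover $e^{-(\HM-b_1)}$ from a ballot-type probability that the tilted field descends along the trunk while respecting $\BL(\theta_1)$. That probability is not computable with the tools of this paper. The trunk checkpoints $b_1,\dots,b_{j^*}$ lie at hyperbolic distance $>n_0$ from the origin, i.e.\ in the microscopic regime, and Proposition~\ref{prop:clt} only transfers expectations of cylinder functions $F\in\BSA(\{\zeta_i\omega_j: i\leq d\})$ with $d\leq n-m\log n$; the indicator of a microscopic barrier event cannot be placed in $F$, and Corollary~\ref{cor:meso} / Lemma~\ref{lem:ratio} only handle the mesoscopic event $\Meso$. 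The only access to $\CUEF$ at microscopic heights is through exponential biases, and the field-moment calculus of Section~\ref{sec:fmc} extracts from these only first and second moments of increments --- enough for the Chebyshev bound in Lemma~\ref{lem:1pLB} showing the barrier is \emph{typical} under $\Bias[2]$, but nowhere near enough to establish an exponentially small conditional probability $e^{-(\HM-b_1)}$ under $\Bias[3]$. So the step ``after the change of mean there remains a ballot-type probability for the branching random walk'' cannot be justified; the probabilistic gain must be converted, as in the paper (and as your own first paragraph almost does), into a deterministic inequality between exponentials of the field that holds on the event, after which only exponential moments --- handled by Corollary~\ref{cor:exact} and crude $O(1)$-multiplicative Gaussian comparison --- are needed.
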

\begin{proof}
    Unlike when $\HM$ was small, in the setting of the previous lemma, $\Bias[3],$ which is the biasing term that appears in the left-hand side, will have much too large a variance.  This is because, by analogy with branching random walk, $\Bias[3]$ overweights the segment before the $\theta_1$ and $\theta_2$ rays split.  Hence, we would like to re-bias the exponential weight on the left-hand side.  Ideally we would replace $\ROT[\theta_1](\zeta_{b_1})$ with $\ROT[\theta_1](\zeta_{\HM})$ in the biasing term.  However, we have no control on the value of $\ROT[\theta_1](\zeta_{\HM}),$ and so we instead choose an approximation over which we do.  To this end, let $b_* \in \left\{ b_1,b_2,\dots,b_\eta \right\}$ be the closest element to $\HM,$ so that \(|\HM - b_*| \leq (2\eta)^{-1}{m\log n}.\)

  Let $\Bias{\F}[\theta] =  2\F(\ROT[\theta_1](\zeta_{b_1})) - 2\F(\ROT[\theta_1](\zeta_{b_*})).$  When $\F \in \BL[\theta_1] \cap \ELL[\theta_1],$ we have that $\F$ is constrained at both of these points.  Specifically, letting
  \[
      q =  (b_1 - b_*) - 2\eta^{-1}\log(n),
  \]
  we have that $\Bias{\F}[\theta] \geq 2q$ for $\F \in \ELL[\theta_1].$  Hence,
  \[
    \FMC[\MFI(\theta_1)\MFI(\theta_2)]
    \leq
    \FMC[\MFI(\theta_1)\MFI(\theta_2)e^{\Bias{\CUEFr+\WN_r}[\theta] - 2q}].
  \]
  We can now drop the indicators of the microscopic field events and estimate
  \[
    \FMC[\MFI(\theta_1)\MFI(\theta_2)]
    \leq 
    \frac{
      \Exp\left[ \one[\Meso] e^{\Bias{\CUEFr}[3]+\Bias{\CUEFr+\WN_r}[\theta] - 2q}\right]
    }
    {
      \Exp\left[ \one[\Meso] e^{\Bias{\CUEFr}[1]}\right]
    }.
  \]
  \corE{By the same argument as in Corollary~\ref{cor:meso}, and by the independence of $\WN_r$ from $\CUEFr,$ 
  \[
    \FMC[\MFI(\theta_1)\MFI(\theta_2)]
    \ll
    \frac{
      \Exp\left[ e^{\Bias{\CUEFr}[3]+\Bias{\CUEFr+\WN_r}[\theta] - 2q}\right]
    }
    {
      \Exp\left[ e^{\Bias{\CUEFr}[1]}\right]
    }.
  \]
  We no longer need to make a precise upper estimate.  Hence, for any $\log\log n \geq p \geq 0,$ we can replace $\CUEFr$ by $\GF_r$ up to a multiplicative loss by an absolute constant:
  \[
    \FMC[\MFI(\theta_1)\MFI(\theta_2)]
    \ll
    \frac{
      \Exp\left[ e^{\Bias{\GF_r}[3]+\Bias{\GF_r+\WN_r}[\theta] - 2q}\right]
    }
    {
      \Exp\left[ e^{\Bias{\GF_r}[1]}\right]
    }.
  \]
  The dependence on $\WN_{r}$ in the bias can be integrated out, as it is independent.  This produces only a loss of a multiplicative factor, on account of the uniform boundedness of the variances of $\WN_r.$ Hence
  \[
    \FMC[\MFI(\theta_1)\MFI(\theta_2)]
    \ll
    \frac{
      \Exp\left[e^{\Bias{\GF_r}[3]+\Bias{\GF_r}[\theta] - 2q}\right]
    }
    {
      \Exp\left[e^{\Bias{\GF_r}[1]}\right]
    },
  \]
  and we turn to estimating the variance of the bias in the numerator.}

  We use Lemma~\ref{lem:branch2} to compare the variance of this bias term to branching random walk.  Here we only need the value up to an additive $O(1)$ constant, so we do not need to worry about distances between segments being macroscopic.  If $\HM \geq b_*,$ then $\Bias{\GF_r}[3]+\Bias{\GF_r}[\theta]$ counts the segment between $\HM$ and $b_*$ twice, and hence we have
  \[
    \tfrac{1}{2}\Var\left( 
    \Bias{\GF_r}[3]+\Bias{\GF_r}[\theta]
    \right)
    \leq 2(n - p - \HM) + 4(\HM-b_*) + (b_*-b_1) + (n_0 - r) + O(1).
  \]
  On the other hand, if $\HM < b_*,$ then $\Bias{\GF_r}[3]+\Bias{\GF_r}[\theta]$ corresponds to a sum of 3 independent random walk segments for a total variance of
  \[
    \tfrac{1}{2}\Var\left( 
    \Bias{\GF_r}[3]+\Bias{\GF_r}[\theta]
    \right)
    \leq (n - p - b_1) + (n-p-b_*) + (n_0 - r) + O(1).
  \]
  Combining these with $-2q,$ we have that
  \[
    \FMC[\MFI(\theta_1)\MFI(\theta_2)]
    \ll e^{2(n-p-b_1)+ (\HM - b_1) + |\HM-b_*| + 2\eta^{-1}\log n}.
  \]
  After applying Lemma~\ref{lem:1pLB} with $\epsilon = \tfrac 12$ to write this in terms of $\FMC[\MFI(\theta_i)],$ we have that
  \[
    \FMC[\MFI(\theta_1)\MFI(\theta_2)]
    \ll
    \FMC[\MFI(\theta_1)]\FMC[\MFI(\theta_2)]
    e^{\HM-b_1 + \tfrac{m+4}{2\eta}\log n}.
  \]
\end{proof}

We are now able to show the desired conditional lower bound.

\begin{proposition}
  For all $\epsilon > 0$ there is a $p_0(\epsilon) > 0$ so that for all $\log\log n > p > p_0$ and all $n$ sufficiently large
  \[
    \FMC[ \one[{\CUEFr + \WN_r \in \Ev}]] \geq 1-\epsilon.
  \]
  \label{prop:1pLB}
\end{proposition}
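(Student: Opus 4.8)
The plan is to run the conditional second moment argument set up in \eqref{eq:2mm}. It suffices to prove that $\FMC[\one[Z>0]] \geq 1 - O(\epsilon)$: on $\Meso$ the constraints $\BS\cap\ES$ appearing in $\Ev$ hold automatically, and $Z>0$ is equivalent to $\CUEF_r+\WN_r\in\BL(\theta)\cap\ELL(\theta)$ for some $\theta\in\Theta = e^{-n+n_0}\Z\cap(-\Xi,\Xi)$, so $\one[\Meso]\one[{\CUEF_r+\WN_r\in\Ev}]=\one[\Meso]\one[Z>0]$ and hence $\FMC[\one[{\CUEF_r+\WN_r\in\Ev}]]=\FMC[\one[Z>0]]$; relabelling $\epsilon$ then gives the proposition. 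By \eqref{eq:2mm} the task is to show $\sum_{\theta_1,\theta_2}\FMC[\MFI(\theta_1)\MFI(\theta_2)]\leq(1+O(\epsilon))\bigl(\sum_\theta\FMC[\MFI(\theta)]\bigr)^2$. The numerator is immediate from Lemma~\ref{lem:1pLB}: for $p\geq p_0(\epsilon)$ one has $\FMC[\MFI(\theta)] = e^{n-p-b_1-2\log 2}(1+O(\epsilon))$ uniformly in $|\theta|<\Xi$, so $\sum_\theta\FMC[\MFI(\theta)]\asymp|\Theta|\,e^{n-p-b_1}$ with $|\Theta|\asymp e^{n-n_0}$.

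For the denominator I would split the pairs $(\theta_1,\theta_2)$ according to the branching height $\HM=\HM(\theta_1,\theta_2)$, using that for fixed $\theta_1$ the number of $\theta_2\in\Theta$ with $\HM(\theta_1,\theta_2)=h$ is $\asymp e^{n-h}$ (and at most $\ll e^p$ of them have $\HM\geq n-p$). \textbf{(i)} For the pairs with $\HM\leq n_0+(1-\epsilon)(b_1-n_0)$, Lemma~\ref{lem:finefield2pUB1} gives $\FMC[\MFI(\theta_1)\MFI(\theta_2)]\leq(1+\epsilon)\FMC[\MFI(\theta_1)]\FMC[\MFI(\theta_2)]$, so the sum over this sub-collection is $\leq(1+\epsilon)\bigl(\sum_\theta\FMC[\MFI(\theta)]\bigr)^2$. \textbf{(ii)} For $n_0+(1-\epsilon)(b_1-n_0)<\HM<n-p$, Lemma~\ref{lem:finefield2pUB2} supplies the penalty $e^{\HM-b_1+\frac{m+4}{2\eta}\log n}$; grouping by $\HM=h$ and using $\FMC[\MFI(\theta_i)]\asymp e^{n-p-b_1}$, the sum over $h$ of $e^{n-h}e^{h-b_1}$ collapses to $\asymp(n-n_0)e^{n-b_1}$, so this regime contributes $\ll|\Theta|\,e^{2(n-p-b_1)}\,e^{n-b_1}\,e^{\frac{m+4}{2\eta}\log n}(n-n_0)$, which relative to the numerator is $\ll e^{n_0-b_1}\,e^{\frac{m+4}{2\eta}\log n}\log n$. \textbf{(iii)} The diagonal $\theta_1=\theta_2$ together with the $\ll e^p$ near-diagonal partners ($\HM\geq n-p$) I would handle via $\MFI(\theta_1)\MFI(\theta_2)\leq\tfrac12(\MFI(\theta_1)^2+\MFI(\theta_2)^2)$, reducing to $\ll e^p\sum_\theta\FMC[\MFI(\theta)^2]$; on the defining event of $\MFI(\theta)$ the exponent $2\CUEF_r(\ROT[\theta](\zeta_{n-p}))-2\CUEF_r(\ROT[\theta](\zeta_{b_1}))$ is, modulo the bounded-variance white-noise difference $\WN_r(\ROT[\theta](\zeta_{n-p}))-\WN_r(\ROT[\theta](\zeta_{b_1}))$, at most $2(b_{\eta-1}+p-n_0)+O(\eta^{-1}\log n)$, so $\MFI(\theta)^2\ll e^{2(b_{\eta-1}+p-n_0)+O(\eta^{-1}\log n)}\,e^{O(\WN_r)}\,\MFI(\theta)$; integrating out the (essentially independent) white noise and using $b_{\eta-1}+p-n_0=(n-p-b_1)+2p+O(1)$ yields $\FMC[\MFI(\theta)^2]\ll e^{2(n-p-b_1)+O(p)+O(\eta^{-1}\log n)}\FMC[\MFI(\theta)]$, hence a contribution $\ll e^{O(p)}e^{n_0-b_1}e^{O(\eta^{-1}\log n)}$ relative to the numerator.

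It remains to check that (ii) and (iii) are $o(1)$, which is where $m>100$ and the slow growth of $\eta_n$ enter. From the definition of $b_1$ one has $b_1-n_0\geq\frac{m\log n}{\eta}-O(1)$, so $e^{n_0-b_1}\leq e^{-\frac{m}{\eta}\log n+O(1)}$; since $\rho_n\eta_n^3\to0$ with $\rho_n\gg(\log n)^{-1}$ forces $\eta_n=O((\log n)^{1/3})$, we get $\tfrac{\log n}{\eta_n}\gg(\log n)^{2/3}$, far larger than $p\leq\log\log n$. Thus the bound in (ii) is $\ll e^{-\frac{m-4}{2\eta}\log n+O(\log\log n)}=o(1)$, and the bound in (iii) is $\ll e^{-\frac{m-O(1)}{\eta}\log n+O(\log\log n)}=o(1)$, the $O(1)$ in the last exponent being an absolute constant from the $\eta^{-1}\log n$-wide windows in $\BL,\ELL$ and hence dwarfed by $m>100$. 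Combining (i)--(iii), the denominator is $\leq(1+O(\epsilon))\bigl(\sum_\theta\FMC[\MFI(\theta)]\bigr)^2$, so $\FMC[\one[Z>0]]\geq1-O(\epsilon)$, completing the proof. The main obstacle is precisely steps (ii)--(iii): one must verify that the barrier-window losses $e^{O(\eta^{-1}\log n)}$, the correlation penalty $e^{\frac{m+4}{2\eta}\log n}$ of Lemma~\ref{lem:finefield2pUB2}, and the $O(\log n)$ loss from summing over all branching heights are \emph{strictly} beaten by the gain $e^{-(b_1-n_0)}$ obtained by detaching the first microscopic level $b_1$ from $n_0$ --- this is why $m$ was taken $>100$ and $\eta_n$ forced to grow slowly. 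A secondary, routine point is the decoupling in (iii) of the white noise $\WN_r$ at the two microscopic endpoints from $\one[\Meso]$, which holds because those endpoints lie at hyperbolic distance $>n_0$ from the origin and the relevant white-noise difference is independent of the $\WN_r$-values used to define $\Meso$.
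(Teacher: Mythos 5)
Your proposal is correct and follows essentially the same route as the paper: the second moment bound \eqref{eq:2mm}, the partition of pairs by $\HM(\theta_1,\theta_2)$ at the threshold $n_0+(1-\epsilon)(b_1-n_0)$, Lemma~\ref{lem:finefield2pUB1} for the decorrelated regime and Lemma~\ref{lem:finefield2pUB2} for the intermediate regime, and the final accounting $e^{n_0-b_1+\frac{m+4}{2\eta}\log n+O(\log\log n)}=o(1)$ using $b_1-n_0=m\eta^{-1}\log n+O(1)$ and $m>100$. Your step (iii) treating the near-diagonal pairs $\HM\geq n-p$ separately (via $\MFI(\theta_1)\MFI(\theta_2)\leq\tfrac12(\MFI(\theta_1)^2+\MFI(\theta_2)^2)$ and the deterministic bound on the exponent from $\BL\cap\ELL$) is a correct, slightly more careful handling of a case the paper absorbs into its sum $I_2$ without comment, even though Lemma~\ref{lem:finefield2pUB2} is stated only for $\HM<n-p$.
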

\begin{proof}
  Recall that we let 
  \[
    Z=  \sum_{\theta \in \Theta} \MFI(\theta), 
  \]
  where $\Theta =e^{-n+n_0}\mathbb{Z} \cap (-\Xi, \Xi)$
  Using \eqref{eq:2mm}, we bound 
  \begin{align*}
    \FMC[ \one[{\CUEFr + \WN_r \in \Ev}]]
    &=
    \FMC[ \one[Z>0]] 
    \geq 
    \frac
    { \bigl(\sum_{\theta}\FMC\left[ \MFI(\theta) \right]\bigr)^2}
    {\sum_{\theta_1, \theta_2}\FMC\left[\MFI(\theta_1)\MFI(\theta_2)\right]},
  \end{align*}
  with $\theta_1,\theta_2$ running over the set $\Theta.$  We now partition this sum according to the value of $\HM(\theta_1,\theta_2).$ We let $I_1$ be the sum
  \[
    I_1 = 
    \sum_{\substack{\theta_1, \theta_2 \\ \HM(\theta_1,\theta_2) \leq
      n_0 + (1-\epsilon)(b_1 - n_0)
    }}
    \FMC\left[\MFI(\theta_1)\MFI(\theta_2)\right],
  \]
  and we let $I_2$ be the sum over the remaining pairs $(\theta_1,\theta_2).$

  For a given $\theta_1$ and a given value of $\HM,$ there are at most $2\Xi e^{n-\HM}$ many integers $h_2$ so that $(\theta_1,e^{-n+n_0}h_2)$ attains this value of $\HM.$ 
  For $(\theta_1,\theta_2)$ such that $\HM \leq n_0 + (1-\epsilon)(b_1 - n_0),$ we have that by Lemma~\ref{lem:finefield2pUB1} that 
  \[
    \FMC\left[\MFI(\theta_1)\MFI(\theta_2)\right]\leq 
    (1+\epsilon)
    \FMC\left[\MFI(\theta_1)\right]\FMC\left[\MFI(\theta_2)\right]
  \]
  for all $p$ sufficiently large (independent of $n$) and all $n$ sufficiently large.
  Thus it follows that
  \[
    I_1 \leq (1+\epsilon)\bigl(\sum_{\theta}\FMC\left[ \MFI(\theta) \right]\bigr)^2.
  \]

  For larger $\HM,$ we apply Lemma~\ref{lem:finefield2pUB2} and sum.  Let
  \corO{$\ell_0=\lfloor
	n_0 + (1-\epsilon)(b_1 - n_0)\rfloor.$}
  \begin{align*}
    I_2=\sum_{\ell=\ell_0}^{n}
    \sum_{\substack{\theta_1,\theta_2 \in 
		\corO{\Theta}\\ \HM(\theta_1,\theta_2) = \ell}}
    \FMC\left[\MFI(\theta_1)\MFI(\theta_2)\right]
    \ll&\sum_{\ell=\ell_0}^{n}
    e^{n-n_0}e^{n-\ell}e^{\ell -b_1 +\tfrac{m+4}{2\eta}\log n}
    e^{2(n-p-b_1)} \\
    \ll&
    e^{n-n_0+n-b_1+\tfrac{m+4}{2\eta}\log n + O(\log\log n)}
    e^{2(n-p-b_1)},
  \end{align*}
  using that the number of terms in the sum is order $\log n.$ Finally, comparing this back to the sum of expectations squared
    \[I_2\leq
    e^{n_0-b_1+\tfrac{m+4}{2\eta}\log n + O(\log\log n)}
    \bigl(\sum_{\theta}\FMC\left[ \MFI(\theta) \right]\bigr)^2
  \]
  As $b_1 - n_0 = m\eta^{-1}\log n + O(1)$ and $m>100,$ this whole expression is 
	\corO{controlled by}
	$o\left(\bigl(\sum_{\theta}\FMC\left[ \MFI(\theta) \right]\bigr)^2\right).$
\end{proof}

\subsection{Lower bound proof}
\label{sec:2p}

We now proceed to the proof of the lower bound for the maximum of $\CUEF.$
This mirrors closely the proof that is given in Section~\ref{sec:2p1} for proving that $Z > 0$ conditioned on $\Meso.$  We again start by introducing biased indicators:
\begin{equation}
    \label{eq:FI}
  \FI(\omega) = e^{2\CUEFr(\zeta_{n_0})} \one[{\CUEFr + \WN_r \in \Ev(\omega)}].
\end{equation}
\corO{(These are not to be confused 
with the variables $W_k(q)$ of Section \ref{sec:fmc}.)}
We also introduce the ``counting'' variable
\[
  Z = \sum_{\omega} \FI(\omega),
\]
with the sum over the set $\mathbb{T}_{n_0} := \left\{ e^{2\pi i h \lfloor e^{n_0}\rfloor^{-1}}~:~h\in \mathbb{N}, h \leq e^{n_0} \right\},$ to which we will apply the second moment method.

By adjusting $p$ and by virtue of Proposition~\ref{prop:1pLB}, for any $\epsilon > 0$ we have that
\begin{equation}
  (1-\epsilon)\Exp\left[ \one[\Meso]  e^{2\CUEFr(\omega\zeta_{n_0})} \right]
  \leq
  \Exp\left[ \FI(\omega) \right] 
  \leq 
  \Exp\left[ \one[\Meso]  e^{2\CUEFr(\omega\zeta_{n_0})} \right],
  \label{eq:Wplower}
\end{equation}
for all $n$ sufficiently large.  Further, using Corollary~\ref{cor:meso}, the weighted probability of $\Meso$ can be calculated up to a $O(1)$ multiplicative error.

The main work that remains is to estimate the pair probability $\Exp\left[\FI(\omega_1)\FI(\omega_2)\right].$  To make this estimate, we can completely disregard the dependence on the microscopic field.  Let $\Meso(\omega)$ be the event $\CUEFr +\WN_r \in  \BS[\BF][\omega] \cap \ES[\omega.]$ Specifically, we begin by bounding
\begin{align*}
  \Exp\left[\FI(1)\FI(\omega)\right]
  &\leq
  \Exp\left[\one[\Meso(1)]\one[\Meso(\omega)]e^{2\CUEFr(\zeta_{n_0})+2\CUEFr(\omega\zeta_{n_0})}
\right].
\end{align*}
As in the proof of Lemma~\ref{lem:finefield2pUB2}, the bias 
\(
\Bias{\F} = 2\F(\zeta_{n_0}) + 2\F(\omega \zeta_{n_0})
\)
overshoots the desired means of $\CUEF(\zeta_i)$ for $i < -\log|\arg(\omega)|.$  We want to subtract a term to compensate for this distortion.  Hence, we define
\[
  \HM(\omega) = \min\{-\left[\log|\arg(\omega)|\right], n_0\}, \quad \HM_r(\omega)=\HM(\omega)-r,
\]
where $[\cdot]$ denotes integer part
and define
\[
  \Bias{\F}[\omega] = 2\F(\zeta_{n_0}) + 2\F(\omega \zeta_{n_0}) - 2\F(\zeta_{\HM(\omega)}).
\]
This point $\zeta_{\HM(\omega)}$ is nearly the midpoint on the hyperbolic geodesic connecting $\zeta_{n_0}$ to $\omega\zeta_{n_0}.$
Letting $\mu_\omega: \D \to \R$ be the mean of $\GF$ under the bias $e^{2\Bias{\GF}[\omega]}.$  At the points $\left\{ \zeta_i,\omega\zeta_i \right\}_1^n,$ we have by Lemmas~\ref{lem:branch} and \ref{lem:means}
\begin{align*}
  |\mu_\omega(\zeta_i) - i| \ll 1~\text{ and }
  |\mu_\omega(\omega\zeta_i) - i| \ll 1,
\end{align*}
uniformly in $1 \leq i \leq n_0.$  Using this mean, we can make a precise calculation in the Gaussian process of the mesoscopic barrier and endpoint events occurring.

In fact, we must introduce an extra condition on the Gaussian process 
at $\HM(\omega)$ to reflect a feature of branching random walk:
if two rays are conditioned to 
be large at their endpoints, it is typical for both rays 
to be within a logarithmic factor of the barrier at their branch point.

Hence, let
\[
  \ELM = \left\{ \F(\zeta_{\HM(\omega)}) > \BF(\HM(\omega)) - (\log n_0)^2 \right\}.
\]
\begin{lemma}
  Set $F = \one[{\BS[{\BF}](1) \cap \BS[{\BF}](\omega) \cap \ES(1) \cap \ES(\omega) \cap \ELM}].$
  There is an absolute constant $C>0$ so that
  uniformly in $\omega \in \T_{n_0}$ 
  \[
    \Exp \left[F(\GF_r + \WN_r + \mu_\omega)\right]
    \ll
    \frac{
    (\log n_0)^C}{
        (1+ (\HM_r(\omega))_+)^{3/2}
    (n_0-\HM(\omega)+1)^3}.
  \]
  \label{lem:ballot2}
\end{lemma}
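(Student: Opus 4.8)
The plan is to read $F(\GF_r+\WN_r+\mu_\omega)$ as a two-ray ballot estimate on a $Y$-shaped tree, in the same spirit as Lemma~\ref{lem:ballotub}, splitting the computation at the branching generation $\HM(\omega)$. First I would record the tree structure of the underlying Gaussian process: by Lemma~\ref{lem:branch} and \eqref{eq:covcosh}, together with the vanishing of $\GF_r$ below generation $r$ built into \eqref{eq:Fr}, the scaled processes $i\mapsto\sqrt2(\GF_r+\WN_r)(\zeta_i)$ and $i\mapsto\sqrt2(\GF_r+\WN_r)(\omega\zeta_i)$ are, up to additive covariance errors of size $O(1)$, a pair of random walks that start at $0$ at time $r$, have identical increments on $[r,\HM(\omega)]$, and have residual covariance $O(1)$ on $[\HM(\omega),n_0]$; the deterministic shift $\mu_\omega$ (from Lemma~\ref{lem:means}) straightens the common drift so that the process runs essentially parallel to the barrier $\BF$, which is the content of the bounds $|\mu_\omega(\zeta_i)-i|\ll1$, $|\mu_\omega(\omega\zeta_i)-i|\ll1$. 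In these terms, $F(\GF_r+\WN_r+\mu_\omega)$ asks that a $Y$-shaped tree stay below the nearly linear barrier $\BF$ along its stem $[r,\HM(\omega)]$ and along both branches $[\HM(\omega),n_0]$, land in the $\ES(1)$- and $\ES(\omega)$-windows at the two leaves, and---by $\ELM$, and the $O(1)$ coincidence of the two rays below the branch vertex from Lemma~\ref{lem:branch}---have both ray values at generation $\HM(\omega)$ confined to a window of width $O((\log n_0)^2)$ just below $\BF(\HM(\omega))$.

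Granting this, I would condition on the restriction of the field to the stem and bound
\[
  \Exp\!\left[F(\GF_r+\WN_r+\mu_\omega)\right]
  \ll
  \Exp\!\left[\one_{A_0}\cdot\Pr\!\left(A_1\cap A_2\,\middle|\,\text{stem}\right)\right],
\]
with $A_0$ the stem event (barrier on $[r,\HM(\omega)]$ plus the confinement at generation $\HM(\omega)$) and $A_1,A_2$ the two branch barrier-and-endpoint events. For the conditional probability I would use a Slepian-type Gaussian comparison---the machinery behind Proposition~\ref{prop-comp-bar}---to absorb the $O(1)$ residual covariance between the branches and get $\Pr(A_1\cap A_2\mid\text{stem})\ll\Pr(A_1\mid\text{stem})\Pr(A_2\mid\text{stem})$ uniformly over admissible stem configurations. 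Each conditional branch probability is a one-dimensional barrier estimate for a walk of length $n_0-\HM(\omega)$ that starts within $O((\log n_0)^2)$ of the barrier and ends in a polylogarithmic window, so Corollary~\ref{cor-UBbarrier} (equivalently Theorem~\ref{theo-app-ballot}) bounds it by $\ll(\log n_0)^{O(1)}/(n_0-\HM(\omega)+1)^{3/2}$; the product of the two gives the cube $(n_0-\HM(\omega)+1)^{3}$ in the statement, with the $+1$ covering the degenerate case $\HM(\omega)$ close to $n_0$. Pulling this deterministic factor out and bounding $\Exp[\one_{A_0}]$ by a stem ballot estimate---again Corollary~\ref{cor-UBbarrier}, now for a walk on $[r,\HM(\omega)]$ with both endpoints in polylogarithmic windows around $\BF$, which is $\ll(\log n_0)^{O(1)}/(1+(\HM_r(\omega))_+)^{3/2}$, the $(1+(\cdot)_+)$ absorbing the regime $\HM(\omega)\le r$ in which there is no stem---then yields the claimed bound after collecting the polylogarithmic gap factors into $(\log n_0)^C$.

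Two points carry the real weight. The passage from the genuine process---smooth, non-Markovian, read along quasi-geodesics, and smoothed by $\WN_r$---to the clean tree-indexed random walk must be effected by the appendix comparison machinery exactly as in the proof of Lemma~\ref{lem:ballotub} (cf.\ \eqref{eq-Sat1}), using Lemma~\ref{lem:branch} and Lemma~\ref{lem:branch2} to pin down the covariances up to $O(1)$ and \eqref{eq:covcosh} to fix the normalization; here we only need an upper bound, so the comparison is one-sided. \emph{The main obstacle is the decoupling of the two branches}: in the Gaussian field they are not independent given the stem---Lemma~\ref{lem:branch} gives only $\Exp\GF(\zeta_h)\GF(\omega\zeta_j)=\tfrac12\HM(\omega)+O(1)$ for $h,j>\HM(\omega)$, i.e.\ residual covariance $O(1)$ rather than $0$---and a crude estimate such as $\Pr(A_1\cap A_2)\le\Pr(A_1)^{1/2}\Pr(A_2)^{1/2}$ would produce only one factor of $(n_0-\HM(\omega)+1)^{-3/2}$ instead of the required cube. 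One therefore has to genuinely exploit the near-independence through a Gaussian comparison inequality, which is exactly why the $\ELM$ condition is indispensable: confining the branch-vertex values to a polylogarithmic window is what makes the per-branch ballot estimates, and hence the comparison, hold uniformly. The rest---verifying the hypotheses of the appendix barrier estimates for the three pieces and checking uniformity in $\omega\in\T_{n_0}$ across all regimes of $\HM(\omega)$ from $\le r$ to $\sim n_0$---is routine.
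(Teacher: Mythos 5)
Your plan is correct and is essentially the paper's proof: the splitting at the branch generation $\HM(\omega)$, the Slepian-type comparison to a tree-indexed random walk with independent branches, and the resulting stem-times-branch-squared ballot factorization (with $\ELM$ confining the branch-point value to a polylogarithmic window below the barrier, so that the decomposition over that value costs only polylog factors) are exactly what the paper packages as Proposition~\ref{prop-overlap}, applied after decomposing according to the value of $\GF_r(\zeta_{\HM(\omega)})$ and combined with the ballot theorem. The only step you omit is the preliminary mean-rebalancing tilt (the $d\cdot\mu_\omega-\xi$ adjustment copied verbatim from Lemma~\ref{lem:ratio}) that the paper performs to flatten the slope of the barrier before comparing; since you only need an upper bound and route each piece through Corollary~\ref{cor-UBbarrier}, which tolerates barriers of size $O(\log n_0)$ and an endpoint window of the same order, this omission is harmless.
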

\begin{proof}
    As in Lemma~\ref{lem:ratio}, we need to slightly rebalance the mean of this variable before applying Gaussian comparisons and the ballot theorem.  What follows is an exact analogue of what was done there.  Set $d = (1-\tfrac 34\tfrac{\log n_0}{n_0}).$  Let $\xi : \D \to \R$ be given by 
    \[
        \xi(z) = \Exp\left[ \WN_r(z)\Bias{\WN_r}[\omega](\zeta_{n_0}) \right]\cdot(\tfrac 34\tfrac{\log n_0}{n_0}),
    \]
    observing that $\xi(z) = O( (\log n_0)/n_0)$ uniformly in $z\in\D.$ 
      Then by Lemma~\ref{lem:means},
      \[
          \Exp\left[ F(\GF_r +\WN_r + \mu_\omega) \right]
          =
          \frac{
              \Exp\left[ F(\GF_r +\WN_r + d \cdot \mu_\omega-\xi)\exp({\tfrac 34\tfrac{\log n_0}{n_0}\Bias{\GF_r+\WN_r}[\omega]}) \right]
          }
          {
              \Exp\left[
              \exp({\tfrac 34\tfrac{\log n_0}{n_0}\Bias{\GF_r+\WN_r}[\omega]}) 
          \right]
          }.
      \]
      On the event $\GF_r +\WN_r + d\cdot \mu_\omega -\xi \in
      \ES(1) \cap \ES(\omega) \cap \ELM,$
      \[
          \tfrac 34\tfrac{\log n_0}{n_0}\Bias{\GF_r+\WN_r}[\omega]
          =
          O( (\log n_0)^2/n_0).
      \]
      Further,
      \[
\Exp\left[
              \exp({\tfrac 34\tfrac{\log n_0}{n_0}\Bias{\GF_r+\WN_r}[\omega]}) 
          \right]
          =e^{O( (\log n_0)^2/n_0)}.
      \]
      Hence we conclude that
      \begin{equation*}
          \Exp\left[ F(\GF_r +\WN_r + \mu_\omega) \right]
          \ll
          \Exp\left[ F(\GF_r +\WN_r + d\cdot\mu_\omega-\xi
					\corO{)}\right].
      \end{equation*}
  The proof now follows from Proposition~\ref{prop-overlap}, decomposing according to the value of $\GF_r(\HM_r(\omega))$ 
and the ballot theorem.
\end{proof}

This estimate transfers relatively painlessly to the expectation of $\FI(1)\FI(\omega).$

\begin{lemma}
  There is a $p_0 > 1$ and a $C>0$ so that for all $\log\log n > p > p_0,$ the following holds.
  For all $\omega \in \T_{n_0},$ 
  \[
    \Exp \left[ \FI(1)\FI(\omega) \right]
    \ll \frac{
      \Exp \left[ \FI(1)\right]^2
      n_0^3
      (\log n_0)^C
      e^{2\BF(\HM)-\HM_r}}
      {(1+ (\HM_r(\omega))_+)^{3/2}(n_0-\HM(\omega)+1)^3}.
    \]
    \label{lem:mesofield2pUB1}
  \end{lemma}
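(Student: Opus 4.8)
The plan is to discard all microscopic information in $\FI$, reduce the resulting mesoscopic quantity to a Gaussian one via Proposition~\ref{prop:clt}, insert the auxiliary branch-point event $\ELM$ by hand, and then apply the ballot estimate Lemma~\ref{lem:ballot2}. Since $\one[{\CUEF_r+\WN_r\in\Ev(\omega)}]\le\one[{\Meso(\omega)}]$, the starting point is
\[
  \Exp[\FI(1)\FI(\omega)]\le \Exp\bigl[\one[{\Meso(1)\cap\Meso(\omega)}]\,e^{2\CUEF_r(\zeta_{n_0})+2\CUEF_r(\omega\zeta_{n_0})}\bigr].
\]
The bias here is supported at the two points $\zeta_{n_0},\omega\zeta_{n_0}$, both at hyperbolic distance $n_0\le n-m\log n$ from $0$, so the ``Euclidean ball of radius $N^{-1}(\log N)^m$'' hypothesis of Proposition~\ref{prop:clt} is vacuous, these two points are hyperbolically separated by $\gg1$, and $\Delta$ is negligible since $|\zeta_{n_0}|^{2N}=e^{-\Theta(Ne^{-n_0})}=e^{-\Theta((\log N)^m)}$. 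Hence, exactly as in the proof of Corollary~\ref{cor:meso} (Proposition~\ref{prop:clt} with $s=k=2$, Corollary~\ref{cor:exact} to give $\Exp e^{\Bias{\GF_r}}/\Exp e^{\Bias{\CUEF_r}}=1+o(1)$, and Lemma~\ref{lem:means}), one may replace $\CUEF_r$ by $\GF_r$ at a relative cost $\ll(\log N)^{-K}$, negligible against the at-most-polynomial-in-$n_0$ main term, to reach
\[
  \Exp[\FI(1)\FI(\omega)]\ll \Exp\bigl[e^{2\GF_r(\zeta_{n_0})+2\GF_r(\omega\zeta_{n_0})}\bigr]\cdot\Exp\bigl[\one[{\Meso(1)\cap\Meso(\omega)}](\GF_r+\WN_r+\mu'_\omega)\bigr],
\]
where $\mu'_\omega$ is the mean of $\GF_r$ under $e^{2\GF_r(\zeta_{n_0})+2\GF_r(\omega\zeta_{n_0})}$ and, by Lemma~\ref{lem:branch}, $\Exp[e^{2\GF_r(\zeta_{n_0})+2\GF_r(\omega\zeta_{n_0})}]=e^{2(n_0-r)+2(\HM_r(\omega))_+}$.

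Next I split this Gaussian expectation according to $\ELM=\{\F(\zeta_{\HM(\omega)})>\BF(\HM(\omega))-(\log n_0)^2\}$ and its complement. To pass from $\mu'_\omega$ to the mean $\mu_\omega$ of Lemma~\ref{lem:ballot2}, observe $\mu'_\omega=\mu_\omega+2\Exp[\GF_r(\zeta_{\HM})\GF_r(\cdot)]$, so Lemma~\ref{lem:means} (bias $2\GF_r(\zeta_{\HM})$, with $\Exp e^{2\GF_r(\zeta_{\HM})}=e^{(\HM_r(\omega))_+}$) converts each piece into $e^{-(\HM_r(\omega))_+}\Exp[\one[{\Meso(1)\cap\Meso(\omega)\cap A}](\GF_r+\WN_r+\mu_\omega)\,e^{2\GF_r(\zeta_{\HM})}]$, with $A$ the event $\ELM$ or its complement. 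On the event on the right the barrier $\BS$ forces $(\GF_r+\WN_r+\mu_\omega)(\zeta_{\HM})<\BF(\HM)$, and $\mu_\omega(\zeta_{\HM})=\HM+O(1)$ by Lemma~\ref{lem:branch}, so $e^{2\GF_r(\zeta_{\HM})}\le e^{2(\BF(\HM)-\HM)+O(1)}e^{-2\WN_r(\zeta_{\HM})}$, improved by an extra factor $e^{-2(\log n_0)^2}$ on the complement of $\ELM$. Conditioning on $\GF_r$ and on the single shared Gaussian $\WN(\zeta_r)$, the variable $\WN_r(\zeta_{\HM})$ is a standard Gaussian constrained to an interval by $\ELM\cap\BS$, so $e^{-2\WN_r(\zeta_{\HM})}$ costs only an absolute constant. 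Feeding $\Exp[\one[{\Meso(1)\cap\Meso(\omega)\cap\ELM}](\GF_r+\WN_r+\mu_\omega)]$ into Lemma~\ref{lem:ballot2} and collecting exponentials yields
\[
  \Exp[\FI(1)\FI(\omega)\one[\ELM]]\ll e^{2(n_0-r)+(\HM_r(\omega))_+ +2(\BF(\HM)-\HM)+O(1)}\cdot\frac{(\log n_0)^C}{(1+(\HM_r(\omega))_+)^{3/2}(n_0-\HM(\omega)+1)^3}.
\]

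To finish, I substitute $\Exp[\FI(1)]\asymp e^{n_0-r}(\log n_0)^{2-2\delta}n_0^{-3/2}$ (from \eqref{eq:Wplower} and Corollary~\ref{cor:meso}), so that the claimed bound equals, up to absolute constants, $e^{2(n_0-r)}(\log n_0)^{4-4\delta+C}e^{2\BF(\HM)-\HM_r}\bigl[(1+(\HM_r(\omega))_+)^{3/2}(n_0-\HM(\omega)+1)^3\bigr]^{-1}$. Using the explicit barrier \eqref{eq-nulb} and $\HM_r=\HM-r$, one computes $(\HM_r)_+ +2(\BF(\HM)-\HM)-(2\BF(\HM)-\HM_r)=(\HM_r)_+ +\HM_r-2\HM$, which equals $-2r$ when $\HM\ge r$ and $-\HM-r$ when $\HM<r$; since $\HM\ge -O(1)$ and $r\ge0$ this is $\le O(1)$, so the $\ELM$-part matches the target within the $(\log n_0)^{4-4\delta}$ slack. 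The part on the complement of $\ELM$ obeys the same estimate with an extra $e^{-2(\log n_0)^2}$; since the $\ELM$-bound exceeds the target by at most a power of $n_0$, this super-polynomial gain makes $\Exp[\FI(1)\FI(\omega)\one[\text{complement}]]$ negligible, and all constants are uniform in $\omega\in\T_{n_0}$. The most delicate point, which I would treat carefully, is precisely this final exponential bookkeeping — reconciling the $e^{2(\log n_0)^{1-\delta}}$ from the offset in \eqref{eq-nulb}, the $e^{-(\HM_r)_+}$ from the change of bias, the $e^{2(\BF(\HM)-\HM)}$ from using the barrier at $\zeta_{\HM}$, and the $n_0^{3/2}$-type factors of Lemma~\ref{lem:ballot2} against $\Exp[\FI(1)]^2$ — so that everything collapses to the stated right-hand side; the white-noise nuisance at $\zeta_{\HM}$ is a minor secondary issue.
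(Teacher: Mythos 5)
Your overall architecture is the right one --- discard the microscopic indicators, split on $\ELM$, exploit the barrier constraint at $\zeta_{\HM(\omega)}$, feed the result into Lemma~\ref{lem:ballot2}, and normalize via Corollary~\ref{cor:meso} --- and your closing exponent reconciliation does land at (in fact slightly below) the stated bound. But the two key operations, ``Gaussianize'' and ``re-bias using the barrier,'' are not interchangeable, and your ordering breaks the argument. You apply Proposition~\ref{prop:clt} first, with the two-point bias $\Bias{\F}=2\F(\zeta_{n_0})+2\F(\omega\zeta_{n_0})$. The error in that proposition is \emph{additive} and, after clearing denominators, is of size $(\log N)^{-K}\,\Exp e^{\Bias{\GF_r}}$ uniformly over $\|F\|_\infty\le 1$; with the two-point bias you yourself compute $\Exp e^{\Bias{\GF_r}}=e^{2(n_0-r)+2(\HM_r(\omega))_++O(1)}$, the factor $e^{2(\HM_r)_+}$ coming from double-counting the common trunk of the two rays. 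The right-hand side of the lemma is only of order $e^{2(n_0-r)+\HM_r(\omega)}$ times poly-logarithms and inverse polynomials in $n_0$ (since $\Exp[\FI(1)]^2 n_0^3\asymp e^{2(n_0-r)}(\log n_0)^{4-4\delta}$ and $2\BF(\HM)-\HM_r=\HM_r+O((\log n_0)^{1-\delta})+O(\HM_r\log n_0/n_0)$). So the Gaussian-approximation error exceeds the claimed bound by roughly $e^{(\HM_r(\omega))_+}(\log N)^{-K+O(1)}$, which is unbounded once $\HM(\omega)\gtrsim r+K\log n$ --- and $\HM(\omega)$ ranges all the way up to $n_0$. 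Your assertion that the error is ``negligible against the at-most-polynomial-in-$n_0$ main term'' is where this hides: the Gaussian main term $\Exp[F(\GF_r+\WN_r+\mu'_\omega)]$ is \emph{not} polynomially small, because the two-point mean $\mu'_\omega$ over-tilts the common trunk (it is $\approx 2(i-r)$ for $i\le\HM$ against a barrier at $\approx i-r$), making that probability exponentially small in $\HM_r(\omega)$.

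The fix is to reverse the order, as the paper does: on $\Meso(1)\cap\Meso(\omega)\cap\ELM$ one has $(\CUEF_r+\WN_r)(\zeta_{\HM(\omega)})\le\BF(\HM)$, so one first replaces $e^{2\CUEF_r(\zeta_{n_0})+2\CUEF_r(\omega\zeta_{n_0})}$ by $e^{\Bias{\CUEF_r}[\omega]+2\BF(\HM)-2\WN_r(\zeta_{\HM(\omega)})}$ \emph{in the $\CUEF$ world} (the white-noise factor is removed by conditioning on $\CUEF_r$), and only then applies Proposition~\ref{prop:clt} with the three-point bias $\Bias[\omega]$, whose exponential moment is $e^{2(n_0-\HM)+\HM_r+O(1)}$; the additive error is then genuinely $(\log N)^{-K}$ relative to the claimed bound. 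A secondary correction for when you redo the computation: the mean your re-biasing produces at $\zeta_{\HM(\omega)}$ is the $r$-rooted one, $\HM_r(\omega)+O(1)$ rather than $\HM(\omega)+O(1)$, so the barrier yields $e^{2(\BF(\HM)-\HM_r)}$ rather than $e^{2(\BF(\HM)-\HM)}$; this removes exactly the spurious $-2r$ slack you found at the end and makes the exponents match the stated bound identically.
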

  \begin{proof}
    We begin by discarding the microscopic information.  That is, we estimate
    \[
      \Exp \left[ \FI(1)\FI(\omega) \right]
      \ll \Exp\left[ 
	\one[\Meso(1) \cap \Meso(\omega)]
	e^{2 \CUEFr(\zeta_{n_0})
	+2 \CUEFr(\omega\zeta_{n_0})}
      \right].
    \]
    We split this into two parts, according to $\ELM.$
    We first estimate the contribution to the expectation when $\CUEFr+\WN_r \in \ELM.$
    On the event $\CUEFr + \WN_r \in \BS(1) \cap \BS(\omega) \cap \ES(1) \cap \ES(\omega),$ we have by definition that $(\CUEFr+\WN_r)(\zeta_{\HM}) \leq \BF(\HM).$  Consequently,
      setting
        $F = \one[{\BS[{\BF}](1) \cap \BS[{\BF}](\omega) 
      \cap \ES(1) \cap \ES(\omega) \cap \ELM}]$
    we have the bound,
    \begin{align*}
      &\!\Exp \!\left[\!F(\CUEFr + \WN_r)e^{2 \CUEFr(\zeta_{n_0})+2 \CUEFr(\omega\zeta_{n_0})}\!\right]
     \! \!\ll\! 
      \Exp \! \left[\!F(\CUEFr \!+ \!\WN_r)
	e^{\Bias{\CUEFr}[\omega]-2\WN_r(\zeta_{\HM(\omega)})+2\BF(\HM)} 
      \!\right]\!. \\
      \intertext{After biasing by $e^{-2\WN_r(\zeta_{\HM(\omega)})},$ the variable $\WN_r(\zeta_{\HM(\omega)})$ has a negative mean, hence by monotonicity of $F,$}
      &\Exp\left[ F(\CUEFr+\WN_r) e^{\Bias{\CUEFr}[\omega]-2\WN_r(\zeta_{\HM(\omega)})} ~\middle|~\CUEFr \right] \\
      &\leq \Exp\left[ F(\CUEFr+\WN_r)e^{\Bias{\CUEFr}[\omega]}~\middle|~\CUEFr \right]
      \Exp\left[  e^{-2\WN_r(\zeta_{\HM(\omega)})}  \right].\\ 
      \intertext{Thus, we have that}
      &\Exp \left[F(\CUEFr + \WN_r)e^{2 \CUEFr(\zeta_{n_0})+2\CUEFr(\omega\zeta_{n_0})}
      \right]
      \ll \Exp \left[F(\CUEFr + \WN_r)e^{\Bias{\CUEFr}[\omega]+2\BF(\HM)}\right].
    \end{align*}

    {Applying Propositions~\ref{prop:clt}, and~\ref{prop:domination}, and using that the additive errors in Proposition~\ref{prop:clt} are negligible, we have that}
    \begin{align*}
      &\Exp \left[F(\CUEFr + \WN_r)e^{2 \CUEFr(\zeta_{n_0})+2 \CUEFr(\omega\zeta_{n_0})}\right]
      \ll 
      \Exp \left[F(\GF_r + \WN_r + \mu_\omega)\right]
      \Exp \left[
	e^{\Bias{\GF_r}[\omega]+2\BF(\HM)} 
      \right]. \\
      \intertext{Using Lemma~\ref{lem:branch}, we have that}
      &\frac12\Var(\Bias{\GF_r}[\omega]) - 2(n_0 - \HM(\omega)) - \HM_r(\omega) \ll 1.\\
      \intertext{So, we conclude, applying Lemma~\ref{lem:ballot2},}
      &\Exp \left[F(\CUEFr + \WN_r)e^{2 \CUEFr(\zeta_{n_0})+2 \CUEFr(\omega\zeta_{n_0})}\right]
      \ll \frac{(\log n_0)^C e^{2(n_0-\HM(\omega)) + \HM_r(\omega)+2\BF(\HM)}}
      {
        (1+ (\HM_r(\omega))_+)^{3/2}
      (n_0-\HM(\omega)+1)^3}. \\
      \intertext{Using Corollary~\ref{cor:meso} and Proposition~\ref{prop:1pLB} with $\epsilon=\tfrac12$, we write this again in terms of $\Exp \left[ \FI(1) \right]$ (adjusting $C$ as need be)}
      &{
          \Exp \left[F(\CUEFr + \WN_r)e^{2 \CUEFr(\zeta_{n_0})+2 \CUEFr(\omega\zeta_{n_0})}\right]
      \ll \frac{
	\Exp \left[ \FI(1)\right]^2
    n_0^3
    (\log n_0)^C e^{-\HM_r(\omega)+2\BF(\HM)}}{
        (1+ (\HM_r(\omega))_+)^{3/2}
    (n_0-\HM(\omega)+1)^3}.} \\
      \end{align*}

      We now turn to estimating the contribution to $\Exp\left[ \FI(1)\FI(\omega) \right]$ on the complementary event $\CUEFr+\WN_r \not\in \ELM.$  In this case, we have that $(\CUEFr+\WN_r)(\zeta_{\HM})\leq \BF(\HM) - (\log n_0)^2.$  Therefore,
      \begin{align*} 
	\Exp &\left[\one[\Meso(1) \cap \Meso(\omega)]
    \one[{(\ELM)}^c](\CUEFr+\WN_r)e^{2 \CUEFr(\zeta_{n_0})+2 \CUEFr(\omega\zeta_{n_0})}\right]\\
	&\ll
	\Exp \left[
	  e^{\Bias{\CUEFr}[\omega] - 2\WN_r\left( \zeta_{n_0} \right)+2\BF(\HM)-2(\log n_0)^2} 
	\right] 
	\ll
	\Exp \left[
	  e^{\Bias{\CUEFr}[\omega]+2\BF(\HM)-2(\log n_0)^2} 
	\right].
      \end{align*}
      This additional $(\log n_0)^2$ makes the entire expression smaller in order than the contribution of the $\CUEFr+\WN_r \in \ELM,$ and this completes the proof.
    \end{proof}

    We must also make a finer estimate when $\HM(\omega) \leq r.$  For these terms, we need that $\Exp\left[ \FI(1)\FI(\omega) \right]$ is the product of expectations up to a $(1+o(1))$ multiplicative error.  For the Gaussian process, we prove this using the exponential decay of correlations in $\GF$:
    \begin{lemma}
      Set $F = \one[{\BS[{\BF}](1) \cap \BS[{\BF}](\omega) \cap \ES(1) \cap \ES(\omega)}].$
      For all $\epsilon>0,$ and all $n_0$ sufficiently large, the following holds.
      Uniformly in $\omega \in \T_{n_0}$ with 
      $\HM(\omega) \leq r$,
      \begin{align*}
	&\Exp \left[F(\GF_r + \WN_r)e^{2\GF_r(\zeta_{n_0})+2\GF_r(\omega\zeta_{n_0})} \right]\\
	&\leq 
	\Exp \left[ 
	  \one[{\BS[{\BF}]\cap \ES}](\GF_r + \WN_r)e^{2\GF_r(\zeta_{n_0})}
	\right]^2(1+\epsilon+O(e^{-r+\HM(\omega)})).
      \end{align*}
      \label{lem:ballot3}
    \end{lemma}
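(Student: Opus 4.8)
The plan is to peel off the exponential weight with the change-of-measure formula, reducing the claim to a comparison between a pair of mesoscopic barrier events along the rays $\{\zeta_i\}_{i\le n_0}$ and $\{\omega\zeta_i\}_{i\le n_0}$ and the product of the two single-ray probabilities. The structural point is that, since $\HM(\omega)\le r$, the hyperbolic branch point of these two rays sits \emph{below} the truncation height $r$, so that after truncation the field $\GF_r$ along the two rays is independent up to an error exponentially small in $r-\HM(\omega)$.

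First, $\HM(\omega)\le r$ forces $\GF_r(\zeta_{\HM(\omega)})=0$, so that $2\GF_r(\zeta_{n_0})+2\GF_r(\omega\zeta_{n_0})=\Bias{\GF_r}[\omega]$. Applying Lemma~\ref{lem:means} on the left-hand side (with mean shift $\mu_\omega$) and on the right-hand side (with mean shift $\mu(z)=2\Exp[\GF(z)\GF_r(\zeta_{n_0})]$), and using that $\GF_r$ and $\WN_r$ are invariant under the grid rotation $z\mapsto\omega z$ because $\omega\in\T_{n_0}$ lies on the defining grid of \eqref{eq:Fr}, the claim reduces to
\[
  \Exp\bigl[F(\GF_r+\WN_r+\mu_\omega)\bigr]\,\Exp\bigl[e^{\Bias{\GF_r}[\omega]}\bigr]
  \le
  \Exp\bigl[\one[{\BS[{\BF}]\cap\ES}](\GF_r+\WN_r+\mu)\bigr]^2\,\Exp\bigl[e^{2\GF_r(\zeta_{n_0})}\bigr]^2\,\bigl(1+\epsilon+O(e^{-r+\HM(\omega)})\bigr).
\]

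Next I would dispose of the exponential moments. By Lemma~\ref{lem:branch} applied to $\GF_r$: because the two rays branch at height $\HM(\omega)\le r$ and $\GF_r$ subtracts the value at height $r$, the cross-covariance telescopes and one gets $\Cov(\GF_r(\zeta_i),\GF_r(\omega\zeta_j))=O(e^{-r+\HM(\omega)})$ uniformly in $r\le i,j\le n_0$; in particular $\Exp[e^{\Bias{\GF_r}[\omega]}]=\Exp[e^{2\GF_r(\zeta_{n_0})}]^2\,e^{O(e^{-r+\HM(\omega)})}$ (the two marginal exponential moments agreeing by rotational invariance), and by Lemma~\ref{lem:means} the mean $\mu_\omega$ equals $\mu$ on the ray $\{\zeta_i\}$, and the corresponding mean on the rotated ray $\{\omega\zeta_i\}$, up to additive errors $O(e^{-r+\HM(\omega)})$. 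Moreover $\WN_r$ restricted to the two rays is \emph{exactly} independent. Thus it remains to show $\Exp[F(\GF_r+\WN_r+\mu_\omega)]\le\Exp[\one[{\BS[{\BF}]\cap\ES}](\GF_r+\WN_r+\mu)]^2(1+\epsilon+O(e^{-r+\HM(\omega)}))$.

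For this last step, write $F=\one[A_1\cap A_2]$ with $A_1$ depending only on $(\GF_r+\WN_r)|_{\{\zeta_i\}}$ and $A_2$ only on $(\GF_r+\WN_r)|_{\{\omega\zeta_i\}}$. I would first replace $\mu_\omega$ by its decoupled counterparts $\mu$ on ray $1$ and its rotate on ray $\omega$, at a cost of a factor $1+\epsilon$, via a Lemma~\ref{lem:ratio}-type estimate — the perturbation is $O(e^{-r+\HM(\omega)})\le O(1)$, so that lemma gives a $1+\rho_n\le 1+\epsilon$ factor for $n$ large. Then I would decouple the two rays: since their cross-covariance is uniformly $O(e^{-r+\HM(\omega)})$ and the $\WN_r$ parts are genuinely independent, a Gaussian interpolation (normal comparison) in the cross-covariance parameter, using the mollification by $\WN_r$ and the ballot-type a priori bounds of the appendix (Theorem~\ref{theo-app-ballot}, Proposition~\ref{prop-overlap}) to control the derivative terms, replaces the correlated pair by an independent one at multiplicative cost $1+O(e^{-r+\HM(\omega)})$; equivalently this is Proposition~\ref{prop-overlap} in the degenerate regime $\HM_r(\omega)\le 0$. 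The decoupled expectation is exactly $\Exp[\one[A_1]]\,\Exp[\one[A_2]]$, and $\Exp[\one[A_2]]=\Exp[\one[{\BS[{\BF}]\cap\ES}](\GF_r+\WN_r+\mu)]$ by rotational invariance, which closes the bound. I expect this decoupling to be the main obstacle: the events $A_1,A_2$ are neither convex nor smooth, so Slepian/Anderson do not apply directly, and one must verify that the white-noise mollification together with the ballot estimates prevents polynomial-in-$n_0$ factors from swamping the exponentially small cross-covariance — alternatively one conditions on the ray-$\omega$ field and shows that on $A_2$ the induced mean perturbation of the ray-$1$ field is uniformly $O(e^{-r+\HM(\omega)})$.
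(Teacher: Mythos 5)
Your reduction coincides with the paper's up to the decisive step: change of measure via Lemma~\ref{lem:means}, factorization of the exponential moment using the $O(e^{-r+\HM(\omega)})$ cross-covariance of $\GF_r$ along the two rays (which follows from Lemma~\ref{lem:branch} exactly as you compute, the four main terms cancelling because $\HM(\omega)\leq r$), rotational invariance of $\GF_r$ under the grid rotation, and a ratio estimate to absorb the $O(1)$ perturbation of the means at cost $1+\epsilon$. All of that matches the actual proof.

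The step you flag as ``the main obstacle'' --- decoupling the two barrier events --- is where your proposal stops short, and your reason for doubting Slepian is not right. The only non-orthant part of $F$ is the two-sided endpoint window $\ES$; the appendix handles this by first conditioning on the endpoint values and integrating, after which each remaining event is of the form $\cap_i\{\tilde Z_i\leq g_i\}$, i.e.\ a decreasing orthant event to which Slepian's lemma applies directly. This is precisely Proposition~\ref{prop-separated} (separated rays), whose hypothesis $E(G_i^{(1)}G_j^{(2)})\leq C$ is exactly the $O(e^{-r+\HM(\omega)})\leq O(1)$ cross-covariance you established; it is the tool the paper uses, not Proposition~\ref{prop-overlap} (which requires a common trunk of length $k>C^2$ and does not degenerate to $k\leq 0$). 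Proposition~\ref{prop-separated} gives $(1+\epsilon)$ times the square of a single-ray random-walk barrier probability with the barrier raised by $(\log n)^{3/4}$, plus an additive error $C_1e^{-(\log n)^{3/2-\epsilon}}$; the barrier shift is absorbed at cost $1+\epsilon$ by Lemma~\ref{lem-ratio} together with Proposition~\ref{prop-comp-bar}, and the additive error is absorbed because the lower bound in the ballot theorem (Theorem~\ref{theo-app-ballot}) shows the probability itself is only polynomially small. Note also that the decoupling costs $1+\epsilon$, not the clean multiplicative $1+O(e^{-r+\HM(\omega)})$ you hope to extract from a Gaussian interpolation (whose derivative-term control you do not carry out); the $O(e^{-r+\HM(\omega)})$ in the statement comes solely from comparing the variances of the exponential biases. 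With Proposition~\ref{prop-separated} substituted for your interpolation step, your argument is the paper's.
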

    \begin{proof}
          Let $\mu(z) = 2\Exp\left[ \GF(z) (\GF_r(\zeta_{n_0}) + \GF(\omega\zeta_{n_0}))\right].$ Let $\mu_r$ be defined as in \eqref{eq:Fr}.  
	  By Lemma~\ref{lem:means} we have
          \[
              \Exp \left[F(\GF_r + \WN_r)e^{2\GF_r(\zeta_{n_0})+2\GF_r(\omega\zeta_{n_0})} \right]
              =\Exp \left[F(\GF_r + \WN_r+ \mu_r)\right]
          \Exp\left[e^{2\GF_r(\zeta_{n_0})+2\GF_r(\omega\zeta_{n_0})} \right].
          \]
          By Lemma~\ref{lem:branch}, we have that 
          \( |\Exp\left[ \GF(\zeta_i)\GF(\omega \zeta_j) \right]| \ll 1
          \)
          uniformly over $r \leq i,j \leq n_0.$
          Hence, in addition we have $\mu_r(\zeta_j) = j-r +O(1)$ for all $r \leq j \leq n_0$ and $\mu_r(\zeta_j \omega) = j-r+O(1)$ for all $r \leq j \leq n_0.$  Thus, we can apply Proposition~\ref{prop-separated} and Lemma~\ref{prop-comp-bar} to get that for any $\epsilon > 0$ and all $n$ sufficiently large
          \[
              \Exp \left[F(\GF_r + \WN_r+ \mu_r)\right]
              \leq 
              (1+\epsilon)
              \Exp\left[ 
                  \one[{\BS[{\BF}]\cap \ES}](\GF_r + \WN_r+\mu_r)
              \right]^2
              +O(e^{-\omega(\log n)}).
          \]
	  The additive error is much smaller in order than the probability, on account of Proposition~\ref{prop-comp-bar} 
	    and the lower bound
	    in the ballot theorem \ref{theo-app-ballot}.
	    
	    Using Proposition~\ref{prop-comp-bar}, Lemma~\ref{lem:means} and Corollary~\ref{cor:means},
          \begin{align*}
              &\Exp\left[ 
                  \one[{\BS[{\BF}]\cap \ES}](\GF_r + \WN_r+\mu_r)
              \right] \\
              \leq &(1+\epsilon)
              \frac{\Exp\left[ 
                  \one[{\BS[{\BF}]\cap \ES}](\GF_r + \WN_r)e^{2\GF_r(\zeta_{n_0})}
              \right]}{\Exp\left[ e^{2\GF_r(\zeta_{n_0})} \right]}
              +O(e^{-\omega(\log n)}).
          \end{align*}
          To complete the proof of the lemma, we only need to show that the exponential moments can be compared.  Hence, we compute the variance of the biasing term.  By expanding the variance as a sum of $\GF(z)$ over terms in $z,$ and applying Lemma~\ref{lem:branch}, we have
          \[
              2\Var\left( \GF_r(\zeta_{n_0}) + \GF_r(\omega\zeta_{n_0}) \right)
              =
              4\Var\left( \GF_r(\zeta_{n_0})\right)
              +O(e^{-r+\HM(\omega)}).
          \]
          The lemma now follows.
    \end{proof}

    \begin{lemma}
      For all $\epsilon > 0,$ there is a $p >0$ sufficiently large that 
      for all $n$ sufficiently large,
      and
      all $\omega \in \T_{n_0}$ so that 
      $ \HM(\omega) \leq r,$
      \[
	\Exp \left[ \FI(1)\FI(\omega) \right]
	\leq
	\Exp \left[ \FI(1)\right]
	\Exp\left[\FI(\omega) \right]
	(1+\epsilon +O(e^{-r+\HM(\omega)}))
	.
      \]
      \label{lem:mesofield2pUB2}
    \end{lemma}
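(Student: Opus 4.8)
The plan is to follow the template of the proof of Lemma~\ref{lem:mesofield2pUB1}: discard the microscopic part of $\Ev$, pass to the Gaussian field, invoke the Gaussian estimate of Lemma~\ref{lem:ballot3}, and convert back. Since $\Ev(1) \subseteq \Meso(1)$, $\Ev(\omega) \subseteq \Meso(\omega)$, and the biasing factor $e^{2\CUEF_r(\zeta_{n_0})}$ is nonnegative, writing $\Bias{\F} = 2\F(\zeta_{n_0}) + 2\F(\omega\zeta_{n_0})$ and letting $F$ be the cylinder indicator of Lemma~\ref{lem:ballot3} (so that $F(\CUEF_r + \WN_r) = \one[\Meso(1) \cap \Meso(\omega)]$ and $\|F\|_\infty \leq 1$), one has
\[
  \Exp\left[ \FI(1)\FI(\omega) \right] \leq \Exp\left[ F(\CUEF_r + \WN_r) e^{\Bias{\CUEF_r}} \right].
\]
I would then apply Proposition~\ref{prop:clt} with $s = 2$ and $K = 100$ to the right-hand side; its hypotheses hold since $\zeta_{n_0}$ and $\omega\zeta_{n_0}$ satisfy $\dH(0,\cdot) = n_0 \leq n - m\log n$ (so the ball condition is vacuous), their pairwise hyperbolic separation is $\asymp n_0 - \HM(\omega) \geq n_0 - r \gg N^{-1}$ because $\HM(\omega) \leq r$, and $\Delta = e^{-\Omega(n^m)}$ as $N e^{-\dH(0,\zeta_{n_0})} \gg n^m$, so $(1+\Delta)^{2k}$ is harmless. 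Combined with Proposition~\ref{prop:domination}, which gives $\Exp e^{\Bias{\CUEF_r}} \leq \Exp e^{\Bias{\GF_r}}$, this yields
\[
  \Exp\left[ \FI(1)\FI(\omega) \right] \leq \Exp\left[ F(\GF_r + \WN_r) e^{\Bias{\GF_r}} \right] + C(\log N)^{-K}\, \Exp e^{\Bias{\GF_r}}.
\]

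Now Lemma~\ref{lem:ballot3} (whose hypothesis $\HM(\omega)\leq r$ is exactly the one we are under) bounds the first term by $P^2\bigl(1 + \epsilon + O(e^{-r + \HM(\omega)})\bigr)$ with $P := \Exp\bigl[\one[\Meso] e^{2\GF_r(\zeta_{n_0})}\bigr]$. By the single-point case of Proposition~\ref{prop:clt} together with Corollary~\ref{cor:meso} (as in the derivation of \eqref{eq-2star-feb1}) one has $\Exp[\one[\Meso] e^{2\CUEF_r(\zeta_{n_0})}] = P(1 + o(1))$ and $P \asymp e^{n_0 - r}(\log n_0)^{2 - 2\delta} n_0^{-3/2}$; using \eqref{eq:Wplower} (hence Proposition~\ref{prop:1pLB}) this gives $\Exp[\FI(1)] = P(1 + O(\epsilon))$, and $\Exp[\FI(\omega)] = \Exp[\FI(1)]$ by invariance of $(\CUEF_r,\WN_r)$ under rotation by $\omega \in \T_{n_0}$. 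Finally $\Exp e^{\Bias{\GF_r}} = e^{\frac12\Var(\Bias{\GF_r})} \asymp e^{2(n_0 - r)}$ by Lemma~\ref{lem:branch}, the cross term $\Cov(\GF_r(\zeta_{n_0}), \GF_r(\omega\zeta_{n_0})) = O(e^{-r + \HM(\omega)})$ being negligible for $\HM(\omega) \leq r$, so the additive error is $C(\log N)^{-K}\Exp e^{\Bias{\GF_r}} \asymp (\log N)^{-K} e^{2(n_0 - r)} = o(P^2)$ since $K = 100$. Assembling these and rescaling $\epsilon$ at the outset gives $\Exp[\FI(1)\FI(\omega)] \leq \Exp[\FI(1)]\Exp[\FI(\omega)]\bigl(1 + \epsilon + O(e^{-r + \HM(\omega)})\bigr)$.

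The substantive content is entirely in Lemma~\ref{lem:ballot3}; the one point requiring attention is the error bookkeeping of the last step, namely confirming that the $(\log N)^{-K}$ loss from the Gaussian approximation is of strictly smaller order than the main term $P^2$. This is precisely where the sharp polylogarithmic two-sided control on $\Exp[\FI(1)]$ furnished by Corollary~\ref{cor:meso} and Proposition~\ref{prop:1pLB} is needed, rather than a crude bound on that quantity.
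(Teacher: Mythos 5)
Your argument is correct and is essentially the paper's own proof: discard the microscopic constraints to reduce to $\Exp[F(\CUEF_r+\WN_r)e^{2\CUEF_r(\zeta_{n_0})+2\CUEF_r(\omega\zeta_{n_0})}]$, pass to the Gaussian field via Proposition~\ref{prop:clt} and Proposition~\ref{prop:domination}, and conclude with Lemma~\ref{lem:ballot3}. The only difference is that you spell out the final bookkeeping (identifying the Gaussian quantity $P$ with $\Exp[\FI(1)]$ up to $1+O(\epsilon)$ via Corollary~\ref{cor:meso}, \eqref{eq:Wplower} and rotation invariance, and checking the $(\log N)^{-K}$ additive error is $o(P^2)$), which the paper leaves implicit; your verification of these points is accurate.
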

    \begin{proof}
      Let $F = \one[{\BS[{\BF}](1) \cap \BS[{\BF}](\omega) \cap \ES(1) \cap \ES(\omega)}].$
      We again begin by discarding the microscopic information and estimating
      \[
	\Exp \left[ \FI(1)\FI(\omega) \right]
	\leq \Exp\left[ 
	  F(\CUEFr+\WN_r)
	  e^{2 \CUEFr(\zeta_{n_0})
	  +2 \CUEFr(\omega\zeta_{n_0})}
	\right].
      \]
      Applying Proposition~\ref{prop:clt} and Proposition~\ref{prop:domination}, we get
      \[
	\Exp \left[ \FI(1)\FI(\omega) \right]
	\leq \Exp\left[ 
	  F(\GF_r+\WN_r)
	  e^{2 \GF_r(\zeta_{n_0})
	  +2 \GF_r(\omega\zeta_{n_0})}
	\right](1+O(\log N)^{-K}).
      \]
      The proof now follows on applying Lemma~\ref{lem:ballot3}.
    \end{proof}

    With these ingredients, we are now in position to prove that some $\Ev$ occurs with high probability.
    \begin{proposition}
      For all $\epsilon > 0$ there is a $p > 0$ so that for all $n$ sufficiently large,
      \[
	\Pr\left(
	\cup_\omega \left\{ \CUEFr +\WN_r \in \Ev[\omega] \right\}
	\right)
	> 1-\epsilon,
      \]
      where $\omega$ runs over $\T_{n_0}.$
      \label{prop:LB}
    \end{proposition}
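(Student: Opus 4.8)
The plan is to run a second moment (Paley--Zygmund) argument for the weighted counting variable $Z = \sum_{\omega\in\T_{n_0}}\FI(\omega)$ defined above, using the pair estimates already established in Lemmas~\ref{lem:mesofield2pUB1} and~\ref{lem:mesofield2pUB2}. Since every weight appearing in $\FI(\omega)$ is strictly positive, one has $\{Z>0\} = \bigcup_{\omega}\{\CUEF_r+\WN_r\in\Ev[\omega]\}$, so by Cauchy--Schwarz it suffices to show $\Exp[Z^2]\le (1+O(\epsilon))(\Exp Z)^2$. Here $p$ is fixed once and for all to exceed the (absolute, respectively $\epsilon$-dependent) thresholds $p_0$ appearing in Lemmas~\ref{lem:mesofield2pUB1}, \ref{lem:mesofield2pUB2} and Proposition~\ref{prop:1pLB}; this is legitimate for $n$ large since all those thresholds are eventually below $\log\log n$.

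By the rotation invariance of the law of $(\CUEF_r,\WN_r)$ under rotations by multiples of $2\pi\lfloor e^{n_0}\rfloor^{-1}$, $\Exp[\FI(\omega)]$ does not depend on $\omega\in\T_{n_0}$, and \eqref{eq:Wplower} together with \eqref{eq-2star-feb1} of Corollary~\ref{cor:meso} gives $\Exp[\FI(1)]\asymp e^{n_0-r}(\log n_0)^{2-2\delta}n_0^{-3/2}$, so $\Exp Z = |\T_{n_0}|\,\Exp[\FI(1)]\asymp e^{2n_0-r}(\log n_0)^{2-2\delta}n_0^{-3/2}$. The same invariance reduces $\Exp[Z^2]$ to $|\T_{n_0}|\sum_{\omega\in\T_{n_0}}\Exp[\FI(1)\FI(\omega)]$, and I split the inner sum by the value $\HM(\omega)$, using that for each $k$ there are $\asymp e^{n_0-k}$ phases $\omega\in\T_{n_0}$ with $\HM(\omega)=k$. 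For $\HM(\omega)\le r$ (the overwhelming majority of phases), Lemma~\ref{lem:mesofield2pUB2} gives $\Exp[\FI(1)\FI(\omega)]\le\Exp[\FI(1)]\Exp[\FI(\omega)](1+\epsilon+O(e^{-r+\HM(\omega)}))$, and the geometric factor $e^{-r+\HM(\omega)}$ summed against the multiplicity $e^{n_0-\HM(\omega)}$ yields only an $O(re^{-r})$ relative error; after multiplying by $|\T_{n_0}|$ this block contributes $(1+\epsilon+o(1))(\Exp Z)^2$.

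For the remaining phases, those with $\HM(\omega)=k>r$ (in particular $\omega=1$, where $\HM=n_0$), Lemma~\ref{lem:mesofield2pUB1} bounds $\Exp[\FI(1)\FI(\omega)]$ by a constant times $\Exp[\FI(1)]^2\, n_0^3(\log n_0)^C\, e^{2\BF(k)-(k-r)}(1+k-r)^{-3/2}(n_0-k+1)^{-3}$. Using the barrier definition \eqref{eq-nulb} one has $2\BF(k)-(k-r)=(k-r)-\tfrac{3(k-r)\log n_0}{2n_0}+2(\log n_0)^{1-\delta}$; the exponential weight $e^{-3(k-r)\log n_0/(2n_0)}$, summed over $k$ against the multiplicity $e^{n_0-k}$ (whose factor $e^{-k}$ cancels $e^{k-r}$ up to $e^{n_0-r}$) and the polynomial factors $(1+k-r)^{-3/2}(n_0-k+1)^{-3}$, produces a total which, after multiplying by $|\T_{n_0}|$ and dividing by $(\Exp Z)^2$, is $O\bigl((\log n_0)^C e^{-r+2(\log n_0)^{1-\delta}}\bigr)$. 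By the choice $r=[3(\log n_0)^{1-\delta}]$ of \eqref{eq-def-r} (the constant $3$ beating the $2$ above) this equals $(\log n_0)^C e^{-(\log n_0)^{1-\delta}+O(1)}=o(1)$, so this block is negligible.

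Combining the two blocks, $\Exp[Z^2]\le(1+\epsilon+o(1))(\Exp Z)^2$, whence $\Pr\bigl(\bigcup_\omega\{\CUEF_r+\WN_r\in\Ev[\omega]\}\bigr)=\Pr(Z>0)\ge(1+\epsilon+o(1))^{-1}\ge1-2\epsilon$ for $n$ large, and relabeling $\epsilon$ completes the proof. The only delicate step---everything else being bookkeeping, since the hard probabilistic content already resides in Lemmas~\ref{lem:mesofield2pUB1}--\ref{lem:mesofield2pUB2} and Proposition~\ref{prop:1pLB}---is the third paragraph: one must verify that the phases $\omega$ genuinely correlated with $1$ (large branching height $\HM(\omega)$) contribute a vanishing fraction of $(\Exp Z)^2$, and this is exactly where the near-optimal barrier slope in \eqref{eq-nulb} and the precise size $r\asymp(\log n_0)^{1-\delta}$ of the coarsening cutoff are used.
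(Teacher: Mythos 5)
Your argument is correct and follows essentially the same route as the paper: a Paley--Zygmund bound for $Z=\sum_\omega \FI(\omega)$, reduction by rotation invariance, a dyadic split over the branching height $\HM(\omega)$, with Lemma~\ref{lem:mesofield2pUB2} handling the decorrelated phases and Lemma~\ref{lem:mesofield2pUB1} plus the cancellation $e^{-r+2(\log n_0)^{1-\delta}}=e^{-(\log n_0)^{1-\delta}}$ killing the correlated block. The only (immaterial) difference is that you cut at $\HM(\omega)\le r$ and absorb the $O(e^{-r+\HM(\omega)})$ error by summing it against the multiplicity $e^{n_0-\HM(\omega)}$, whereas the paper cuts at $r+\log\epsilon$ and pushes the intervening range into the second block.
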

    \begin{proof}
      Recall that we let 
      \(
      Z = \sum_{\omega} \FI(\omega),
      \)
      with the sum over $\omega \in \T_{n_0}$, and we wish to show that $Z > 0.$
      We bound 
      \begin{align*}
	\Pr[ \one[Z>0]]
	&\geq
	\frac
	{ \bigl(\sum_{\omega}\Exp\left[ \FI(\omega) \right]\bigr)^2}
	{\sum_{\omega_1, \omega_2}\Exp\left[\FI(\omega_1)\FI(\omega_2)\right]}.
      \end{align*}
      By rotation invariance, we can rewrite both top and bottom sums as
      \begin{align*}
	\Pr[ \one[Z>0]]
	&\geq
	\frac
    {|\T_{n_0}|\bigl(\Exp\left[ \FI(1) \right]\bigr)^2}
	{\sum_{\omega}\Exp\left[\FI(1)\FI(\omega)\right]}.
      \end{align*}
      We now partition this sum according to the value of $\HM(\omega).$ We let $I_1$ be the sum
      \[
	I_1 = 
	\sum_{\substack{\omega \\ \HM(\omega) \leq r + \log \epsilon
	}}
	\Exp\left[\FI(1)\FI(\omega)\right]
      \]
      and we let $I_2$ be the sum over the remaining $\omega.$

      Hence, applying Lemma~\ref{lem:mesofield2pUB2}, we conclude that 
      \[
          I_1 \leq |\T_{n_0}|(1+\epsilon)\Exp\left[\FI(1)\right]^2.
      \]

      For a given value of $\HM,$ there are at most $O(e^{n_0 - \HM})$ many integers $\omega \in \corO{\T_{n_0}}
			$ that attain this value of $\HM.$
      Let $\ell_0$ be the floor of $r + \log \epsilon.$
      \begin{align*}
	I_2=&\sum_{\ell=\ell_0}^{n_0}
	\sum_{\substack{\omega \in \corO{\T_{n_0}}
	\\ \HM(\omega) = \ell}}
	\Exp\left[\FI(1)\FI(\omega)\right]\\
	\ll&\sum_{\ell=\ell_0}^{n_0}
	e^{n_0 - \ell}
	\frac{\Exp \left[ \FI(1)\right]^2
	  (n_0)^3
	  (\log n_0)^C
	  e^{\ell-r-\tfrac{3\ell}{2n_0}\log n_0 + 2(\log n_0)^{1
	    -
	\delta}}
	}
	{(1+(\ell-r)_+)^{3/2}(n_0-\ell+1)^3}
	\\
	\ll&
	e^{n_0-r}
	\Exp \left[ \FI(1)\right]^2
	e^{2(\log n_0)^{1-\delta}}
	(\log n_0)^C
	\sum_{\ell=\ell_0}^{n_0}
	\frac{
	  n_0^3
	  e^{-\tfrac{3\ell}{2n_0}\log n_0 }
	}
	{
        (1+(\ell-r)_+)^{3/2}
        (n_0-\ell+1)^3}.
	\\
	\intertext{The sum is $O(-\log \epsilon)$: for $\ell \leq \tfrac{2n_0}{3},$ 
	the summand can be dominated by $O(
    (1+(\ell-r)_+)^{-3/2})$ which gives the $O(-\log\epsilon),$ for
	$ \tfrac{2n_0}{3} \leq n_0 \leq n_0 - n_0^{1/2},$ 
	the summand is $O(n_0^{-1}),$ 
	and for $\ell \geq n_0 - n_0^{1/2}$ 
	the summand is $O((n_0 - \ell+1)^{-3}).$ 
      Hence we get that}
	I_2\ll&
	-\log(\epsilon) e^{n_0-r}
	\Exp \left[ \FI(1)\right]^2
	(\log n_0)^C
	e^{2(\log n_0)^{1
	    -
	\delta}}.
      \end{align*}
      As $r = 3(\log n_0)^{1
	    -
    \delta},$ in particular, the sum is now negligible with respect to $|\T_{n_0}|\Exp \left[ \FI(1)\right]^2.$
    \end{proof}
    \begin{theorem}
        For any $\delta > 0,$
        with probability going to $1,$ 
        \[
            \max_{|z| = 1} \CUEF(z) 
            \geq \log N - (\tfrac 34 + \delta) \log\log N.
        \]
        \label{thm:LB}
    \end{theorem}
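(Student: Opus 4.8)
The plan is to deduce the theorem from Proposition~\ref{prop:LB} by three reductions: extracting from the event $\Ev[\omega]$ a point near the boundary at which the truncated smoothed field is large, replacing $\WN$ by the radial noise $\RWN$ and bounding it, and finally undoing the radius-$r$ truncation. Fix $\epsilon,\delta>0$ and let $p=p(\epsilon)$ be as in Proposition~\ref{prop:LB}, so that with probability at least $1-\epsilon$ there is $\omega\in\T_{n_0}$ with $\CUEF_r+\WN_r\in\Ev[\omega]$. Unwinding \eqref{eq:Ev}, on this event there is an admissible integer $h$ such that, writing $z_\omega=\omega\ROT[he^{-n+n_0}](\zeta_{n-p})$, the endpoint event $\ELL{p,t_0}[he^{-n+n_0},\omega]$ holds, i.e. $|(\CUEF_r+\WN_r)(z_\omega)-b_{\eta-1}-p-t_0|\le\eta^{-1}\log n$. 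Since $t_0=-\tfrac34\log n$, $b_{\eta-1}=n-\lfloor (m\log n)/\eta\rfloor$, $p$ is fixed, and $\eta=\eta_n\to\infty$, this gives $(\CUEF_r+\WN_r)(z_\omega)\ge n-\tfrac34\log n-o(\log n)$; moreover $\dH(0,z_\omega)=n-p+O(1)$ by Lemma~\ref{lem:branch2}, so that $1-|z_\omega|\asymp e^{-n}\asymp N^{-1}$, using $n=\log N+O(1)$ and hence $\log n=\log\log N+O(1)$. Since $\CUEF$ is harmonic on $\D$, $\max_{|z|=1}\CUEF(z)=\sup_{|z|<1}\CUEF(z)\ge\CUEF(z_\omega)$, so it suffices to produce, with probability $1-o(1)$, a realizing $\omega$ with $\CUEF(z_\omega)\ge n-\tfrac34\log n-\delta\log n$.

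For the second reduction I would replace $\WN$ by $\RWN(z)=\WN(|z|)$ throughout the proof of Proposition~\ref{prop:LB}; this is legitimate because the lemmas feeding that proposition use the noise only through its marginal along the finitely many quasi-geodesics appearing in $\Ev$, exactly as in the proof of Theorem~\ref{thm:ub}. Then $\RWN_r(z_\omega)=\RWN(|z_\omega|)-\RWN(\zeta_r)$, and as $h$ ranges over its $\ll(\log N)^{O(1)}$ admissible values the modulus $|z_\omega|$ takes at most $(\log N)^{O(1)}$ distinct values, so $\sup_h|\RWN_r(z_\omega)|=O(\sqrt{\log\log N})=o(\log\log N)$ with probability $1-o(1)$. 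Hence $\CUEF_r(z_\omega)=(\CUEF_r+\RWN_r)(z_\omega)-\RWN_r(z_\omega)\ge n-\tfrac34\log n-o(\log n)$.

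The third reduction removes the truncation and is the main obstacle. We have $\CUEF(z_\omega)=\CUEF_r(z_\omega)+\CUEF(\omega\zeta_r)$, so what remains is to bound $\CUEF(\omega\zeta_r)$ below for the realizing $\omega$. A naive union bound over the $\asymp e^{n_0}$ grid points $\omega\in\T_{n_0}$ fails, since $\Var(\GF(\omega\zeta_r))\asymp r=3(\log\log N)^{1-\delta}$ is far too small to absorb $\log|\T_{n_0}|\asymp\log N$. Instead one exploits that the circle $\{|z|=|\zeta_r|\}$ carries the field at hyperbolic depth only $r=o(\log\log N)$: by Lemma~\ref{lem:branch} the values $\CUEF(\omega\zeta_r)$ differ by $O(1)$ once the angular separation drops below $e^{-r}$, so only $O(e^r)$ of them are effectively independent, and the sub-Gaussian tail bound of Proposition~\ref{prop:domination} together with a crude chaining argument (of the kind used in the proof of Theorem~\ref{thm:ub}) yields $\min_{|z|=|\zeta_r|}\CUEF(z)\ge -r\log\log\log N=-o(\log\log N)$ with probability $1-o(1)$. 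The upper bound on $\CUEF$ enters precisely here, through Proposition~\ref{prop:domination}. (Alternatively one may bound, by a first-moment computation using Proposition~\ref{prop:domination}, the number of $\omega$ for which $\CUEF_r(z_\omega)$ comes within $o(\log\log N)$ of the target; this count is $(\log N)^{O(1)}$ with high probability, and one then union-bounds $\CUEF(\omega\zeta_r)$ only over this small random set, which is admissible because $\CUEF$ restricted to radius $\zeta_r$ is nearly independent of $\CUEF_r$ by \eqref{eq:correlationbound}, as in Lemmas~\ref{lem:ballot3} and~\ref{lem:mesofield2pUB2}.)

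Putting the three reductions together: with probability $1-\epsilon-o(1)$ there is $\omega$ with $\CUEF(z_\omega)=\CUEF_r(z_\omega)+\CUEF(\omega\zeta_r)\ge n-\tfrac34\log n-o(\log n)$, whence $\max_{|z|=1}\CUEF(z)\ge\log N-\tfrac34\log\log N-o(\log\log N)\ge\log N-(\tfrac34+\delta)\log\log N$ for all $N$ large; letting $\epsilon\downarrow0$ completes the proof. The delicate point throughout is that the radius-$r$ truncation exists only to decouple the second-moment estimates across rays, and the slow growth $r=3(\log n_0)^{1-\delta}$ must do double duty: it has to be small enough that $\CUEF(\omega\zeta_r)=o(\log\log N)$, yet it must tend to infinity fast enough that the Gaussian comparison errors in Proposition~\ref{prop:clt} remain affordable in the proof of Proposition~\ref{prop:LB} — which is exactly why $\delta>0$, rather than $\delta=0$, appears in the statement.
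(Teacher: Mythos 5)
Your step 1 (extracting a near-boundary point from the event of Proposition~\ref{prop:LB} and invoking harmonicity) and your step 3 (controlling $\CUEF(\omega\zeta_r)$ to undo the truncation) are in line with the paper's argument, though in step 3 the bound $\min_{|z|=|\zeta_r|}\CUEF(z)\geq -r\log\log\log N$ is not what the union bound delivers: with $\sim e^{r}\,\mathrm{poly}(\log n)$ effective points and variance $\asymp r$, the threshold has to be of order $\sqrt{r\log n}\asymp(\log\log N)^{1-\delta/2}$, which exceeds $r\log\log\log N$ but is still $o(\log\log N)$, so the conclusion survives.

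The genuine gap is step 2. You propose to rerun the entire proof of Proposition~\ref{prop:LB} with the radial noise $\RWN(z)=\WN(|z|)$ in place of $\WN$, on the grounds that the noise enters only through its marginals along quasi-geodesics, "exactly as in the proof of Theorem~\ref{thm:ub}." That analogy fails: in the upper bound the paper only needs the \emph{single-ray} statements (Lemmas~\ref{lem:1pUB} and~\ref{lem:dyadic}), and this is precisely why the substitution is permitted there. The lower bound, by contrast, rests on a second moment computation whose off-diagonal terms (Lemmas~\ref{lem:mesofield2pUB1}, \ref{lem:mesofield2pUB2}, \ref{lem:ballot3}) apply Proposition~\ref{prop:clt} with $s=2$ to cylinder functions of the form $\one[\mathcal{B}_1^\nu(1)\cap\mathcal{B}_1^\nu(\omega)\cap\cdots]$ depending jointly on two rays. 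The proof of Proposition~\ref{prop:clt} in Section~\ref{sec:tv} recovers the joint density $\varrho_U$ by Fourier inversion against the factor $e^{-\frac12\langle\xi,\xi\rangle}$ coming from \emph{independent} smoothing noise at all $R\cdot d$ points; with $\RWN$ the noise vector is constant across rays at equal radii, its covariance is singular on $\R^{R\cdot d}$ for $R=2$, and the inversion integral does not converge — the smoothed density need not even exist. So Proposition~\ref{prop:LB} is not available for $\CUEF_r+\RWN_r$, and your route offers no substitute for removing the genuine white noise at the (random, noise-correlated) realizing point; a naive union bound over the $\Theta(N)$ candidates costs $\sqrt{\log N}\gg\log\log N$. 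The paper's actual mechanism is the one you only gesture at in your parenthetical: decompose the candidate set into the dyadic level sets $A_j$ of $\CUEF(z)-m_N$, bound $\E|A_j|$ by a first-moment computation via Proposition~\ref{prop:domination}, and union-bound the event that $\WN_r$ bridges a gap of size $x_j$ over the (with high probability few) points of $A_j$. Without this — or an equivalent decoupling of the selected point from the noise — the passage from a lower bound on $\max(\CUEF+\WN_r)$ to one on $\max\CUEF$ is unjustified.
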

    \begin{proof}
        By Proposition~\ref{prop:LB}, for each $\epsilon>0$
        there exists a deterministic set 
        $\hat T_\epsilon\subset \mathbb{D}$ of cardinality $N$ at most 
        so that, for all $N$ large enough,
        \begin{equation*}
            \label{AC-00}
            \Pr(\max_{z\in \hat T_\epsilon} (\CUEFr(z)+ \WN_r(z))
            \geq m_N-\epsilon \log \log N)
            \geq 1-\epsilon,
        \end{equation*}
        where $m_N=\log N-\frac{3}{4}\log\log N$. In the sequel, we always assume that
        $\epsilon<1/8$.  
        By the same type of chaining argument used in the proof of Theorem~\ref{thm:ub}, we can show that with high probability 
        \[
            \max_{\omega \in \T}\CUEF(\omega \zeta_r) \leq r + 100\log n.
        \]
        Hence, it follows that for each $\epsilon >0$ and all $N$ large enough,
        \begin{equation}
            \label{AC-0}
            \Pr(\max_{z\in \hat T_\epsilon} (\CUEF(z)+ \WN_r(z))
            \geq m_N-\epsilon \log \log N)
            \geq 1-\epsilon.
        \end{equation}


        Let $j_{\max}=\max\{j\in \mathbb{N}: 2^j\leq2\sqrt{\log N}\}$
        and set $J=\mathbb{N}\cap [0,j_{\max}]$. For $j\in J$, set
        $x_j=2^j$ and 
        $$A_j=\{z\in \hat T_\epsilon: \CUEF(z)-m_N+2\epsilon \log\log N
            \in [-x_j,-x_{j+1}]\} .$$ Note that, with
            $G$ denoting a centered Gaussian random variable with variance 
            $\frac12 \log N$,
            \begin{eqnarray*}
                \E|A_j|&\leq
                &|\hat T_\epsilon|\cdot \sup_{z\in \D}\Pr(\CUEF(z)\geq 
                m_N-2\epsilon \log \log N-x_{j+1})\\
                &\leq&
                N\cdot \E(e^{2 G})\cdot e^{-2(m_N-2\epsilon\log \log N-2x_j)}
                \leq  e^{2\log \log N+4x_{j}}\,,
            \end{eqnarray*}
            where the second inequality follows from  Proposition
            \ref{prop:domination} and the last from a Gaussian computation.
            In particular,
            \begin{equation}
                \label{AC-1}
                \Pr(|A_j|\geq e^{4\log \log N+5x_{j}})
                \leq e^{-x_{j}-2\log \log N}.
            \end{equation}

            For any $z \in \C$ with $\dH(0,z) > r,$ (including $\hat T_\epsilon$), $\WN_r(z)$ is distributed like a standard normal with variance $2.$
            We now obtain that
            \begin{eqnarray}
                \label{AC-2}
                &&\Pr(\exists z\in A_j: \CUEF(z)+\WN_r(z)\geq m_N-\epsilon
                \log\log N)\nonumber \\
                &\leq & \Pr(\exists z\in A_j: \WN_r(z)\geq \epsilon 
                \log\log N+x_j)\nonumber \\
                &\leq & \Pr(|A_j|\geq e^{4\log \log N+5x_j})+
                e^{4\log\log N+5x_j}\Pr(\WN_r(1)\geq \epsilon \log\log N+x_j)\nonumber\\
                &\leq&
                e^{-x_j-2\log\log N}+e^{-\epsilon^2(\log\log N)^2/4-x_j^2/2+4
                \log\log N+5x_j}
                \leq C_N e^{-x_j}\,,
            \end{eqnarray}
            where $C_N\to_{N\to\infty} 0$ and we used
            \eqref{AC-1} in the next to last inequality.

            By a simple Gaussian computation, 
            for $N$ large enough one has
            \begin{eqnarray}
                \label{AC-3}
                &&	\Pr(\max_{z\in \hat T_\epsilon} 
                \WN_r(z)\geq \epsilon\log \log N+
                x_{j_{\max}+1})\\
                &\leq& 
                N\cdot \Pr(\WN_r(1)\geq \epsilon 
                \log\log N+ 2\sqrt{\log N})
                \leq e^{-\epsilon \log \log N \sqrt{\log N}}.
                \nonumber
            \end{eqnarray}
            Therefore, 
            \begin{eqnarray*}
                &&\Pr(\max_{z\in \hat T_\epsilon} (\CUEF(z)+\WN_r(z))\geq 
                m_N-\epsilon \log\log N)\\
                &\leq &
                \Pr(\max_{z\in \hat T_\epsilon} \CUEF(z)\geq m_N-2\epsilon\log 
                \log N)\\
                &&\quad
                +\sum_{j=0}^{j_{\max}}
                \Pr(\exists z\in A_j: \CUEF(z)+\WN_r(z)\geq m_N-\epsilon \log\log N)\\
                &&\quad
                +
                \Pr(\max_{z\in \hat T_\epsilon} \WN_r(z)\geq \epsilon\log \log N+
                x_{j_{\max}+1})\\
                &\leq &
                \Pr(\max_{z\in \hat T_\epsilon} \CUEF(z)\geq m_N-2\epsilon\log \log N)
                +C_N \sum_{j=0}^{j_{\max}}e^{-x_j}+
                e^{-\epsilon \log \log N \sqrt{\log N}},
            \end{eqnarray*}
            where \eqref{AC-2} and \eqref{AC-3} were used in the last display.
            It follows from this and  \corO{\eqref{AC-0}} that
            \begin{eqnarray*}
                && \liminf_{N\to\infty}\Pr(\max_{z\in \D}
                \CUEF(z)\geq m_N-2\epsilon\log \log N)\\
                &\geq&
                \liminf_{N\to\infty}
                \Pr(\max_{z\in \hat T_\epsilon} \CUEF(z)\geq m_N-2\epsilon\log \log N)\\
                &\geq& \liminf_{N\to\infty}
                \Pr(\max_{z\in \hat T_\epsilon} (\CUEF(z)+\WN_r(z))\geq 
                m_N-\epsilon \log\log N)\geq 1-\epsilon\,.
            \end{eqnarray*}
            This completes the proof of the theorem, on account of the maximum principle.
        \end{proof}

    \section{Baxter-type Toeplitz determinant identities}
    \label{sec:Baxter}
    Let $f \in \operatorname{L}^1(\T).$  Define the \emph{$N$-th order Toeplitz determinant with symbol $f$} by
        \[
            D_N(f)= \det\left( \hat{f}(j-k) \right)_{1 \leq j,k \leq N},
        \]
        with $\hat f( k)$ the $k$-th Fourier coefficient of $f,$ i.e.
        \[
            f(e^{i\theta}) = \sum_{k=-\infty}^\infty \hat f(k) e^{ik\theta}.
        \]
        Toeplitz determinants relate to expectations against the unitary group through the celebrated relationship \cite[p.23]{Szego1}
        \[
            \Exp\left[ \prod_{h=1}^N f(e^{i\theta_h}) \right]
            =
            D_N(f),
        \]
        where we recall that $\left\{ e^{i\theta_h} \right\}_{h=1}^N$ are the eigenvalues of an $N\times N$ Haar unitary matrix.

    A core lemma of \cite{Johansson}, used to great effect to estimate the Fourier transform of polynomial linear statistics is a Toeplitz determinant identity he attributes to \cite{Baxter}.  
    Related identities were used in Szeg\"o's proof of the Strong Szeg\"o theorem~\cite{Szego2}, and similar identities have appeared elsewhere, \corE{for example in \cite{CFS}, \cite{CFKRS}, \cite{Day}, and \cite{Bottcher}.}
    In any case, \corE{Baxter's formula} shows that
    the so-called Cauchy identity holds already at finite $N$ for the Toeplitz determinant of certain rational symbols.

    Throughout this section, we assume $U_\ell$ and $V_m$ are the polynomials
    \begin{equation}
      \begin{aligned}
	U_\ell(z) &= \prod_{j=1}^\ell (1-a_j z) \\
	V_m(z) &= \prod_{j=1}^m (1-b_j z), 
      \end{aligned}
      \label{eq:Uell}
    \end{equation}
    for some $|a_j| < 1$ and $|b_j|<1.$ 

    \begin{proposition}[Baxter]
      Let $U_\ell$ and $V_m$ be as in \eqref{eq:Uell}. Then if $N \geq \ell$ or $N \geq m,$
      \[
	D_N\left( \frac{1}{U_\ell(e^{-i\theta})V_m(e^{i\theta})} \right) = \prod_{i=1}^m \prod_{j=1}^\ell \frac{1}{1-a_j b_i}.
      \]
    \end{proposition}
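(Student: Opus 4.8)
The plan is to compute $D_N(f)$ with $f(e^{i\theta}) = 1/\bigl(U_\ell(e^{-i\theta})V_m(e^{i\theta})\bigr)$ by reading it as an integral over the unitary group and expanding in Schur polynomials. Since $|a_j|,|b_i|<1$, the symbol $f$ is continuous and nonvanishing on $\T$, so the Heine--Szeg\H{o} identity $D_N(f) = \Exp\bigl[\prod_{h=1}^N f(e^{i\theta_h})\bigr]$ recalled above applies. Writing $\lambda_h = e^{i\theta_h}$ for the eigenvalues of the $N\times N$ Haar unitary $U$ and using $e^{-i\theta_h}=\overline{\lambda_h}$, the factorizations of $U_\ell$ and $V_m$ give
\[
  \prod_{h=1}^N f(\lambda_h)
  = \prod_{j=1}^\ell\prod_{h=1}^N\frac{1}{1-a_j\overline{\lambda_h}}
  \cdot\prod_{i=1}^m\prod_{h=1}^N\frac{1}{1-b_i\lambda_h}.
\]
By Cauchy's identity for Schur functions (the series converge absolutely since all $|a_j|,|b_i|<1$, with the usual convention that $s_\mu$ evaluated at fewer variables than $\mu$ has parts vanishes),
\[
  \prod_{j,h}\frac{1}{1-a_j\overline{\lambda_h}} = \sum_{\nu} s_\nu(a_1,\dots,a_\ell)\,\overline{s_\nu(\lambda_1,\dots,\lambda_N)},\qquad
  \prod_{i,h}\frac{1}{1-b_i\lambda_h} = \sum_{\mu} s_\mu(b_1,\dots,b_m)\,s_\mu(\lambda_1,\dots,\lambda_N),
\]
both sums running over all partitions.

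I would then multiply the two series and integrate term by term. This is legitimate because, bounding $|s_\mu(\lambda_1,\dots,\lambda_N)|\le s_\mu(1,\dots,1)$ for $\lambda_h\in\T$, the resulting double series is dominated, uniformly in the $\lambda_h$, by $\prod_{j}(1-|a_j|)^{-N}\prod_i(1-|b_i|)^{-N}<\infty$. Now $s_\mu(\lambda_1,\dots,\lambda_N)$ is the value of the irreducible character $\chi_\mu$ of the unitary group $\mathrm{U}(N)$ at $U$ (Weyl character formula), and it vanishes whenever $\mu$ has more than $N$ parts. Schur orthogonality, $\Exp\bigl[\chi_\mu(U)\overline{\chi_\nu(U)}\bigr]=\delta_{\mu\nu}$ over partitions with at most $N$ parts, collapses the double sum to
\[
  D_N(f) = \sum_{\mu:\,\ell(\mu)\le N} s_\mu(a_1,\dots,a_\ell)\,s_\mu(b_1,\dots,b_m),
\]
the sum over partitions $\mu$ with at most $N$ parts.

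The last step is where the hypothesis enters. If $N\ge\ell$, then any $\mu$ with more than $N$ parts has more than $\ell$ parts, hence $s_\mu(a_1,\dots,a_\ell)=0$; if instead $N\ge m$, such $\mu$ have $s_\mu(b_1,\dots,b_m)=0$. In either case the restriction $\ell(\mu)\le N$ may be dropped without changing the value of the sum, and a second application of Cauchy's identity yields $\sum_\mu s_\mu(a)\,s_\mu(b) = \prod_{i=1}^m\prod_{j=1}^\ell(1-a_jb_i)^{-1}$, which is exactly the claimed identity. The one genuinely delicate point is this final step: the identity holds exactly at finite $N$ precisely because ``$N\ge\ell$ or $N\ge m$'' forces the length-restricted Schur sum produced by Schur orthogonality to coincide with the unrestricted Cauchy sum; without that hypothesis one would only obtain the truncated series. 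Everything else — Heine's identity, the two Cauchy expansions, Schur orthogonality for $\mathrm{U}(N)$, and the dominated-convergence justification for interchanging summation and integration — is standard.
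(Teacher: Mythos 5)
Your proof is correct. The paper itself offers no proof of this proposition --- it is quoted from Baxter (via Johansson) --- but the generalizations the paper does prove (Lemma~\ref{lem:BaxterExact} and Proposition~\ref{prop:Baxter}) proceed by a direct determinantal computation: partial-fraction decomposition of the symbol, explicit identification of its Fourier coefficients, and Cauchy--Binet, which exhibits the Toeplitz matrix as a product of essentially Vandermonde factors. Your route is the standard symmetric-function one: Heine--Szeg\H{o}, two applications of Cauchy's identity, and Schur orthogonality for $\mathrm{U}(N)$, with the hypothesis ``$N\ge\ell$ or $N\ge m$'' entering exactly where the length-restricted sum $\sum_{\ell(\mu)\le N}s_\mu(a)s_\mu(b)$ produced by orthogonality must coincide with the unrestricted Cauchy sum. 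All the analytic steps are correctly handled: the domination $|s_\mu(\lambda)|\le s_\mu(1,\dots,1)$ (valid because Schur polynomials have nonnegative coefficients) legitimately justifies the term-by-term integration, and the vanishing of $s_\mu$ in fewer variables than $\ell(\mu)$ correctly disposes of the excluded partitions under either half of the hypothesis. Each approach buys something: yours is shorter and makes the role of the hypothesis conceptually transparent, while the paper's determinantal route is the one that actually extends to the symbols needed later, namely those carrying a polynomial factor $p(e^{i\theta})e^{-ik\theta}$ in the numerator, where Cauchy's identity no longer applies directly and the correction terms (the sums over subsets $S$) must be tracked explicitly.
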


    This can be expressed using a contour integral representation as
    \[
      \prod_{i=1}^m \prod_{j=1}^\ell \frac{1}{1-a_j b_i}
      =\exp\left( 
      \frac{1}{2\pi i} \int_{S^1} \log\left(\frac{1}{V_m(z)}\right) \frac{d}{dz} \log( U_\ell(z^{-1}) )\,dz
      \right).
    \]
    This can be checked by an elementary residue calculation.
    This formula is attractive as the contour integral is one representation of the $\Hhalf$ inner-product for the logarithms of $V_m$ and $U_\ell$.
    It is also possible to take $m \to \infty$ in the formula, holding $\ell \leq N,$ so that $V_m$ can be replaced by a zero-free analytic function $v$ in an open neighborhood of the closed unit disk with $v(0) = 0.$

    We give a modification of this formula that additionally allows for a low degree term like $U_\ell$ or $V_m$ to be placed in the numerator of the symbol.  We begin with the following explicit formula, which is purely algebraic in nature.
    \begin{lemma}
	Let $k \in \mathbb{N}$ have $k < \ell.$
      Let $U_\ell$ and $V_m$ be as in \eqref{eq:Uell} and in addition assume all $\left\{ a_j \right\}_{j=1}^\ell$ are pairwise distinct.  Suppose $p(z)$ is a polynomial of degree strictly less than $\ell+m$ which does not vanish on $\left\{ a_j \right\}_{j=1}^\ell.$  Then
      \begin{align*}
	D_{\ell - k}\left( 
	\frac{
	  p(e^{i\theta})e^{-ik\theta}}
	  {U_\ell(e^{-i\theta})V_m(e^{i\theta})}
	  \right)
	  =
	  \corO{\prod_{j=1}^\ell} p(a_j)
	  &\prod_{i=1}^m \prod_{j=1}^\ell \frac{1}{1-a_j b_i} 
	  \sum_{\substack{S \subseteq [\ell] \\ |S|=k}}
	  \prod_{i \in S}
	  \left[ \frac{\prod_{j=1}^m (1-b_ja_i)}
	    {p(a_i) \prod_{j \not\in S} (a_j - a_i)}
	  \right].
	\end{align*}
	\label{lem:BaxterExact}
      \end{lemma}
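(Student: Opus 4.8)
The plan is to evaluate the left‑hand side directly, via the Weyl‑integration (Heine) form of the Toeplitz--unitary correspondence $D_N(f)=\Exp\big[\prod_{h=1}^N f(e^{i\theta_h})\big]$ recalled above, followed by a residue computation; the algebraic identity then drops out after elementary sign bookkeeping. Set $N:=\ell-k\ge 1$ and $\widetilde U_\ell(z):=\prod_{j=1}^\ell(z-a_j)$. Writing $z=e^{i\theta}$ and using $U_\ell(z^{-1})=z^{-\ell}\widetilde U_\ell(z)$, the symbol becomes the genuine rational function $f(z)=z^{N}p(z)\big/\big(\widetilde U_\ell(z)V_m(z)\big)$. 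Since $|a_j|<1$ the only poles of $f$ in $\{|z|\le 1\}$ are the simple poles at the $a_j$ (simple because the $a_j$ are distinct); since $|b_i|<1$ the remaining poles $z=1/b_i$ lie outside the closed disk; in particular $f$ is continuous on $|z|=1$ and $D_N(f)=\tfrac{1}{N!}\int_{[0,2\pi)^N}\prod_{1\le u<v\le N}|z_u-z_v|^2\,\prod_{q=1}^N f(z_q)\,\prod_{q=1}^N\tfrac{d\theta_q}{2\pi}$, with $z_q=e^{i\theta_q}$.

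Next I would expand $\prod_{u<v}|z_u-z_v|^2=\det(z_q^{\,r-1})_{q,r}\,\det(z_q^{-(r-1)})_{q,r}=\sum_{\sigma,\tau\in\mathfrak{S}_N}\sgn(\sigma\tau)\prod_{q}z_q^{\sigma(q)-\tau(q)}$ and carry out the one‑dimensional integrals. After absorbing the factor $z_q^{N}$ from $f(z_q)$ and the Jacobian $\tfrac{d\theta_q}{2\pi}=\tfrac{dz_q}{2\pi i z_q}$, the $q$‑th coordinate integral becomes $\tfrac{1}{2\pi i}\oint_{|z|=1}z^{\,\sigma(q)-\tau(q)+N-1}\,p(z)\big/\big(\widetilde U_\ell(z)V_m(z)\big)\,dz$. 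The key observation is that the exponent $\sigma(q)-\tau(q)+N-1$ is always $\ge 0$, since $\sigma(q)\ge 1$ and $\tau(q)\le N$; hence there is no residue at $z=0$, and the residue theorem gives that this integral equals $\sum_{j=1}^\ell w_j\,a_j^{\,\sigma(q)-\tau(q)+N-1}$, where $w_j:=p(a_j)\big/\big(V_m(a_j)\prod_{i\ne j}(a_j-a_i)\big)$.

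Substituting these back and interchanging the summations, a multi‑index $(j_1,\dots,j_N)\in[\ell]^N$ appears; factoring $\prod_q a_{j_q}^{N-1}$ out of the $\sigma,\tau$ sum leaves $\sum_{\sigma,\tau}\sgn(\sigma\tau)\prod_q a_{j_q}^{\sigma(q)-\tau(q)}=\det(a_{j_q}^{\,r-1})\det(a_{j_q}^{-(r-1)})=(-1)^{\binom N2}\big(\prod_q a_{j_q}^{-(N-1)}\big)\det(a_{j_q}^{\,r-1})^2$, and the powers of $a_{j_q}$ cancel. The squared Vandermonde annihilates every multi‑index with a repeated entry, and summing over the $N!$ orderings of each $N$‑element subset $T\subseteq[\ell]$ yields $D_N(f)=(-1)^{\binom N2}\sum_{|T|=N}\big(\prod_{j\in T}w_j\big)\prod_{\{i,j\}\subseteq T}(a_i-a_j)^2$. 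Finally, using $\prod_{i\ne j\in T}(a_i-a_j)=(-1)^{\binom N2}\prod_{\{i,j\}\subseteq T}(a_i-a_j)^2$ and splitting, for each $j\in T$, the product $\prod_{i\ne j}(a_j-a_i)$ into its ``$i\in T$'' and ``$i\notin T$'' parts, the two factors $(-1)^{\binom N2}$ cancel and one is left with $D_N(f)=\sum_{|T|=N}\prod_{j\in T}\tfrac{p(a_j)}{V_m(a_j)}\cdot\prod_{j\in T}\prod_{i\notin T}(a_j-a_i)^{-1}$. Putting $S=[\ell]\setminus T$, writing $V_m(a)=\prod_{i=1}^m(1-b_i a)$, and multiplying and dividing by $\prod_{j=1}^\ell p(a_j)$ rearranges this into the asserted expression.

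The only genuinely delicate point is the exponent bookkeeping that forces the integrand in each coordinate integral to have nonnegative degree, so that no spurious residue at the origin enters; this is exactly where $N=\ell-k\ge 1$ and the precise shift $e^{-ik\theta}$ are used, and it is the reason the identity holds in this ``subcritical'' range where the classical Baxter identity ($N\ge\ell$) does not apply. Everything else is routine. The hypotheses enter as follows: $|a_j|<1$ places the $a_j$ among the interior poles, $|b_i|<1$ keeps the $1/b_i$ off the closed disk, and the distinctness of the $a_j$ gives simple residues and a nonvanishing Vandermonde. The conditions $p(a_j)\ne 0$ and $\deg p<\ell+m$ are not needed for the computation itself — both sides are rational functions of the parameters, so the occurrences of $p(a_i)$ in the denominators of the stated formula are harmless, and if one wishes one may also assume all $a_j\ne 0$ by a density argument before running the above.
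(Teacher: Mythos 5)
Your argument is correct and is, at bottom, the same computation as the paper's: evaluating the coordinate integrals by residues is exactly the paper's partial-fraction computation of the Fourier coefficients $\hat f(\cdot)=\sum_{r}w_r a_r^{\,\cdot}$, your observation that the exponent $\sigma(q)-\tau(q)+N-1$ is nonnegative is the paper's remark that only the coefficients $\{\alpha_j\}$ matter for the $(\ell-k)$-sized determinant, and the resummation over $\sigma,\tau$ and multi-indices is Cauchy--Binet in its Andr\'eief (Heine-integral) form. The sign bookkeeping and the final passage from $(\ell-k)$-subsets $T$ to their complements $S$ check out, and your side remarks (that $\deg p<\ell+m$ and $p(a_j)\neq 0$ are not actually needed for this particular identity) are accurate.
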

      Compare with \cite[Lemma 7.4]{Baxter}.  In short, the proof here is just partial fractions, Cauchy-Binet, and the observation that the resulting matrices are essentially products of \corO{Vandermonde} determinants.
      \begin{proof}
	\corE{Observe the formula in the lemma is continuous in $\left\{ b_j \right\}_{j=1}^m \subset \D^m.$  Hence, by taking limits, it suffices to prove the formula under the additional assumption that all $\left\{ b_j \right\}_{j=1}^m$ are distinct.}
	Let $f(z)$ be the symbol
	\[
	  f(z) = \frac{p(z)z^{-k}}{U_\ell(z^{-1})V_m(z)}.
	\]
	Let $q(z) = z^\ell U_\ell(z^{-1}),$ so that
	\(
	f(z)z^{k-\ell} = \frac{p(z)}{q(z)V_m(z)}
	\)
	is a ratio of polynomials whose numerator has lower degree than its denominator.  
	To this rational function, all of whose poles are simple, we therefore can apply partial fractions to get
	\[
	  f(z)z^{k-\ell}
	  = 
	  \sum_{j=1}^\ell \frac{\alpha_j}{z-a_j} 
	  +\sum_{j=1}^m \frac{\beta_j}{1-zb_j},
	\]
	for some coefficients $\left\{ \alpha_j \right\}_1^\ell$ and $\left\{ \beta_j \right\}_1^m.$  Further, we have the formula
	\[
	  \alpha_j = \frac{p(a_j)}{\prod_{i \neq j}(a_j - a_i)
	  \prod_{i}(1-a_jb_i)
	},
      \]
      for any $j = 1,\dots,\ell.$

      Given the partial fractions expansion, we can give the Laurent expansion of $f$ in a neighborhood of the unit circle as
      \[
	f(z)
	= 
	\sum_{j=1}^\ell 
	\alpha_j
	\sum_{r=0}^\infty
	z^{\ell -k-r-1} a_j^r
	+O(z^{\ell-k}).
      \]
      In particular, for the purposes of computing the $\ell-k$ Toeplitz determinant, we only need $\left\{ \alpha_j \right\}_1^\ell.$  Letting $\hat f : \mathbb{Z} \to \mathbb{C}$ denote the Fourier coefficients of $f,$ we have that
      \begin{align*}
	&\left( \hat f(j-i) \right)_{1 \leq i,j \leq \ell-k}
	= \\
	&\begin{bmatrix}
	  \alpha_1 & \alpha_2 & \cdots & \alpha_\ell \\
	  \alpha_1 a_1 & \alpha_2 a_2 & \cdots & \alpha_\ell a_\ell \\
	  \vdots & \vdots & & \vdots \\
	  \alpha_1 a_1^{\ell-k-1} & \alpha_2 a_2^{\ell-k-1}& \cdots & \alpha_\ell a_\ell^{\ell-k-1}\\
	\end{bmatrix}
	\begin{bmatrix}
	  a_1^{\ell-k-1} & a_1^{\ell-k-2}  & \cdots & 1 \\
	  a_2^{\ell-k-1} & a_2^{\ell-k-2}& \cdots & 1 \\
	  \vdots & \vdots & & \vdots \\
	  a_\ell^{\ell-k-1} & a_\ell^{\ell-k-2}& \cdots &1\\
	\end{bmatrix}.
      \end{align*}

      Applying Cauchy-Binet to this expression, we have that
      \begin{align*}
	D_{\ell - k}(f)
	&=
	\sum_{\substack{S \subset [\ell] \\ |S|=k}}
	\prod_{j \not\in S} \alpha_j
	\prod_{\substack{i \neq j \\ i,j \not\in S}} (a_j - a_i).
      \end{align*}
      Now substitute in the value of $\alpha_i,$ to get
      \begin{align*}
	D_{\ell - k}(f)
	&=
	\sum_{\substack{S \subset [\ell] \\ |S|=k}}
	\prod_{j \not\in S}\frac{p(a_j)}{\prod_{i}(1-a_jb_i)}
	\prod_{\substack{i \neq j \\ j \not\in S \\ i \in S}} \frac{1}{(a_j - a_i)} \\
	&=
	\corO{\prod_{j=1}^\ell p(a_j)}
	\prod_{i=1}^m \prod_{j=1}^\ell \frac{1}{1-a_j b_i}
	\sum_{\substack{S \subseteq [\ell] \\ |S|=k}}
	\prod_{i \in S}
	\left[ \frac{\prod_{j=1}^m (1-b_ja_i)}
	{p(a_i) \prod_{j \not\in S} (a_j - a_i)}\right], 
      \end{align*}
      which was the claim.
    \end{proof}

    This formula can now be given a contour integral representation that allows the size of the determinant to be larger than $\ell-k$ and which allows us to remove the condition that all $\left\{a_i \right\}_1^\ell$ are distinct.
    \begin{proposition}
      Let $U_\ell$ and $V_m$ be as in \eqref{eq:Uell}, and let $k \in \mathbb{N}_0$ be fixed 
	with $k < \ell.$ 
      Then for any $N \geq \ell - k$ and any polynomial $p$ of degree at most $N$ which does not vanish on $\left\{ a_j \right\}_{j=1}^\ell$ and which does not vanish at $0,$
      \begin{align*}
	D_{N}\left( 
	\frac{
	  p(e^{i\theta})e^{-ik\theta}}
	  {U_\ell(e^{-i\theta})V_m(e^{i\theta})}
	  \right)
	  &=(-1)^{\binom{k}{2} + kN}p(0)^{N+k} \\
	  &\cdot
	  \exp\left( 
	  \frac{1}{2\pi i} \int_{\gamma} \log\left(\frac{p(z)}{p(0)V_m(z)}\right) \frac{d}{dz} \log( U_\ell(z^{-1}) )\,dz
	  \right) \\
	  & \cdot
	  \frac{1}{(2\pi i)^k} \idotsint_{\gamma} \frac{\Delta(z_1,\dots,z_k)^2}{k!}\prod_{i=1}^k \frac{V_m(z_i)}{p(z_i) z_i^{N+k} U_\ell(z_i^{-1})}dz_i,
	\end{align*}
	where $\Delta$ is the \corO{Vandermonde} determinant and
	where $\gamma$ is a positively oriented contour enclosing $0$, $\left\{ a_j \right\}_{j=1}^\ell$ but no zeroes of $p$ \corE{or $V_m.$}
	\label{prop:Baxter}
      \end{proposition}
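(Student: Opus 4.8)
The plan is to derive the proposition from the explicit combinatorial formula of Lemma~\ref{lem:BaxterExact} in three stages: first establish the identity when $N=\ell-k$ and the $a_j$ are pairwise distinct and nonzero; then remove the genericity hypothesis on the $a_j$ by analyticity; and finally reach $N>\ell-k$ by padding $U_\ell$ with extra roots at the origin. I expect the main obstacle to be the sign bookkeeping in the first stage: one must combine the sign produced by the squared Vandermonde with the sign coming from the orientation of the $\prod_{j\notin S}$ factors, while simultaneously tracking the power of $\ell$ that is hidden inside the exponential prefactor, and verify that those powers of $\ell$ cancel so that $N=\ell-k$ — not $\ell$ — appears in the final exponent $(-1)^{\binom k2+kN}$ and in $p(0)^{N+k}$.

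\textbf{Step 1 ($N=\ell-k$, distinct nonzero $a_j$).} Here I would recognize the subset sum in Lemma~\ref{lem:BaxterExact} as a $k$-fold residue integral. Writing $q(z)=z^\ell U_\ell(z^{-1})=\prod_{j=1}^\ell(z-a_j)$ and $h(z)=\frac{V_m(z)}{p(z)\,z^{N+k}\,U_\ell(z^{-1})}=\frac{V_m(z)}{p(z)\,q(z)}$ (the net power of $z$ being $0$ precisely because $N=\ell-k$), the function $h$ has only simple poles inside $\gamma$, at the $a_j$, with $\operatorname{Res}_{a_j}h=V_m(a_j)/(p(a_j)q'(a_j))$. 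Since $\Delta(z_1,\dots,z_k)^2$ vanishes on the diagonals, expanding $\frac1{(2\pi i)^k}\idotsint_\gamma\frac{\Delta^2}{k!}\prod_i h(z_i)\,dz_i$ by residues produces $\sum_{|S|=k}\Delta(\{a_i\}_{i\in S})^2\prod_{i\in S}\operatorname{Res}_{a_i}h$; the elementary identity $\prod_{i\in S}q'(a_i)=(-1)^{\binom k2}\Delta(\{a_i\}_{i\in S})^2\prod_{i\in S}\prod_{j\notin S}(a_i-a_j)$ turns this into $(-1)^{\binom k2}\sum_{|S|=k}\prod_{i\in S}\frac{V_m(a_i)}{p(a_i)\prod_{j\notin S}(a_i-a_j)}$, which (flipping each of the $k$ inner products of $\ell-k$ factors) is $(-1)^{\binom k2+k(\ell-k)}$ times the sum appearing in Lemma~\ref{lem:BaxterExact}. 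For the remaining factors, a residue computation based on $\frac{d}{dz}\log U_\ell(z^{-1})=\sum_{j=1}^\ell\frac1{z-a_j}-\frac\ell z$ — the $-\ell/z$ term contributing nothing since $\log\frac{p(z)}{p(0)V_m(z)}$ vanishes at $0$ ($V_m(0)=1$) — gives $\frac1{2\pi i}\int_\gamma\log\frac{p(z)}{p(0)V_m(z)}\frac{d}{dz}\log U_\ell(z^{-1})\,dz=\sum_{j=1}^\ell\log\frac{p(a_j)}{p(0)V_m(a_j)}$, so the exponential factor equals $\prod_j\frac{p(a_j)}{p(0)V_m(a_j)}=\frac{\prod_j p(a_j)}{p(0)^\ell\prod_{i,j}(1-a_jb_i)}$. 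Multiplying through by $(-1)^{\binom k2+k(\ell-k)}p(0)^{\ell}$ then reproduces exactly the right-hand side of Lemma~\ref{lem:BaxterExact}, which proves the proposition in this case (with $p(0)^{N+k}=p(0)^\ell$ and $\binom k2+kN=\binom k2+k(\ell-k)$).

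\textbf{Step 2 (arbitrary $a_j\in\D$).} On the open set $\{(a_1,\dots,a_\ell)\in\D^\ell:\ p(a_j)\ne 0\ \forall j\}$ both sides of the claimed identity are holomorphic in $(a_1,\dots,a_\ell)$: the left-hand side because the symbol $\frac{p(z)z^{-k}}{U_\ell(z^{-1})V_m(z)}=\frac{p(z)z^{\ell-k}}{q(z)V_m(z)}$ stays analytic on a fixed neighborhood of $\T$ and its Fourier coefficients depend holomorphically (indeed rationally) on the $a_j$; the right-hand side because for $(a_1,\dots,a_\ell)$ in any compact subset one may fix a single circular contour $\gamma\subset\D$ enclosing $0$ and all the $a_j$ and differentiate under the integral sign. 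Since the distinct nonzero tuples are dense, Step~1 together with analytic continuation gives the identity for all $a_j\in\D$ with $p(a_j)\ne 0$; in particular $a_j=0$ is permitted, consistently with the hypothesis $p(0)\ne 0$.

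\textbf{Step 3 ($N>\ell-k$).} Put $\ell'=N+k$. Then $\widetilde U_{\ell'}(z):=U_\ell(z)\prod_{j=\ell+1}^{\ell'}(1-0\cdot z)=U_\ell(z)$, so the symbol $\frac{p(z)z^{-k}}{\widetilde U_{\ell'}(z^{-1})V_m(z)}$ is literally $\frac{p(z)z^{-k}}{U_\ell(z^{-1})V_m(z)}$, whence $D_N$ of the latter equals $D_{\ell'-k}$ of the former. Applying Step~2 with $\ell$ replaced by $\ell'$ and $N$ replaced by $\ell'-k=N$ — legitimate because $k<\ell\le\ell'$, $\deg p\le N$, and $p$ does not vanish at the roots $a_1,\dots,a_\ell,0,\dots,0$ of $\widetilde U_{\ell'}$ — and then substituting $\widetilde U_{\ell'}(z^{-1})=U_\ell(z^{-1})$ everywhere it occurs (noting $p(0)^{\ell'}=p(0)^{N+k}$ and $\binom k2+k(\ell'-k)=\binom k2+kN$), yields exactly the displayed formula, completing the proof.
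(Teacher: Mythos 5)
Your proof is correct and follows essentially the same route as the paper: establish the identity at $N=\ell-k$ for pairwise distinct roots by matching the residue expansion of the right-hand side against Lemma~\ref{lem:BaxterExact}, then extend by continuity/analyticity of both sides in the $a_j$ and by padding $U_\ell$ with roots at the origin to reach $N>\ell-k$ and coincident roots. The only difference is cosmetic: you evaluate the $k$-fold Vandermonde integral directly via the identity $\prod_{i\in S}q'(a_i)=(-1)^{\binom{k}{2}}\Delta(\{a_i\}_{i\in S})^2\prod_{i\in S}\prod_{j\notin S}(a_i-a_j)$, whereas the paper isolates that computation as Lemma~\ref{lem:vdm} (which it reuses later in Lemma~\ref{lem:residues}); the sign and $p(0)$-power bookkeeping in both versions agrees.
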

      \begin{proof}
	The main task is to show that the desired expression holds for $N = \ell - k$ under the additional assumption that all $\left\{ a_i \right\}_1^\ell$ are distinct.  That is, we will show that
	\begin{align*}
	  D_{\ell-k}\left( 
	  \frac{
	    p(e^{i\theta})e^{-ik\theta}}
	    {U_\ell(e^{-i\theta})V_m(e^{i\theta})}
	    \right)
	    &=(-1)^{\binom{k}{2} + k(\ell - k)}p(0)^{\ell} \\
	    &\cdot
	    \exp\left( 
	    \frac{1}{2\pi i} \int_{\gamma} \log\left(\frac{p(z)}{p(0)V_m(z)}\right) \frac{d}{dz} \log( U_\ell(z^{-1}) )\,dz
	    \right) \\
	    & \cdot
	    \frac{1}{(2\pi i)^k} \idotsint_{\gamma} \frac{\Delta(z_1,\dots,z_k)^2}{k!}\prod_{i=1}^k \frac{V_m(z_i)}{p(z_i) z_i^{\ell} U_\ell(z_i^{-1})}dz_i.
	  \end{align*}
	  The right hand side is now continuous under smooth deformations of the roots $\left\{ a_i \right\}_1^\ell$ that do not cross the contour.  \corE{This fact establishes the formula for the case $N=\ell-k$ even without the restriction that $\left\{ a_i \right\}_1^\ell$ are all distinct.  In addition, it is possible to move any root to $0.$  Such a move allows one to establish the formula in the case $N = \ell -k + 1.$ Iterating this procedure, this establishes the formula in the case $N \geq \ell - k$ without the restriction that $\left\{ a_i \right\}_1^\ell$ are pairwise distinct.}

	  We show the formula when $N = \ell - k$ by expanding the right hand side using the residue theorem.  Expanding the first integral, we have that
	  \begin{align*}
	    \frac{1}{2\pi i} \int_{\gamma} \log\left(\frac{p(z)}{p(0)V_m(z)}\right) &\frac{d}{dz} \log( U_\ell(z^{-1}) )\,dz \\
	    &=
	    \frac{1}{2\pi i} \int_{\gamma} \log\left(\frac{p(z)}{p(0)V_m(z)}\right) \left[ -\frac{\ell}{z} + \sum_{j=1}^\ell \frac{1}{z-a_j}\right]\,dz \\
	    &=\sum_{j=1}^\ell \log\left(\frac{p(a_j)}{p(0)V_m(a_j)}\right).
	  \end{align*}
	  Hence,
	  \begin{align}
	    \label{eq:allcross}
	    p(0)^{\ell}
	    \exp\left( 
	    \frac{1}{2\pi i} \int_{\gamma} \log\left(\frac{p(z)}{p(0)V_m(z)}\right)
	    \frac{d}{dz} \log( U_\ell(z^{-1}) )\,dz\right) \hspace{-6cm}&\hspace{6cm} \\
	    &=\prod_{j=1}^\ell \frac{p(a_j)}{V_m(a_j)}
	    =\prod_{j=1}^\ell p(a_j)
	    \prod_{i=1}^m \prod_{j=1}^\ell \frac{1}{1-a_j b_i}.
	    \nonumber
	  \end{align}
		  Applying Lemma~\ref{lem:BaxterExact}, it remains to show that
	  \begin{align*}
	    &\frac{1}{(2\pi i)^k} \idotsint_{\gamma} \frac{\Delta(z_1,\dots,z_k)^2}{k!}\prod_{i=1}^k \frac{V_m(z_i)}{p(z_i) z_i^{\ell} U_\ell(z_i^{-1})}dz_i \\
	    &=(-1)^{\binom{k}{2} + k(\ell - k)}
	    \sum_{\substack{S \subseteq [\ell] \\ |S|=k}}
	    \prod_{i \in S}
	    \left[ \frac{\prod_{j=1}^m (1-b_ja_i)}
	      {p(a_i) \prod_{j \not\in S} (a_j - a_i)}
	    \right].
	  \end{align*}

	  For this, we use the following lemma.
	  \begin{lemma}
	    Let $\corE{D} \subset \mathbb{C}$ be an open, simply connected set, and suppose that $f : \corE{D} \to \mathbb{C}$ is holomorphic.
	    Let $Q_r = \prod_{j=1}^r(1-z \corE{c_j})$ for some finite set $\left\{ \corE{c}_j \right\}_1^r \subset \mathbb{C}.$
	    Let $\gamma \subset U$ be a positively oriented contour enclosing $\left\{ \corE{c}_j \right\}_1^r$ then
	    \begin{align*}
	      \frac{1}{(2\pi i)^k} \idotsint_{\gamma} &\frac{\Delta(z_1,\dots,z_k)^2}{k!}\prod_{i=1}^k \frac{f(z_i)}{z_i^{r} Q_r(z_i^{-1})}dz_i\\
	      &=  
	      (-1)^{\binom{k}{2}+k(r-k)}
	      \sum_{\substack{S \subseteq [r] \\ |S|=k}}
	      \prod_{i \in S}
	      \left[ \frac{\corO{f(\corE{c}_i)}}
	      {\prod_{j \not\in S} (\corE{c}_j - \corE{c}_i)}\right].
	    \end{align*}
	    \label{lem:vdm}
	  \end{lemma}
	  The proof of Proposition~\ref{prop:Baxter} now follows by applying Lemma~\ref{lem:vdm} with $f=V_m/p,$ $Q_r=U_\ell,$ \corE{and $c_j = a_j$.}   So we turn to the proof of the lemma.  This is just a direct calculation with the residue theorem.  We iterate the integral, doing the $z_k$ integral first.  The relevant integral is
	  \[
	    \frac{1}{2\pi i} \int_\gamma \frac{f(z_k)}{z_k^{r} Q_r(z_k^{-1})}\prod_{s < k}(z_s - z_k)^2\,dz_k
	    =\sum_{i_k=1}^r  \frac{f(\corE{c}_{i_k})}{\prod_{j \neq i_k} (\corE{c}_{i_k} - \corE{c}_j)} \prod_{s < k}(z_s - \corE{c}_{i_k})^2.
	  \]
	  The $z_{k-1}$ integral has the same form, except that the $(z_{k-1} - \corE{c}_{i_k})^2$ term from evaluating the $z_k$ term cancels one of poles of the integrand.  This procedure is easily iterated to give
	  \begin{align*}
	    \frac{1}{(2\pi i)^k} \idotsint_{\gamma} &\frac{\Delta(z_1,\dots,z_k)^2}{k!}\prod_{i=1}^k \frac{f(z_i)}{z_i^{r} Q_r(z_i^{-1})}dz_i\\
	    &=
	    \frac{1}{k!}
	    \sum_{\substack{i_k,\dots,i_1 \\ \text{distinct}}}
	    \prod_{s=1}^k \frac{f(\corE{c}_{i_s})}{\prod_{j \not\in \left\{ i_k,\dots,i_s \right\}} (\corE{c}_{i_s} - \corE{c}_j)}
	    \prod_{t < s} \left( \corE{c}_{i_t} - \corE{c}_{i_s} \right) \\
	    &=  
	    (-1)^{\binom{k}{2}}
	    \sum_{\substack{S \subseteq [r] \\ |S|=k}}
	    \prod_{i \in S}
	    \left[ \frac{f(\corE{c}_i)}
	    { \prod_{j \not\in S} (\corE{c}_i - \corE{c}_j)}\right] \\
	    &=  
	    (-1)^{\binom{k}{2}+k(r-k)}
	    \sum_{\substack{S \subseteq [r] \\ |S|=k}}
	    \prod_{i \in S}
	    \left[ \frac{f(\corE{c}_i)}
	    {\prod_{j \not\in S} (\corE{c}_j - \corE{c}_i)}\right].
	  \end{align*}
	\end{proof}

	We will now evaluate the effect of contracting the contour $\gamma$ to infinity.  This will give us an exact formula for the correction term with complexity only depending on the degree of $p.$   
	\begin{lemma}
	  Suppose $p(z)$ is given by
	  \[
	    p(z) = a \prod_1^{2k} (z-c_j),
	  \]
	  with all $c_j$ pairwise distinct, and suppose that $N+3k \geq \ell+ m+2.$
	  Let $\gamma$ be a contour enclosing all $\left\{ a_j \right\}_1^\ell$ and $0$ but enclosing no roots of $p.$  Then,
	  \begin{align*}
	    \frac{1}{(2\pi i)^k} \idotsint_{\gamma} &\frac{\Delta(z_1,\dots,z_k)^2}{k!}\prod_{i=1}^k \frac{V_m(z_i)}{p(z_i) z_i^{N+k} U_\ell(z_i^{-1})}dz_i \\
	    &=(-1)^{\binom{k}{2}}
	    \sum_{\substack{S \subseteq [2k] \\ |S|=k}}
	    \prod_{i \in S}
	    \left[ \frac{V_m(c_i)}
	    {a c_i^{N+k}U_\ell(c_i^{-1})\prod_{j \not\in S} (c_j - c_i)}\right].
	  \end{align*}
	  \label{lem:residues}
	\end{lemma}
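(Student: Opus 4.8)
The plan is to regard the $z_i$-integrand as a rational function of each variable, push the contour $\gamma$ outward through the roots of $p$ and off to infinity, and collect residues; the Vandermonde weight then assembles these residues into the stated sum over $k$-subsets, exactly as in the residue computation already carried out for Lemma~\ref{lem:vdm}, except that now the relevant poles sit \emph{outside} $\gamma$ rather than inside it.

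Concretely, first rewrite $z^{\ell}U_\ell(z^{-1}) = \prod_{j=1}^{\ell}(z-a_j)$, so that for each $i$ the factor $V_m(z_i)/(p(z_i)z_i^{N+k}U_\ell(z_i^{-1}))$ is a rational function of $z_i$ whose poles lie only at $0$ and at $\left\{ a_j \right\}_{j=1}^\ell$ --- both enclosed by $\gamma$ --- and at the $2k$ \emph{distinct} roots $\left\{ c_j \right\}_{j=1}^{2k}$ of $p$, all lying outside $\gamma$. Since $\Delta(z_1,\dots,z_k)^2$ is polynomial in each variable, the full $z_i$-integrand (with the other variables fixed) is rational with the same pole set. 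For a rational function the sum of all finite residues equals minus the residue at infinity, and the degree hypothesis $N+3k\ge\ell+m+2$ is precisely the condition forcing the integrand to decay like $O(|z|^{-2})$ on large circles at every stage of the iteration, so that the contribution of the arc at infinity vanishes. Hence $\tfrac1{2\pi i}\oint_\gamma$ of the $z_k$-factor equals $-\sum_{i_k=1}^{2k}\operatorname{Res}_{z_k=c_{i_k}}$, and evaluating this residue using $p(z)=a\prod_j(z-c_j)$ produces the factor $V_m(c_{i_k})/\bigl(a\,c_{i_k}^{N+k}U_\ell(c_{i_k}^{-1})\prod_{j\ne i_k}(c_{i_k}-c_j)\bigr)$ together with $\prod_{s<k}(z_s-c_{i_k})^2$ from the Vandermonde.

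From here the bookkeeping is identical to the proof of Lemma~\ref{lem:vdm}: substituting $z_k=c_{i_k}$ cancels the corresponding pole of the $z_{k-1}$-integrand, so iterating the residue evaluation over the remaining variables yields a sum over ordered distinct tuples $(i_1,\dots,i_k)\subseteq[2k]$; the $1/k!$ collapses this to a sum over $k$-element subsets $S$; the surviving Vandermonde factors regroup as $\prod_{\{s,t\}\subseteq S}$ up to a reordering sign $(-1)^{\binom k2}$; and the remaining factors combine to $\prod_{i\in S}\bigl[V_m(c_i)/(a\,c_i^{N+k}U_\ell(c_i^{-1})\prod_{j\notin S}(c_j-c_i))\bigr]$, which is the asserted identity.

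The one place that needs genuine care, rather than routine manipulation, is the justification that the contour may be dragged to infinity with no boundary term: one must track the degree of the integrand in each variable through the successive residue substitutions --- noting that each substitution inserts a quadratic Vandermonde factor while simultaneously cancelling a simple pole of $p$, so the net order at infinity stays controlled --- and check that the stated hypothesis $N+3k\ge\ell+m+2$ indeed forces decay faster than $|z|^{-1}$ throughout. Everything else reduces to the residue-and-Cauchy--Binet argument already in hand.
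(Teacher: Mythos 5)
Your proposal is correct and takes essentially the same route as the paper: both arguments use the degree hypothesis to kill the contribution at infinity, thereby converting the integral over $\gamma$ into $(-1)^k$ times the residues at the roots of $p$ lying outside $\gamma$, and then carry out the Lemma~\ref{lem:vdm} iterated-residue/Cauchy--Binet bookkeeping (with the sign $(-1)^k(-1)^{\binom{k}{2}+k^2}=(-1)^{\binom{k}{2}}$). The only difference is organizational: the paper performs the contour deformation in all $k$ variables at once via the multilinear cycle expansion $\gamma=(\gamma-\gamma')+\gamma'$ and then cites Lemma~\ref{lem:vdm} as a black box, whereas you push each contour to infinity one variable at a time, which is equally valid since, as you note, each residue substitution inserts a quadratic Vandermonde factor that keeps the order at infinity unchanged at the next stage.
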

    \begin{proof}
      For contours (or more generally cycles, which are oriented sums of closed curves), let
        \(
            I(\gamma_1,\dots,\gamma_k)
        \)
	be the displayed integral in the \corO{lemma}, with $z_i$ integrated along the $\gamma_i$ contour for each $1 \leq i \leq k.$
        Fix another contour $\gamma'$ not intersecting any roots of $p$ nor $0$ nor any of $\left\{ a_j \right\}_1^\ell.$
        For a subset $T \subseteq [k],$ let 
        \(
        I_T
        \)
        be
        \(
            I(\gamma_1,\dots,\gamma_k)
        \)
        where $\gamma_i = \gamma-\gamma'$ if $i \in T$ and $\gamma_i = \gamma'$ otherwise.
        Using the multilinearity of the integral, we can write
        \[
            I(\gamma,\dots,\gamma)
            =
            \sum_{\substack{T \subseteq [k]}}
            I_{T}.
        \]

        Now we send $\gamma'$ to infinity.  By the assumption on the degrees of the polynomials, for each $i$ the integrand is $O(|z_i|^{-2}).$  Hence, any term in the above expansion of the form $I(\dots, \gamma', \dots)$ will tend to $0.$  Thus we have that
        \[
            I(\gamma,\dots,\gamma)
            =
            I(\gamma-\gamma',\dots,\gamma-\gamma')
            +o(1).
        \]
	On the other hand, by the residue theorem, 
	this right hand side stabilizes once $\gamma'$ encloses all roots of 
	$p$, \corO{all $a_i$'s and $0$}.  
	Moreover, we can apply Lemma~\ref{lem:vdm} to evaluate it, with $r = 2k$,
	\[
	  f(z)=\frac{V_m(z)}{a z^{N+k} U_\ell(z^{-1})}~\text{and}~Q_{2k}(z) = \frac{z^{2k} p(z^{-1})}{a} = \prod_{j=1}^{2k}(1-zc_j),
	\]
	\corE{and with $D$ being the domain encircled by $\gamma'-\gamma$ which can be chosen simply connected. By construction, $f$ is holomorphic in $U$ since $\gamma'-\gamma$ does not encircle any of the $a_i$'s or $0.$}
	We get
	\[
	  \lim_{\gamma' \to \infty}
            I(\gamma-\gamma',\dots,\gamma-\gamma')
	    =
	    (-1)^{k+k^2+\binom{k}{2}}
	    \sum_{\substack{S \subseteq [2k] \\ |S|=k}}
	    \prod_{i \in S}
	    \left[ \frac{V_m(c_i)}
	    {a c_i^{N+k}U_\ell(c_i^{-1})\prod_{j \not\in S} (c_j - c_i)}\right].
	\]
      The extra sign change $(-1)^k$ arises as $\gamma-\gamma'$ winds negatively around the roots of $p.$
    \end{proof}


	\begin{example}
	  \label{ex:2p}
	  Consider evaluating the Toeplitz determinant with symbol $|1-z_1e^{i\theta}|^2/|1-z_2e^{i\theta}|^2.$
	  This makes $p(x) = (1-z_1x)(x - \bar z_1),$ $U_1 = (1-\bar z_2 x)$ and $V_1 = (1- z_2 x).$
	  Applying Proposition~\ref{prop:Baxter},
	  \begin{align*}
	    D_{N}\left( 
	    \frac{
	      p(e^{i\theta})e^{-i\theta}}
	      {U_\ell(e^{-i\theta})V_m(e^{i\theta})}
	      \right)
	      &=(-1)^{N}p(0)^{N+1} 
	       \frac{p(\bar z_2)}{(1-|z_2|^2)p(0)}\\
	      & \;\;\;\cdot
	      \left(
	      \frac{-(1-\bar z_1 z_2)}{(1-|z_1|^2) (\bar z_1)^{N}(\bar z_1 - \bar z_2)}
	      +\frac{(z_1 - z_2)(z_1)^{N}}{(1-|z_1|^2)(1-\bar z_2 z_1)}
	      \right) \\
	      &=
	      \frac{|1-\bar z_1 z_2|^2}{(1-|z_1|^2) (1-|z_2|^2)}
	      -\frac{|z_1 - z_2|^2|z_1|^{2N}}{(1-|z_1|^2) (1-|z_2|^2)},
	    \end{align*}
	    for all $N$ sufficiently large.
	    The first term is exactly $\Exp[ e^{2\GF(z_1) - 2\GF(z_2)}]$.  Note for $z_1$ at hyperbolic distance $\log N + O(1)$ from the origin, the second term is on the same order as the first and so is non-negligible. 
	  \end{example}

	  Finally, we reformulate the Toeplitz determinant identity in a way that is most applicable to how we use it.  Namely, suppose we have two finite sets $\mathbf{z}, \mathbf{y} \in \D$ of jointly pairwise distinct points.  We wish to evaluate the Toeplitz determinant with symbol
	  \[
	    \frac{\prod_{z \in \mathbf{z}} |1-ze^{i\theta}|^2 }{
	      f_1(e^{-i\theta})f_2(e^{i\theta})\prod_{y \in \mathbf{y}} |1-ye^{i\theta}|^2
	    }
	  \]
	  for high degree polynomials $f_1$ and $f_2$ without zeroes in $\D.$

	  \begin{proposition}
          Fix $k \in \mathbb{Z},$ and let $\mathbf{z}, \mathbf{y} \subset \D$ 
	  be jointly pairwise distinct points with 
	  $|\mathbf{z}| = |\mathbf{y}|\corO{= k}.$
	  Let $f_1$ and $f_2$ be polynomials of degree strictly less than $N/2$ with $f_1(0)=f_2(0) = 1$ and without zeros in $\bar \D$ so that $z \mapsto f_1(z^{-1})$ does not vanish on $\mathbf{z}.$ \corE{Let $\gamma$ wind positively once along the unit circle.} \corO{Then},
	    \begin{align*}
	      D_{N} \hspace{0.5cm}&\hspace{-0.5cm}
	      \left( 
	      \frac{
		\prod_{z \in \mathbf{z}} |1-ze^{i\theta}|^2
	      }
	      {
		f_1(e^{-i\theta})f_2(e^{i\theta})
		\prod_{y \in \mathbf{y}} |1-ye^{i\theta}|^2
	      }
	      \right)
	      =\Exp e^{\Bias{\GF}} \\
	      &
	      \cdot \exp\left( 
	      \frac{1}{2\pi i} \int_{\gamma} \log\left(\frac{1}{f_2(z)}\right) \frac{d}{dz} \log( f_1(z^{-1}) )\,dz
	      \right)
	      \cdot \frac{
		\prod_{z \in \mathbf{z}} f_1(z) f_2(\bar z)
	      }{
		\prod_{y \in \mathbf{y}} f_1(y) f_2(\bar y)
	      } \\
	      &\cdot
	      \sum_{\substack{S_1,S_2 \subset \mathbf{z}\\ |S_1|=|S_2|}}
          (-1)^{|S_1|}
          \!\cdot\!
	      \left[
		\prod_{S_1} \frac{ z^N f_1(z^{-1})}{f_2(z)}
	      \right]
          \!\cdot\!
	      \left[ 
		\prod_{S_2} \frac{ \bar z ^N f_2(\bar z^{-1})}{f_1(\bar z)}
	      \right]
          \!\cdot\!
	      c^{\mathbf{y}}(S_1,S_2)
          \!\cdot\!
	      c_{\mathbf{z}}(S_1,S_2),
	    \end{align*}
	    where
	    \begin{align*}
	      c^{\mathbf{y}}(S_1,S_2) &=
	      \prod_{
		\substack{
		  z \in S_1 \\
		  y \in \mathbf{y}
		}}
		T_y(z)
		\cdot
		\prod_{
		  \substack{
		    z \in S_2 \\
		    y \in \mathbf{y}
		  }}
		  T_{\bar y}(\bar z)
		  , \text { and } \\
		  |c_{\mathbf{z}}(S_1,S_2)| &=
		  \prod_{
		    \substack{
		      z \in S_1 \\
		      w \in S_1^c
		    }}
		    \frac{1}{|T_z(w)|}
		    \cdot
		    \prod_{
		      \substack{
			z \in S_2 \\
			w \in S_2^c
		      }}
		      \frac{1}{|T_{\bar z}(\bar w)|}
		      \cdot
		      \frac{1}{\Exp e^{ 
			2 \sum_{S_1 \cap S_2^c} \GF[z]
			-2 \sum_{S_2 \cap S_1^c} \GF[z]
		      }},
		    \end{align*}
		    \corO{and the sets $S_i^c$ denote the complements of $S_i$ in $\bf{z}$.}
		    The functions $T_y(z)$ are the disk automorphisms, as in \eqref{eq:diskautomorphism}.  An exact expression for $c_{\mathbf{z}}(S_1,S_2)$ is given in \eqref{eq:exc2}.  When $S_1=S_2,$ $c_{\mathbf{z}}(S_1,S_2) > 0.$
		    \label{prop:exact}
		  \end{proposition}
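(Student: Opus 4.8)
The plan is to derive Proposition~\ref{prop:exact} as a bookkeeping reorganization of Proposition~\ref{prop:Baxter} together with Lemma~\ref{lem:residues}, using the covariance formula \eqref{eq:gfcov} to identify the leading factor $\Exp e^{\Bias{\GF}}$.

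First I would put the symbol in the shape required by Proposition~\ref{prop:Baxter}. Writing $|1-ze^{i\theta}|^2 = (1-ze^{i\theta})(1-\bar z e^{-i\theta})$ and $(1-\bar z e^{-i\theta}) = e^{-i\theta}(e^{i\theta}-\bar z)$, the numerator $\prod_{z\in\mathbf{z}}|1-ze^{i\theta}|^2$ equals $e^{-ik\theta}p(e^{i\theta})$ with $k = |\mathbf{z}|$ and $p(x) = \prod_{z\in\mathbf{z}}(1-zx)(x-\bar z)$, a polynomial of degree $2k$ with $p(0) = \prod_{z\in\mathbf{z}}(-\bar z)$. Likewise the denominator is $U_\ell(e^{-i\theta})V_m(e^{i\theta})$ with $U_\ell(x) = f_1(x)\prod_{y\in\mathbf{y}}(1-\bar y x)$ and $V_m(x) = f_2(x)\prod_{y\in\mathbf{y}}(1-yx)$; the degree bounds on $f_1,f_2$ guarantee $N\ge \ell-k$ and $N+3k\ge \ell+m+2$, so that Proposition~\ref{prop:Baxter} and Lemma~\ref{lem:residues} both apply, and the pairwise distinctness of the points of $\mathbf{z}$ (together with $|z|<1$) makes the $2k$ roots $\{z^{-1},\bar z:z\in\mathbf{z}\}$ of $p$ pairwise distinct. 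Proposition~\ref{prop:Baxter} then writes $D_N$ as the product of $(-1)^{\binom{k}{2}+kN}p(0)^{N+k}$, the exponential of a contour integral, and a $k$-fold contour integral, and Lemma~\ref{lem:residues} evaluates the latter as a sum over $k$-element subsets $S$ of the root set of $p$.

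The heart of the argument is to split each such residue. Encode a $k$-subset $S$ of $\{z^{-1}:z\in\mathbf{z}\}\sqcup\{\bar z:z\in\mathbf{z}\}$ by $S_1 = \{z\in\mathbf{z}: z^{-1}\in S\}$ and $S_2 = \{z\in\mathbf{z}: \bar z\notin S\}$; since $|S|=k=|\mathbf{z}|$ one has $|S_1|=|S_2|$, and $S\mapsto(S_1,S_2)$ is a bijection onto pairs of equal-sized subsets of $\mathbf{z}$. Substituting the explicit roots $c_i$ and expanding $U_\ell(c_i^{-1})=f_1(c_i^{-1})\prod_{y}(1-\bar y c_i^{-1})$, $V_m(c_i)=f_2(c_i)\prod_y(1-yc_i)$, and the Vandermonde differences $\prod_{j\notin S}(c_j-c_i)$ into their pieces indexed by $S_1$, $S_1^c$, $S_2$, $S_2^c$, and $\mathbf{y}$, each residue reorganizes into $(-1)^{|S_1|}\bigl[\prod_{S_1}\tfrac{z^Nf_1(z^{-1})}{f_2(z)}\bigr]\bigl[\prod_{S_2}\tfrac{\bar z^Nf_2(\bar z^{-1})}{f_1(\bar z)}\bigr]c^{\mathbf{y}}(S_1,S_2)c_{\mathbf{z}}(S_1,S_2)$: the cross-terms between the selected roots and $\mathbf{y}$ produce exactly the disk automorphisms $T_y(z)=(z-y)/(1-z\bar y)$ appearing in $c^{\mathbf{y}}$, the cross-terms among selected and unselected $\mathbf{z}$-roots produce the $|T_z(w)|^{-1}$ factors in $c_{\mathbf{z}}$, and the residual powers $z^{-(N+k)},\bar z^{-(N+k)}$, combined with $p(0)^{N+k}$ and the overall sign, yield the $z^N,\bar z^N$ powers and the sign $(-1)^{|S_1|}$. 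When $S_1=S_2$ all residual $\GF$-exponentials cancel and only absolute-value factors survive, so $c_{\mathbf{z}}(S_1,S_2)>0$; the general exact form is the one recorded in \eqref{eq:exc2}.

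Finally, the exponential-of-contour-integral factor of Proposition~\ref{prop:Baxter} factorizes once $\log$ of the product symbol is written as a sum: the $f_1$–$f_2$ interaction is the displayed $\exp\bigl(\tfrac{1}{2\pi i}\int_\gamma\log\tfrac{1}{f_2(z)}\tfrac{d}{dz}\log f_1(z^{-1})\,dz\bigr)$; the interactions of $f_1,f_2$ with the points of $\mathbf{z},\mathbf{y}$ collapse, by the residue evaluation already carried out in \eqref{eq:allcross}, to $\prod_{z\in\mathbf{z}}f_1(z)f_2(\bar z)\big/\prod_{y\in\mathbf{y}}f_1(y)f_2(\bar y)$; and the remaining pure $\mathbf{z}$–$\mathbf{y}$ interactions, together with $p(0)^{N+k}$ and the prefactor sign, assemble via \eqref{eq:gfcov} (i.e. $\Exp e^{\Bias{\GF}} = \exp\bigl(\tfrac12\Var(\Bias{\GF})\bigr)$ with $\Exp\GF(z)\GF(w)=-\tfrac12\log|1-z\bar w|$) into $\Exp e^{\Bias{\GF}}$. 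Collecting the three factors gives the claimed identity. The main obstacle is exactly the second step: the careful bookkeeping of signs, of $z^N$ versus $\bar z^N$ powers, and of the complementation that converts the constraint $|S|=k$ into $|S_1|=|S_2|$, together with recognizing the various residue cross-terms as disk automorphisms. No new idea is needed beyond assembling Proposition~\ref{prop:Baxter}, Lemma~\ref{lem:residues}, and \eqref{eq:gfcov}.
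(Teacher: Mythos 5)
Your proposal is correct and follows essentially the same route as the paper: the paper's proof applies Proposition~\ref{prop:Baxter} and Lemma~\ref{lem:residues} with exactly the same choices $p(\omega)=\prod_{z\in\mathbf{z}}(1-z\omega)(\omega-\bar z)$, $U_\ell=f_1q_1$, $V_m=f_2q_2$, reindexes the residue sum by the same bijection $S\mapsto(S_1,S_2)$ with $S_1=(S\cap\mathbf{z}^{-1})^{-1}$ and $S_2=\mathbf{z}\setminus\overline{(S\cap\bar{\mathbf{z}})}$, and extracts $\Exp e^{\Bias{\GF}}$ from the $\mathbf{z}$--$\mathbf{y}$ interactions via \eqref{eq:allcross} and \eqref{eq:gfcov}. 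The bookkeeping you flag as the main obstacle is precisely the content of the paper's computation of $I_1\tilde I_2 I_3$ and of \eqref{eq:exc1}--\eqref{eq:exc2}.
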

          \begin{remark}
              The requirement that $\mathbf{y}$ be a set, and not a multiset, is not important. Indeed, by continuity, it is possible to deform the points in $\mathbf{y}$ continuously.  By sending these points to $0,$ it is also possible to extend the statement to the case that $|\mathbf{y}| \leq |\mathbf{z}|,$ by adding some number of zeros to $\mathbf{y}.$
          \end{remark}
	  The dominant term in this expansion is given by $S_1=S_2=\emptyset.$  The coefficient $|c^{\mathbf{y}}(S_1,S_2)| \leq 1,$ as the functions $T_y$ map to the unit disk.  On the other hand, $c_{\mathbf{z}}(S_1,S_2)$ could in principle be very large if some $w,z \in \mathbf{z}$ are very close to each other. 
	  Note that the Toeplitz determinant is continuous as $w \to z,$ so this representation hides some cancellation.  For our purposes, however, it will be enough just to bound $c_{\mathbf{z}}(S_1,S_2)$ above, which we do using the easily verified identity
		  \begin{equation}
		    \label{eq:Tdist}
		    |T_w(y)| = \tanh(\dH(w,y)/2).
		  \end{equation}
		  Note also that by Jensen's inequality, the expectation that appears in the modulus of $c_{\mathbf{z}}(S_1,S_2)$ is larger than $1,$ and so can be disregarded for the purposes of an upper bound.  
          For the purposes of evaluating exponential moments, these estimates can be summarized as follows.
		  \begin{corollary}
		    Let $k \in \mathbb{Z}$ be fixed.
		    There is a $C=C(k)$ sufficiently large that the following holds.
            Let $\mathbf{z}, \mathbf{y} \in \D$ be jointly pairwise distinct points with $|\mathbf{y}| \leq |\mathbf{z}| = k.$ Define
		    \begin{equation*}
		      \Delta = \max_{z \in \mathbf{z}} e^{-N\exp(-\dH(0,z))}\prod_{
			\substack{x \in \mathbf{z} \\ 
		      x \neq z}} 
		      \coth(\dH(x,z)/2).
		    \end{equation*}
		    Then,
		    \(
		      {\Exp e^{\Bias{\CUEF}}}
		      \geq 
		      {\Exp e^{\Bias{\GF}}}
              (1 - C(\Delta^2 + \Delta^{2k})).
		    \)
		    Further, if we write $\mathbf{z} = \left\{ w \right\} \cup \mathbf{z}',$ and define
		    \begin{equation*}
		      \Delta' = \max_{z \in \mathbf{z}'} e^{-N\exp(-\dH(0,z))}\prod_{
			\substack{x \in \mathbf{z} \\ 
		      x \neq z}} 
		      \coth(\corO{\dH(x,z)}/2).
		    \end{equation*}
		    Then
		    \[
		      \frac
		      {\Exp e^{\Bias{\CUEF}}}
		      {\Exp e^{\Bias{\GF}}}
		      = 1 -
		      |w|^{2N}
		      \frac{
			\prod_{y \in \mathbf{y}} \tanh(\dH(w,y)/2)^2
		      }{
			\prod_{z \in \mathbf{z}} \tanh(\dH(w,z)/2)^2
		      }
		      + \xi,
		    \]
		    where $|\xi| \leq C (\Delta')(1+\Delta)^{2k-1}.$
		    \label{cor:exact}
		  \end{corollary}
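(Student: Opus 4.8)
The plan is to recognize $\Exp e^{\Bias{\CUEF}}$ as a Toeplitz determinant of exactly the form treated in Proposition~\ref{prop:exact} and to read both estimates off the combinatorial sum produced there. First I would record that, since $\CUEF(z)=\sum_h\log|1-ze^{i\theta_h}|$ with $\{e^{i\theta_h}\}$ the eigenvalues of $U_N$, one has $e^{\Bias{\CUEF}}=\prod_h\bigl(\prod_{z\in\mathbf{z}}|1-ze^{i\theta_h}|^2\bigr)\bigl(\prod_{y\in\mathbf{y}}|1-ye^{i\theta_h}|^2\bigr)^{-1}$, so that $\Exp e^{\Bias{\CUEF}}=D_N$ of the symbol in Proposition~\ref{prop:exact} with $f_1\equiv f_2\equiv 1$. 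Using the remark following Proposition~\ref{prop:exact} I would reduce to the case $|\mathbf{y}|=|\mathbf{z}|=k$, padding $\mathbf{y}$ with zeros (perturbing slightly to keep the configuration pairwise distinct). With $f_1=f_2\equiv1$ the contour-integral factor and the $f_i$-product factor in Proposition~\ref{prop:exact} are both $1$, and $\prod_{S_1}z^Nf_1(z^{-1})/f_2(z)$, $\prod_{S_2}\bar z^Nf_2(\bar z^{-1})/f_1(\bar z)$ reduce to $\prod_{S_1}z^N$, $\prod_{S_2}\bar z^N$; hence
\[
  \frac{\Exp e^{\Bias{\CUEF}}}{\Exp e^{\Bias{\GF}}}
  =\sum_{\substack{S_1,S_2\subseteq\mathbf{z}\\ |S_1|=|S_2|}}(-1)^{|S_1|}\Bigl(\prod_{z\in S_1}z^N\Bigr)\Bigl(\prod_{z\in S_2}\bar z^N\Bigr)c^{\mathbf{y}}(S_1,S_2)\,c_{\mathbf{z}}(S_1,S_2),
\]
where the term $S_1=S_2=\emptyset$ contributes $1$ and everything else is error.

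The key estimate I would establish is a factorized bound: the modulus of the term with $|S_1|=|S_2|=j\ge1$ is at most the product, over $z\in S_1$ and over $z\in S_2$, of $|z|^N\prod_{x\in\mathbf{z}\setminus\{z\}}\coth(\dH(x,z)/2)$. This uses that $|c^{\mathbf{y}}|\le1$ (every $|T_y(\cdot)|\le1$); that the expectation inside the modulus of $c_{\mathbf{z}}$ is $\ge1$ by Jensen and may be dropped; that $\coth\ge1$ on $(0,\infty)$, which lets one enlarge $S_i^c$ to $\mathbf{z}\setminus\{z\}$; the identity \eqref{eq:Tdist} giving $1/|T_z(w)|=\coth(\dH(z,w)/2)$; and $|z|^N\le e^{-N(1-|z|)}\le e^{-N\exp(-\dH(0,z))}$, since $1-|z|\ge e^{-\dH(0,z)}$. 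Each factor is then at most $\Delta$. There are exactly $\binom{2k}{k}$ pairs $(S_1,S_2)$ with $|S_1|=|S_2|$, and $\Delta^{2j}\le\Delta^2+\Delta^{2k}$ for $1\le j\le k$, so summing gives $\left|\frac{\Exp e^{\Bias{\CUEF}}}{\Exp e^{\Bias{\GF}}}-1\right|\le C(k)(\Delta^2+\Delta^{2k})$; the first assertion follows because $\Exp e^{\Bias{\GF}}>0$.

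For the refined estimate I would write $\mathbf{z}=\{w\}\cup\mathbf{z}'$ and isolate the pair $(S_1,S_2)=(\{w\},\{w\})$. There $S_1=S_2$, so $c_{\mathbf{z}}>0$ and, since $S_1\cap S_2^c=\emptyset$, equals $\prod_{z\in\mathbf{z}'}\coth(\dH(w,z)/2)^2$; moreover $c^{\mathbf{y}}(\{w\},\{w\})=\prod_{y\in\mathbf{y}}|T_y(w)|^2=\prod_{y\in\mathbf{y}}\tanh(\dH(w,y)/2)^2$ and $\prod_{S_1}z^N\prod_{S_2}\bar z^N=|w|^{2N}$, so this term equals precisely $-|w|^{2N}\prod_{y\in\mathbf{y}}\tanh(\dH(w,y)/2)^2\big/\prod_{z\in\mathbf{z}'}\tanh(\dH(w,z)/2)^2$, the claimed main correction (the denominator product runs over $\mathbf{z}'=\mathbf{z}\setminus\{w\}$, equivalently over $\mathbf{z}$ with the vanishing $z=w$ factor omitted). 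Everything else goes into $\xi$. Each remaining pair has $(S_1,S_2)\notin\{(\emptyset,\emptyset),(\{w\},\{w\})\}$ with $|S_1|=|S_2|$, which forces $S_1$ or $S_2$ to contain some $z'\in\mathbf{z}'$ (otherwise both sit inside $\{w\}$ and the pair is $(\emptyset,\emptyset)$ or $(\{w\},\{w\})$). In the factorized bound the factor at $z'$ is $\le\Delta'$ by the definition of $\Delta'$, and each of the at most $2k-1$ remaining factors is $\le\Delta\le1+\Delta$; hence each such term is $\le\Delta'(1+\Delta)^{2k-1}$, and summing over the $\binom{2k}{k}$ pairs yields $|\xi|\le C(k)\,\Delta'(1+\Delta)^{2k-1}$.

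The main obstacle, as far as I can see, is purely bookkeeping: identifying the combinatorial factors $c^{\mathbf{y}}$ and $c_{\mathbf{z}}$ of Proposition~\ref{prop:exact} with the stated main term under the correct index sets (reading the $\tanh$-product in the denominator as ranging over $\mathbf{z}'$, with the $z=w$ factor vacuous), and checking that the reduction to $|\mathbf{y}|=|\mathbf{z}|$ and to pairwise-distinct configurations by continuity does not spoil the error bounds. No genuine analytic difficulty arises once Proposition~\ref{prop:exact} and the identity \eqref{eq:Tdist} are in hand.
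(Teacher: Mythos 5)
Your proposal is correct and follows essentially the same route as the paper: expand $\Exp e^{\Bias{\CUEF}}/\Exp e^{\Bias{\GF}}$ via Proposition~\ref{prop:exact} with $f_1=f_2\equiv 1$, bound each summand by $\Delta^{|S_1|+|S_2|}$ using $|c^{\mathbf{y}}|\leq 1$, Jensen, and \eqref{eq:Tdist}, isolate the $(S_1,S_2)=(\{w\},\{w\})$ term for the refined estimate, and bound the remaining terms by $\Delta'\Delta^{|S_1|+|S_2|-1}$. You supply more detail than the paper's (very terse) proof, including the correct reading of the $\tanh$-product in the denominator as running over $\mathbf{z}\setminus\{w\}$, but there is no substantive difference in method.
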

		  \begin{proof}
		    In the notation of Proposition~\ref{prop:exact}, we have the finite expansion
		    \[
		      \frac{
			\Exp\left[ 
			  e^{\Bias{\CUEF}}
			\right]}
			{
			  \Exp\left[ 
			    e^{\Bias{\GF}}
			  \right]
			}
			=
			\sum_{\substack{S_1,S_2 \subset \mathbf{z}\\ |S_1|=|S_2|}}
            (-1)^{|S_1|}
			\left[
			  \prod_{S_1} z^N
			\right]
			\left[ 
			  \prod_{S_2} \bar z^N
			\right]
			c^{\mathbf{y}}(S_1,S_2)
			c_{\mathbf{z}}(S_1,S_2).
		      \]
		      The $S_1 = S_2 = \emptyset$ term is $1.$  
		      
		      \corO{Writing $|z|\leq e^{-\exp(-\dH(0,z))}$,
		    each summand} can be estimated by 
		    $\Delta^{|S_1|+|S_2|},$ from which the first estimate follows. The $S_1 = S_2 = \left\{ w \right\}$ term gives what is displayed in the corollary, \corE{recalling that $c_{\mathbf{z}}(S_1,S_1) > 0$ and $c^{\mathbf{y}}(S_1,S_1) > 0$ as is claimed in Proposition~\ref{prop:exact}}.  As $T_w(y)$ maps to the unit disc, we have $|c^{\mathbf{y}}(S_1,S_2)| \leq 1.$  By Jensen, we have 
		      $\Exp e^{  
			2 \sum_{S_1 \cap S_2^c} \GF[z]
			-2 \sum_{S_2 \cap S_1^c} \GF[z]
		      } \geq 1.$
		      Terms in the sum where either $S_1$ or $S_2$ are not equal to $\left\{ w \right\}$ can be estimated by $(\Delta')(\Delta)^{|S_1|+|S_2|-1}.$  The estimate now follows.
		    \end{proof}

		    \begin{proof}[Proof of Proposition \ref{prop:exact}]
		      The proof is nothing but the combination of Proposition~\ref{prop:Baxter} and Lemma~\ref{lem:residues} and some reorganization.  In particular we apply these results with 
		      \begin{alignat*}{2}
			p(\omega) &= 
			\prod_{z \in \mathbf{z}} (1-z\omega)(\omega - \bar{z}), 
			~~& &
			\\
			q_1(\omega) &=
			\prod_{y \in \mathbf{y}} (1-\bar y \omega),
			& U_\ell(\omega) &=
			f_1(\omega)q_1(\omega), \\
			q_2(\omega) &= \prod_{y \in \mathbf{y}} (1- y \omega),
			& V_m(\omega) &=
			f_2(\omega)q_2(\omega).
		      \end{alignat*}
		      By how we have chosen $f_1$ and $f_2,$ the hypotheses of both Proposition~\ref{prop:Baxter} and Lemma~\ref{lem:residues} are satisfied, and so we can write
		      \begin{align*}
			D_{N}\left( 
			\frac{
			  p(e^{i\theta})e^{-ik\theta}}
			  {U_\ell(e^{-i\theta})V_m(e^{i\theta})}
			  \right)
			  \hspace{-1cm}&\hspace{1cm}
			  =(-1)^{\binom{k}{2} + kN}
			  (-1)^{\binom{k}{2}}
			  p(0)^{N+k} 
			&\biggr\} I_1 \\
			&\cdot
			\exp\left( 
			\frac{1}{2\pi i} \int_{\gamma} \log\left(\frac{p(z)}{p(0)V_m(z)}\right) \frac{d}{dz} \log( U_\ell(z^{-1}) )\,dz
			\right) 
		      &\biggr\} I_2 \\
		      & \cdot
		      \sum_{\substack{S \subseteq [2k] \\ |S|=k}}
		      \prod_{i \in S}
		      \left[ \frac{V_m(c_i)}
		      {a c_i^{N+k}U_\ell(c_i^{-1})\prod_{j \not\in S} (c_j - c_i)}\right],
		    &\biggr\} I_3
		  \end{align*}
		  where $a = (-1)^k \prod_{z \in \mathbf{z}} {z}$ and where 
		  \[
		    \left\{ c_1,c_2,\dots,c_{2k} \right\}
		    =
		    \left\{ z^{-1} : z \in \mathbf{z} \right\}
		    \cup
		    \left\{ \bar z : z \in \mathbf{z} \right\}.
		  \]

		  In $I_1,$ we are left with
		  \[
		    I_1 = (-1)^k \left( \prod_{z \in \mathbf{z}} \bar z \right)^{N+k}.
		  \]
		  In $I_2,$ we want to separate the terms involving only $f_1$ and $f_2$ from the rest.  Here we use \eqref{eq:allcross}, and we write $f_1(\omega) = \prod_{a \in \mathbf{a}} (1-a\omega)$ for some finite set $\mathbf{a} \in \C.$  Then we can write
		  \begin{align*}
		    I_2 =&
		    \prod_{a \in\mathbf{a}}
		    \frac{1}{f_2(a)}
		    \cdot
		    \prod_{y \in\mathbf{y}}
		    \frac{p(\bar y)}{p(0)q_2(\bar y)f_2(\bar y)}
		    \cdot
		    \prod_{a \in\mathbf{a}}
		    \frac{p(a)}{p(0)q_2(a)}.
		  \end{align*}
		  These can be rewritten by expanding the definitions of $p$ and $q_2$ and reordering the products to give
		  \begin{align*}
		    I_2 =&
		    \exp\left( 
		    \frac{1}{2\pi i} \int_{\gamma} \log\left(\frac{1}{f_2(z)}\right) \frac{d}{dz} \log( f_1(z^{-1}) )\,dz
		    \right) 
		    \cdot
		    \frac{\prod_{z \in\mathbf{z}}
		    f_1(z)f_2(\bar{z})}
		    {\prod_{y \in\mathbf{y}}
		  f_1(y) f_2(\bar y)} \\
		  &\cdot
		  \prod_{\substack{
		    z \in\mathbf{z}, 
		    y \in\mathbf{y} 
		  }}
		  |1-z \bar y|^2
		  \cdot
		  \prod_{y,w, \in \mathbf{y}}
		  \frac{1}{1-y\bar w}
		  \cdot
		  \prod_{z \in\mathbf{z}}
		  \frac{q_1(\bar{z}^{-1})}{q_2(\bar z)}
		  \frac{f_1(\bar{z}^{-1})}{f_2(\bar z)}.
		\end{align*}
		Meanwhile we can expand $\Exp e^{\Bias{\GF}}$ using the covariance formula~\eqref{eq:gfcov} 
		\[
		  \Exp e^{\Bias{\GF}}
		  =\frac{
		    \prod_{\substack{
		      z \in\mathbf{z},
		      y \in\mathbf{y} 
		    }}
		    |1-z \bar y|^2
		  }
		  {
		    \prod_{y,w \in \mathbf{y}}
		    (1-y\bar w)
		    \prod_{z,w \in \mathbf{z}}
		    (1-z\bar w)
		  }.
		\]
		Hence, we have that 
		\begin{equation}
		  I_2 =
		  \exp\left( 
		  \frac{1}{2\pi i} \int_{\gamma} \log\left(\frac{1}{f_2(z)}\right) \frac{d}{dz} \log( f_1(z^{-1}) )\,dz
		  \right) 
		  \cdot
		  \frac{\prod_{z \in\mathbf{z}}
		  f_1(z)f_2(\bar{z})}
		  {\prod_{y \in\mathbf{y}}
		f_1(y) f_2(\bar y)} 
		\Exp e^{\Bias{\GF}}
		\tilde I_2,
		\label{eq:I2}
	      \end{equation}
	      where we define
	      \begin{equation*}
		\tilde I_2
		=
		\prod_{z,w \in \mathbf{z}}
		(1-z\bar w)
		\cdot
		\prod_{z \in\mathbf{z}}
		\frac{q_1(\bar{z}^{-1})}{q_2(\bar z)}
		\frac{f_1(\bar{z}^{-1})}{f_2(\bar z)}.
	      \end{equation*}

	      Hence, it remains to show that for an appropriate $c_{\mathbf{z}}(S_1,S_2)$ we have
	      \begin{equation}
		\corO{I_1\tilde I_2 I_3}
		=
		\sum_{\substack{S_1,S_2 \subset \mathbf{z}\\ |S_1|=|S_2|}}
		\corO{(-1)^{|S_1|}}
		\left[
		  \prod_{S_1} \frac{ z^N f_1(z^{-1})}{f_2(z)}
		\right]
		\left[ 
		  \prod_{S_2} \frac{ \bar z ^N f_2(\bar z^{-1})}{f_1(\bar z)}
		\right]
		\corO{c^{\mathbf{y}}(S_1,S_2)}
		c_{\mathbf{z}}(S_1,S_2).
		\label{eq:ex1}
	      \end{equation}
	      This amounts to expanding the definition of $I_3.$  Recall the sum in $I_3$ was over some subset $S$ of indices $[2k]$ corresponding to an arbitrary subset of $\mathbf{z}^{-1} \cup \bar{\mathbf{z}}$ of size $k.$  Hence, we may as well consider $S$ a subset of $\mathbf{z}^{-1} \cup \bar{\mathbf{z}}.$  Then, we define
	      \begin{align*}
		S_1 &= (S \cap \mathbf{z}^{-1})^{-1} \\
		S_2 &= \mathbf{z} \setminus \overline{(S \cap \bar{\mathbf{z}})}.
	      \end{align*}
	      \corO{Recalling that
	      $S_1^c$ and $S_2^c$ are} the complements of these sets within $\mathbf{z},$ we have
	      \begin{align*}
		I_3 &=
		\sum_{\substack{S_1,S_2 \subset \mathbf{z}\\ |S_1|=|S_2|}}
		\frac{
		  \prod_{z \in S_1}
		  \left[ \frac{z^{N+k}f_2(z^{-1})q_2(z^{-1})}
		  {a f_1(z)q_1(z)}\right]
		  \prod_{z \in S_2^c}
		  \left[ \frac{f_2(\bar{z})q_2(\bar{z})}
		  {a \bar z^{N+k}f_1(\bar z^{-1})q_1(\bar z^{-1})}\right]}
		  { 
		    \prod_{\substack{z \in S_1 \\ w \in S_1^c}} (w^{-1} - z^{-1})
		    \prod_{\substack{z \in S_1 \\ w \in S_2}} (\bar w - z^{-1})
		    \prod_{\substack{z \in S_2^c \\ w \in S_1^c}} (w^{-1} - \bar z)
		    \prod_{\substack{z \in S_2^c \\ w \in S_2}} (\bar w - \bar z)
		  }.
		\end{align*}
		Combining the $a = \prod_{z \in \mathbf{z}}(-z)$ terms with the denominator and distributing appropriately, we can remove the inverses, leaving
		\begin{align*}
		  I_3 &=
		  \sum_{\substack{S_1,S_2 \subset \mathbf{z}\\ |S_1|=|S_2|}}
		  \frac{
		    (-1)^{|S_1^c|}
		    \prod_{z \in S_1}
		    \left[ \frac{z^{N+k}f_2(z^{-1})q_2(z^{-1})}
		    {f_1(z)q_1(z)}\right]
		    \prod_{z \in S_2^c}
		    \left[ \frac{f_2(\bar{z})q_2(\bar{z})}
		    {\bar z^{N+k}f_1(\bar z^{-1})q_1(\bar z^{-1})}\right]}
		    { 
		      \prod_{\substack{z \in S_1 \\ w \in S_1^c}} (w - z)
		      \prod_{\substack{z \in S_1 \\ w \in S_2}} (1- z \bar w )
		      \prod_{\substack{z \in S_2^c \\ w \in S_1^c}} (1 - w \bar z)
		      \prod_{\substack{z \in S_2^c \\ w \in S_2}} (\bar w - \bar z)
		    }.
		  \end{align*}
		  Including the $\tilde I_2$ and $I_1$ terms, we have
		  \begin{align*}
		    I_1 \tilde I_2 I_3=
		    \sum_{\substack{S_1,S_2 \subset \mathbf{z}\\ |S_1|=|S_2|}}
		    (-1)^{|S_1|}
		    &\prod_{z \in S_1}
		    \left[ \frac{z^{N+k}f_2(z^{-1})q_2(z^{-1})}
		    {f_1(z)q_1(z)}\right] \\
		    &\cdot \prod_{z \in S_2}
		    \left[ \frac{\bar z^{N+k}f_1(\bar z^{-1})q_1(\bar z^{-1})}
		    {f_2(\bar{z})q_2(\bar{z})}\right]
		    c_{\mathbf{z}}(S_1,S_2),
		  \end{align*}
		  where we define $c_{\mathbf{z}}(S_1,S_2)$ to be 
		  \begin{equation}
		    c_{\mathbf{z}}(S_1,S_2)
		    =
		    \frac{ 
		      \prod_{z,w \in \mathbf{z}}
		      (1-z\bar w)
		    }
		    { 
		      \prod_{\substack{z \in S_1 \\ w \in S_1^c}} (w - z)
		      \prod_{\substack{z \in S_1 \\ w \in S_2}} (1- z \bar w )
		      \prod_{\substack{z \in S_2^c \\ w \in S_1^c}} (1 - w \bar z)
		      \prod_{\substack{z \in S_2^c \\ w \in S_2}} (\bar w - \bar z)
		    }
		    \label{eq:exc1}
		  \end{equation}
		  In light of \eqref{eq:ex1}, it just remains to observe that
		  \begin{align*}
		    c^{\mathbf{y}}(S_1,S_2)
		    &=
		    \prod_{\substack{z \in S_1 \\ y \in \mathbf{y}}}
		    \frac{z-y}{1-\bar y z}
		    \cdot
		    \prod_{\substack{z \in S_2 \\ y \in \mathbf{y}}}
		    \frac{\bar z- \bar y}{1-y \bar z} 
		    =
		    \prod_{z \in S_1}
		    \left[ \frac{z^{k}q_2(z^{-1})}
		    {q_1(z)}\right]
		    \prod_{z \in S_2}
		    \left[ \frac{\bar z^{k}q_1(\bar z^{-1})}
		    {q_2(\bar{z})}\right].
		  \end{align*}

		  We now give an alternate representation of $c_{\mathbf{z}}(S_1,S_2)$ using which the bound in the statement of the 
		  \corO{proposition} will follow.  We extract the M\"obius transformation terms first, and write
		  \begin{equation}
		    \tilde{c}(S_1,S_2)
		    =
		    \frac{ 
		      \prod_{z,w \in \mathbf{z}}
		      (1-z\bar w)
		    }
		    { 
		      \prod_{\substack{z \in S_1^c \\ w \in S_1}} (1-z\bar w)
		      \prod_{\substack{z \in S_1 \\ w \in S_2}} (1- z \bar w )
		      \prod_{\substack{z \in S_1^c \\ w \in S_2^c}} (1 - z \bar w)
		      \prod_{\substack{z \in S_2^c \\ w \in S_2}} (1 - z\bar w)
		    }
		    \label{eq:exc2},
		  \end{equation}
		  so that
		  \[
		    {c_{\mathbf{z}}}(S_1,S_2)
		    =
		    \tilde{c}(S_1,S_2)
		    \frac{1}{
		      \prod_{\substack{z \in S_1 \\ w \in S_1^c}} T_w(z)
		      \prod_{\substack{z \in S_2^c \\ w \in S_2}} T_{\bar w}(\bar z)
		    }.
		  \]

		  Cancelling and combining like terms gives
		  \begin{equation*}
		    \tilde{c}(S_1,S_2)
		    =
		    \frac{
		      \prod_{\substack{z \in S_2^c \\ w \in S_1 \cap S_2}} (1-\bar z w)
		    }{
		      \prod_{\substack{z \in S_2^c \\ w \in S_1 \cap S_2}} (1-z\bar w)
		    }
		    \frac{
		      \prod_{\substack{z \in S_1\setminus S_2 \\ w \in S_1 \setminus S_2}} (1-z\bar w)
		      \prod_{\substack{z \in S_2\setminus S_1 \\ w \in S_2 \setminus S_1}} (1-z\bar w)
		    }{
		      \prod_{\substack{z \in S_1\setminus S_2 \\ w \in S_2 \setminus S_1}} |1-z\bar w|^2
		    }.
		  \end{equation*}
		  Hence we conclude that
		  \begin{equation*}
		    |\tilde{c}(S_1,S_2)|
		    =\frac{1}{\Exp 
		      \exp\left( 
		      2 \sum_{S_1 \cap S_2^c} \GF[z]
		      -2 \sum_{S_2 \cap S_1^c} \GF[z]
		      \right)
		    },
		  \end{equation*}
		  which completes the proof.
		\end{proof}

		\section{Smoothed total variation comparison to normal} 
		\label{sec:tv}

        In this section, we prove Proposition~\ref{prop:clt}, using the Toeplitz determinant identities developed in Section~\ref{sec:Baxter}.  We will use these identities to estimate the Fourier transforms of finite dimensional marginals $\CUEF$ under exponential biases.  
	\corO{To fit with 
	Proposition~\ref{prop:clt},
      } we would like to take $f_2(\omega)$ \corO{in Proposition 
	\ref{prop:exact}}
	to be something like $e^{i \sum_z \xi_z\log(1-\omega z)}$ for some collection of $z \in \D.$  As we require $f_2$ to be a polynomial, our first step is to give a quantitative estimate on how well this can be approximated by a polynomial of degree $N/2.$

        Here we will use in a nontrivial way the requirement in Proposition~\ref{prop:clt} on the locations of the points $z.$  As there, let $R \in \mathbb{N}.$ 
        For any $\left\{ \omega_j \right\}_1^R \subset \T$ and any $\left\{ \xi_{h,j} \right\}_{\substack{ h=1,\dots,d \\\ j= 1,\dots,R}}$ real numbers, define $L : \D \to \C$ by
        \[
            L(z) = \sum_{\substack{ h=1,\dots,d \\\ j= 1,\dots,R}}
            \xi_{h,j} \frac{\log( 1 - \zeta_h \omega_j z)}{2}.
        \]

        Let $\Bias$ be an exponential bias as in the statement of Proposition~\ref{prop:clt}.
	  We will start by estimating the Fourier transform
        \[
            \varphi(\left\{ \xi_{h,j} \right\})
            \!:= \Exp\!\left[\! 
                e^{i \sum_{h,j} \xi_{h,j} \CUEF(
		\zeta_h
		\omega_j) + \Bias{\CUEF}}
           \! \right]
           \! =\!D_{N} 
           \! \left(\! 
            \frac{
                \prod_{z \in \mathbf{z}} |1-ze^{i\theta}|^2
            }
            {
                e^{-i \overline{L(e^{i\theta})}
                -i L(e^{i\theta})}
                \prod_{y \in \mathbf{y}} |1-ye^{i\theta}|^2
            }
            \!\right)\!.
        \]
	  We will then apply Fourier inversion to produce an estimate on the difference between the  density of $\left\{ (\WN+\CUEF)(\zeta_h\omega_j) \right\}_{\substack{ h=1,\dots,d \\\ j= 1,\dots,R}}$ and $\left\{ (\WN+\GF)(\zeta_h\omega_j) \right\}_{\substack{ h=1,\dots,d \\\ j= 1,\dots,R}}.$

        We now define $f_2$ as the degree $A = [N/2]-1$ truncation of the Taylor series expansion at $0$ of $e^{-iL(z)}.$  The remainder $e^{-iL(z)} - f_2(z)$ can be represented by the following contour integral formula
        \[
            g_2(x) :=
            {e^{-iL(x)}}
            -f_2(x)
            =
            \frac{1}{2\pi i}
            \int_{|z|=1}
            \frac{e^{-iL(z)}}
            {z-x} \left( \frac{x}{z} \right)^A {dz}.
        \]
        Also define $f_1(\bar z)$ as the degree $A$ truncation of the Taylor series of $e^{-i \overline{L(z)}}$ in $\bar z,$
	\corO{and set $g_1(x):=e^{i\overline{L(x)}}-f_1(\overline{x})$.}
        \begin{lemma}
            Fix $R \in \mathbb{N}.$ Uniformly in $d\in \mathbb{N}$ with $1 \leq d \leq \log N,$ in $\left\{ \omega_j \right\}_1^R \subset \T$ in $\left\{ \xi_{h,j} \right\}_{\substack{ h=1,\dots,d \\\ j= 1,\dots,R}} \subset \R$ and in $r \leq \zeta_d^{-1} - N^{-1},$
            \[
                \corO{\sup_{|x|=r}
                |g_j(x)|
                \ll
		\sinh(\corO{\frac{\pi}{2}}{\textstyle \sum_{h,j} |\xi_{h,j}|})
		|r \zeta_d|^{A}, \quad j=1,2.}
            \]
            \label{lem:approx}
        \end{lemma}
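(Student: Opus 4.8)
The plan is to read off $g_2$ from the power‑series tail of $e^{-iL}$, equivalently from the contour formula displayed above with the circle $|z|=1$ replaced (by Cauchy's theorem) by a circle $|z|=\rho$ with $1\le\rho<\zeta_d^{-1}$: this is legitimate because $L$ has its branch points at the points $(\zeta_h\omega_j)^{-1}$, all of modulus $\zeta_h^{-1}\ge\zeta_d^{-1}$, so $e^{-iL}$ is holomorphic on $\{|z|<\zeta_d^{-1}\}$, and I will want $\rho$ as close to $\zeta_d^{-1}$ as the hypothesis on $r$ permits. The one structural fact that makes everything work is that $e^{-iL}$ has bounded modulus on this punctured disk: since $\operatorname{Re}(-iL(z))=\tfrac12\sum_{h,j}\xi_{h,j}\arg(1-\zeta_h\omega_j z)$ and $1-\zeta_h\omega_j z$ has nonnegative real part whenever $\zeta_h|z|\le1$, one gets $|\arg(1-\zeta_h\omega_j z)|<\pi/2$ and hence $|e^{-iL(z)}|<e^{(\pi/4)\sum_{h,j}|\xi_{h,j}|}$ for all $|z|<\zeta_d^{-1}$.

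The second ingredient is the extraction of $\sinh$ (rather than merely exponential) dependence on $\sum_{h,j}|\xi_{h,j}|$ — in particular the bound must vanish when all $\xi_{h,j}=0$. For this I would subtract the constant term before estimating Fourier coefficients: for $k\ge1$ the number $c_k\rho^k$ is the $k$‑th Fourier coefficient of $e^{-iL(\rho\,\cdot)}-1$ on $\T$, so $|c_k|\le\rho^{-k}\|e^{-iL(\rho\,\cdot)}-1\|_{L^1(\T)}$. Using the elementary inequality $|e^{w}-1|\le|w|\,e^{(\operatorname{Re}w)_+}$ with $w=-iL(\rho e^{i\theta})$, together with the bound on $\operatorname{Re}(-iL)$ above, this is at most $e^{(\pi/4)\sum_{h,j}|\xi_{h,j}|}\|L(\rho\,\cdot)\|_{L^2(\T)}$. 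The crucial estimate is that $\|L(\rho\,\cdot)\|_{L^2(\T)}$ is controlled by $\sum_{h,j}|\xi_{h,j}|$ \emph{uniformly in} $\rho<\zeta_d^{-1}$: by Parseval and the expansion $\log(1-\beta e^{i\theta})=-\sum_{m\ge1}\beta^m e^{im\theta}/m$ one has $\|\log(1-\beta e^{i\cdot})\|_{L^2(\T)}^2=\sum_{m\ge1}|\beta|^{2m}/m^2=\operatorname{Li}_2(|\beta|^2)\le\pi^2/6$ for $|\beta|<1$, so by the triangle inequality in $L^2$, $\|L(\rho\,\cdot)\|_{L^2(\T)}\le\tfrac{\pi}{2\sqrt6}\sum_{h,j}|\xi_{h,j}|$. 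Since $s\,e^{\pi s/4}\ll\sinh s$ for $s\ge0$, this gives $|c_k|\ll\rho^{-k}\sinh(\sum_{h,j}|\xi_{h,j}|)$, and letting $\rho\uparrow\zeta_d^{-1}$ yields $|c_k|\ll\zeta_d^{\,k}\sinh(\sum_{h,j}|\xi_{h,j}|)$ for every $k\ge1$, with an absolute implied constant.

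Finally I would sum the tail: $|g_2(x)|\le\sum_{k>A}|c_k|r^k\ll\sinh(\sum_{h,j}|\xi_{h,j}|)\sum_{k>A}(r\zeta_d)^k$, and here the hypothesis $r\le\zeta_d^{-1}-N^{-1}$ (equivalently $r\zeta_d\le1-N^{-1}\zeta_d<1$) makes the geometric series converge and be comparable to its leading term $(r\zeta_d)^{A+1}\le(r\zeta_d)^A$, producing the claimed bound. I expect the delicate point to be exactly the interaction between the branch‑point singularity of $e^{-iL}$ at $\zeta_d^{-1}$ — which pins the useful contour radius, or the geometric ratio $r\zeta_d$, to within $O(N^{-1})$ of the critical value and naively costs a power of $N$ through the factor $(1-r\zeta_d)^{-1}$ — and the insistence on a clean $\sinh$ factor with an absolute constant. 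The resolution is the $\rho$‑uniform $L^2$ control of $L$ on the circle obtained above, which is what lets one push $\rho\to\zeta_d^{-1}$ at no cost in the coefficient bound; in the narrow regime where $r$ is within $O(N^{-1})$ of $\zeta_d^{-1}$ (and hence $(r\zeta_d)^A$ is itself of order one) one should instead keep the contour integral and estimate its near‑singular part by Cauchy–Schwarz (Parseval) on the circle rather than pointwise. The remaining steps — checking $s\,e^{\pi s/4}\ll\sinh s$, the contour‑deformation justification, and the uniformity in $d$, $\{\omega_j\}$, $\{\xi_{h,j}\}$ and $r$ — are routine.
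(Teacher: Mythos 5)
Your coefficient bound $|c_k|\ll\zeta_d^{\,k}\sinh(\sum_{h,j}|\xi_{h,j}|)$ is correct and cleanly derived: the pointwise bound $|e^{-iL}|\le e^{(\pi/4)\sum|\xi_{h,j}|}$ on $|z|<\zeta_d^{-1}$, the $\rho$-uniform $L^2$ control of $L$ via $\operatorname{Li}_2$, and $s\,e^{\pi s/4}\ll\sinh s$ all check out. Away from the endpoint of the allowed range of $r$ (say whenever $(1-r\zeta_d)^{-1}\ll 1$), summing the tail then proves the lemma by a route genuinely different from the paper's. But the hypothesis permits $\zeta_d^{-1}-r\asymp N^{-1}$, and there your argument does not close: $\sum_{k>A}(r\zeta_d)^k=(r\zeta_d)^{A+1}/(1-r\zeta_d)$ with $1-r\zeta_d\asymp N^{-1}$ while $(r\zeta_d)^A\asymp 1$, so you lose a factor $\asymp N$ against the claimed bound; and the Cauchy--Schwarz/Parseval patch you propose only improves this to $N^{1/2}$, since $\bigl(\sum_{k>A}(r/\rho)^{2k}\bigr)^{1/2}\asymp(1-r/\rho)^{-1/2}\gg N^{1/2}$ when $\rho$ must stay below $\zeta_d^{-1}$. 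The root cause is that the Cauchy estimate $|c_k|\le\rho^{-k}\|e^{-iL(\rho\cdot)}-1\|_{L^1}$ discards a full factor of $1/k$: the singularities of $e^{-iL}$ on $|z|=\zeta_d^{-1}$ are branch points across which the jump is bounded (by $\ll\sinh(\sum|\xi_{h,j}|)$), not poles, so the true decay is $|c_k|\ll\sinh(\sum|\xi_{h,j}|)\,\zeta_d^{\,k}/k$, and that extra $1/k\asymp 1/N$ for $k>A\asymp N$ is exactly what cancels $(1-r\zeta_d)^{-1}\le N/\zeta_d$.

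The paper captures this automatically by a different deformation: it fixes a branch of $L$ off $R$ rays running from the points $\zeta_d^{-1}\bar\omega_j$ to infinity, pushes the contour in the integral formula for $g_2$ out to infinity, and is left with integrals along the rays of the jump of $e^{-iL}$ across each cut. On $t\in(\zeta_h^{-1},\zeta_{h-1}^{-1})$ that jump has modulus $\ll|\sinh(\tfrac12\sum_{k\ge h}\xi_{k,j})|$, and the resulting integral $\int_{\zeta_d^{-1}}^\infty\frac{r^A}{(t-r)\,t^A}\,dt$ is bounded by $N\,r^A\,\zeta_d^{\,A-1}/(A-1)\ll(r\zeta_d)^A$ because $A\asymp N$ and $\zeta_d\gg1$: the loss from $1/(t-r)\le N$ is absorbed by the $1/A$ coming from integrating $t^{-A}$ along the ray. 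If you prefer to keep your Taylor-coefficient framework, derive the improved bound $|c_k|\ll\sinh(\sum|\xi_{h,j}|)\,\zeta_d^{\,k}/k$ from this same branch-cut representation of $c_k$; with that in hand your tail summation goes through uniformly in $r\le\zeta_d^{-1}-N^{-1}$.
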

        \begin{proof}
	\corO{We only consider the case $j=2$, the argument for $j=1$ is similar.}
	  The choice of branch was not important to the definition of $e^{iL(z)}$ on the unit disc.  We will estimate $g_2$ by deforming the contour of integration outside of the unit disc, however, and so we set the definition here.  We take all the logarithms $\log(1-y)$ to be real for $y\in (0,1),$ so that there is a discontinuity along the ray $\corO{y\in}[1,\infty).$  Hence the imaginary part of the logarithm approaching the strip from above \
	    \corO{$[1,\infty)$} is $-\pi$ and is $\pi$ coming from below.
            
                This allows $e^{iL(z)}$ to be defined on the complex plane minus $R$ rays.  These rays start at $\{\zeta_d^{-1}\omega_j^{-1}\}_{j=1}^R$ and emanate out from the origin.  Since the imaginary parts of the logarithms are bounded, we also have that $e^{iL(z)}$ remains bounded on this domain.  Further, once $A > 0,$ we have that
            \[
                \frac{e^{-iL(z)}}
                {z-x} \left( \frac{x}{z} \right)^A
                =O(|z|^{-2}),
            \]
            with $x$ held fixed.  Therefore, we can deform the contour to run twice along every ray, once towards the origin and once away, giving us the representation
            \[
                g_2(x)
                =\frac{1}{2\pi i}
                \sum_{j=1}^R
                \int_{\zeta_d^{-1}}^\infty
                \frac{
                    e^{-iL_{+}(\bar{\omega}_j t)}
                    -e^{-iL_{-}(\bar{\omega}_j t)}
                }
                {\bar{\omega}_jt-x} \left( \frac{x}{\bar{\omega}_j t} \right)^A \bar{\omega}_j dt,
            \]
            where we define 
            $L_+(z) = \lim_{\theta \downarrow 0} L(ze^{i\theta})$ and $L_-(z) = \lim_{\theta \uparrow 0} L(ze^{i\theta}).$

            This integral we further decompose according to the value of $t.$  For $t \in (\zeta_{h}^{-1}, \zeta_{h-1}^{-1}),$ we have that 
            \[
                e^{-iL_{+}(\bar{\omega}_j t)}
                -e^{-iL_{-}(\bar{\omega}_j t)}
                =
                e^{i\beta_{h,j}(t)}
                \left( 
		\corO{   e^{-\frac{\pi}{2}\sum_{k=h}^d \xi_{k,j}}
		-e^{\frac{\pi}{2}\sum_{k=h}^d \xi_{k,j}}
	      }
                \right),
            \]
            for some real function $\beta_{h,j}(t).$  Hence, estimating $g_2(x)$ for $|x|=r,$ we get that
            \begin{align*}
                |g_2(x)|
                &\leq
                \frac{1}{\pi}
                \sum_{j=1}^R
                \sum_{h=1}^d
		|\sinh(\corO{\frac{\pi}{2}}{\textstyle \sum_{k=h}^d \xi_{k,j})}|
                \int_{\zeta_h^{-1}}^{\zeta_{h-1}^{-1}}
                \frac{r^A}{t-r}
                \frac{dt}{t^A} \\
                &\ll
		\sinh(\corO{\frac{\pi}{2}}
		{\textstyle \sum_{h,j} |\xi_{h,j}|})
                \int_{\zeta_d^{-1}}^\infty 
                \frac{r^A}{t-r}
                \frac{dt}{t^A}. \\
                \intertext{Since $d \leq \log N,$ $|\zeta_{d}|^{-1}-r \gg N^{-1}.$ Hence, bounding the $t-r$ in the integrand in supremum, we get that}
                |g_2(x)|
                &\ll
                \sinh(\corO{\frac{\pi}{2}}
		{\textstyle \sum_{h,j} |\xi_{h,j}|})
                |r \zeta_d|^{A}.
            \end{align*}
            \end{proof}

	    We will be interested in  $\sum_{h,j}|\xi_{h,j}|^2 \ll \log N \log\log N.$  Hence the $\ell^1$ norm of $\left\{ \xi_{h,j} \right\}$ is $\sum_{h,j}|\xi_{h,j}| \ll \sqrt{R}\log N \sqrt{\log\log N}.$
	    We would like that  $|\zeta_d|^A$ is sufficiently small to absorb this.  This occurs as soon as $d \leq \log N - m\log\log N$ for some fixed $m > 1,$ as we have that
            \[
                A \log |\zeta_d| \asymp -e^{\log N - d}.
            \]
            This estimate allows us to relatively easily approximate $\varphi(\left\{ \xi_{h,j} \right\})$ by approximating the symbol in the Toeplitz determinant.
            \begin{lemma}
                Fix $m >1$ a real number and fix $R \in \mathbb{N}.$
                Uniformly in $\left\{ \xi_{h,j} \right\} \subset \R$ having $\sum_{h,j} |\xi_{h,j}|^2 \ll (\log N)^m$ and uniformly in $d \leq \log N - m\log\log N,$
                \[
                    \varphi(\left\{ \xi_{h,j} \right\})
                    =
                    D_{N}\left( 
                    \frac{
                        \prod_{z \in \mathbf{z}} |1-ze^{i\theta}|^2
                    }
                    {
                        f_1(e^{-i\theta})f_2(e^{i\theta})
                        \prod_{y \in \mathbf{y}} |1-ye^{i\theta}|^2
                    }
                    \right)
                    +e^{-\Omega((\log N)^m)}\cdot\Exp\left[ e^{\Bias{\CUEF}} \right].
                \]
                \label{lem:SymbolApprox}
            \end{lemma}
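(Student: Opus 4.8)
The plan is to realize both Toeplitz determinants via the Heine--Szeg\H{o} identity $D_N(f)=\Exp\bigl[\prod_{h=1}^N f(e^{i\theta_h})\bigr]$ and compare the two integrands \emph{pointwise and uniformly in the eigenangles}, the per-angle error being governed by Lemma~\ref{lem:approx}. Write $h_0(e^{i\theta})$ for the symbol appearing in $\varphi(\{\xi_{h,j}\})$ (so $\varphi=D_N(h_0)$, as recorded before the lemma), and $h_1(e^{i\theta})$ for the approximating symbol on the right-hand side, with $f_1(e^{-i\theta})f_2(e^{i\theta})$ in place of $e^{-i\overline{L(e^{i\theta})}-iL(e^{i\theta})}$. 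Their ratio is the function on $\T$
\[
  \psi(e^{i\theta}) := \frac{h_0(e^{i\theta})}{h_1(e^{i\theta})} = \bigl(f_1(e^{-i\theta})e^{i\overline{L(e^{i\theta})}}\bigr)\bigl(f_2(e^{i\theta})e^{iL(e^{i\theta})}\bigr),
\]
which, writing $g_2(x)=e^{-iL(x)}-f_2(x)$ as in the excerpt and $g_1(\bar x):=e^{-i\overline{L(x)}}-f_1(\bar x)$ for the analogous remainder, factors as $\psi(e^{i\theta})=\bigl(1-g_1(e^{-i\theta})e^{i\overline{L(e^{i\theta})}}\bigr)\bigl(1-g_2(e^{i\theta})e^{iL(e^{i\theta})}\bigr)$.

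Next I would bound $\|\psi-1\|_{L^\infty(\T)}$. Since $\sum_{h,j}|\xi_{h,j}|^2\ll(\log N)^m$ and there are at most $dR\le R\log N$ indices, Cauchy--Schwarz gives $\sum_{h,j}|\xi_{h,j}|\ll_R(\log N)^{(m+1)/2}$; hence $\sinh\!\bigl(\sum_{h,j}|\xi_{h,j}|\bigr)$ and $|e^{\pm iL(e^{i\theta})}|=e^{\mp\Im L(e^{i\theta})}$ (note $|\Im L(e^{i\theta})|\le\tfrac{\pi}{4}\sum_{h,j}|\xi_{h,j}|$, as each $1-\zeta_h\omega_j e^{i\theta}$ lies in the right half-plane) are all $e^{O((\log N)^{(m+1)/2})}$. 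Applying Lemma~\ref{lem:approx} with $r=1$ — legitimate because $d\le\log N-m\log\log N$ forces $1\le\zeta_d^{-1}-N^{-1}$ for $N$ large — and its obvious conjugate for $g_1$, we get $\|g_i\|_{L^\infty(\T)}\ll\sinh\!\bigl(\sum|\xi_{h,j}|\bigr)\,|\zeta_d|^A$, and by the estimate $A\log|\zeta_d|\asymp-e^{\log N-d}\le-(\log N)^m$ recorded just above the lemma, $\|g_i\|_{L^\infty(\T)}\le e^{O((\log N)^{(m+1)/2})}e^{-\Omega((\log N)^m)}$. Since $m>1$ we have $m>\tfrac{m+1}{2}$, so $\|\psi-1\|_{L^\infty(\T)}\le e^{-\Omega((\log N)^m)}=:\delta_N$. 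The same computation run on a disk of radius slightly above $1$ (still below $\zeta_d^{-1}-N^{-1}$) shows $|g_i|$ is much smaller there than $\inf|e^{-iL}|$, hence $f_1,f_2$ are zero-free on $\{|z|\le(1-N^{-1})^{-1}\}$; this makes $h_1$ a genuine $L^1$ symbol and, incidentally, verifies the hypotheses under which $D_N(h_1)$ will later be fed to Proposition~\ref{prop:exact}.

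Finally, because $L+\overline{L}=2\Re L$ is real on $\T$, we have $|f_1(e^{-i\theta})f_2(e^{i\theta})|=|\psi(e^{i\theta})|\in[1-\delta_N,1+\delta_N]$ pointwise. Consequently, for any eigenangles $\{\theta_h\}$, $\prod_h\psi(e^{i\theta_h})=1+O(N\delta_N)=1+O(e^{-\Omega((\log N)^m)})$ (using $N\delta_N\le e^{\log N-\Omega((\log N)^m)}=e^{-\Omega((\log N)^m)}$ since $m>1$), and likewise $\bigl|\prod_h h_1(e^{i\theta_h})\bigr|=\prod_h|h_1(e^{i\theta_h})|=(1+O(e^{-\Omega((\log N)^m)}))\,e^{\Bias{\CUEF}}$, because $\prod_h\tfrac{\prod_{z\in\mathbf z}|1-ze^{i\theta_h}|^2}{\prod_{y\in\mathbf y}|1-ye^{i\theta_h}|^2}=e^{\Bias{\CUEF}}>0$. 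Then, from $\prod_h h_0(e^{i\theta_h})=\prod_h h_1(e^{i\theta_h})\cdot\prod_h\psi(e^{i\theta_h})$,
\[
  \varphi(\{\xi_{h,j}\}) - D_N(h_1) = \Exp\Bigl[\prod_{h}h_1(e^{i\theta_h})\Bigl(\prod_h\psi(e^{i\theta_h}) - 1\Bigr)\Bigr],
\]
and taking absolute values yields $|\varphi(\{\xi_{h,j}\})-D_N(h_1)|\le e^{-\Omega((\log N)^m)}\,\Exp\bigl[\,\prod_h|h_1(e^{i\theta_h})|\,\bigr]=e^{-\Omega((\log N)^m)}\,\Exp[e^{\Bias{\CUEF}}]$, which is the assertion.

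The main obstacle is the interplay of the two competing polylogarithmic scales in the second step: approximating $e^{-iL}$ by its degree-$A$ truncation on $\T$ costs a factor $\sinh\!\bigl(\sum|\xi_{h,j}|\bigr)=e^{\Theta((\log N)^{(m+1)/2})}$, and accumulating the per-angle errors over all $N$ eigenvalues costs a further factor $N=e^{\log N}$; both must be swallowed by the gain $|\zeta_d|^A=e^{-\Theta((\log N)^m)}$, which is precisely why one needs $m>1$ (so $m$ strictly exceeds both $\tfrac{m+1}{2}$ and $1$) together with the cutoff $d\le\log N-m\log\log N$. Everything else is bookkeeping: fixing the branch of $\log$ defining $L$, $\overline L$, $f_1$, $f_2$ consistently on the relevant slit domain, and tracking that the zero-free and degree conditions needed downstream survive the truncation.
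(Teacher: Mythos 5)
Your proof is correct and follows essentially the same route as the paper's: both arguments reduce to the uniform pointwise bound of Lemma~\ref{lem:approx} on the difference between $e^{-iL}$ (resp.\ its conjugate) and the truncations $f_2$ (resp.\ $f_1$) on $\T$, and both absorb the factor of $N$ coming from the $N$ eigenvalue factors into the superpolynomially small error $e^{-\Omega((\log N)^m)}$, using $m>1$ and the cutoff on $d$. Your packaging via the ratio $\psi=h_0/h_1$ rather than a term-by-term telescoping replacement is only a cosmetic difference, and the extra checks (the hypothesis $r=1\le\zeta_d^{-1}-N^{-1}$, the zero-freeness of $f_1,f_2$) are correct and consistent with what the paper verifies separately after the lemma.
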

            \begin{proof}
                We will from here on suppress the argument of $\varphi$ and write
                \[
                    \varphi
                    =
                    \Exp\left[ 
                        e^{\Bias{\CUEF}}\prod_{h=1}^N e^{i2\Re L(e^{i\theta_h})}
                    \right].
                \]
                We simply the replace the terms one-by-one in this product.  Each $e^{i2\Re L(e^{i\theta_h})}$ has modulus $1,$ and by Lemma~\ref{lem:approx}, each difference
                \[
                    e^{i2\Re L(e^{i\theta_h})}
                    -
                    \frac{1}{
                            f_1(e^{-i\theta_h})
                            f_2(e^{i\theta_h})
                        }
                    =O(e^{-\Omega((\log N)^m)})
                \]
                in supremum over all possible $\theta_h.$
                This decays so quickly it can absorb the extra polynomial factors in $N$ that arise as a result of replacing all $N$ terms in the product.
%
            \end{proof}

            We would like to apply Proposition~\ref{prop:exact} to estimate $\varphi,$ but we must first check that $f_1(0)=f_2(0)=1$ and that $f_1$ and $f_2$ are zero-free in $\bar{\D}.$  The fact that they are $1$ at the origin arises from their being truncations of $e^{-iL(z)},$ which is $1$ at the origin.  To check that they are $0$-free, we have that $|\Im(L)| \ll \sum_{h,j} |\xi_{h,j}|$ uniformly in $\D.$  Hence we can estimate for $z \in \bar{\D},$ 
            \[
                |f_2(z)| \geq | e^{-iL(z)}| - |g_2(z)|
                \geq
                e^{-O( \sum_{h,j} |\xi_{h,j}|)}
                -\sup_{\omega \in \T} |g_2(\omega)|,
            \]
            by the maximum principle. In particular, under the hypotheses of Lemma~\ref{lem:SymbolApprox}, we have 
	      $\sum_{h,j} |\xi_{h,j}| \ll R^{1/2} (\log N)^{ (1+m)/2}$
	    while $|g_2(\omega)| = e^{-\Omega( (\log N)^m)},$ so that $f_2$ is $0$-free in $\bar{\D}.$  Furthermore, we can also write, uniformly in $z\in \bar{\D},$
            \[
                f_2(z) = e^{-iL(z)}(1 + e^{-\Omega( (\log N)^m)}).
            \]

            \begin{lemma}
                Fix $m >1$ a real number and fix $R \in \mathbb{N}.$
                Uniformly in $\left\{ \xi_{h,j} \right\} \subset \R$ having $\sum_{h,j} |\xi_{h,j}|^2 \ll (\log N)^{m},$ uniformly in $d \leq \log N - m\log\log N$ and uniformly in $\mathbf{z} \subset \D$ of a given finite cardinality having pairwise hyperbolic separation 
		  $N^{-1},$
                \begin{align*}
                    \varphi
                    &=\Exp\left[ 
		      e^{i \sum_{h,j} \xi_{h,j} \GF(\corO{\zeta_h}\omega_j) + \Bias{\GF}}
                    \right]
                    \\
                    &\cdot
                    \sum_{\substack{S_1,S_2 \subset \mathbf{z}\\ |S_1|=|S_2|}}
                    (-1)^{|S_1|}
                    \!\cdot\!
                    \left[
                        \prod_{S_1} \frac{ z^N e^{-i\overline{L(\bar{z}^{-1})}}}{e^{-i{L(z)}}}
                    \right]
                    \!\cdot\!
                    \left[ 
                        \prod_{S_2} 
                        \frac{ \bar z^N e^{-iL(\bar z^{-1})}}
                        {e^{-i\overline{L({z})}}}
                    \right]
                    \!\cdot\!
                    c^{\mathbf{y}}(S_1,S_2)
                    \!\cdot\!
                    c_{\mathbf{z}}(S_1,S_2),
                    \\
                    &+e^{-\Omega((\log N)^m)}\cdot\Exp\left[ e^{\Bias{\CUEF}} \right].
                \end{align*}
                \label{lem:cf}
            \end{lemma}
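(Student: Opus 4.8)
The plan is to feed the symbol approximation of Lemma~\ref{lem:SymbolApprox} into the exact Toeplitz identity of Proposition~\ref{prop:exact}, and then replace the truncations $f_1,f_2$ by the exponentials $e^{-i\overline{L}},e^{-iL}$ that they approximate. First I would check that $f_1,f_2$ meet the hypotheses of Proposition~\ref{prop:exact}: they have degree $A=[N/2]-1<N/2$, they equal $1$ at the origin (being Taylor truncations of $e^{-iL},e^{-i\overline{L}}$, which are $1$ there), and they are zero-free on $\bar{\D}$ — this is exactly the content of the paragraph preceding the lemma, via $|\Im L|\ll\sum_{h,j}|\xi_{h,j}|$ together with $\sup_{\T}|g_2|=e^{-\Omega((\log N)^m)}$ from Lemma~\ref{lem:approx}; the same estimate applied at $z^{-1}$, legitimate for $z\in\mathbf{z}$ lying close enough to $\T$ that $|z|^{-1}\le\zeta_d^{-1}-N^{-1}$, shows $z\mapsto f_1(z^{-1})$ does not vanish on those $z$, while for the remaining (deeper) $z\in\mathbf{z}$ a crude Cauchy bound suffices. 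With these in hand, Lemma~\ref{lem:SymbolApprox} and Proposition~\ref{prop:exact} give
\[
  \varphi=\Exp e^{\Bias{\GF}}\cdot
  \exp\!\Bigl(\tfrac1{2\pi i}\!\int_\gamma\log\tfrac1{f_2(z)}\tfrac{d}{dz}\log f_1(z^{-1})\,dz\Bigr)
  \cdot\frac{\prod_{z\in\mathbf{z}}f_1(z)f_2(\bar z)}{\prod_{y\in\mathbf{y}}f_1(y)f_2(\bar y)}
  \cdot\!\!\sum_{\substack{S_1,S_2\subset\mathbf{z}\\|S_1|=|S_2|}}\!\!(\cdots)
  +e^{-\Omega((\log N)^m)}\Exp e^{\Bias{\CUEF}},
\]
where $(\cdots)$ is the summand of Proposition~\ref{prop:exact}.

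Next I would carry out the substitutions. On $\bar{\D}$ one has $f_2(z)=e^{-iL(z)}(1+e^{-\Omega((\log N)^m)})$ and $f_1(\bar z)=e^{-i\overline{L(z)}}(1+e^{-\Omega((\log N)^m)})$, while for $z\in\mathbf{z}$ sufficiently close to $\T$, Lemma~\ref{lem:approx} at $z^{-1}$ and $\bar z^{-1}$ gives $f_1(z^{-1})=e^{-i\overline{L(\bar z^{-1})}}+e^{-\Omega((\log N)^m)}$ and $f_2(\bar z^{-1})=e^{-iL(\bar z^{-1})}+e^{-\Omega((\log N)^m)}$; for deeper $z\in\mathbf{z}$ the prefactor $|z|^{N}\le e^{-\Omega((\log N)^m)}$ in the summand already renders every term with that $z\in S_1\cup S_2$ negligible, so no approximation is needed there. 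Since $\mathbf{z},\mathbf{y}$ have bounded cardinality, all these replacements cost a bounded number of factors $1+e^{-\Omega((\log N)^m)}$ (here using $m>1$, so that $\sinh(\sum_{h,j}|\xi_{h,j}|)=e^{o((\log N)^m)}$ is dwarfed by the geometric decay $|z\zeta_d|^{A}$ exactly as in the paragraph before the lemma), and the expressions being perturbed are at most $N^{O(1)}\Exp e^{\Bias{\GF}}$ — using $|z|^{N}\le1$, $|c^{\mathbf{y}}|\le1$, and $|c_{\mathbf{z}}|\le\prod\coth(\dH(\cdot,\cdot)/2)\le N^{O(1)}$ from the pairwise separation — hence at most $N^{O(1)}\Exp e^{\Bias{\CUEF}}$ by Jensen ($\Exp e^{\Bias{\CUEF}}\ge e^{\Exp[\Bias{\CUEF}]}=1$) and $\Exp e^{\Bias{\GF}}=e^{\frac12\Var(\Bias{\GF})}\le N^{O(1)}$. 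As $m>1$, every $N^{O(1)}e^{-\Omega((\log N)^m)}$ collapses back to $e^{-\Omega((\log N)^m)}$.

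It then remains to identify the surviving leading factor. Writing out $f_1(z)f_2(\bar z)=e^{-2i\Re L(\bar z)}=e^{-i\sum_{h,j}\xi_{h,j}\log|1-z\overline{\zeta_h\omega_j}|}$ and comparing with the covariance $\Exp\GF(z)\GF(\zeta_h\omega_j)=-\tfrac12\log|1-z\overline{\zeta_h\omega_j}|$ shows that $\prod_{z\in\mathbf{z}}f_1(z)f_2(\bar z)/\prod_{y\in\mathbf{y}}f_1(y)f_2(\bar y)$ is exactly $\exp\bigl(i\sum_{h,j}\xi_{h,j}\mu(\zeta_h\omega_j)\bigr)$, where $\mu$ is the mean of $\GF$ under the $e^{\Bias{\GF}}$-tilt (Lemma~\ref{lem:means}); and evaluating the contour integral by residues at $0$ and the points $\overline{\zeta_h\omega_j}$ — the Gaussian analogue of the residue computation in the proof of Proposition~\ref{prop:Baxter} — yields $\tfrac14\sum_{h,j,h',j'}\xi_{h,j}\xi_{h',j'}\log(1-\zeta_h\omega_j\overline{\zeta_{h'}\omega_{j'}})$, whose symmetrization is real and, by \eqref{eq:Fourier}, equals $-\tfrac12\Var\bigl(\sum_{h,j}\xi_{h,j}\GF(\zeta_h\omega_j)\bigr)$. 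Combining these with $\Exp e^{\Bias{\GF}}$ reconstitutes $\Exp[e^{i\sum_{h,j}\xi_{h,j}\GF(\zeta_h\omega_j)+\Bias{\GF}}]$, while the same substitution turns each $z^{N}f_1(z^{-1})/f_2(z)$ (resp.\ $\bar z^{N}f_2(\bar z^{-1})/f_1(\bar z)$) in the summand into $z^{N}e^{-i\overline{L(\bar z^{-1})}}/e^{-iL(z)}$ (resp.\ $\bar z^{N}e^{-iL(\bar z^{-1})}/e^{-i\overline{L(z)}}$), which is the claimed expression. The main obstacle I expect is precisely this last identification: one must choose $\gamma$ to avoid the moving zeros of $f_1,f_2$ and the points $\bar z$, $z\in\mathbf{z}$, keep track of the branch of $\log(1-\cdot)$ fixed in the proof of Lemma~\ref{lem:approx}, and verify that only the residues at $0$ and $\overline{\zeta_h\omega_j}$ contribute; the remainder is error bookkeeping of the kind already routine in Sections~\ref{sec:Baxter} and~\ref{sec:tv}.
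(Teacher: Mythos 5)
Your proposal follows essentially the same route as the paper: feed Lemma~\ref{lem:SymbolApprox} into Proposition~\ref{prop:exact}, replace $f_1,f_2$ term by term using Lemma~\ref{lem:approx}, and identify the surviving prefactor with $\Exp[e^{i\sum\xi_{h,j}\GF(\zeta_h\omega_j)+\Bias{\GF}}]$ via the mean shift of Lemma~\ref{lem:means} and the Fourier identity \eqref{eq:Fourier}. The outline is correct, but two steps need more than you give them. First, for $z\in\mathbf{z}$ deep in the disk you assert that $|z|^N\le e^{-\Omega((\log N)^m)}$ alone makes the term negligible ``so no approximation is needed there''; this is not enough, because the companion factor $f_2(\bar z^{-1})$ is a degree-$A$ polynomial evaluated at a point of modulus $|z|^{-1}>1$ and can be as large as $|z|^{-A}$. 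You need the maximum-principle bound $|f_2(z^{-1})|\le |z|^{-A}\max_{\T}|f_2|$, which leaves $|z|^{N-A}\approx |z|^{N/2}$ — still $e^{-\Omega((\log N)^m)}$ in the relevant range, but the bound on $f_2(\bar z^{-1})$ cannot be skipped. Second, to evaluate the contour integral by residues at $0$ and $\overline{\zeta_h\omega_j}$ you must first replace $\frac{d}{dz}\log f_1(z^{-1})$ by $\frac{d}{dz}\bigl(-i\overline{L(\bar z^{-1})}\bigr)$, and a sup-norm bound on $g_1$ does not directly bound $g_1'$; the paper gets $|g_1'|\ll e^{-\Omega((\log N)^m)}$ from Cauchy's formula on a circle of radius $N^{-1}$, paying only a factor $N(1+N^{-1})^A$. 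You flag this identification as the expected obstacle, and both repairs are routine, but as written these two points are gaps rather than omissions of bookkeeping.
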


            \begin{proof}
                We start by applying Lemma~\ref{lem:SymbolApprox} and Proposition~\ref{prop:exact}.  Together, these give
                \begin{equation}
                    \label{eq:cf0}
                    \begin{aligned}
                    \varphi
                    &=\Exp e^{\Bias{\GF}}
                    \cdot \exp\left( 
                    \frac{1}{2\pi i} \int_{\gamma} \log\left(\frac{1}{f_2(z)}\right) \frac{d}{dz} \log( f_1(z^{-1}) )\,dz
                    \right)
                    \cdot \frac{
                        \prod_{z \in \mathbf{z}} f_1(z) f_2(\bar z)
                    }{
                        \prod_{y \in \mathbf{y}} f_1(y) f_2(\bar y)
                    } \\
                    &\cdot
                    \sum_{\substack{S_1,S_2 \subset \mathbf{z}\\ |S_1|=|S_2|}}
                    (-1)^{|S_1|}
                    \!\cdot\!
                    \left[
                        \prod_{S_1} \frac{ z^N f_1(z^{-1})}{f_2(z)}
                    \right]
                    \!\cdot\!
                    \left[ 
                        \prod_{S_2} \frac{ \bar z ^N f_2(\bar z^{-1})}{f_1(\bar z)}
                    \right]
                    \!\cdot\!
                    c^{\mathbf{y}}(S_1,S_2)
                    \!\cdot\!
                    c_{\mathbf{z}}(S_1,S_2), \\
                    &+e^{-\Omega( (\log N)^m)}\Exp e^{\Bias{\GF}}.
                \end{aligned}
                \end{equation}
                We remark that
                \begin{align*}                  
                    &\Exp\left[ 
                        e^{i \sum_{h,j} \xi_{h,j} \GF(\zeta_h\omega_j) + \Bias{\GF}}
                    \right] \\
                    &=\Exp e^{\Bias{\GF}} 
                    \cdot \exp\left( 
                    \frac{1}{2\pi i} \int_{\mathbb{T}} L(z) \frac{d}{dz} \bigl(\overline{L(\bar{z}^{-1})}\bigr)\,dz
                    \right)
                    \cdot \frac{
                        \prod_{z \in \mathbf{z}} e^{-i2\Re{L(\bar{z})}}
                    }{
                        \prod_{y \in \mathbf{y}}  e^{-i2\Re{L(\bar{y})}}
                    }.
                \end{align*}
		This can be seen on the one hand by noting that the mean $\mu_{h,j}$ of $\GF(\zeta_h\omega_j)$ under the bias $\Bias{\GF}$ is given by 
		\begin{equation}
		  \label{eq:muhj}
		  \mu_{h,j}
		  =
                    \sum_{z \in \mathbf{z}} 
                    -\log|1-\zeta_h \omega_j \bar z|
                    +\sum_{z \in \mathbf{z}} 
                    \log|1-\zeta_h \omega_j \bar y|,
		  \end{equation}
                and on the other hand by the identity
                \[
                    \frac{1}{2\pi i} \int_{\mathbb{T}} L(z) \frac{d}{dz} \bigl(\overline{L(\bar{z}^{-1})}\bigr)\,dz
                    =-\sum_{k =1}^\infty k\hat{L}(k) \overline{\hat L(k)},
                \]
                where $\hat L(k)$ are the Fourier coefficients of $L$ (c.f.\,\eqref{eq:Fourier}).

                It then remains to replace all the $f_1$ and $f_2$ in \eqref{eq:cf0} by $e^{-i\overline{L({\bar{\cdot}})}}$ and $e^{-iL}$ respectively.  Uniformly in $|z| \leq 1,$ using Lemma~\ref{lem:approx} and that 
		$|\Im L(z)| \ll R^{1/2}(\log N)^{(1+m)/2},$
	      we have that 
                \begin{equation}
                    \label{eq:cf1}
                    f_2(z) = e^{-iL(z)}( 1 +  e^{-\Omega( (\log N)^m)}).
                \end{equation}
                The same holds for terms $f_1(z),$ mutatis mutandis.

                However we also need to compare two other types of terms: we need to estimate the derivative $f_1'(z)$ for $|z|=1$ and $f_{1,2}(z^{-1})$ for $z \in \D.$  To estimate the derivative of $f_1,$ observe that we have \(f_1= e^{-i\overline{L(\bar{\cdot})}} + g_1\).  Hence on differentiating, we have that the error in approximation between $f_1'(z)$ and $\frac{d}{dz}e^{-i\overline{L(\bar{z})}}$ is $g_1'(z).$  This we can represent as
                \[
                    g_1'(z) = \frac{1}{2\pi i}
                    \int_\gamma \frac{g_1(x)}{(x-z)^2}\,dx
                \]
                where $\gamma$ encloses $z.$  Hence, letting $\gamma$ be a circle of radius $N^{-1}$ around $z$ and applying Lemma~\ref{lem:approx},
                we get that
                \[
                    |g_1'(z)|
                    \ll
                    N
                    \cdot\sinh({\textstyle \sum_{h,j} |\xi_{h,j}|}) \cdot
                    |\zeta_d|^{A} \cdot (1+N^{-1})^A 
                    \ll e^{-\Omega( (\log N)^m)}.
                \]
                This allows us to write
                \begin{equation}
                    \label{eq:cf2}
                    \frac{d}{dz} \log( f_1(z^{-1}) )
                    =
                    \frac{d}{dz}\bigl(-i\overline{L(\bar{z}^{-1})} \bigr)
                    +e^{-\Omega( (\log N)^m)}.
                \end{equation}

                For terms $f_2(z^{-1})$ with $z \in \D,$ we decompose into cases.  For $z$ with $\dH(0,z) \geq d + C,$ we have that
                \[
                    A \log |z^{-1} \zeta_d| \asymp 
                    e^{\log N - d - C}
                    -e^{\log N - d} \asymp - e^{\log N - d}.
                \]
                Hence, we still have that uniformly in $z$ with $\dH(0,z) \geq d + C,$
                \begin{equation}
                    \label{eq:cf3}
                    f_2(z^{-1}) = e^{-iL(z^{-1})}( 1 +  e^{-\Omega( (\log N)^m)}).
                \end{equation}

                For terms $f_2(z^{-1})$ with $\dH(0,z) \leq d + C,$ we will instead use that their contribution to $\varphi$ is negligible.  By applying the maximum principle to $f_2(z^{-1})z^A,$ we have that for $z \in \D.$
                \[
                    |f_2(z^{-1})| \leq |z|^{-A}\max_{\omega \in T} |f_2(\omega)|.
                \]
                As we have assumed that the pairwise separations between points in $\mathbf{z}$ is $N^{-1},$ we have that for any $S_1,S_2 \subset \mathbf{z},$
                \[
                    \log |c^{\mathbf{y}}(S_1,S_2)
                    \!\cdot\!
                    c_{\mathbf{z}}(S_1,S_2)|
                    \ll \log N.
                \]
                Hence if either $S_1 \subset \mathbf{z}$ or $S_2 \subset \mathbf{z}$ contain a point $z$ with $\dH(0,z) \leq d + C,$ then since $|z|^N |f_2(z^{-1})| \ll e^{-\Omega( (\log N)^m)},$
                \[
                    \biggl|
                        \prod_{S_1} \frac{ z^N f_1(z^{-1})}{f_2(z)}
                    \biggr|
                    \!\cdot\!
                    \biggl| 
                        \prod_{S_2} \frac{ \bar z ^N f_2(\bar z^{-1})}{f_1(\bar z)}
                    \biggr|
                    \!\cdot\!
                    |c^{\mathbf{y}}(S_1,S_2)
                    \!\cdot\!
                    c_{\mathbf{z}}(S_1,S_2)|
                    \ll
                    e^{-\Omega( (\log N)^m)}.
                \]
                The same statement holds for the expression after $f_{1,2}$ have been replaced, using the uniform boundedness of $e^{-iL}.$
                Using \eqref{eq:cf1}, \eqref{eq:cf2}, and \eqref{eq:cf3} to replace terms one-by-one, we arrive at the conclusion of the Lemma.
            \end{proof}

	    Lemma~\ref{lem:cf} reveals that $\varphi$ has the form of a combination of Gaussian characteristic functions, all having the same covariance.  The additional $e^{-iL}$ terms that appear in the sum can be compared to $1$ at this point, but the error incurred will be of order $(\log N)^{-K}.$  This is too expensive a loss to incur at this point.  If we approximate the characteristic function $\varphi$ with a $(\log N)^{-K}$-error, we would only have an $\operatorname{L}^{\infty}$ estimate on the difference of densities of order $(\log N)^{-K}.$  Since the space is $\Theta(\log N)$-dimensional, this is prohibitively large.
	    
	    Instead, we first do Fourier inversion, since this characteristic function can be inverted essentially exactly.  So, we let $\varrho_U$ be the 
	    density of the law of
	    \corO{$\left(\CUEF( \zeta_h \omega_j) \right.$ $\left.+ \WN(\zeta_h\omega_j)\right)_{h,j}$} under the (non-probability) measure biased by ${e^{\Bias{\CUEF}}}.$  That is to say, for any
          \[
              F \in \BSA(\left\{ \zeta_h\omega_j~:~ 1 \leq h \leq d, 1 \leq j \leq R \right\})
          \]
          with $\|F\|_\infty \leq 1,$ we have that
	  \[
              \Exp \left[F(\CUEF + \WN) e^{\Bias{\CUEF}}\right]
	      =
	      \int_{\R^{R\cdot d}} F(x)\varrho_U(x)\,dx.
	  \]
	  We also define $\varrho_G$ to be the density of the law of $\left(\GF( \zeta_h \omega_j) + \WN(\zeta_h\omega_j)\right)_{h,j}$ biased by ${e^{\Bias{\GF}}}.$ These densities exist on account of the extra smoothing provided by the $\WN(\zeta_h\omega_j)$ noise.

	  The density $\varrho_G$ is explicit.  Let $\Sigma$ denote the covariance matrix of $\left(\GF( \zeta_h \omega_j))\right)_{h,j},$ with $1 \leq h \leq d$ and $1 \leq j \leq R.$  Then
	  \[
	    \varrho_G(x)
	    =
	    \frac{
	    	e^{-\frac12 \langle (I+\Sigma)^{-1}(x-\mu), x-\mu\rangle} 
	    }{
	      \sqrt{(2\pi)^{R\cdot d} \det(I+\Sigma)}
	    },
	  \]
	  where $\mu \in \R^{R\cdot d}$ is the vector of means of $
	  \corO{\GF(\zeta_h\omega_j)}$ under the bias $e^{\Bias{\GF}}$ (see \eqref{eq:muhj}).  The bilinear form $\langle \cdot, \cdot\rangle$ is the standard real inner product on $\R^{R\cdot d}.$  We extend it to $\C^{R \cdot d}$ to stay bilinear (so there is no conjugation in the second variable).

	  \begin{lemma}
	    Fix $m >1$ a real number and fix $R \in \mathbb{N}.$
	    Uniformly in $d \leq \log N - m\log\log N,$ in $x \in \R^{R\cdot d}$ uniformly and uniformly in $\mathbf{z} \subset \D$ of a given finite cardinality having pairwise hyperbolic separation
	      $N^{-1},$
                \begin{align*}
                    \varrho_U(x)
                    =&\varrho_G(x)
                    \cdot
                    \!\!\!\!
		    \sum_{\substack{S_1,S_2 \subset \mathbf{z}\\ |S_1|=|S_2|}}
                    \!(-1)^{|S_1|}
                    \!\cdot\!
		    e^{H(S_1,S_2,x)}
                    \!\cdot\!
                    \left[
                        \prod_{S_1} z^N 
                    \right]
                    \!\cdot\!
                    \left[ 
                        \prod_{S_2} 
			  \bar z^N 
                    \right]
                    \!\cdot\!
                    c^{\mathbf{y}}(S_1,S_2)
                    \!\cdot\!
                    c_{\mathbf{z}}(S_1,S_2),
                    \\
                    &+e^{-\Omega((\log N)^m)}\cdot\Exp\left[ e^{\Bias{\GF}} \right],
                \end{align*}
		where
		\[
		  H(S_1,S_2,x)
		  = 
		  \langle (I+\Sigma)^{-1}(\mu-x-\frac{1}{2}v), v\rangle
		\]
		and $v \in \C^{R\cdot d}$ is the vector
		\[
		  v_{h,j} = 
		  \sum_{z \in S_1} 
		  \log\left( 
		  \frac{1-\zeta_h\corO{\bar{\omega}_j} {z}^{-1}}
		  {1-\zeta_h{\omega_j} {z} } \right)
		  +
		  \sum_{z \in S_2} 
		  \log\left( 
		  \frac{1-\zeta_h{\omega_j} \bar{z}^{-1}}
		  {1-\zeta_h\corO{\bar{\omega}_j} \bar{z} } \right).
		\]
	    \label{lem:density_expansion}
	  \end{lemma}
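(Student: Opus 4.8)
The plan is to produce $\varrho_U$ by Fourier inversion from the characteristic function computed in Lemma~\ref{lem:cf}. The key first remark is that, because $\WN$ is independent of $\CUEF$ and each coordinate $\WN(\zeta_h\omega_j)$ is a standard normal, the Fourier transform of $\varrho_U$ factors as
\[
  \widehat{\varrho_U}(\xi)
  = \Exp\Bigl[ e^{i\sum_{h,j}\xi_{h,j}(\CUEF+\WN)(\zeta_h\omega_j)}\,e^{\Bias{\CUEF}}\Bigr]
  = \varphi(\{\xi_{h,j}\})\cdot e^{-\frac12\langle\xi,\xi\rangle},
\]
with $\langle\cdot,\cdot\rangle$ the bilinear form of the statement, while the density $\varrho_G$ (normalized so that $\int\varrho_G=\Exp e^{\Bias{\GF}}$, consistently with the defining identity $\Exp[F(\GF+\WN)e^{\Bias{\GF}}]=\int F\,\varrho_G$) has Fourier transform $\Exp e^{\Bias{\GF}}\,e^{i\langle\mu,\xi\rangle-\frac12\langle(\Sigma+I)\xi,\xi\rangle}$. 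From the trivial bound $|\varphi(\xi)|\le\Exp e^{\Bias{\CUEF}}$ one gets $|\widehat{\varrho_U}(\xi)|\le\Exp e^{\Bias{\CUEF}}\,e^{-\frac12\langle\xi,\xi\rangle}$, so the inversion integral converges absolutely (this is exactly where the $\WN$-mollification is used), and the same bound controls the contribution of $\{\sum_{h,j}\xi_{h,j}^2\gg(\log N)^m\}$ by $e^{-\Omega((\log N)^m)}\Exp e^{\Bias{\CUEF}}\le e^{-\Omega((\log N)^m)}\Exp e^{\Bias{\GF}}$ via Proposition~\ref{prop:domination}; on the complementary, polynomially sized ball $\{\sum_{h,j}\xi_{h,j}^2\ll(\log N)^m\}$ we may invoke Lemma~\ref{lem:cf}, whose additive error integrates to $e^{-\Omega((\log N)^m)}\Exp e^{\Bias{\GF}}$ as well.

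Next I would substitute the expansion of Lemma~\ref{lem:cf} and invert term by term. On the relevant ball, $\widehat{\varrho_U}(\xi)$ is a finite sum over $S_1,S_2\subset\mathbf z$ with $|S_1|=|S_2|$, the $(S_1,S_2)$-summand being the $\xi$-independent scalar $\Exp e^{\Bias{\GF}}\,(-1)^{|S_1|}\prod_{S_1}z^N\prod_{S_2}\bar z^N\,c^{\mathbf y}(S_1,S_2)\,c_{\mathbf z}(S_1,S_2)$ times
\[
  e^{i\langle\mu,\xi\rangle-\frac12\langle(\Sigma+I)\xi,\xi\rangle}\cdot e^{-\frac i2\langle v,\xi\rangle},
\]
where the final factor gathers the ratios of the truncated exponentials $f_1\approx e^{-i\overline{L(\cdot)}}$, $f_2\approx e^{-iL}$ appearing in Lemma~\ref{lem:cf}. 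Since $L(z)=\tfrac12\sum_{h,j}\xi_{h,j}\log(1-\zeta_h\omega_j z)$ is linear in $\{\xi_{h,j}\}$, those ratios are honest exponentials of linear functionals of $\xi$, and reading off the coefficient of each $\xi_{h,j}$ — keeping in mind $\zeta_h\in\R$ and the substitutions $z\mapsto\bar z^{-1}$ and $z\mapsto\bar z$ together with the conjugate $\overline{L(z)}$ — yields exactly the complex vector $v\in\C^{R\cdot d}$ of the statement. Then, for fixed $(S_1,S_2)$, I would evaluate
\[
  \frac1{(2\pi)^{R\cdot d}}\int_{\R^{R\cdot d}} e^{-i\langle\xi,x\rangle}\,e^{i\langle\mu,\xi\rangle-\frac12\langle(\Sigma+I)\xi,\xi\rangle-\frac i2\langle v,\xi\rangle}\,d\xi
\]
by completing the square, shifting the contour by $i(\Sigma+I)^{-1}(\mu-x-\tfrac12 v)$ (valid by the Gaussian decay and holomorphy of the integrand in that shift), which gives $\varrho_G(x)$ times $e^{H(S_1,S_2,x)}$ with $H$ exactly the expression in the statement, its linear-in-$x$ part coming from the cross term between $\mu-x$ and $v$. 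Multiplying back the scalar prefactors and resumming over $(S_1,S_2)$ completes the proof.

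The hard part is not any single estimate but the bookkeeping: tracking the complex shift $v$ through the conjugations and the substitutions $z\mapsto\bar z^{-1}$ in Lemma~\ref{lem:cf}, and carrying it cleanly through the completion of the square so as to recognize $H(S_1,S_2,x)$. The one genuinely analytic subtlety is that Lemma~\ref{lem:cf} is stated only on a ball in $\xi$-space, so the inversion integral must be split, with the tail handled by the crude modulus bound on the tilted characteristic function — which is again available only thanks to the $\WN$ smoothing.
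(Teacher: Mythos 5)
Your proposal is correct and follows essentially the same route as the paper: Fourier inversion made absolutely convergent by the $\WN$-mollifier, truncation of the $\xi$-integral to the ball $\|\xi\|_2^2\le(\log N)^m$ via the trivial bound $|\varphi(\xi)|\le\Exp e^{\Bias{\CUEF}}$, substitution of Lemma~\ref{lem:cf}, and term-by-term Gaussian integration by a complex shift producing $v$ and $H$. The only step you leave implicit --- and which the paper carries out explicitly in passing from its $\tilde H$ to $H$ --- is that after substituting the expansion on the ball, each $(S_1,S_2)$-summand must be re-extended to an integral over all of $\R^{R\cdot d}$ before completing the square, the added tail being controlled by $|\Im L(z)|\ll (R\log N)^{1/2}\|\xi\|_2$ against the Gaussian factor; this is routine and does not affect the correctness of your argument.
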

	  \begin{proof}
	    By Fourier inversion, we can write for any $x \in \R^{R\cdot d},$
	    \[
                    \varrho_U(x)
		    =
		    \frac{1}{(2\pi)^{R\cdot d}}
		    \int_{ \R^{R\cdot d}} 
		    e^{-i \langle \xi, x\rangle}
		    \varphi(\xi)
		    e^{-\frac12 \langle \xi, \xi\rangle}
		    \,d\xi.
	    \]
	    Note that by definition of $\varphi,$ we have that
	    \[
		    |\varphi(\xi)| \leq \Exp\left[ e^{\Bias{\CUEF}} \right].
	    \]
	    Hence, truncating the integral according to $\|\xi\|_2^2 \leq (\log N)^m$ or otherwise, we get that
	    \[
                    \varrho_U(x)
		    =
		    \frac{1}{(2\pi)^{R\cdot d}}
		    \int\limits_{ \|\xi\|_2^2 \leq (\log N)^m} 
		    e^{-i \langle \xi, x\rangle}
		    \varphi(\xi)
		    e^{-\frac12 \langle \xi, \xi\rangle}
		    \,d\xi
                    +e^{-\Omega((\log N)^m)}\cdot\Exp\left[ e^{\Bias{\CUEF}} \right].
	    \]
	    The Lebesgue-volume of $\left\{ \xi : \|\xi\|_2^2 \leq (\log N)^m \right\}$ is
	    $e^{ O(\log N \log\log N)},$ and hence by Lemma~\ref{lem:cf}, we get that
	    \begin{equation}
	      \label{eq:cf4}
	    \begin{aligned}
                    \varrho_U(x)
                    =&\varrho_G(x)
                    \cdot
                  \!\!\!  \sum_{\substack{S_1,S_2 \subset \mathbf{z}\\ |S_1|=|S_2|}}
                    \!(-1)^{|S_1|}
                    \!\cdot\!
		    e^{\tilde{H}(S_1,S_2,x)}
                    \!\cdot\!
                    \left[
                        \prod_{S_1} z^N 
                    \right]
                    \!\cdot\!
                    \left[ 
                        \prod_{S_2} 
			  \bar z^N 
                    \right]
                    \!\cdot\!
                    c^{\mathbf{y}}(S_1,S_2)
                    \!\cdot\!
                    c_{\mathbf{z}}(S_1,S_2),
                    \\
                    &+e^{-\Omega((\log N)^m)}\cdot\Exp\left[ e^{\Bias{\CUEF}} \right],
                \end{aligned}
	      \end{equation}
		where
		\begin{multline*}
		  \varrho_G(x)
		  e^{\tilde{H}(S_1,S_2,x)}
		  =
		  \frac{1}{(2\pi)^{R\cdot d}}
		 \!\! \int\limits_{ \|\xi\|_2^2 \leq (\log N)^m} 
		  e^{-i \langle \xi, x\rangle}
		  e^{-\frac12 \langle \xi, \xi\rangle}
		  \Exp\left[ 
                        e^{i \sum_{h,j} \xi_{h,j} \GF(\zeta_h\omega_j) + \Bias{\GF}}
                  \right] \\
		  \cdot
                    \left[
                        \prod_{S_1} \frac{ e^{-i\overline{L(\bar{z}^{-1})}}}{e^{-i{L(z)}}}
                    \right]
                    \!\cdot\!
                    \left[ 
                        \prod_{S_2} 
                        \frac{ e^{-iL(\bar z^{-1})}}
                        {e^{-i\overline{L({z})}}}
                    \right]
		  \,d\xi.
		\end{multline*}
		Using that 
		\[
		  |\Im L(z)| \ll \sum_{h,j} |\xi_{h,j}| \ll (R\log N)^{1/2}\|\xi\|_2,
		\]
		we get that
		\begin{multline*}
		  \int\limits_{ \|\xi\|_2^2 > (\log N)^m} 
		  e^{-\frac12 \langle \xi, \xi\rangle}
		  \Exp\left[ 
                        e^{\Bias{\GF}}
                  \right] 
		  \cdot
                    \left|
                        \prod_{S_1} \frac{ e^{-i\overline{L(\bar{z}^{-1})}}}{e^{-i{L(z)}}}
			\cdot
                        \prod_{S_2} 
                        \frac{ e^{-iL(\bar z^{-1})}}
                        {e^{-i\overline{L({z})}}}
                    \right|
		    \,d\xi \\
		    \leq e^{-\Omega( (\log N)^m)}
		  \Exp\left[ 
                        e^{\Bias{\GF}}
                  \right] 
		    .
		  \end{multline*}
		  Thus if we define $H$ by 
		\begin{multline*}
		  \varrho_G(x)
		  e^{{H}(S_1,S_2,x)}
		  =
		  \frac{1}{(2\pi)^{R\cdot d}}\\
		  \cdot\int\limits_{\R^{R\cdot d}} 
		  e^{-i \langle \xi, x\rangle}
		  e^{-\frac12 \langle \xi, \xi\rangle}
		  \Exp\left[ 
                        e^{i \sum_{h,j} \xi_{h,j} \GF(\zeta_h\omega_j) + \Bias{\GF}}
                  \right] 
		  \cdot
                    \left[
                        \prod_{S_1} \frac{ e^{-i\overline{L(\bar{z}^{-1})}}}{e^{-i{L(z)}}}
                    \right]
                    \!\cdot\!
                    \left[ 
                        \prod_{S_2} 
                        \frac{ e^{-iL(\bar z^{-1})}}
                        {e^{-i\overline{L({z})}}}
                    \right]
		  \,d\xi,
		\end{multline*}
		we can replace $\tilde H$ in \eqref{eq:cf4} by $H$ with the same order of error term. 

		There only remains to show that $H$ can be written as claimed in the Lemma.  Write $v \in \C^{R\cdot d}$ for the vector
		\[
		  v_{h,j} = 
		  \sum_{z \in S_1} 
		  \log\left( 
		  \frac{1-\zeta_h\bar{\omega_j} {z}^{-1}}
		  {1-\zeta_h{\omega_j} {z} } \right)
		  +
		  \sum_{z \in S_2} 
		  \log\left( 
		  \frac{1-\zeta_h{\omega_j} \bar{z}^{-1}}
		  {1-\zeta_h\bar{\omega_j} \bar{z} } \right).
		\]
		In this way, we have that
		\[
		  \left[
                        \prod_{S_1} \frac{ e^{-i\overline{L(\bar{z}^{-1})}}}{e^{-i{L(z)}}}
                    \right]
                    \!\cdot\!
                    \left[ 
                        \prod_{S_2} 
                        \frac{ e^{-iL(\bar z^{-1})}}
                        {e^{-i\overline{L({z})}}}
                    \right]
		    =e^{-i \langle \xi, v \rangle}.
		\]
		This allows us to write
		\[
		  \varrho_G(x)
		  e^{{H}(S_1,S_2,x)}
		  =
		  \frac{1}{(2\pi)^{R\cdot d}}
		  \int\limits_{\R^{R\cdot d}} 
		  e^{i\langle \xi, \mu-x-v\rangle}
		  e^{-\frac12 \langle (I+\Sigma)\xi, \xi\rangle}
		  \Exp\left[ 
                        e^{\Bias{\GF}}
                  \right]
		  \,d\xi.
		\]
		Changing variables by letting $\xi = (1+\Sigma)^{-1}(i(\mu-x-v)+\xi')$
		and using analyticity of the integrand to keep the integration over $\xi
		' \in \R^{R\cdot d},$ this integral reduces to the standard Gaussian integral:
		\[
		  \varrho_G(x)
		  e^{{H}(S_1,S_2,x)}
		  =
		  \frac{
		    e^{-\frac12 \langle (I+\Sigma)^{-1}(x-\mu), x-\mu\rangle} 
		  }{
		    \sqrt{(2\pi)^{R\cdot d} \det(I+\Sigma)}
		  }
		  \Exp\left[ 
                        e^{\Bias{\GF}}
                  \right]
		  e^{\langle (I+\Sigma)^{-1}(\mu-x-\frac{1}{2}v), v\rangle}
		  .
		\]
	  \end{proof}

	  To conclude the proof of the Proposition~\ref{prop:clt}, we would like to show that $H$ can be ignored.  Since Proposition~\ref{prop:clt} follows immediately from Lemma~\ref{lem:density_expansion} and Lemma~\ref{lem:H}, we say nothing more on its proof.
	  \begin{lemma}
                For any $\delta > 0,k>0$, any $m > 2+2\delta$ and any $R \in \mathbb{N},$ the following holds uniformly in all possible choices.
		Let $\mathbf{z} \subset \D$ of cardinality $k$ have the property that all points in $\mathbf{z}$ with distance to $0$ greater than $\log N - (1+\delta)\log\log N$ are contained in a Euclidean ball of radius $N^{-1}(\log N)^m.$  Suppose it has the further property that the pairwise hyperbolic separation between its elements is 
		  $N^{-1}.$
		For all $S_1,S_2 \subseteq \mathbf{z}$ with $|S_1|=|S_2|,$
                \begin{align*}
		  \int_{\R^{R\cdot d}}
		  |e^{{H}(S_1,S_2,x)} - 1|
		  \,
		  \varrho_G(x) 
		  dx
		  &\!\cdot\!
                    \left[
                        \prod_{S_1} { z^N } 
		      \right]
                    \!\cdot\!
                    \left[ 
                        \prod_{S_2} 
                        { \bar z^N }
                    \right]
                    \!\cdot\!
                    c^{\mathbf{y}}(S_1,S_2)
                    \!\cdot\!
                    c_{\mathbf{z}}(S_1,S_2) \\
		    &\ll(\log N)^{\corO{(2+\delta-m)}/2}(1+\Delta)^{2|\mathbf{z}|}
		    \Exp\left[ e^{\Bias{\GF}} \right]
		    .
		\end{align*}
		$\Delta$ is as in Corollary~\ref{cor:exact} or Proposition~\ref{prop:clt}.
                \label{lem:H}
            \end{lemma}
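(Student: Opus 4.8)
The plan is to control the integral by showing that $e^{H(S_1,S_2,x)}-1$ is small on the bulk of the Gaussian measure $\varrho_G$, while the tail of $\varrho_G$ (where $H$ might be large) contributes negligibly because of the exponential Gaussian decay, and finally to fold in the prefactors using the estimates already recorded in Corollary~\ref{cor:exact}. The key observation is that the prefactor $\bigl[\prod_{S_1} z^N\bigr]\bigl[\prod_{S_2}\bar z^N\bigr]\cdot c^{\mathbf{y}}(S_1,S_2)\cdot c_{\mathbf{z}}(S_1,S_2)$ is, up to the $\Exp e^{\Bias{\GF}}$ normalization, bounded by $\Delta^{|S_1|+|S_2|}\ll (1+\Delta)^{2|\mathbf{z}|}$, exactly as in the proof of Corollary~\ref{cor:exact}; so the whole task reduces to showing
\[
  \int_{\R^{R\cdot d}} |e^{H(S_1,S_2,x)}-1|\,\varrho_G(x)\,dx \ll (\log N)^{2+\delta-m/2}.
\]

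First I would analyze the vector $v=v(S_1,S_2)$ appearing in $H=\langle (I+\Sigma)^{-1}(\mu-x-\tfrac12 v),v\rangle$. Each component $v_{h,j}$ is a sum over $z\in S_1\cup S_2$ of terms $\log\bigl(\tfrac{1-\zeta_h\bar\omega_j z^{-1}}{1-\zeta_h\omega_j z}\bigr)$ (and conjugate-type variants). The numerator $1-\zeta_h\bar\omega_j z^{-1}$ is, for $|z|$ close to $1$, of size $\asymp 1$ unless $\zeta_h\omega_j$ is within distance $\asymp |1-|z||$ of $z$; meanwhile $1-\zeta_h\omega_j z$ is $\asymp 1$ always. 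The hypothesis that all deep points of $\mathbf z$ sit in a Euclidean ball of radius $N^{-1}(\log N)^m$ means that for $h$ with $\zeta_h$ deep (i.e.\ $i> \log N-(1+\delta)\log\log N$), the phase $\omega_j=1$ ray passes close to $z^{-1}$ only for a bounded range, and the logarithm there is $O(\log\log N)$; for shallow $h$ the term is $O(1)$. Summing, $\|v\|_\infty\ll \log\log N$ and — more importantly — $\|v\|_2^2\ll (\log\log N)^2$ times the number of indices where the log is large, which the ball hypothesis caps at $O((\log N)\cdot R)$ deep rows contributing $O((\log\log N)^2)$ each; I expect the clean bound to be $\langle (I+\Sigma)^{-1}v,v\rangle \ll (\log\log N)^{2+\delta}$, using that $(I+\Sigma)^{-1}$ has operator norm $\le 1$ and exploiting that $v$ is supported (in the large-entry sense) on the deep, clustered rows. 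This is the quantitative heart of the argument: the ball hypothesis of radius $N^{-1}(\log N)^m$ is precisely what forces $v$ to be concentrated and hence $\langle(I+\Sigma)^{-1}v,v\rangle$ to be $O((\log N)^{\delta'})$ rather than $O(\log N)$.

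Given that, write $H(x) = a - \langle (I+\Sigma)^{-1}x, v\rangle$ where $a=\langle(I+\Sigma)^{-1}(\mu-\tfrac12 v),v\rangle$ is a deterministic constant with $|a|\ll (\log\log N)^{2+\delta}$ (using $\|\mu\|$ is at most polylog and the bound on $v$, plus Cauchy--Schwarz). Under $\varrho_G$, the linear statistic $\langle(I+\Sigma)^{-1}x,v\rangle$ is Gaussian with mean $\langle(I+\Sigma)^{-1}\mu,v\rangle$ and variance $\langle(I+\Sigma)^{-1}v,v\rangle - \langle(I+\Sigma)^{-1}\Sigma(I+\Sigma)^{-1}v,v\rangle\le \langle(I+\Sigma)^{-1}v,v\rangle\ll (\log\log N)^{2+\delta}$. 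So $H(x)$ is a Gaussian variable whose mean and variance are both $O((\log\log N)^{2+\delta}) = O((\log N)^{o(1)})$. For a Gaussian $W$ with mean and variance of this size, $\Exp|e^W-1| = \Exp|e^W-1|$ is $O(\sqrt{\Var W}+|\Exp W|)\cdot e^{O(\Var W + |\Exp W|)}$ when these are small — but here they are only polylogarithmically small relative to nothing, so this naive bound fails. Instead I would split the $x$-integral: on the event $\{|H(x)|\le \tfrac12 m\log\log N\}$ use $|e^{H}-1|\le |H|e^{|H|} \ll |H| (\log N)^{m/2}$ and bound $\Exp[|H|\one_{|H|\le\cdots}]\ll \sqrt{\Var H}+|\Exp H|\ll (\log\log N)^{1+\delta/2}$; on the complementary event $\{|H(x)|> \tfrac12 m\log\log N\}$, use $|e^H-1|\le e^{|H|}$ and Gaussian concentration: since $H$ is Gaussian with bounded-polylog parameters, $\Exp[e^{|H|}\one_{|H|>\frac m2\log\log N}]\ll e^{-c(\log\log N)^2/\Var H}\le e^{-c(\log\log N)^{2-(2+\delta)}}$, which — wait, this is not obviously small. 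Here is where I expect the real fight: one must choose the threshold more carefully, balancing $(\log N)^{m/2}$ against the Gaussian tail of $H$, and the target exponent $(\log N)^{2+\delta-m/2}$ suggests the intended split is at level $\asymp\log N$ (not $\log\log N$), using that on $\{|H|\le m\log N\cdot c\}$ one has $|e^H-1|\ll e^{cm\log N}=N^{cm}$ times $|H|$, which is too lossy unless one also uses that $\varrho_G$ restricted to large $|\langle(I+\Sigma)^{-1}x,v\rangle|$ has measure $N^{-\Omega(1)}$.

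The cleanest route — and the one I would actually pursue — is to avoid the dichotomy by a direct computation. Since $H(x)=a-\ell(x)$ with $\ell$ linear, $\int |e^{H(x)}-1|\varrho_G(x)\,dx \le \int (e^{H(x)}+1)\varrho_G(x)\,dx$ only helps if $\int e^{H}\varrho_G$ is $1+o(1)$, which it is \emph{not} in general; but one can compute $\int e^{H}\varrho_G$ exactly (it is a Gaussian exponential-moment, giving $e^{a - \langle(I+\Sigma)^{-1}\mu,v\rangle + \frac12\Var \ell}$, and in fact by the structure of $H$ and the exact Fourier computation in Lemma~\ref{lem:density_expansion}, summing $e^{H(S_1,S_2,x)}$ against the prefactors over all $S_1=S_2$ reconstructs $\Exp e^{\Bias{\CUEF}}/\Exp e^{\Bias{\GF}}$, which is $1+O(\Delta)$). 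So the telescoping trick is: bound $|e^{H}-1| \le |e^{H}-1|^{1/2}(e^{H}+1)^{1/2}$ by Cauchy--Schwarz, i.e.\ $\int|e^H-1|\varrho_G \le (\int|e^H-1|^2\varrho_G)^{1/2}$ — no, cleaner still: use $|e^H-1|\le |H|\max(1,e^H)$ pointwise and Cauchy--Schwarz in $\varrho_G$ to get $\int|e^H-1|\varrho_G \le (\int H^2\varrho_G)^{1/2}(\int \max(1,e^{2H})\varrho_G)^{1/2}$. The first factor is $\ll (\Var H + (\Exp H)^2)^{1/2}\ll (\log\log N)^{1+\delta/2}$. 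The second factor is $\le (1+\int e^{2H}\varrho_G)^{1/2}$, and $\int e^{2H}\varrho_G = e^{2a-2\langle(I+\Sigma)^{-1}\mu,v\rangle+2\Var\ell}\le e^{O((\log\log N)^{2+\delta})} = e^{O((\log N)^{o(1)})}$. Multiplying, the integral is $\ll (\log N)^{o(1)}$, and then absorbing into $(\log N)^{2+\delta-m/2}$ is automatic once $m>2+2\delta$ is taken large enough that the saved factor $(\log N)^{m/2}$ dominates. I would therefore structure the final write-up as: (i) bound $\|v\|$ and $\langle(I+\Sigma)^{-1}v,v\rangle$ using the ball hypothesis (the main obstacle); (ii) identify $H$ as a Gaussian linear statistic under $\varrho_G$ with the parameter bounds from (i); (iii) Cauchy--Schwarz plus the explicit Gaussian moment $\int e^{2H}\varrho_G$; (iv) multiply in the prefactor bound $\ll (1+\Delta)^{2|\mathbf z|}\Exp e^{\Bias{\GF}}$ from Corollary~\ref{cor:exact} and conclude. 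The genuinely delicate point is (i): making precise that "clustered in a ball of Euclidean radius $N^{-1}(\log N)^m$" translates into $\langle(I+\Sigma)^{-1}v,v\rangle \ll (\log N)^{\delta'}$ with $\delta'$ controllable by the parameters — this requires the quantitative hyperbolic-distance estimates of Lemma~\ref{lem:branch} to control the entries of $\Sigma^{-1}$ and of $v$ on the deep rows, and careful bookkeeping of which of the $R\cdot d$ coordinates actually carry a large logarithm.
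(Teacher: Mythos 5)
There is a genuine gap, and it is located exactly where you flagged the "real fight": your argument cannot produce a bound that \emph{decays} in $N$. Your reduction at the outset — bound the prefactor $\bigl[\prod_{S_1}z^N\bigr]\bigl[\prod_{S_2}\bar z^N\bigr]c^{\mathbf{y}}c_{\mathbf{z}}$ by $(1+\Delta)^{2|\mathbf{z}|}$ and then try to show $\int|e^H-1|\varrho_G\,dx\ll(\log N)^{2+\delta-m/2}\Exp e^{\Bias{\GF}}$ — is not achievable, because the normalized integral is genuinely of order $1$ when $S_1\neq S_2$ and the corresponding points are well separated: there $\|v\|_\infty\ll1$ and $\|v\|_1\ll1$ (by \eqref{eq:cf6} and geometric summation in $h$, not the $(\log\log N)^{2+\delta}$ route you sketch), so $H$ is a Gaussian under $\varrho_G$ with mean $-\tfrac12\langle(I+\Sigma)^{-1}v,v\rangle$ and variance $\langle(I+\Sigma)^{-1}v,v\rangle$, both $O(1)$, and $\int|e^H-1|\varrho_G$ is then $\Theta(1)\cdot\Exp e^{\Bias{\GF}}$ — not $o(1)$. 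Your closing step, "absorbing into $(\log N)^{2+\delta-m/2}$ is automatic once $m$ is large," runs the inequality backwards: $(\log N)^{2+\delta-m/2}$ is a \emph{negative} power of $\log N$ for the relevant $m$ (it must be, since Proposition~\ref{prop:clt} needs error $(\log N)^{-K}$), and a bound of $(\log N)^{o(1)}$ cannot be absorbed into it.

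The missing idea is a dichotomy on $\|v\|_1$ at scale $(\log N)^{-m/2}$. If $\|v\|_1\ll(\log N)^{-m/2}$, then on the truncated region $|H|\leq\|\mu-x-\tfrac12v\|_2\|v\|_2\ll(\log N)^{3/2+\delta}\|v\|_1$ is $o(1)$, $|e^H-1|\ll|H|$, and your prefactor bound suffices. If instead $\|v\|_1\gg(\log N)^{-m/2}$, the estimate \eqref{eq:cf6} forces every matching $\phi:S_1\setminus S_2\to S_2\setminus S_1$ to satisfy $\sum_z N|z-\phi(z)|\gg(\log N)^{m/2}$, and then the decay comes not from the integral (which one bounds crudely by $e^{\Re H}+1$ and the exact Gaussian identity \eqref{eq:cf5}) but from the prefactor you discarded: $|c_{\mathbf{z}}(S_1,S_2)|$ contains the factor $1/\Exp\exp(2\sum_{S_1\cap S_2^c}\GF(z)-2\sum_{S_2\cap S_1^c}\GF(z))$, and the variance lower bound of Lemma~\ref{lem:varbound} (comparison with branching random walk on a tree built from the rays through $S_1\triangle S_2$) shows this factor is $\ll(\log N)^{-(m/2-1-\delta)}$. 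Without this second branch your proof cannot close. You also omit the (easy but necessary) case where some $z\in S_1\cup S_2$ is shallower than $\log N-(1+\delta)\log\log N$, where $|z|^N\leq e^{-\Omega((\log N)^{1+\delta})}$ trivializes the claim.
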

            \begin{proof}
	      If some $z \in S_1 \cup S_2$ has $\dH(0,z) \leq \log N - (1+\delta)\log \log N,$ the whole left-hand-side will be trivial.  This is because $|z|^{N} \leq e^{-\Omega( (\log N )^{1+\delta})}.$  By the separation assumption on $\mathbf{z},$ we get that
	      \[
		\log |c^{\mathbf{y}}(S_1,S_2)
		\!\cdot\!
		c_{\mathbf{z}}(S_1,S_2)|
		\ll \log N.
	      \]
	      We further have that $\int \varrho_G(x)\,dx = \Exp\left[ e^{\Bias{\GF}} \right].$  So we only need to worry about controlling $\int e^{\Re H(S_1,S_2,x)}\,\varrho_G(x)dx.$  To do this, we begin by observing
	      \begin{equation}
		\varrho_G(x)
		e^{\Re{H}(S_1,S_2,x)}
		=
		\frac{
		  e^{-\frac12 \langle (I+\Sigma)^{-1}(x-\mu+\Re v), x-\mu+\Re v\rangle} 
		}{
		  \sqrt{(2\pi)^{R\cdot d} \det(I+\Sigma)}
		}
		\Exp\left[ 
		  e^{\Bias{\GF}}
		\right]
		e^{
		  \frac12 \langle (I+\Sigma)^{-1}\Im v, \Im v\rangle
		}.
		\label{eq:cf5}
	      \end{equation}
	      Hence
	      \[
		\int e^{\Re H(S_1,S_2,x)}\,\varrho_G(x)dx
		=e^{\frac12 \langle (I+\Sigma)^{-1}\Im v, \Im v\rangle}.
	      \]
	      The entries of $\Im v$ are all bounded in absolute value by something that can be chosen only to depend on $|S_1 \cup S_2|.$  The operator norm of $(I+\Sigma)^{-1}$ is at most $1$ as $\Sigma$ is positive definite.  Hence
	      $\langle (I+\Sigma)^{-1}\Im v, \Im v\rangle \ll \log N.$
	      Thus, putting all of this together, we get that when 
some $z \in S_1 \cup S_2$ has $\dH(0,z) \leq \log N - (1+\delta)\log \log N,$ the Lemma holds.

	      So, in what follows, we will assume that all $z \in S_1 \cup S_2$ have $\dH(0,z) > \log N - (1+\delta)\log\log N.$  Our first task will be to truncate the integral.  We will restrict the integral to the $\operatorname{L}^{\infty}$-ball $V$ in $\R^{R\cdot d}$ of radius $(\log N)^{1+\delta}.$  The probability that any $\GF(\zeta_h\omega_j) - \mu_{h,j}$ is larger than $(\log N)^{1+\delta}$ in absolute value is $e^{-\Omega( (\log N)^{1+2\delta})},$ hence
	      \[
		\int_{V^c} \varrho_G(x)dx \leq e^{-\Omega( (\log N)^{1+2\delta})}
		\Exp\left[ e^{\Bias{\GF}} \right].
	      \]
	      We would like to say the same for $e^{\Re H(S_1,S_2,x)}\varrho_G(x),$ for which we will use \eqref{eq:cf5}. 
	      Hence we need to know something about the magnitudes of the real parts of $v.$
	      Suppose $y \in \D$ and $z \in \C$ have $|z-y| \ll (\log N)^{m} N^{-1}$ and $\zeta_d z \in \D,$ then uniformly in $h \in \mathbb{N}$ with $1 \leq h \leq d,$ 
	      \begin{equation}
		\left|
		\log\left( 
		\frac{1-\zeta_h z}
		{1-\zeta_h y}
		\right)
		\right|
		\ll
		\frac{|z-y|}
		{1-\zeta_h}.
		\label{eq:cf6}
	      \end{equation}
	      Thus the real and imaginary parts of the entries of $v$ can be bounded solely in terms of $|S_1 \cup S_2|.$  Hence, the probability that any $\GF(\zeta_h\omega_j) - \mu_{h,j} - v_{h,j}$ is larger than $(\log N)^{1+\delta}$ in absolute value is also $e^{-\Omega( (\log N)^{1+2\delta})},$ so that
	      \[
		\int_{V^c} 
		e^{\Re H(S_1,S_2,x)}\,
		\varrho_G(x)dx \leq e^{-\Omega( (\log N)^{1+2\delta})}
		\Exp\left[ e^{\Bias{\GF}} \right].
	      \]
	      Therefore, it suffices to show that
	      \begin{align*}
		\int_{V}
		|e^{{H}(S_1,S_2,x)} - 1|
		\,
		\varrho_G(x) 
		dx
		&\!\cdot\!
		\left[
		  \prod_{S_1} { z^N } 
		\right]
		\!\cdot\!
		\left[ 
		  \prod_{S_2} 
		  { \bar z^N }
		\right]
		\!\cdot\!
		c^{\mathbf{y}}(S_1,S_2)
		\!\cdot\!
		c_{\mathbf{z}}(S_1,S_2) \\
		&\ll(\log N)^{
		  \corO{(3+2\delta-m)}/2}(1+\Delta)^{2|\mathbf{z}|}
		\Exp\left[ e^{\Bias{\GF}} \right]
		.
	      \end{align*}

	      Since $\|(I+\Sigma)^{-1}\|_{\operatorname{op}} \leq 1,$
	      for $x \in V,$
	      we can estimate
	      \[
		|H(S_1,S_2,x)|
		\leq \|\mu-x-\frac{1}{2}v\|_2\|v\|_2
		\ll (\log N)^{\tfrac 32 +\delta} \|v\|_1.
	      \]
	      Hence, when $\|v\|_1 \ll (\log N)^{-m/2},$ we have shown the desired bound, and it suffices to consider the case that $\|v\|_1 \gg (\log N)^{-m/2}.$

	      Applying \eqref{eq:cf6}, for any bijection $\phi : S_1 \setminus S_2 \to S_2 \setminus S_1,$
	      \[
		\|v\|_1
		\ll 
		\sum_{z \in S_1 \cap S_2} \frac{|z^{-1} - \bar z| + |z - \bar{z}^{-1}|}{1-\zeta_d}
		+\sum_{z \in S_1 \setminus S_2} \frac{
		  |z^{-1} - \overline{\phi(z)}|
		  +|z-{\overline{\phi(z)}}^{-1}|
		}{1-\zeta_d}.
	      \]
	      Since all $z\in S_1 \cup S_2$ have $1-|z| \gg N^{-1}(\log N)^{1+\delta},$ we can replace all inverses in this sum by conjugates, incurring an acceptable error.  Namely:
	      \begin{equation*}
		\|v\|_1
		\ll 
		(\log N)^{1+\delta - m}
		+
		\sum_{z \in S_1 \setminus S_2} \frac{
		  |z - {\phi(z)}|
		}{1-\zeta_d}.
	      \end{equation*}
	      Hence it follows that in the case we are considering, for every matching $\phi : S_1 \setminus S_2 \to S_2 \setminus S_1,$
	      \begin{equation}
		\label{eq:cf7}
		(\log N)^{m/2}
		\ll
		\sum_{z \in S_1 \setminus S_2}
		N\cdot|z - {\phi(z)}|.
	      \end{equation}
	      Estimating the integral by the trivial bound 
	      $
		|e^{{H}(S_1,S_2,x)} - 1|
		\leq
		e^{\Re{H}(S_1,S_2,x)} + 1
		$
		and using \eqref{eq:cf5},
	      \begin{align*}
		\int_{V}
		|e^{{H}(S_1,S_2,x)} - 1|
		\,
		\varrho_G(x) 
		dx
		&\!\cdot\!
		\left[
		  \prod_{S_1} { z^N } 
		\right]
		\!\cdot\!
		\left[ 
		  \prod_{S_2} 
		  { \bar z^N }
		\right]
		\!\cdot\!
		c^{\mathbf{y}}(S_1,S_2)
		\!\cdot\!
		c_{\mathbf{z}}(S_1,S_2) \\
		&\ll (1+\Delta)^{2|\mathbf{z}|}
		\frac{e^{\frac12 \langle (I+\Sigma)^{-1}\Im v, \Im v\rangle}
		}
	      {
		\Exp 
		\exp\left( 
		2 \sum_{S_1 \cap S_2^c} \GF[z]
		-2 \sum_{S_2 \cap S_1^c} \GF[z]
		\right)
	      }.
	    \end{align*}
	    Since $\|v\|_1 \ll 1$ and $\|(I+\Sigma)^{-1}\|_{\operatorname{op}} \leq 1,$ it follows that $e^{\frac12 \langle (I+\Sigma)^{-1}\Im v, \Im v\rangle} \ll 1.$
            We now argue that this Gaussian expectation in the denominator is large.  Specifically, by comparison with branching random walk, we will show the following:
            \begin{lemma}
            \[
                        2
                        \Var
                        \left(
                        \sum_{S_1 \cap S_2^c} \GF[z]
                        -\sum_{S_2 \cap S_1^c} \GF[z]
                        \right)
                        \geq 
                        \min_{\phi}
                        \sum_{z \in S_1 \setminus S_2}\dH(z,\phi(z))-O(1),
            \]
	    where the minimum is over the set of all bijections from $S_1 \setminus S_2 \to S_2 \setminus S_1.$  
            The error can be taken to only depend on $|S_1 \triangle S_2|.$
            \label{lem:varbound}
            \end{lemma}
	    We delay the proof of Lemma \ref{lem:varbound} for the moment.
	    Since all $z$ in question have $\dH(0,z) \leq \log N - (1+\delta)\log \log N,$ we therefore can estimate using Lemma~\ref{lem:branch} for any such $z,$
            \[
                \dH(z,\phi(z))
                = 
                (\log |z-\phi(z)|
                +\log N-(1+\delta)\log N)_+-O(1).
            \]
	    By \eqref{eq:cf7}, at least one of these $z$ has Euclidean distance to $\phi(z)$ order $\Omega( (\log N)^{m/2}),$ and we conclude
            \[
                \Exp 
                        \exp\left( 
                        2 \sum_{S_1 \cap S_2^c} \GF[z]
                        -2 \sum_{S_2 \cap S_1^c} \GF[z]
                        \right)
                        \gg 
			(\log N)^{m/2-1-\delta}.
            \]
	    This completes the proof of the lemma.
            \end{proof}
            We now give the proof of Lemma~\ref{lem:varbound}
            \begin{proof}[Proof of Lemma~\ref{lem:varbound}]
                We prove this by comparison with branching random walk.  First we construct a tree.  Define a family of rays $\left\{ \zeta_h \omega_z : z \in S_1 \triangle S_2, h \in \mathbb{N}_0  \right\},$ where $\omega_z$ is the polar part of $z.$  Let $\Tilde T$ be a graph on this vertex set with edges between every pair of the form $(\zeta_h\omega_z,\zeta_{h+1}\omega_z),$ where $h \in \mathbb{N}_0.$ Now, identify any two points $\zeta_h \omega_z$ and $\zeta_h \omega_y$ with $e^h |\arg\omega_y-\arg\omega_z| < 1.$  This will not be an equivalence relation so extend this to the transitive closure. Let $T$ be the graph that results, discarding multiple edges. 

		We claim $T$ is a tree, which we show by induction on the number of rays.  With a single ray, there is nothing to check.  Suppose that we wish to add a ray $\left\{ \zeta_h \omega_z \right\}_{h=1}^\infty$ to an existing tree $T.$  Suppose that there is some $h \in \N$ and some $y \in S_1 \triangle S_2$ so that $e^h |\arg\omega_y-\arg\omega_z| < 1,$ and so we should identify the two points $\zeta_h\omega_z$ with the entire equivalence class of $\zeta_h\omega_y$ in $T.$  Then for every $j \leq h,$ we also have that $e^j |\arg\omega_y-\arg\omega_z| < 1,$ and hence the entire vector $\left( \zeta_h\omega_z \right)_{j=1}^h$ will be identified with $\left( \zeta_h\omega_y \right)_{j=1}^h.$  This ensures that after the $\omega_z$ ray is added to $T,$ the graph remains a tree.
		
	      Orient $T$ by calling $0$ the top.  On $T,$ we define branching random walk.  That is, we put independent, standard normals on every edge, and let $W(v)$ be the sum of these normals on along the unique path connecting $0$ to $v.$  Then by Lemma~\ref{lem:branch}, up to constants depending only $|S_1 \triangle S_2|,$ we have that
                \[
                    \Exp\left[ W(\zeta_h\omega_z)W(\zeta_j\omega_y) \right]
                    =2\Exp\left[ \GF(\zeta_h\omega_z)\GF(\zeta_j\omega_y) \right] + O(1).
                \]
                Letting $\tilde S_1$ and $\tilde S_2$ be the points in $T$ which are closest to $S_1$ and $S_2,$ we get that
            \[
                        2\Var
                        \left(
                        \sum_{S_1 \cap S_2^c} \GF[z]
                        -\sum_{S_2 \cap S_1^c} \GF[z]
                        \right)
                        =
                        \Var
                        \left(
                        \sum_{\tilde S_1 \cap \tilde S_2^c} W(z)                        -\sum_{\tilde S_2 \cap \tilde S_1^c} W(z)
                        \right)
                        +O(1),
            \]
            using the bounded correlation of increments \eqref{eq:correlationbound}.  The error can again be estimated solely in terms of $|S_1 \triangle S_2|.$

	    Hence, we have reduced the problem to estimating this variance on the tree.  For a matching $\phi : \tilde S_1 \setminus \tilde S_2 \to \tilde S_2 \setminus \tilde S_1,$ each pair $W(z) - W(\phi(z))$ is a sum of 
    independent, standard normals along the geodesic in $T$ that connects $z$ to $\phi(z),$ some having positive signs and some having negative signs.  After summing this over all $z,$ a lower bound for the variance is simply the sum of squares of the signed numbers of times edges are crossed by geodesic paths $(z,\phi(z))$ with $z \in \tilde S_1 \setminus \tilde S_2.$  

            We claim there is a matching $\phi : \tilde S_1 \setminus \tilde S_2 \to \tilde S_2 \setminus \tilde S_1,$ for which there is no cancellation, i.e.\,if $n$ is the number of edges of $T$ contained in the geodesic from $z \to \phi(z)$ and in the geodesic from $y \to \phi(y),$ then
            \[
                \Exp
                \left( W(z) - W(\phi(z)) \right)
                \left( W(y) - W(\phi(y)) \right)
                = n.
            \]
            Note that for such a matching, we therefore have that
            \[
                        \Var
                        \left(
                        \sum_{\tilde S_1 \cap \tilde S_2^c} W(z)                        -\sum_{\tilde S_2 \cap \tilde S_1^c} W(z)
                        \right)
                \geq
                \sum_{z \in \tilde S_1 \cap \tilde S_2^c}
                \dT(z,\phi(z)).
            \]
            Since we have that $\dT(z,\phi(z)) = \dH(z,\phi(z)) + O(1),$ this completes the proof.

            Let $\psi : T^2 \to \mathbb{Z}$ be the coefficient of an edge of $T$ in the sum 
            \[
                \sum_{\tilde S_1 \cap \tilde S_2^c} W(z)       
                -\sum_{\tilde S_2 \cap \tilde S_1^c} W(z).
            \]
            For a point in $z\in\tilde S_1 \triangle \tilde S_2,$ there must be an edge incident to it on which $\psi$ is nonzero, as $W(z)$ includes in it all edges above $z$ while other $W(y)$ would include both the edge above $z$ and an edge below (or neither).  On the other hand, by the same reasoning, at a point $z$ which is incident to a single edge for which $\psi$ is nonzero, we must have $z \in \tilde S_1 \triangle \tilde S_2.$  

            Pick a path $\gamma = z_0z_1\dots z_k$ which is of maximal length subject to the following conditions 
            \begin{enumerate}
                \item $\psi(z_i,z_{i+1}) \neq 0$ for all $0 \leq i \leq k-1.$
                \item $\psi(z_i,z_{i+1})$ changes signs at most once.  If it does, it changes from positive to negative.  
                \item $\psi(z_i,z_{i+1}) > 0$ if and only if $z_{i+1}$ is above $z_i.$
            \end{enumerate}
            Such paths always exist provided $\psi$ is not identically $0.$  This is guaranteed provided $\tilde S_1 \triangle \tilde S_2 \neq \emptyset.$
            We claim that $z_0\in\tilde S_1 \triangle \tilde S_2$ and $z_k\in\tilde S_2 \triangle \tilde S_1.$
            Suppose we are in the case that $\psi(z_0,z_{1}) > 0.$  Then $z_1$ is above $z_0.$  As the path is maximal, it is not possible to extend it to include any points below $z_0.$  Hence it must be that for all $y$ immediately below $z_0,$ $\psi(y,z_0) \leq 0.$  This is only possible if $z_0 \in \tilde S_1 \triangle \tilde S_2.$  
            
           Consider now the case $\psi(z_0,z_{1}) < 0.$  Then we have $z_1$ below $z_0$ by definition.  Due to maximality of the path, there are two types of extensions to the path that must be impossible.  First, it must be there is no $y$ above $z_0$ so that $\psi(y,z_0) < 0.$  Thus there are at least as many $z \in \tilde S_1 \triangle \tilde S_2$ at or below $z_0$ as there are $z \in \tilde S_2 \triangle \tilde S_1$ at or below $z_0.$  Second, for every other $y$ below $z_0,$ we have that $\psi(y,z_0) \leq 0.$  Hence, we conclude that $z_0$ must be in $\tilde S_1 \triangle \tilde S_2.$ 

           A symmetric argument shows that $z_k\in\tilde S_2 \triangle \tilde S_1.$  We let $\phi(z_0) = z_k.$  The geodesic between $z_0$ and $z_k$ is exactly $\gamma,$ as the distance to $0$ decreases monotonically and then increases monotonically.  Hence we remove $z_0$ from $\tilde S_1,$ remove $z_k$ from $\tilde S_2$ and repeat this procedure.   The matching constructed this way has no cancellation as the effect on $\psi$ of deleting $z_0$ and $z_k$ is to increase some edges where $\psi < 0$ by $1$ and decrease edges where $\psi > 0$ by $1.$
      \end{proof}

\appendix
\section{Barrier estimate for certain Gaussian processes}
\label{app-barrier1}
Throughout  this appendix, $\{G_i\}_{i=1}^n$ denotes a centered Gaussian
process with covariance $E(G_iG_j)=:R_G(i,j)$, 
while $\{Y_i\}_{i=1}^n$ denotes the Gaussian
centered process with covariance $R_Y(i,j)=i\wedge j$.
\subsection{The ballot theorem}
Throughout the paper, we 
use the following version of the ballot theorem, 
restricted to Gaussian random variables.
\begin{theorem}
  \label{theo-app-ballot}
  For $\sqrt{n}\geq x,y\geq 1$, we have
  \begin{equation}
    \label{eq-ballot-app}
    \Pr( Y_i\geq -x, i=1,\ldots,n,
    Y_n\in [-x+y,-x+y+1]) \asymp \frac{xy}{n^{3/2}}.
  \end{equation}
  Further, the upper bound in \eqref{eq-ballot-app} holds without restriction on
  the upper bound on $x,y$.
\end{theorem}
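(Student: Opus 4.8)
The plan is to realize $\{Y_i\}_{i=0}^n$ as the restriction to integer times of a standard Brownian motion $(B_t)_{t\ge 0}$ — legitimate since $R_Y(i,j)=i\wedge j$, or concretely $Y_i=\sum_{k\le i}\xi_k$ with i.i.d.\ standard Gaussian $\xi_k$ — and to deduce \eqref{eq-ballot-app} from the reflection principle for Brownian motion, the only real work being to control the gap between the discrete barrier event $\{Y_i\ge -x,\ i\le n\}$ and the continuous one $\{\inf_{t\in[0,n]}B_t\ge -x\}$. Write $I=[-x+y,\,-x+y+1]$.

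For the \emph{lower bound} I would simply note that $\{\inf_{[0,n]}B\ge -x\}\cap\{B_n\in I\}$ is contained in the event in \eqref{eq-ballot-app}. By reflection, the sub-density of $B_n$ on $\{\inf_{[0,n]}B\ge -x\}$ (started from $0$) is $v\mapsto(2\pi n)^{-1/2}\bigl(e^{-v^2/(2n)}-e^{-(v+2x)^2/(2n)}\bigr)$ for $v\ge -x$; for $1\le x,y\le\sqrt n$ and $v=-x+u$ with $u\in[y,y+1]$ both exponents are $O(1)$, so this quantity equals $(2\pi n)^{-1/2}e^{-v^2/(2n)}\bigl(1-e^{-2x(x+u)/n}\bigr)\asymp n^{-1/2}\cdot xy/n$ using $1-e^{-s}\asymp s$ on the bounded range $s=2x(x+u)/n$, and integrating over $u\in[y,y+1]$ gives the claimed lower bound.

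For the \emph{upper bound} I would first pass from the discrete to a slightly enlarged continuous event: conditionally on $(B_i)_{i=0}^n$ the excursions over $[i,i+1]$ are independent Brownian bridges, and a unit-time bridge from $a$ to $b$ has $\Pr(\inf<\min(a,b)-h)\le e^{-2h^2}$ uniformly in $a,b$, so for every $h>0$
\[
  \Pr(Y_i\ge -x,\ i\le n,\ Y_n\in I)\ \le\ \Pr\!\bigl(\inf_{[0,n]}B\ge -x-h,\ B_n\in I\bigr)+n\,e^{-2h^2}.
\]
Taking $h=C\sqrt{\log n}$ makes the error $o(n^{-100})$, negligible against $xy/n^{3/2}\ge n^{-3/2}$, while the reflection computation above (with $x,y$ replaced by $x+h,y+h$) bounds the first term by $\ll(x+h)(y+h)/n^{3/2}$. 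This already yields the upper bound $\ll xy/n^{3/2}$ whenever $x,y\gg\sqrt{\log n}$, which covers the poly-logarithmic range in which the theorem is actually applied.

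To obtain the upper bound for \emph{all} $x,y\ge 1$ one must remove the $\sqrt{\log n}$ losses, and this is the hard part, since the Brownian embedding is genuinely lossy near the barrier. The plan here is to peel off a short initial segment: decompose at time $s=\lceil(\log n)^2\rceil$ via the Markov property, bound the part on $[s,n]$ by the previous step started from $Y_s$ (which is sharp because, conditioned to stay above $-x$, $Y_s$ is of order $\sqrt s=\log n\gg\sqrt{\log n}$ with overwhelming probability), and estimate the weight of atypically small $Y_s$ using the one-sided persistence bound $\Pr(Y_i\ge -x,\ i\le s)\asymp\min(1,x/\sqrt s)$ — a classical fact from random-walk fluctuation theory (Sparre Andersen / renewal theory for strict descending ladder heights) — together with reversibility of the killed walk to handle the endpoint constraint at $I$; if a single peeling still leaves a $\sqrt{\log\log n}$ loss one iterates the construction. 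Alternatively one may simply invoke a ballot theorem for i.i.d.\ finite-variance walks from the literature. Either way, the one delicate point is the sharp linear-in-$x$ and linear-in-$y$ dependence uniformly down to $x,y=1$; everything else is routine.
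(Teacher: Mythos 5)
The paper does not actually prove Theorem \ref{theo-app-ballot}: it simply refers to the literature (\cite{Carr,AB}) for a more general, non-Gaussian version. Your proposal therefore does strictly more work than the paper, and what you do is essentially correct. The lower bound via the containment $\{\inf_{[0,n]}B\ge -x\}\cap\{B_n\in I\}\subset\{Y_i\ge -x\ \forall i,\ Y_n\in I\}$ and the reflection formula is complete, up to one arithmetic slip: with $v=-x+u$ the exponent in the correction factor is $\bigl((v+2x)^2-v^2\bigr)/(2n)=2x(v+x)/n=2xu/n$, not $2x(x+u)/n$; the corrected exponent is exactly what yields $1-e^{-2xu/n}\asymp xy/n$, so the conclusion stands. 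The bridge-fluctuation step in the upper bound (a unit bridge from $a$ to $b$ dips below $\min(a,b)-h$ with probability at most $e^{-2h^2}$) is also correct, and it delivers the sharp bound whenever $x,y\gg\sqrt{\log n}$, which covers every application of the theorem in the paper (where $x,y$ are poly-logarithmic). The only genuine gap is the one you flag yourself: the uniform upper bound down to $x,y=1$ is not established by your peeling sketch, which is a plan rather than a proof (in particular the claim that a single decomposition at $s=\lceil(\log n)^2\rceil$, combined with the persistence estimate $\Pr(Y_i\ge -x,\ i\le s)\asymp\min(1,x/\sqrt s)$ and time reversal at the other endpoint, closes the loss would need to be carried out carefully, and you concede it may require iteration). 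Your fallback — invoking a ballot theorem for mean-zero finite-variance walks from the literature — is precisely what the authors do for the whole theorem, so in that regime your proposal and the paper coincide. In summary: your route is more self-contained and elementary than the paper's for the main range of parameters, at the cost of leaving the endpoint range $1\le x,y\lesssim\sqrt{\log n}$ of the upper bound to a citation, exactly as the paper itself does.
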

For the proof of a much more general version that does not use the
Gaussian assumption, we refer to \cite{Carr,AB}.
\subsection{One ray estimate}
Given a process
$\{Z_i\}_{i=1}^n$, a sequence 
$h=(h_i)$ and a real number $t$, we introduce the barrier event 
$$B_Z(n,t,h)= \{Z_i\leq h_i, i=1,\ldots,n-1; Z_n\in [t,t+1]\}$$
and its probability $p_{B;Z}(n,t,h)=P(B_Z(n,t,h))$.
\begin{proposition}
  \label{prop-comp-bar}
Assume that there exists a constant $C$, independent of $n$, so that
$|R_G(i,j)-R_Y(i,j)|\leq C$ for all $i,j\leq n$. 
Then, for any $\epsilon>0$ there exists $C_1=C_1(C,\epsilon)$,
$j=1,\ldots, $ so that, for any
$t$ with $|t|<n^{1/4}$ and all $n$ large enough,
\begin{eqnarray}
	\label{eq-barrier}
	&&(1-\epsilon) p_{B,Y}(n,t,h-(\log n)^{3/4})-C_1e^{-(\log n)^{3/2-\epsilon}}
	 \nonumber\\ 
&&	
\;\;\;\;\;\;\;
\;\;\;\;\;\;\;
\;\;\;\;\;\;\;
\leq
p_{B,G}(n,t,h)\leq \\
&&	
\;\;\;\;\;\;\;
\;\;\;\;\;\;\;
\;\;\;\;\;\;\;
\;\;\;\;\;\;\;
(1+\epsilon) p_{B,Y}(n,t,h+(\log n)^{3/4})+C_1e^{-(\log n)^{3/2-\epsilon}} .
	\nonumber
\end{eqnarray}
\end{proposition}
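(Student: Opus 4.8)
\textbf{Proof proposal for Proposition~\ref{prop-comp-bar}.}

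The plan is to interpolate between the Gaussian process $\{G_i\}$ and the reference random walk $\{Y_i\}$ using a Slepian/Gaussian-comparison argument, but applied to a suitably \emph{smoothed} version of the barrier indicator so that the comparison inequality actually produces the two-sided bound \eqref{eq-barrier} with the stated shift of the barrier by $(\log n)^{3/4}$. First I would fix $\epsilon>0$ and introduce a mollified barrier functional: replace each hard constraint $\{Z_i\le h_i\}$ and the terminal window $\{Z_n\in[t,t+1]\}$ by smooth functions of $Z_i$ that transition over a window of width $(\log n)^{3/4}$, i.e.\ a product $\Phi\bigl(\textstyle\sum_i \phi_i(Z_i)\bigr)$ or a product of coordinate-wise smooth cutoffs whose derivatives are bounded by $O((\log n)^{-3/4})$. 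The point of choosing the transition scale $(\log n)^{3/4}$ is that it is the geometric mean between the $O(1)$ covariance discrepancy we must absorb and the $O((\log n)^{3/2})$ cost we can afford to pay in the error term $e^{-(\log n)^{3/2-\epsilon}}$.

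Next I would run the standard Gaussian interpolation $Z^{(s)}=\sqrt{s}\,G+\sqrt{1-s}\,Y$ (after possibly adjusting for the fact that $G$ and $Y$ need not have matching variances — one adds an independent Gaussian with the appropriate small variance to whichever process has smaller variance, which is harmless because $|R_G(i,i)-R_Y(i,i)|\le C$). Differentiating $\Exp[\text{smoothed functional}(Z^{(s)})]$ in $s$ gives the usual expression $\tfrac12\sum_{i,j}(R_G(i,j)-R_Y(i,j))\,\Exp[\partial_i\partial_j(\text{functional})(Z^{(s)})]$. Using the hypothesis $|R_G(i,j)-R_Y(i,j)|\le C$ and the fact that each second derivative of the smoothed functional is bounded by $O((\log n)^{-3/4})$ per differentiated coordinate (and the functional's mixed partials are supported on the transition layers), this difference is bounded by $C\cdot n^2\cdot(\log n)^{-3/2}$ times the probability that the process is within the transition layers — and here one uses the ballot-type estimate (Theorem~\ref{theo-app-ballot}) to see that the relevant probability is $O((\log n)^{O(1)}/n^{3/2})$, so after summing the whole correction is smaller than $e^{-(\log n)^{3/2-\epsilon}}$, or is dominated by $\epsilon\, p_{B,Y}$. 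One then sandwiches the true indicator for $G$ between smoothed functionals built on the barriers $h\mp(\log n)^{3/4}$, and sandwiches the smoothed functionals for $Y$ between the true $Y$-probabilities at barriers $h\mp 2(\log n)^{3/4}$ — absorbing the extra factor of $2$ into the constants, or more carefully by running the argument with transition scale $\tfrac12(\log n)^{3/4}$. Monotonicity of $p_{B,Y}(n,t,\cdot)$ in the barrier height, together with the ballot theorem showing $p_{B,Y}$ is stable under $O((\log n)^{3/4})$ perturbations of the barrier up to a $(1+o(1))$ factor, then yields \eqref{eq-barrier}.

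The main obstacle I anticipate is controlling the terminal window: the event $\{Z_n\in[t,t+1]\}$ is a thin slab, and smoothing it over a window of width $(\log n)^{3/4}$ changes the order of magnitude of the probability (from $\Theta(n^{-3/2})$ to $\Theta((\log n)^{3/4}n^{-3/2})$), so one cannot naively smooth the endpoint constraint to the same scale as the barrier. The fix is to treat the endpoint separately: condition on $Z_{n-1}$ (or on $Z_n$ at an intermediate smoothing scale), use the near-independence of the last increment and the ballot estimate to write the terminal slab probability as a local CLT term times the barrier-survival probability, and only apply the Gaussian comparison to the barrier-survival part where the $(\log n)^{3/4}$ smoothing is harmless. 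Equivalently, one keeps the endpoint constraint exact but slightly enlarges it to $[t-(\log n)^{-1/4}, t+1+(\log n)^{-1/4}]$, which changes $p_{B,Y}$ only by a $(1+o(1))$ factor by the ballot theorem. Making this two-scale smoothing interact cleanly with the interpolation bound — so that the mixed second derivatives hitting the endpoint coordinate are controlled — is the delicate bookkeeping step, but it is precisely parallel to the barrier-comparison arguments in \cite{BDZ14,DRZ} and should go through with the ballot theorem (Theorem~\ref{theo-app-ballot}) as the only nontrivial input.
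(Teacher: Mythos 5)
There is a genuine gap at the heart of your argument: the Gaussian interpolation step does not deliver a $(1+\epsilon)$ comparison, because the hypothesis gives only $|R_G(i,j)-R_Y(i,j)|\leq C$ with no control on the sign. The interpolation derivative $\tfrac12\sum_{i,j}(R_G(i,j)-R_Y(i,j))\,\Exp[\partial_i\partial_jF(Z^{(s)})]$ must therefore be bounded in absolute value term by term. Even with sharp indicators the resulting error is $\sum_{i<j}C\cdot(\text{density of touching the barrier at both }i\text{ and }j,\text{ within the barrier event})$, and a ballot-theorem computation shows this sum is of order $C\cdot p_{B,Y}$ — a multiplicative constant depending on $C$, not on $\epsilon$. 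With your smoothing at scale $w=(\log n)^{3/4}$ it is strictly worse: each transition-layer visit contributes a factor $w^2$ to the probability (window width times the entropic-repulsion weight $\propto u$ for positions at distance $u\leq w$ from the barrier), plus an extra factor $w$ for re-entry, so the off-diagonal terms total $\asymp w^4 p_{B,Y}=(\log n)^{3}p_{B,Y}$ and the diagonal terms alone (if variances are not exactly matched) total $\asymp w\,p_{B,Y}$. The error is thus \emph{larger} than the main term, and your assertion that it is either $\leq e^{-(\log n)^{3/2-\epsilon}}$ (it is polynomial in $n$, comparable to $p_{B,Y}$) or $\leq \epsilon\,p_{B,Y}$ is false. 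A secondary but real problem is circularity: bounding $\Exp[|\partial_i\partial_jF(Z^{(s)})|]$ requires a ballot estimate for the interpolated process $Z^{(s)}$, which is precisely the statement being proven; Theorem~\ref{theo-app-ballot} applies only to $Y$.

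The paper's proof avoids all of this by never taking absolute values. After conditioning on the endpoint $Z_n$ (which handles your "thin slab" concern exactly: the comparison is run on the bridge processes, with the terminal density factored out explicitly), one adds a \emph{common} bounded-variance Gaussian $\sigma^u_iW$ to the $G$-bridge and \emph{independent} bounded-variance Gaussians $\eta^u_iW_i$ to the $Y$-bridge, with $\sigma^u,\eta^u$ chosen so that the diagonal covariances match exactly and the off-diagonal covariances are ordered. Slepian's inequality then gives a one-sided bound with \emph{no} error term; the only cost is removing the auxiliary Gaussians, which on the event $\max(|W|,|W_i|)\leq(\log n)^{3/4-\epsilon/2}$ shifts the barrier by at most $(\log n)^{3/4}$, and the complementary event has probability $\leq ne^{-(\log n)^{3/2-\epsilon}}$ — exactly the two error terms in \eqref{eq-barrier}. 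If you want to keep an interpolation framework, you must first perform this covariance surgery so that every term in the interpolation derivative has a definite sign; without it the method cannot produce anything sharper than a $C$-dependent multiplicative constant.
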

\begin{proof}
	Throughout this proof, $c$ denotes a constant that depends 
	only on $C$.

	It will be convenient to consider the processes conditioned
	on their end point. Recall that for a centered Gaussian process
	$\{Z_i\}_{i=1}^n$, setting $\hat Z_i=E(Z_i|Z_n)$ and
	$\tilde Z_i=Z_i-\hat Z_i$, we have
	\begin{equation}
		\label{eq-cond1}
		\hat Z_i=\frac{R_Z(i,n)}{R_Z(n,n)} Z_n,\quad
		R_{\tilde Z}(i,j)=R_Z(i,j)-\frac{R_Z(i,n)R_Z(j,n)}{R_Z(n,n)}\,.
	\end{equation}
	In particular, we have
	\begin{equation}
		\label{eq-cond2}
		\hat Y_i=
		\frac{i}{n} Y_n,\quad
		|\hat G_i-\frac{i}{n} G_n|
		\leq c\frac{|G_n|}{n},\quad
		|R_{\tilde G}(i,j)-R_{\tilde Y}(i,j)|\leq c\,.
	\end{equation}
	Note also that $\{\tilde Y_i\}$ is a Gaussian bridge. 

Let 
$$\phi(x,\sigma^2)=\frac{1}{\sqrt{2\pi\sigma^2}} e^{-x^2/2\sigma^2}$$
denote the centered \corO{standard} Gaussian density.
We rewrite $p_{B,G}(n,t,h)$ in terms of the conditional process $\tilde G$ as
follows:
\begin{equation}
	\label{eq-cond3}
	p_{B,G}(n,t,h)=\int_{[t,t+1]} \phi(x,R_G(n,n))
	P(\tilde G_i\leq h_i-\frac{R_G(i,n)}{R_G(n,n)} x,
	i=1,\ldots,n-1)dx\,.
\end{equation}
In the range $x\in [t,t+1]$ and with the constraint on $|t|<\!<n^{1/2}$
we have that
\begin{equation}
	\label{eq-cond4}
	  (1-\frac{c}{n}) \phi(x,n)
	\leq \phi(x,R_G(n,n)) \leq (1+\frac{c}{n}) \phi(x,n).
\end{equation}
\corO{(It is useful to recall that $\phi(x,n)=\phi(x,R_Y(n,n))$.)}

Our proof of the proposition will use in a crucial way Slepian's lemma.
Toward this end, let $W$ be a centered Gaussian random variable of variance
$1$ and let $W_i$, $i=1,\ldots,n-1$ denote i.i.d.\ copies of $W$,
\corO{with $W$ and the $W_i$'s independent of $G$ and $Y$.}
Introduce the processes
$$G^u_i=\corO{\tilde G_i}+\sigma_i^u W, Y^u_i=\corO{\tilde Y_i}+\eta_i^u W_i,
G^l_i=\corO{\tilde G_i}+\sigma_i^l W_i, Y^l_i=\corO{\tilde Y_i}+
\eta_i^l W\,,$$
where the superscripts $u$ and $l$ refer to upper and lower bounds in
\eqref{eq-barrier}, and $\{\sigma^u_i\}, $ $\{\sigma^l_i\},$ 
$ \{\eta^u_i\},$ $
\{\eta_i^l\}$ are deterministic sequences, chosen as follows.
\begin{enumerate}
	\item\textbf{Upper Bound:} We choose
		\begin{eqnarray}
			\label{eq-constub1}
		R_{\tilde G}(i,i)+(\sigma_i^u)^2&=&
		R_{\tilde Y}(i,i)+(\eta_i^u)^2, \quad i=1,\ldots,n-1\\
			\label{eq-constub2}
		R_{\tilde G}(i,j)+\sigma_i^u \sigma_j^u&\geq &
		R_{\tilde Y}(i,j), \quad 1\leq i<j\leq n-1.
	\end{eqnarray}
	(Such a choice is possible by first choosing $\sigma_i^u>
	(R_{\tilde G}(i,i)-R_{\tilde Y}(i,i))_+$ so 
	that \eqref{eq-constub2} is satisfied, 
	and then adjusting $\eta_i^u\geq 0$ to satisfy 
	\eqref{eq-constub1}.) 
	\item\textbf{Lower Bound:} We choose
		\begin{eqnarray}
			\label{eq-constlb1}
		R_{\tilde G}(i,i)+(\sigma_i^l)^2&=&
		R_{\tilde Y}(i,i)+(\eta_i^l)^2, \quad i=1,\ldots,n-1\\
			\label{eq-constlb2}
			\corO{R_{\tilde G}(i,j)}&\leq &
		R_{\tilde Y}(i,j)+\eta_i^l \eta_j^l, \quad 1\leq i<j\leq n-1.
	\end{eqnarray}
	(Such a choice is possible by first choosing $\eta_i^l>
	(R_{\tilde Y}(i,i)-R_{\tilde G}(i,i))_+$ so 
	that \eqref{eq-constub2} is satisfied, 
	and then adjusting $\sigma_i^l\geq 0$ to satisfy 
	\eqref{eq-constlb1}.)
\end{enumerate}
	With these choices, we note that
	\begin{equation}
		\label{eq-comp1}
		\max_i (\sigma_i^u+\sigma_i^l+\eta_i^u+\eta_i^l)<c.
	\end{equation}

	We now prove the upper bound in \eqref{eq-barrier}.
	By Slepian's inequality, we obtain that for any deterministic sequence 
	$\{g_i\}$,
	$$
	P(G_i^u\leq g_i,
	i=1,\ldots,n-1)
	\leq
	P(Y_i^u\leq g_i,
	i=1,\ldots,n-1).$$
	In particular, 
	\begin{eqnarray*}
		&&P(\cap_{i=1}^{n-1}
		\{\tilde G_i\leq h_i-\frac{R_G(i,n)}{R_G(n,n)} x\}
	)\\
	&=&
	P(\cap_{i=1}^{n-1}
	\{G_i^u\leq h_i-\frac{R_G(i,n)}{R_G(n,n)} x\corO{+}\sigma_i^u W\})
	\nonumber\\
	&\leq & P(|W|\geq (\log n)^{3/4-\epsilon/2})+
	P(\cap_{i=1}^{n-1}
	\{G_i^u\leq h_i-\frac{R_Y(i,n)}{R_Y(n,n)} x+(\log n)^{3/4-\epsilon/2}
+c\})\nonumber\\
	&\leq & P(|W|\geq (\log n)^{3/4-\epsilon/2})+
	P(\cap_{i=1}^{n-1}
	\{Y_i^u\leq h_i-\frac{R_Y(i,n)}{R_Y(n,n)} x+(\log n)^{3/4-\epsilon/2}
+c\})\nonumber\\
	&\leq & P(|W|\geq (\log n)^{3/4-\epsilon/2})+
	P(\cap_{i=1}^{n-1}
	\{\tilde Y_i\leq h_i-\frac{R_Y(i,n)}{R_Y(n,n)} x
	-\eta_i^u W_i+(\log n)^{3/4-\epsilon/2}
+c\})\nonumber\\
	&\leq & 
	nP(|W|\geq (\log n)^{3/4-\epsilon/2})+
	P(\cap_{i=1}^{n-1}
	\{\tilde Y_i\leq h_i-\frac{R_Y(i,n)}{R_Y(n,n)} x
	+(\log n)^{3/4}
\}).\nonumber
\end{eqnarray*}
Combined with \eqref{eq-cond4} and changing the value of $\epsilon$ if
necessary we obtain
\begin{eqnarray*}
	&&p_{B,G}(n,t,h)
	\leq  C_1 e^{-(\log n)^{3/4-\epsilon}}\\
	&&+
	(1+\epsilon)\int_{[t,t+1]} \phi(x,R_Y(n,n))
	P(\tilde Y_i\leq h_i-\frac{R_Y(i,n)}{R_Y(n,n)} x+(\log n)^{3/4},
	i=1,\ldots,n-1)dx\\
	&=&
	C_1e^{-(\log n)^{3/4-\epsilon}}+ (1+\epsilon) P(Y_i\leq h_i+(\log n)^{3/4},
	i=1,\ldots,n-1, Y_n\in [t,t+1])\,.
\end{eqnarray*}
The proof of the lower bound in \eqref{eq-barrier} is similar, using 
now the processes $\tilde Y^l$ and $\tilde G^l$ together with
\eqref{eq-constlb1} and \eqref{eq-constlb2},
instead of the processes $\tilde Y^u$ and $\tilde G^u$ together with
\eqref{eq-constub1} and \eqref{eq-constub2}. Further details are omitted.
\end{proof}
The following lemma,
\corO{dealing with the case that $h_i$ depends on $n$ but not on $i$,}
will be used in conjunction with 
Proposition \ref{prop-comp-bar}.
\begin{lemma}
  \label{lem-ratio}
  Fix $\epsilon<1/2$ small.
  In the notation of Proposition \ref{prop-comp-bar}, assume that
  $h_i=h(n)\geq (\log n)^{1-\epsilon}$ and $t=t(n)$ is such that
  $-\log n/10>t>-n^{1/2-\epsilon}$. Let $t'=t'(n)$ be such that 
  $|t'-t|/t\to 0$. Then,
  \begin{equation}
    \label{eq-ratio-comp}
    \lim_{n\to\infty} \frac{p_{B,Y}(n,t,h+(\log n)^{3/4})}
    {p_{B,Y}(n,t',h-(\log n)^{3/4})}=1\,,
  \end{equation}
  and the convergence is uniform in such choices.
  \end{lemma}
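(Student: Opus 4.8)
The plan is to deduce the lemma from a sharp, uniform first‑order asymptotic for the Gaussian ballot probability: there is an absolute constant $c_0>0$ such that
\[
  p_{B,Y}(n,t,h)=(1+o(1))\,c_0\,\frac{h\,(h-t)}{n^{3/2}},
\]
with the $o(1)$ uniform over $h=h(n)$ and $t=t(n)$ in the stated ranges (in particular for $h=o(\sqrt n)$, which holds in all applications, where $h$ is polylogarithmic). This upgrades the two–sided bound of Theorem~\ref{theo-app-ballot} to an honest asymptotic in the Gaussian regime; it can also be quoted from \cite{Carr,AB}. Granting it, the lemma is immediate: writing $a=(\log n)^{3/4}$, which is $o(h)$ because $h\geq(\log n)^{1-\epsilon}$ with $\epsilon$ small, one has
\[
  \frac{p_{B,Y}(n,t,h+a)}{p_{B,Y}(n,t',h-a)}
  =(1+o(1))\,\frac{h+a}{h-a}\cdot\frac{(h-t)+a}{(h-t)+(t-t')-a}\,;
\]
the first factor tends to $1$ since $a=o(h)$, and in the second both $a$ and $|t-t'|=o(|t|)$ are $o(h-t)$ (using $h-t\geq|t|$ and $|t'-t|/t\to0$), so it too tends to $1$. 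All the $o(1)$'s are uniform, so the ratio tends to $1$ uniformly, as claimed.

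To prove the asymptotic, I would condition on the endpoint:
\[
  p_{B,Y}(n,t,h)=\int_t^{t+1}\phi(x,n)\,\Pr\!\big(Y_i\leq h,\ 1\leq i\leq n-1\ \big|\ Y_n=x\big)\,dx,
\]
and note that $|t|<n^{1/2-\epsilon}$ gives $x^2/n=o(1)$, hence $\phi(x,n)=(1+o(1))(2\pi n)^{-1/2}$ uniformly on $[t,t+1]$. Realize $\{Y_i\}$ as a standard Brownian motion $B$ sampled at the integers. For the lower bound, imposing the barrier at all real times only shrinks the event, so $p_{B,Y}(n,t,h)\geq \Pr(B_s\leq h\ \forall\,s\in[0,n];\,B_n\in[t,t+1])$, which the reflection principle evaluates exactly as $\int_t^{t+1}(2\pi n)^{-1/2}e^{-x^2/2n}(1-e^{-2h(h-x)/n})\,dx$; since $h(h-x)/n\leq h(h+|t|+1)/n=o(1)$, this equals $(1+o(1))\,c_0\,h(h-t)\,n^{-3/2}$, and in particular $p_{B,Y}(n,t,h)\gg(\log n)^{2-\epsilon}n^{-3/2}$. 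For the upper bound, choose a cushion $\ell=\ell(n)$ with $\ell/\sqrt{\log n}\to\infty$ and $\ell=o(h)$ — possible exactly because $h\geq(\log n)^{1-\epsilon}$ with $1-\epsilon>1/2$. If $B\leq h$ at every integer but $\sup_{[0,n]}B>h+\ell$, then on some unit interval $B$ rises by more than $\ell$, an event of probability $\Pr(\sup_{[0,1]}B>\ell)\ll e^{-\ell^2/2}$; hence
\[
  p_{B,Y}(n,t,h)\leq \Pr\!\big(B_s\leq h+\ell\ \forall\,s\in[0,n];\,B_n\in[t,t+1]\big)+n\,e^{-\ell^2/2}.
\]
The first term, again by reflection, is $(1+o(1))\,c_0\,(h+\ell)(h+\ell-t)\,n^{-3/2}=(1+o(1))\,c_0\,h(h-t)\,n^{-3/2}$ (as $\ell=o(h)$), while $n e^{-\ell^2/2}$ decays faster than any power of $n$ and so is $o\big(p_{B,Y}(n,t,h)\big)$ by the lower bound. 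Combining the two estimates gives the asymptotic, uniformly in the stated ranges.

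I expect the main obstacle to be precisely this passage from the continuous‑time barrier, where the reflection principle gives exact formulas, to the integer‑time barrier appearing in $p_{B,Y}$: the cushion $\ell$ must be large enough that the overshoot union bound is genuinely negligible relative to $p_{B,Y}(n,t,h)$, which forces $\ell\gtrsim\sqrt{\log n}$ and is exactly why the hypothesis $\epsilon<1/2$ is needed (so that such an $\ell$ can still be taken $o(h)$), all while keeping every estimate uniform in $h$ and $t$. The remaining steps — the flatness of the Gaussian density on $[t,t+1]$ and the linearization of the two reflection formulas — are routine computations.
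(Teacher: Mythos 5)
Your proposal is correct and follows essentially the same route as the paper: sandwich the integer-time barrier probability between continuous-time Brownian barrier probabilities (lower bound by inclusion, upper bound with a cushion whose overshoot is controlled by the Gaussian tail of $\max_{[0,1]}W$, which is exactly where $\epsilon<1/2$, i.e.\ $h\gg\sqrt{\log n}$, enters), evaluate the continuous events exactly by the reflection principle, and linearize $p_n(x)-p_n(x-2g)$. The only cosmetic difference is that you use an additive cushion $\ell$ with $\sqrt{\log n}\ll\ell\ll h$ where the paper uses the multiplicative cushion $\epsilon g$; both yield the same uniform asymptotic $c_0\,h(h-t)n^{-3/2}$ and hence the ratio statement.
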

  \begin{proof}
    Let $W_t$ be standard Brownian motion. 
    Note that for any real numbers $s<0,g>0$,
    \begin{eqnarray*} 
    &&P(W_t\leq g, t\leq n; W_n\in [s,s+1])\leq p_{B,Y}(n,s,g)\\
    & &\leq P(W_t\leq (1+\epsilon)g, t\leq n; W_n\in [s,s+1])+
    nP(\max_{t\in [0,1]} W_t>\epsilon g)\,.
  \end{eqnarray*}
    For $g\geq (\log n)^{1-\epsilon}/2$, we have
    $$P(\max_{t\in [0,1]} W_t>\epsilon g)\leq e^{-c(\log n)^{2(1-\epsilon)}}
    \leq n^{-10}\,.$$
    One the other hand, with $p_u(x)=\frac{1}{\sqrt{2\pi u}}e^{-x^2/2u}$
    denoting the standard heat kernel we have \corO{from the reflection principle}
    that
    \begin{equation}
      \label{eq-dec6a}
      P(W_t\leq g, t\leq n; W_n\in [s,s+1])=
    \int_{[s,s+1]}
  [p_n(x)-p_n(x-2g) ]dx\,.
\end{equation}
  Combining the last three displays and using that for 
  $\corO{0<g<<n^{1/2}}$ and $0>x>>-n^{1/2}$,
  $[p_n(x)-p_n(x-2g) ]\sim - C \frac{\corO{g(g-x)}}{n^{3/2}}$,
   we conclude that for $g,s$ satisfying the above constraints,
  $$\lim_{n\to \infty}
  \frac{ P(W_t\leq g, t\leq n; W_n\in [s,s+1])}
  {P(W_t\leq (1+\epsilon)g, t\leq n; W_n\in [s,s+1])}=1+O(\epsilon)\,.$$
  Combining this with \eqref{eq-dec6a} 
   (to handle the difference between $t$ and $t'$)
  yields the lemma after some elementary manipulations.
\end{proof}
The following corollary specializes Proposition \ref{prop-comp-bar}
to the case of almost linear barriers.
\begin{corollary}
	\label{cor-UBbarrier}
	Assume that there exists a constant $C$, independent of $n$, so that
	$|R_G(i,j)-R_Y(i,j)|\leq C$
	and $|h_i|\leq C \log n$
	for all $i,j\leq n$.
	Then there exists $C_1=C_1(C)$ so that, for any $t<h_{n-1}$ with 
	$|t|\leq n^{1/4}$, and all $n$ large enough,
	\begin{equation}
		\label{directbar}
		p_{B,G}(n,t,h)\leq C\frac{t\log n}{n^{3/2}}.
	\end{equation}
\end{corollary}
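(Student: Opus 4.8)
The plan is to transfer the estimate to the Gaussian random walk $\{Y_i\}$ via Proposition~\ref{prop-comp-bar}, flatten the barrier using the bound $|h_i|\le C\log n$, and then read off the conclusion from the ballot theorem (Theorem~\ref{theo-app-ballot}).

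First I would invoke the upper half of Proposition~\ref{prop-comp-bar}: since $|R_G(i,j)-R_Y(i,j)|\le C$ and $|t|<n^{1/4}$, fixing any small $\epsilon>0$ we obtain, for all $n$ large,
\[
 p_{B,G}(n,t,h)\ \le\ (1+\epsilon)\,p_{B,Y}\bigl(n,t,h+(\log n)^{3/4}\bigr)\ +\ C_1 e^{-(\log n)^{3/2-\epsilon}}.
\]
The additive term decays faster than any power of $n$, hence is absorbed into the claimed bound, and the problem reduces to estimating the barrier probability $p_{B,Y}(n,t,h+(\log n)^{3/4})$ for the exact random walk (covariance $i\wedge j$).

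Second I would flatten the barrier. For $n$ large, $h_i+(\log n)^{3/4}\le 2C\log n=:x$ for every $i\le n-1$, and on the event $\{Y_n\in[t,t+1]\}$ with $t<h_{n-1}$ one moreover has $Y_n\le t+1\le h_{n-1}+1\le x$; hence
\[
 p_{B,Y}\bigl(n,t,h+(\log n)^{3/4}\bigr)\ \le\ \Pr\bigl(Y_i\le x,\ i=1,\dots,n;\ Y_n\in[t,t+1]\bigr).
\]
Reflecting $Y\mapsto-Y$ (which preserves the law of the walk) puts the right-hand side into exactly the form of Theorem~\ref{theo-app-ballot}, with $x$ as there and $y:=x-t-1$. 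Since $t<h_{n-1}\le C\log n$ we have $y>C\log n-1\ge1$ for $n$ large, so the upper bound of Theorem~\ref{theo-app-ballot} (which holds with no restriction from above on $x,y$) applies and gives
\[
 \Pr\bigl(Y_i\le x,\ i=1,\dots,n;\ Y_n\in[t,t+1]\bigr)\ \ll\ \frac{xy}{n^{3/2}}\ \ll\ \frac{(\log n)\,(\log n+|t|)}{n^{3/2}}.
\]
Combining the two steps yields $p_{B,G}(n,t,h)\ll \frac{(\log n)(\log n+|t|)}{n^{3/2}}$, which gives \eqref{directbar} in the regime $|t|\gg\log n$ in which the corollary is applied; in particular in Lemma~\ref{lem:ballotub}, where $|t|\asymp(\log n)^2$, it produces the bound $(\log n)^3/n^{3/2}$ used there.

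There is no genuine obstacle here: the substantive inputs — the Slepian-type comparison of Proposition~\ref{prop-comp-bar} and the Gaussian ballot theorem — are already established. The only points requiring attention are checking the hypotheses (that $|t|<n^{1/4}$, so Proposition~\ref{prop-comp-bar} applies, and that $y=x-t-1\ge1$, where $t<h_{n-1}$ together with $|h_i|\le C\log n$ is used, so Theorem~\ref{theo-app-ballot} applies) and verifying that the exponentially small error from the comparison step is dominated by the target bound, which is immediate.
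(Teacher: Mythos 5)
Your proposal is correct and follows essentially the same route as the paper's (very terse) proof: apply the upper bound of Proposition~\ref{prop-comp-bar}, flatten the barrier to a constant of order $\log n$, and invoke the ballot theorem upper bound; the only cosmetic difference is that you use the internal Theorem~\ref{theo-app-ballot} after a reflection, while the paper cites external references for the random-walk ballot theorem. Your remark that the bound obtained is really of order $(\log n)(\log n+|t|)/n^{3/2}$ is apt, since as literally written \eqref{directbar} cannot hold for negative $t$, and this is exactly the form needed in Lemma~\ref{lem:ballotub}.
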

\begin{proof}
	From Proposition \ref{prop-comp-bar} we obtain that
	$$p_{B,G}(n,t,h)\leq
	2p_{B,Y}(n,t,\corO{2C}\log n)+C_1e^{-(\log n)^{3/2-\epsilon}}.$$
The conclusion follows from the Ballot theorem for random walk,
see e.g. \cite{AB,Carr}.
\end{proof}

\subsection{Two rays estimate} We will need two different two ray estimates, depending on the separation between the rays.
\begin{proposition}[Separated rays]
    \label{prop-separated}
  Let $\{G_i^{(j)}\}_{i=1}^n$, $j=1,2$ two centered Gaussian processes, each 
  satisfying the assumptions of Proposition \ref{prop-comp-bar}. Assume 
  further that $\corO{|E(G_i^{(1)} G_j^{(2)})|}\leq C$. 
  Define
  $p_{B;G^{(1)},G^{(2)}}(n,t,h)=P(B_{G^{(1)}}(n,t,h)\cap \corO{B_{G^{(2)}}(n,t,h) })$.
  Then, there exists 
  $C_1=C_1(\epsilon,C)$ so that
  \begin{align}
    \label{eq-tworay-1}
    & p_{B;G^{(1)},G^{(2)}}(n,t,h) \\
     &\leq  
     (1+\epsilon) p_{B;Y}(n,t,h+(\log n)^{3/4})^2
  +C_1 e^{-(\log n)^{3/2-\epsilon}}.
  \nonumber
\end{align}
\end{proposition}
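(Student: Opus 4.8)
The plan is to reduce the two‑ray estimate to the one‑ray comparison of Proposition~\ref{prop-comp-bar}, applied to the concatenated $2(n-1)$‑dimensional Gaussian vector built from the two rays, with the reference process being \emph{two independent} copies of the random‑walk bridge. The only genuinely new input is to verify that the bounded cross‑covariance hypothesis $E(G^{(1)}_iG^{(2)}_j)\le C$ (read, as in all our applications, as a two‑sided bound) survives conditioning on the endpoints; once that is in hand, the doubled system satisfies exactly the hypotheses under which the Slepian argument of Proposition~\ref{prop-comp-bar} was run, and independence of the reference system is what turns its conclusion into the square $p_{B;Y}(n,t,h+(\log n)^{3/4})^2$.

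First I would condition on the two endpoints. For $x_1,x_2\in[t,t+1]$ with $|t|<n^{1/4}$, the joint density of $(G^{(1)}_n,G^{(2)}_n)$ is within a factor $1+O(1/n)$ of the product of the two marginal densities, since $\operatorname{Var}(G^{(j)}_n)=n+O(1)$ while $|E(G^{(1)}_nG^{(2)}_n)|\le C$, so the correlation is $O(1/n)$. Conditioning each $G^{(j)}$ on its own endpoint produces bridge‑type processes $\tilde G^{(j)}$ as in \eqref{eq-cond1}, and combining \eqref{eq-cond1}--\eqref{eq-cond2} with the bound on $E(G^{(1)}_iG^{(2)}_j)$ one checks that $|E(\tilde G^{(1)}_i\tilde G^{(2)}_j)|\le c$ and $|R_{\tilde G^{(j)}}(i,i')-R_{\tilde Y}(i,i')|\le c$ for a constant $c=c(C)$. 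Thus the covariance of the $2(n-1)$‑vector $(\tilde G^{(1)},\tilde G^{(2)})$ differs entrywise, by at most $c$, from that of $(\tilde Y^{(1)},\tilde Y^{(2)})$, where $\tilde Y^{(1)},\tilde Y^{(2)}$ are two independent bridges and the cross‑block of their covariance is identically $0$.

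Next I would run the Slepian step of Proposition~\ref{prop-comp-bar} on this doubled system verbatim, with the auxiliary shared standard normal $W$ on one side and per‑coordinate i.i.d.\ normals $W_i$ on the other (precisely as in \eqref{eq-constub1}--\eqref{eq-comp1}, and with the same placement of the two noises), the one new requirement being that the bounded inflation coefficients $\sigma_i^u$ are taken large enough — a constant depending only on $C$, with a local adjustment near the endpoints — that after adding $\sigma_i^u\sigma_j^u$ every off‑diagonal entry, now including the cross‑ray entries $E(\tilde G^{(1)}_i\tilde G^{(2)}_j)+\sigma_i^u\sigma_j^u\ge 0$, dominates the corresponding entry of the independent‑bridges covariance (namely $R_{\tilde Y}$ within a ray and $0$ across rays), while matching variances coordinate by coordinate on the reference side keeps its two bridge blocks independent. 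Slepian's inequality then bounds the barrier probability of the inflated $G$‑system by that of the inflated reference system, and the latter factorizes over the two blocks; removing the shared noise costs $P(|W|>(\log n)^{3/4-\epsilon/2})=e^{-\Omega((\log n)^{3/2-\epsilon})}$ and a barrier shift of at most $(\log n)^{3/4}$, removing the i.i.d.\ noises costs another $e^{-\Omega((\log n)^{3/2-\epsilon})}$ by a union bound over the $2(n-1)$ coordinates, and the ratios $R_{G^{(j)}}(i,n)/R_{G^{(j)}}(n,n)=i/n+O(1/n)$ perturb the tilts negligibly. Integrating the resulting product bound against $\phi(x_1,n)\phi(x_2,n)\,dx_1\,dx_2$ over $[t,t+1]^2$, and using that the integrand factorizes so that the double integral equals the square of the single integral $p_{B;Y}(n,t,h+(\log n)^{3/4})$, yields $(1+\epsilon')^2\,p_{B;Y}(n,t,h+(\log n)^{3/4})^2+C_1e^{-(\log n)^{3/2-\epsilon}}$; choosing $\epsilon'$ with $(1+\epsilon')^2\le 1+\epsilon$ and absorbing the cross terms between main and error, using $p_{B;Y}\le 1$, gives the claim.

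The main obstacle here is bookkeeping rather than any new idea: one must check that a single family of bounded coefficients $\sigma_i^u$ can simultaneously absorb the two (possibly different) within‑ray covariance discrepancies and the cross‑ray discrepancy, and that matching variances on the reference side remains feasible — exactly the computation behind \eqref{eq-constub1}--\eqref{eq-comp1}, now with the extra constraint $E(\tilde G^{(1)}_i\tilde G^{(2)}_j)+\sigma_i^u\sigma_j^u\ge 0$, which is met by any $\sigma_i^u\ge\sqrt{c}$. One should also record that the stated error $C_1e^{-(\log n)^{3/2-\epsilon}}$ can be made uniform in $t$ with $|t|<n^{1/4}$, as in Proposition~\ref{prop-comp-bar}.
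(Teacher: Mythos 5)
Your proposal is correct and follows essentially the same route as the paper: condition on the endpoint pair (whose covariance is $nI_2$ plus a bounded matrix, so the joint density essentially factorizes), then run the Slepian comparison of Proposition~\ref{prop-comp-bar} on the doubled $2(n-1)$-dimensional system against two \emph{independent} Gaussian bridges, with the bounded auxiliary noises also absorbing the $O(1)$ cross-ray covariance so that the reference probability factorizes into the square $p_{B;Y}(n,t,h+(\log n)^{3/4})^2$. The only cosmetic differences are that the paper conditions on the joint endpoint vector $(G_n^{(1)},G_n^{(2)})$ rather than on each endpoint separately, and uses a (possibly correlated) pair of shared noises $W^{(1)},W^{(2)}$ rather than a single shared $W$; neither affects the argument.
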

\begin{proof}

  The argument is very similar to the proof of Proposition 
  \ref{prop-comp-bar}. The only difference is that instead of
  conditioning on $G_n$, one needs to condition on $(G_n^{(1)},G_n^{(2)})$,
  where the latter vector has
  covariance matrix $nI_2+ B$ where $B$ is a $2\times 2$ matrix which 
  has bounded norm (and with inverse $I_2/n +B'/n^2$ with $B'$ having bounded
  norm).
  Performing the conditioning, in the comparison with (independent) 
  Gaussian bridges, instead of the variables 
  $W, W_i$ one needs to use variables $W^{(j)}, W_i^{(j)}$ ($j=1,2$), with
  $E (W_i^{(j)})^2\leq C$ and $E(W^{(j)})^2\leq C$, but otherwise may be 
  correlated. Since the estimate 
  $$P(\max_{j=1,2} \max_{i=1}^n 
  |W^{(j)}_i|>(\log n)^{3/4-\epsilon/2})\leq 2n P(|W|>(\log n)^{3/4-\epsilon/2})$$
  holds regardless of the correlation, the argument  of
  Proposition \ref{prop-comp-bar} carries through.
  We omit further details.
\end{proof}

We next consider rays with a common trunk. We introduce some notation. 
Given two processes $\{Z^{(1)}_i\}_{i=1}^n,\{Z^{(2)}_i\}_{i=1}^n$, a sequence 
$h=(h_i)$, an integer $0<k<n$ and real numbers $z,t$, define the event
\begin{align*}
  &A_{Z^{(1)},Z^{(2)}}(n,t,h,k,z) \\
  &=\{Z_i^{(\ell)}\leq h_i, i=1,\ldots,n-1, \ell=1,2; 
Z_n^{(\ell)}\in [t,t+1], \ell=1,2; Z_k^{(1)}\in [z,z+1]\},
\end{align*}
and its probability 
$p_{A;Z^{(1)},Z^{(2)}}(n,t,h,k,z)=P(A_{Z^{(1)},Z^{(2)}}(n,t,h,k,z))$.
In addition, for any $0<k<n$,
introduce the centered Gaussian processes $\{Y_i^{(1),k}\}_{i=1}^n$,
$\{Y_i^{(2),k}\}_{i=1}^n, $
such that
$$R_{Y^{(\ell),k}}(i,j)=i\wedge j,\quad 
E(Y^{(1),k}_i Y^{(2),k}_j)=:R_{Y^{(1),k},Y^{(2),k}}=i\wedge j\wedge k\,.$$
Note that the processes $\{Y^{(\ell),k}\}$ possess a common Gaussian random walk
part up to time $k$ (i.e., $Y^{(1),k}_i=Y^{(2),k}_i, i=1, \ldots,k$), 
and from $k$ onward increments in both processes
are independent of each other. 
\begin{proposition}[Overlapping rays]
    \label{prop-overlap}
    Let $\{G_i^{(j)}\}_{i=1}^n$, $j=1,2$ \corO{be}
    two centered Gaussian processes, each 
  satisfying the assumptions of Proposition \ref{prop-comp-bar}. Assume 
  further that there exist $C>10$ and  $C^2<k<n-C^2$ so that
  $|z|\leq (\log k)^2,$  $|t-z|\leq (\log (n-k))^2$ 
  and
  \begin{equation}
	  \label{eq-compare1}
  |E(G_i^{(1)} G_j^{(2)})- \min(i,j,k)|\leq C.
  \end{equation}
  Then, for any $\epsilon>0$ there exists $C_1=C_1(\epsilon,C)$ so that
  \begin{align}
    \label{eq-common1}
&p_{A;Z^{(1)},Z^{(2)}}(n,t,h,k,z)\\
    \nonumber
&\leq (1+\epsilon)
P(Y_i^{(\ell)}\leq h_i+(\log n)^{3/4}, i=C_1,\ldots,n-C_1, \ell=1,2;\\
\nonumber
&\quad
Y_n^{(\ell)}\in [t,t+1], \ell=1,2; Y_k^{(1)}\in [z,z+1])\\
&\quad +C_1e^{-(\log n)^{3/2-\epsilon}}\,.
\nonumber
\end{align}
  \end{proposition}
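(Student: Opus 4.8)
The plan is to reproduce the Slepian-comparison scheme of Proposition~\ref{prop-comp-bar}, with the single endpoint $G_n$ there replaced here by the three-dimensional pinning vector $\mathbf{v}=(G_n^{(1)},G_n^{(2)},G_k^{(1)})$. First I would condition on $\mathbf{v}$: by \eqref{eq-compare1} and the within-ray hypotheses, the covariance matrix of $\mathbf{v}$ agrees, up to an additive error of operator norm $O(C)$, with the covariance of the corresponding reference vector $(Y_n^{(1),k},Y_n^{(2),k},Y_k^{(1),k})$, namely the $3\times3$ matrix with diagonal $(n,n,k)$ and all off-diagonal entries $k$, whose determinant is of order $k(n-k)^2$. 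Since the pinning values run over $[z,z+1]\times[t,t+1]\times[t,t+1]$ with $|z|\le(\log k)^2$ and $|t-z|\le(\log(n-k))^2$, these are comfortably within the range where the Gaussian density prefactor agrees with the reference one up to a factor $1+O(1/k+1/(n-k))$, exactly as in \eqref{eq-cond4}. Writing $p_{A;G^{(1)},G^{(2)}}$ as the integral over the pinning values of this density times the probability that the conditioned processes $\tilde G^{(\ell)}_i:=G_i^{(\ell)}-\Exp(G_i^{(\ell)}\mid\mathbf{v})$ stay below the corresponding (shifted, pinning-dependent) barriers --- the analogue of \eqref{eq-cond3} --- reduces matters to a barrier comparison between the bridge-type processes $\tilde G^{(1)},\tilde G^{(2)}$ and the conditioned references $\tilde Y^{(1),k},\tilde Y^{(2),k}$.

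Next I would verify the covariance inputs for Slepian. Applying the conditioning formula \eqref{eq-cond1} to both the within-ray covariances and the cross-covariance, and invoking the hypotheses of Proposition~\ref{prop-comp-bar} together with \eqref{eq-compare1}, yields
\[
\bigl|R_{\tilde G^{(\ell)}}(i,j)-R_{\tilde Y^{(\ell),k}}(i,j)\bigr|\le c,
\qquad
\bigl|\Exp(\tilde G^{(1)}_i\tilde G^{(2)}_j)-\Exp(\tilde Y^{(1),k}_i\tilde Y^{(2),k}_j)\bigr|\le c
\]
for all $i,j$, with $c=c(C)$. I would treat both sides as $2(n-1)$-dimensional vectors $(\tilde G^{(1)}_i,\tilde G^{(2)}_i)_i$ and $(\tilde Y^{(1),k}_i,\tilde Y^{(2),k}_i)_i$; on the reference side the two coordinates at any time $i\le k$ are perfectly correlated (the shared trunk), so a pair of barrier constraints there collapses to a single one, which is precisely why the right-hand side of \eqref{eq-common1} may be written in the stated form. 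Note also that, on the trunk, once the variances are matched by the auxiliary-variable construction below, the covariance domination required for Slepian across the perfectly-correlated reference pairs is automatic from Cauchy--Schwarz.

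Then I would run the Slepian step as in Propositions~\ref{prop-comp-bar} and~\ref{prop-separated}: introduce bounded-variance auxiliary Gaussians (shared within a ray on one side of the comparison, independent across the index on the other, together with the correlated-auxiliary device of Proposition~\ref{prop-separated} to link the $j=1$ and $j=2$ families), with deterministic coefficients chosen so that the modified vectors have matching variances and the modified $G$-side covariance matrix is entrywise dominated off the diagonal by the modified $Y$-side one; this is feasible because every discrepancy is $O(C)$ and $C^2<k<n-C^2$. Restricting to the event that all of the (at most $2n$) auxiliary variables are bounded by $(\log n)^{3/4-\epsilon/2}$ in absolute value --- whose complement has probability at most $2n\,\Pr(|W|>(\log n)^{3/4-\epsilon/2})\le C_1 e^{-(\log n)^{3/2-\epsilon}}$ irrespective of the correlations among them, after possibly enlarging $\epsilon$ --- produces the barrier shift $(\log n)^{3/4}$ and the stated error term. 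Discarding the first and last $C_1$ steps of each ray, where the variance-matching need not yield domination, accounts for the index range $i=C_1,\dots,n-C_1$ in \eqref{eq-common1}; reintegrating over the pinning values as in the passage from \eqref{eq-cond3} to the conclusion of Proposition~\ref{prop-comp-bar} then completes the argument.

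The main obstacle is the bookkeeping forced by conditioning on $G_k^{(1)}$ alone --- and not also on $G_k^{(2)}$ --- while the reference process genuinely shares its trunk up to time $k$: one must check that after this asymmetric conditioning the cross-covariance $\Exp(\tilde G^{(1)}_i\tilde G^{(2)}_j)$ still matches $\Exp(\tilde Y^{(1),k}_i\tilde Y^{(2),k}_j)$ to within $O(1)$ for \emph{all} $i,j$ (including the regime $i,j\le k$, where the reference coordinates coincide), and that the auxiliary construction can simultaneously enforce variance-matching and off-diagonal domination across the trunk block, the two branch blocks, and the trunk--branch cross blocks. Once these are verified the remainder is a direct transcription of the one-ray argument and of the two-independent-rays argument of Proposition~\ref{prop-separated}.
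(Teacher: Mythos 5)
Your proposal is correct and follows essentially the same route as the paper: condition on a three-dimensional pinning vector spanning $G_n^{(1)},G_n^{(2)},G_k^{(1)}$ (the paper uses the increment parametrization $(G_k^{(1)},G_n^{(1)}-G_k^{(1)},G_n^{(2)}-G_k^{(2)})$, but this is an inessential reparametrization and yours is arguably the more natural one for the pinned event), verify that the conditional means and covariances of the joint process match those of the conditioned reference $(Y^{(1),k},Y^{(2),k})$ up to $O(1)$, and then run the Slepian comparison with bounded auxiliary Gaussians exactly as in Propositions~\ref{prop-comp-bar} and~\ref{prop-separated}, paying the $(\log n)^{3/4}$ barrier shift and the $C_1e^{-(\log n)^{3/2-\epsilon}}$ additive error from the union bound over the auxiliary variables. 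Your explicit attention to the degenerate trunk block of the reference covariance (where Cauchy--Schwarz plus variance matching gives the off-diagonal domination for free) fills in a point the paper leaves implicit.
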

  \begin{proof}
As in the proof of Proposition \ref{prop-comp-bar}, the first step is 
a conditioning. Here we condition on the vector $W:=
(G_k^{(1)}, G_n^{(1)}-G_k^{(1)},G_n^{(2)}-G_k^{(2)})$, which 
possesses the covariance matrix
$$R_W=\begin{pmatrix}
  k&0&0\\
  0&(n-k)&0\\
  0&0&(n-k)
\end{pmatrix}+ {\bf A} $$
where the matrix ${\bf A}$ has norm bounded by $3C$. In particular (because 
$k,n-k\geq C^2>3C$), the matrix $R_W$ is invertible and 
$$R_W^{-1}=\begin{pmatrix}
  1/k&0&0\\
  0&1/(n-k)&0\\
  0&0&1/(n-k)
\end{pmatrix}+ \widehat{\bf A} $$
where the norm of $\widehat{\bf A}$ is bounded. 
We obtain
$$\hat G^{(\ell)}_i:=E(G^{(\ell)}_i|W)=
\sum_{m,m'=1}^3 
E(G^{(\ell)}_i W_m) R_W^{-1}(m,m') W_{m'}.$$
Defining the analogous quantities for the processes $Y^{(\ell),k}$, 
\corO{writing  $W^Y$ instead of  $W$}, we have that
on the event $ \corO{ \cap_{j=1}^3 \{|W_j-W_j^Y|\leq 1\}}$ one has
$$\sup_{i,\ell} |\hat G^{(\ell)}_i-\hat Y^{(\ell),k}_i|\leq c,$$
for some $c=c(C)$. Similarly, with
$\tilde G^{(\ell)}=G^{(\ell)}-\hat G^{(\ell)}$, we have that
with $ G=(G^{(1)}, G^{(2)})$ and
$\tilde G=(\tilde G^{(1)},\tilde G^{(2)})$,
$$ R_{\tilde G}(i,j)=
 R_{G}(i,j)-
\sum_{m,m'=1}^3 
E(G_i W_m) R_W^{-1} (m,m')
E(G_j W_{m'}) \,.$$
We conclude that under our assumptions,
$$\max_{i,j}\|R_{\tilde G}(i,j)-R_{\tilde Y}(i,j)\|\leq c\,.$$
The conclusion now follows by using Slepian's lemma as in
the proof of Proposition \ref{prop-comp-bar}.
  \end{proof}

  \corO{
    \noindent {\bf Acknowledgment} We thank an anonymous referee for an 
  extremely useful and detailed referee report, that led to a significant improvement in the paper.}
      \bibliographystyle{alpha}
\bibliography{CUE}

\newcommand{\etalchar}[1]{$^{#1}$}
\begin{thebibliography}{CFK{\etalchar{+}}03}

\bibitem[ABB17]{ABB}
Louis-Pierre Arguin, David Belius, and Paul Bourgade.
\newblock Maximum of the characteristic polynomial of random unitary matrices.
\newblock {\em ArXiv:1511.07399. To appear in Comm. Math. Phys.}, 2017.

\bibitem[ABH15]{ABH}
Louis-Pierre Arguin, David Belius, and Adam~J. Harper.
\newblock Maximum of a randomized riemann zeta function, and branching random
  walks.
\newblock {\em ArXiv:1506.00629}, 2015.

\bibitem[ABR08]{AB}
L.~Addario-Berry and B.~A. Reed.
\newblock Ballot theorems, old and new.
\newblock In {\em Horizons of combinatorics}, volume~17 of {\em Bolyai Soc.
  Math. Stud.}, pages 9--35. Springer, Berlin, 2008.

\bibitem[ABR09]{ABR09}
Louigi Addario-Berry and Bruce Reed.
\newblock Minima in branching random walks.
\newblock {\em Ann. Probab.}, 37(3):1044--1079, 2009.

\bibitem[A{\"{\i}}d13]{Aidekon}
Elie A{\"{\i}}d{\'e}kon.
\newblock Convergence in law of the minimum of a branching random walk.
\newblock {\em Ann. Probab.}, 41(3A):1362--1426, 2013.

\bibitem[Bax61]{Baxter}
Glen Baxter.
\newblock Polynomials defined by a difference system.
\newblock {\em J. Math. Anal. Appl.}, 2:223--263, 1961.

\bibitem[BDZ16a]{BDZ14}
Maury Bramson, Jian Ding, and Ofer Zeitouni.
\newblock Convergence in law of the maximum of nonlattice branching random
  walk.
\newblock {\em Annales Inst. H. Poincar\'{e}}, 52:1897--1924, 2016.

\bibitem[BDZ16b]{BDG}
Maury Bramson, Jian Ding, and Ofer Zeitouni.
\newblock Convergence in law of the maximum of the two-dimensional discrete
  gaussian free field.
\newblock {\em Comm. Pure Appl. Math.}, 69(1):62--123, 2016.

\bibitem[B{\"o}t95]{Bottcher}
Albrecht B{\"o}ttcher.
\newblock The {O}nsager formula, the {F}isher-{H}artwig conjecture, and their
  influence on research into {T}oeplitz operators.
\newblock {\em J. Statist. Phys.}, 78(1-2):575--584, 1995.
\newblock Papers dedicated to the memory of Lars Onsager.

\bibitem[Bra78]{Bramson78}
Maury~D. Bramson.
\newblock Maximal displacement of branching {B}rownian motion.
\newblock {\em Comm. Pure Appl. Math.}, 31(5):531--581, 1978.

\bibitem[Bra83]{bramson83}
Maury Bramson.
\newblock Convergence of solutions of the {K}olmogorov equation to travelling
  waves.
\newblock {\em Mem. Amer. Math. Soc.}, 44(285):iv+190, 1983.

\bibitem[BZ11]{BZ10}
Maury Bramson and Ofer Zeitouni.
\newblock Tightness of the recentered maximum of the two-dimensional discrete
  {G}aussian free field.
\newblock {\em Comm. Pure Appl. Math.}, 65:1--20, 2011.

\bibitem[Car05]{Carr}
Francesco Caravenna.
\newblock A local limit theorem for random walks conditioned to stay positive.
\newblock {\em Probab. Theory Related Fields}, 133(4):508--530, 2005.

\bibitem[CFK{\etalchar{+}}03]{CFKRS}
J.~B. Conrey, D.~W. Farmer, John~P. Keating, M.~O. Rubinstein, and N.~C.
  Snaith.
\newblock Autocorrelation of random matrix polynomials.
\newblock {\em Comm. Math. Phys.}, 237(3):365--395, 2003.

\bibitem[CFKP97]{Kenyon}
James~W. Cannon, William~J. Floyd, Richard Kenyon, and Walter~R. Parry.
\newblock Hyperbolic geometry.
\newblock In {\em Flavors of geometry}, volume~31 of {\em Math. Sci. Res. Inst.
  Publ.}, pages 59--115. Cambridge Univ. Press, Cambridge, 1997.

\bibitem[CFS05]{CFS}
J.~B. Conrey, Peter~J. Forrester, and Nina~C. Snaith.
\newblock Averages of ratios of characteristic polynomials for the compact
  classical groups.
\newblock {\em Int. Math. Res. Not.}, (7):397--431, 2005.

\bibitem[CMN16]{CNM}
Reda Chhaibi, Thomas Madaule, and Joseph Najnudel.
\newblock On the maximum of the {C$\beta$E} field.
\newblock {\em ArXiv:1607.00243}, 2016.

\bibitem[Day75]{Day}
K.~Michael Day.
\newblock Toeplitz matrices generated by the {L}aurent series expansion of an
  arbitrary rational function.
\newblock {\em Trans. Amer. Math. Soc.}, 206:224--245, 1975.

\bibitem[DE01]{DE}
Persi Diaconis and Steven~N. Evans.
\newblock Linear functionals of eigenvalues of random matrices.
\newblock {\em Trans. Amer. Math. Soc.}, 353(7):2615--2633, 2001.

\bibitem[DRZ17]{DRZ}
Jian Ding, Rishideep Roy, and Ofer Zeitouni.
\newblock Convergence of the centered maximum of log-correlated gaussian
  fields.
\newblock {\em ArXiv:1503.04588. To appear, Annals Probab.}, 2017.

\bibitem[DS94]{DS}
Persi Diaconis and Mehrdad Shahshahani.
\newblock On the eigenvalues of random matrices.
\newblock {\em J. Appl. Probab.}, 31A:49--62, 1994.
\newblock Studies in applied probability.

\bibitem[FB08]{FBo}
Yan~V. Fyodorov and Jean-Philippe Bouchaud.
\newblock Freezing and extreme-value statistics in a random energy model with
  logarithmically correlated potential.
\newblock {\em J. Phys. A}, 41(37):372001, 12, 2008.

\bibitem[FHK12]{FH}
Yan~V. Fyodorov, Gaith~A. Hiary, and Jonathan~P. Keating.
\newblock Freezing transition, characteristic polynomials of random matrices,
  and the {R}iemann {Z}eta function.
\newblock {\em Phys. Rev. Lett.}, 108:170601, 2012.

\bibitem[FK14]{FK}
Yan~V. Fyodorov and Jonathan~P. Keating.
\newblock Freezing transitions and extreme values: random matrix theory, and
  disordered landscapes.
\newblock {\em Philos. Trans. R. Soc. Lond. Ser. A Math. Phys. Eng. Sci.},
  372(2007):20120503, 32, 2014.

\bibitem[HKO01]{HKO}
C.~P. Hughes, J.~P. Keating, and Neil O'Connell.
\newblock On the characteristic polynomial of a random unitary matrix.
\newblock {\em Comm. Math. Phys.}, 220(2):429--451, 2001.

\bibitem[Joh97]{Johansson}
Kurt Johansson.
\newblock On random matrices from the compact classical groups.
\newblock {\em Ann. of Math. (2)}, 145(3):519--545, 1997.

\bibitem[KS00]{KS}
J.~P. Keating and N.~C. Snaith.
\newblock Random matrix theory and {$\zeta(1/2+it)$}.
\newblock {\em Comm. Math. Phys.}, 214(1):57--89, 2000.

\bibitem[Lax44]{Lax}
Peter~D. Lax.
\newblock Proof of a conjecture of {P.} {Erd\"{o}s} on the derivative of a
  polynomial.
\newblock {\em Bull. Amer. Math. Soc.}, 50:509--513, 1944.

\bibitem[LP16]{LP}
Gautier Lambert and Elliot Paquette.
\newblock The law of large numbers for the maximum of almost {G}aussian
  log-correlated fields coming from random matrices.
\newblock {\em ArXiv:1611.08885}, 2016.

\bibitem[Mad15]{madaule}
Thomas Madaule.
\newblock Maximum of a log-correlated {G}aussian field.
\newblock {\em Ann. Inst. Henri Poincar\'e Probab. Stat.}, 51(4):1369--1431,
  2015.

\bibitem[Sze52]{Szego2}
G{\'a}bor Szeg{\"o}.
\newblock On certain {H}ermitian forms associated with the {F}ourier series of
  a positive function.
\newblock {\em Comm. S\'em. Math. Univ. Lund [Medd. Lunds Univ. Mat. Sem.]},
  1952(Tome Supplementaire):228--238, 1952.

\bibitem[Sze75]{Szego1}
G{\'a}bor Szeg{\H{o}}.
\newblock {\em Orthogonal polynomials}.
\newblock American Mathematical Society, Providence, R.I., fourth edition,
  1975.
\newblock American Mathematical Society, Colloquium Publications, Vol. XXIII.

\bibitem[Wie02]{Wieand}
K.~Wieand.
\newblock Eigenvalue distributions of random unitary matrices.
\newblock {\em Probab. Theory Related Fields}, 123(2):202--224, 2002.

\end{thebibliography}
\end{document}